\newcommand{\C}{{\mathbb C}}
\newcommand{\Z}{{\mathbb Z}}
\newcommand{\bP}{{\mathbb P}}
\newcommand{\IG}{\mathrm{IG}}
\DeclareMathOperator{\Sp}{Sp}
\DeclareMathOperator{\GL}{GL}
\newcommand{\cO}{{\mathcal O}}
\newcommand{\cX}{{\mathcal X}}
\renewcommand{\Xc}{Z} %closed orbit
\newcommand{\Xo}{X^\circ} %open orbit
\newcommand{\Y}{Y} %the second closed $\Sp_{2n}$-orbit
\newcommand{\U}{U} %the open $\Sp_{2n}$-orbit
\newcommand{\Tsmall}{T_{2n}}
\newcommand{\Wodd}{W^{odd}}
\newcommand{\Xev}{X^{ev}}
\newcommand{\Xt}{\widetilde{X}}
\newcommand{\Wsmall}{W_{2n}}
\newcommand{\Wlarge}{W}
\newcommand{\Wo}{W^\circ} %permutations of the open orbit
\newcommand{\Oo}{O^\circ}
\newcommand{\OY}{O_\Y}
\newcommand{\OZ}{O_\Xc}
\newcommand{\wtE}{\widetilde{E}}
\newcommand{\wto}{\widetilde{\omega}}
\DeclareMathOperator{\Span}{Span}
\DeclareMathOperator{\e}{\mathbf{e}}
\DeclareMathOperator{\diag}{diag}
\newcommand{\Mb}{\overline{\mathcal{M}}}
\DeclareMathOperator{\ev}{ev}
\newcommand{\Part}{\operatorname{Part}}
\newcommand{\BC}{\operatorname{BC}}
\newcommand{\BCodd}{\operatorname{BC}^{odd}}
\newcommand{\BKT}{\operatorname{BKT}}
\newcommand{\BKTodd}{\operatorname{BKT}^{odd}}
\newcommand{\Comp}{\operatorname{Comp}}
\newcommand{\QH}{\mathrm{QH}}
\DeclareFontFamily{OT1}{rsfs}{}
\DeclareFontShape{OT1}{rsfs}{n}{it}{<-> rsfs10}{}
\DeclareMathAlphabet{\mathscr}{OT1}{rsfs}{n}{it}
\newtheorem{thm}{Theorem}[section]
\newtheorem{lemma}[thm]{Lemma}
\newtheorem{cor}[thm]{Corollary}
\newtheorem{prop}[thm]{Proposition}
\theoremstyle{definition} \newtheorem{defn}[thm]{Definition}}
\theoremstyle{remark} \newtheorem{remark}[thm]{Remark}
\newtheorem{example}[thm]{Example}}
\begin{document}

\title[Curve Neighborhoods]{Curve neighborhoods of Schubert Varieties in the odd symplectic Grassmannian}

\author{Clelia Pech}

\address{
School of Mathematics, Statistics, and Actuarial Science,
Sibson Building,
University of Kent,
Canterbury,
CT2 7FS, UK
}
\email{c.m.a.pech@kent.ac.uk}

\author{Ryan M. Shifler}

\address{
Department of Mathematical Sciences,
Henson Science Hall, 
Salisbury University,
Salisbury, MD 21801
}
\email{rmshifler@salisbury.edu}

\subjclass[2010]{Primary 14N35; Secondary 14N15, 14M15}

%\maketitle
\begin{abstract}
Let $\IG(k,2n+1)$ be the odd symplectic Grassmannian. It is a quasi-ho\-mo\-ge\-neous space with homogeneous-like behavior. A very limited description of curve neighborhoods of Schubert varieties  in $\IG(k,2n+1)$ was used by Mihalcea and the second named author to prove an (equivariant) quantum Chevalley rule. In this paper we give a full description of the irreducible components of curve neighborhoods in terms of the Hecke product of (appropriate) Weyl group elements, $k$-strict partitions, and BC-partitions. The latter set of partitions respect the Bruhat order with inclusions. Our approach follows the philosophy of Buch and Mihalcea's curve neighborhood calculations of Schubert varieties in the homogeneous cases.

\end{abstract}

\maketitle

%%%

%
%\setcounter{tocdepth}{1}
%\setcounter{topnumber}{1}
%\setcounter{bottomnumber}{1}
%
%\tableofcontents

\section{Introduction}
The degree $d$ curve neighborhood of a subvariety $V \subset X$ is the closure of the union of all degree $d$ rational curves through $V$. Curve neighborhoods were introduced in \cite{BCMP:qkfin} to prove finiteness of quantum $K$-theory for $X$ a cominuscule homogeneous space. Let $\IG(k,2n+1)$ be the odd symplectic Grassmannian. In ~\cite{mihalcea.shifler:qhodd} an explicit description of curve neighborhoods of Schubert varieties in $\IG(k,2n+1)$ is limited to the $d=1$ case for irreducible components of ``expected dimension". In this paper we give a full description of the irreducible components of the degree $d$ curve neighborhood of any Schubert variety of $\IG(k,2n+1)$. A key argument is the description of the odd symplectic Grassmannian as a \emph{horospherical variety}, see \cite{pasquier:horo}. Namely, $\IG(k,2n+1)$ can be endowed with an action of the symplectic group $\Sp_{2n}$, which decomposes it into three orbits. Out of these three orbits two are closed and isomorphic to the ordinary (even) symplectic Grassmannians $\IG(k,2n)$ and $\IG(k-1,2n)$, whose curve neighborhoods play a crucial role in our results.

Let $E:= \C^{2n+1}$ be an odd-dimensional complex vector space and $1 \le k \le n+1$. An \emph{odd symplectic form} $\omega$ on $E$ is a skew-symmetric bilinear form with kernel of dimension $1$. The \emph{odd symplectic Grassmannian} $\IG:= \IG(k, E)$ parametrizes $k$-dimensional linear subspaces of $E$ which are isotropic with respect to $\omega$. One can find vector spaces $F \subset E \subset \widetilde{E}$ such that $\dim F = 2n$, $\dim \widetilde{E} = 2n+2$, the restriction of $\omega$ to $F$ is non-degenerate, and $\omega$ extends to a symplectic form (hence non-degenerate) on $\widetilde{E}$. Then the odd symplectic Grassmannian is an intermediate space
\begin{equation}\label{E:evenodd} 
    \IG(k-1, F) \subset \IG(k, E) \subset \IG(k, \widetilde{E}), 
\end{equation} 
sandwiched between two symplectic Grassmannians. This and the more general \emph{odd symplectic partial flag varieties} have been studied in \cite{mihai:odd,pech:quantum,GPPS,pasquier:horo,mihalcea.shifler:qhodd,LMS}. In particular, Mihai showed that $\IG(k, E)$ is a smooth Schubert variety in $\IG(k, \widetilde{E})$, and that it admits an action of Proctor's \emph{odd symplectic group} $\Sp_{2n+1}$ (see \cite{proctor:oddsymgrps}). If $k \neq n+1$ then the odd symplectic group acts on $\IG(k, E)$ with $2$ orbits, and the closed orbit can be identified with $\IG(k-1, F)$. If $k=1$ then $\IG(1, E) = \mathbb{P}(E)$ and if $k=n+1$ then $\IG(n+1, E)$ is isomorphic to the Lagrangian Grassmannian $\IG(n, F)$. Since $\IG$ is a Schubert variety in the symplectic Grassmannian $\IG(k, \wtE )$ it follows that the (equivariant) fundamental classes of those Schubert varieties $X(u) \subset \IG(k, \wtE )$ included in $\IG$ form a basis for the cohomology ring $H^*(\IG)$; we call this the \emph{Schubert basis}.

The odd symplectic group $\Sp_{2n+1}$ acts on $\IG$ with two orbits. There is a closed orbit $Z$ and an open orbit $\Xo$. The symplectic group $\Sp_{2n}$ acts on $\IG$ with three orbits. There are two closed orbits $Y$ and $Z$ and an open orbit $U$. The orbit $Z$ is isomorphic to $\IG(k-1,2n)$ and the orbit $Y$ is isomorphic to $\IG(k,2n)$. Also, $\Xo=Y \cup U$.

We now discuss curve neighborhoods. Let $X$ be a smooth variety. Let $d \in H_2(X,\Z)$ be an effective degree. Recall that the moduli space of genus $0$, degree $d$ stable maps with two marked points $\Mb_{0,2}(X,d)$ is endowed with two evaluation maps $\ev_i \colon \Mb_{0,2}(X,d) \to X$, $i=1,2$ which evaluate stable maps at the $i$-th marked point.

Let $\Omega \subset X$ be a closed subvariety. The \emph{curve neighborhood} of $\Omega$ is the subscheme 
\[ 
    \Gamma_d(\Omega) := \ev_2( \ev_1^{-1} \Omega) \subset X
\] 
endowed with the reduced scheme structure.

It is natural to first work out curve neighborhood calculations of Schubert varieties for the symplectic Grassmannian $\IG(k-1,2n)$ since it is isomorphic to the closed orbit $Z$. These calculations are worked out in \cite{ShiflerWithrow} in the context of calculating minimum quantum degree that appears in the quantum product. The results from the paper that are relevant to this manuscript are stated in Lemma \ref{lem:Lambdaline} and Lemma \ref{lem:Partline}.

We review the philosophy that Buch and Mihalcea used in \cite{buch.m:nbhds} to calculate curve neighborhoods in the homogeneous case. It was shown in \cite{BCMP:qkfin} that curve neighborhoods of Schubert varieties in the homogeneous space $G/P$ are irreducible. Thus, $\Gamma_d(X(id))=X(z_d)$ where $z_d$ is given by a greedy recursive formula in terms of Weyl group elements, roots, and the Hecke product. To compute any curve neighborhoods of a Schubert variety in $G/P$ it suffices to consider the Schubert point $id$ by considering the action of $G$. That is, $\Gamma_d(X(u))=X(u \cdot z_d)$ where $u$ is a Weyl group element and $\cdot$ is the Hecke product defined in Subsection \ref{s:hecke}. 

A similar situation occurs in $\IG$ by considering the horospherical action of $\Sp_{2n}$. There is a point in the orbit $Y$ corresponding to the Schubert point of $\IG(k,2n)$, which we call $id_Y$. The Schubert point $id$ of $\IG$ corresponds to the Schubert point of the orbit $Z$. For the calculation of curve neighborhoods in $\IG$ it suffices to consider the two points $id_Y$ and $id$. We explain this next.

First we discuss the case when a Schubert variety intersects the open orbit $\Xo$. In Proposition \ref{prop:irredopen} we prove that the curve neighborhood $\Gamma_d(X(w))$ of any Schubert variety $X(w) \subset \IG$ where $X(w) \cap \Xo \neq \emptyset$ (equivalently, $X(w) \cap Y \neq \emptyset$) is irreducible. Then 
\[ 
    \Gamma_d(X(id_Y))=X(id_Y \cdot_k O^\circ(d))
\] 
where the element $\Oo(d)$ is recursively defined. Furthermore, $\Oo(d)$ corresponds to the element $z_d$ in $\IG(k,2n)$. Using the horospherical action of $\Sp_{2n}$ it follows that 
\[
    \Gamma_d(X(w))=X(w \cdot_k \Oo(d))
\] 
where $w$ is an appropriate Weyl group element and $X(w) \cap Y \neq \emptyset$ and $\cdot_k$ is the modified Hecke product defined in Subsection \ref{s:hecke}.

Now we will discuss the case when a Schubert variety is contained in the closed orbit $Z$. There are two possibilities for the behavior of the degree $d$ rational curves that contain the Schubert point $id$. The first case is to consider the closure of the curves that are in $Z$. Since $Z$ is isomorphic to $\IG(k-1,2n)$ the union of those curves is a Schubert variety. This Schubert variety is given by $X(\OZ(d))$ where the element $\OZ(d)$ is recursively defined. Furthermore, $\OZ(d)$ corresponds to the element $z_d$ in $\IG(k-1,2n)$. The second case is to consider the closure of the curves that are intersect $Y \cup U$. In the manuscript we show that this is also a Schubert variety. This Schubert variety is given by $X(\OZ(d_1) \cdot \OY(1) \cdot_k \Oo(d_2))$ where $d_1+d_2+1=d$ and $X(\OY(1))$ is the irreducible component of $\Gamma_1(X(id))$ that intersects the open orbit $\Xo$. So we have that 
\[
    \Gamma_{d_1+d_2+1}(X(id)) = \Gamma_{d_2+1}(X(\OZ(d_1)))=X(\OZ(d_1) \cdot \OY(1) \cdot_k \Oo(d_2)) \cup X(\OZ(d_1+d_2+1)).
\] 
Once again using the horospherical action of $\Sp(2n)$, if $X(w) \subset Z$ then it follows that 
\begin{align*}
    \Gamma_{d_1+d_2+1}(X(w)) &= \Gamma_{d_2+1}(X(w \cdot \OZ(d_1))) \\
        &=X(w \cdot \OZ(d_1) \cdot \OY(1) \cdot_k \Oo(d_2)) \cup X(w \cdot \OZ(d_1+d_2+1)).
\end{align*}
\begin{remark}
There are cases where $X(w \cdot \OZ(d_1+d_2+1)) \subset  X(w \cdot \OZ(d_1) \cdot \OY(1) \cdot_k \Oo(d_2))$. 
\end{remark}

\subsection{Statement of results in terms of partitions}
In this paper we give a full description of the irreducible components of curve neighborhoods in terms of Hecke product of (appropriate) Weyl group elements, $k$-strict partitions, and BC-partitions. The latter set of partitions respect the Bruhat order with inclusions. Next, we will discuss the main results of this manuscript in terms of the two sets of partitions.

The Schubert varieties of $\IG$ are indexed by two sets of partitions. The first is set is the set $\BKT(k,2n+1)$ of $(n-k)$-strict partitions given as follows.
\begin{align*}
    &\BKT(k,2n+1) = \\
    &\{(2n+1-k\geq \lambda_1 \geq \cdots \geq \lambda_k \geq -1) \mid \text{$\lambda$ is $(n-k)$-strict, } \lambda_k=-1 \implies \lambda_1=2n+1-k \}. 
\end{align*}

The number of parts of partitions in the set $\BKT(k,2n+1)$ is the codimension of the corresponding Schubert varieties, however, in that set partition inclusion is not compatible with the Bruhat order.

Next we define BC-partitions, introduced in~\cite{ShiflerWithrow} as an alternative to $k$-strict partitions. The advantage of this set of partitions is that the Bruhat order corresponds to inclusion of the Young diagrams; the drawback is that codimension can no longer be readily computed by summing the parts of the partition. We call these collection of partitions BC-partitions.\footnote{ The ``BC" comes from the fact that this set of partitions is ``{\bf B}ruhat {\bf C}ompatible" for isotropic Grassmannians in Types {\bf B} and {\bf C}.}

To define these new partitions we first recall the encoding of partitions using $01$-words. Let $\lambda \in \Part(k,N)$ be a partition. The boundary of this partition consists of $N$ steps, either horizontal of vertical, going from the northeast corner of the $k \times (N-k)$ rectangle to its southwest corner. The  total number of vertical steps is $k$. We associate to $\lambda$ a $01$-word, denoted by $D(\lambda)$, as follows: if the $i$-th step is horizontal we set $D(\lambda)(i)=0$, otherwise $D(\lambda)=1$. 

Let $\BC(k,2n+2)$ denote the set of partitions $\lambda \in \Part(k,2n+2)$ such that if $D(\lambda)(i)=D(\lambda)(2n+3-i)$ for some $1 \leq i \leq n+1$, then $D(\lambda)(i)=0$. The Schubert varieties in $\IG(k,2n+1)$ are indexed by the following set of partitions $ \BC(k,2n+1):=\{\lambda: \lambda+1^k \in \BC(k,2n+2) \}. $

Let $I \in \{\BC(k,2n+1),\BKT(k,2n+1) \}$. For any $\lambda \in I$ there is a corresponding Weyl group element $w \in \Wodd$ that that indexes the same Schubert variety. We use $\lambda^{\OZ(d)}, \lambda^{\OY(d)},$ and  $\lambda^{\Oo(d)}$ to denote the partition in $I$ that corresponds to $w \cdot \OZ(1)$, $w \cdot \OY(d)$, and $w \cdot_k \Oo(d)$, respectively. These are defined in Section \ref{s:Partitions}.

Next we will set up notation to describe the irreducible components of the curve neighborhoods. First, Let $\ell^d_i(\lambda)=\# \{j \mid \lambda_j>i \mbox{ and } j>d \}$. The next definition gives the sets of partitions that correspond to curve neighborhoods with two irreducible components. 
\begin{defn}
    We will define two sets.
    \begin{enumerate}
        \item The first is for BC-partitions. 
            \[
                \Comp_{\BC(k,2n+1)}(d):=\{ \lambda \in \BC(k,2n+1) \mid \lambda_1=2n+1-k, \lambda_{d+1}-\ell_{d-1}^{d+1}(\lambda)-d=2(n+1-k) \};
            \]
        \item  The second is for $(n-k)$-strict partitions. 
            \begin{align*}
                &\Comp_{\BKT(k,2n+1)}(d):= \\
                &\{ \lambda \in \BKT(k,2n+1): \lambda_1=2n+1-k, \lambda_2^{\OZ(d-1)}-\ell_{-1}^2(\lambda^{\OZ(d-1)})=2(n+1-k)\}.
            \end{align*}
    \end{enumerate}
\end{defn}

We will now state the main theorem of the article. A more precise version of the theorem is given as Theorem \ref{thm:mainthmsec}.

\begin{thm}
Let $I \in \{\BC(k,2n+1),\BKT(k,2n+1) \}$. If $\lambda \in I$ then
\[ 
    \Gamma_d(X(\lambda)) =  \begin{cases}
                                X\left(\lambda^{\OY(d)}\right) \cup X\left(\lambda^{\OZ(d)}\right) & \text{if $\lambda \in \Comp_{I}(d)$}\\
                                X\left(\lambda^{\OY(d)}\right) & \text{if $X(\lambda) \subset Z, \lambda \notin \Comp_{I}(d)$} \\
                                X\left( \lambda^{\Oo(d)}\right) & \text{if $X(\lambda) \cap \Xo \neq \emptyset$.}
                            \end{cases}
\]
Moreover, in the first case $(\lambda \in \Comp_I(d))$, the Schubert varieties $X\left(\lambda^{\OY(d)}\right)$ and  $X\left(\lambda^{\OZ(d)}\right)$ form two irreducible components.
\end{thm}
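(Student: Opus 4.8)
The plan is to reduce everything to the two base Schubert points $id$ and $id_Y$ via the horospherical $\Sp_{2n}$-action, and then to convert the recursive Weyl-group formulas recorded in the introduction into the two partition languages. The three cases of the theorem correspond exactly to the three possible positions of $X(\lambda)$ relative to the orbit stratification $\IG = Z \sqcup U \sqcup Y$: either $X(\lambda)$ meets the open orbit $\Xo = Y \cup U$, or $X(\lambda) \subseteq Z$. I would first set up the dictionary of Section \ref{s:Partitions} relating a partition $\lambda \in I$ to its Weyl group element $w \in \Wodd$, together with the partition avatars $\lambda^{\OY(d)}, \lambda^{\OZ(d)}, \lambda^{\Oo(d)}$ of the Hecke products $w \cdot \OY(d)$, $w \cdot \OZ(d)$, $w \cdot_k \Oo(d)$. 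This bookkeeping is what makes the remaining argument purely combinatorial, and it is where the precise form (Theorem \ref{thm:mainthmsec}) is eventually read off.

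For the case $X(\lambda) \cap \Xo \neq \emptyset$ I would invoke Proposition \ref{prop:irredopen}, which guarantees that $\Gamma_d(X(w))$ is irreducible, together with the formula $\Gamma_d(X(w)) = X(w \cdot_k \Oo(d))$ obtained by transporting the computation to the base point $id_Y \in Y \cong \IG(k,2n)$ through the $\Sp_{2n}$-action. Since $\Oo(d)$ corresponds to the greedy element $z_d$ of $\IG(k,2n)$, the answer is supplied by the known symplectic Grassmannian calculation in Lemmas \ref{lem:Lambdaline} and \ref{lem:Partline}. Translating through the dictionary yields $\Gamma_d(X(\lambda)) = X(\lambda^{\Oo(d)})$, which is the third case.

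When $X(\lambda) \subseteq Z$ I would split the degree-$d$ rational curves through $X(\lambda)$ into those whose image stays inside $Z \cong \IG(k-1,2n)$ and those that leave $Z$. The first family sweeps out the Schubert variety $X(\lambda^{\OZ(d)})$, directly from the $\IG(k-1,2n)$ computation. A curve of the second family must spend some degree $d_1$ inside $Z$, use one degree to cross into $Y \cup U$ (recorded by $\OY(1)$), and the remaining degree $d_2 = d - d_1 - 1$ inside $Y$; by monotonicity of the Hecke product with respect to the Bruhat order, the closure of the union over all admissible splits $d_1 + d_2 + 1 = d$ is a single Schubert variety, which I would identify with $X(\lambda^{\OY(d)})$ using the recursive definition of $\OY(d)$. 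This exhibits the two candidate components $X(\lambda^{\OY(d)})$ and $X(\lambda^{\OZ(d)})$.

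It then remains to decide which candidates are genuine irreducible components, i.e.\ whether $X(\lambda^{\OZ(d)}) \subseteq X(\lambda^{\OY(d)})$. Since the escaping family always survives, the $Z$-candidate is a separate component precisely when no such containment holds, and this should cut out exactly the set $\Comp_I(d)$. I expect this to be the main obstacle: one must translate the Bruhat-order (non-)containment into the explicit equalities defining $\Comp_I(d)$. For BC-partitions this is eased by the fact that the Bruhat order is inclusion of Young diagrams, so the comparison reduces to the stated condition $\lambda_{d+1} - \ell_{d-1}^{d+1}(\lambda) - d = 2(n+1-k)$ together with $\lambda_1 = 2n+1-k$; for $(n-k)$-strict partitions codimension is no longer additive, so I would pass through the $\OZ(d-1)$-shift and compare via the inequality on $\lambda^{\OZ(d-1)}$. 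Heuristically, $\lambda_1 = 2n+1-k$ records that $X(\lambda)$ is large enough to contain curves saturating the boundary direction, while the second equality measures exactly when the escaping and staying families fail to be nested. Verifying that these two conditions are equivalent to non-containment, in both partition languages and with the reduced scheme structure accounted for, is the technical heart of the proof.
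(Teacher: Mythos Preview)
Your proposal is correct and follows essentially the same route as the paper: the open-orbit case via Proposition~\ref{prop:irredopen} and transport to $id_\Y$, the closed-orbit case via the two families of curves yielding $X(\lambda^{\OY(d)})$ and $X(\lambda^{\OZ(d)})$, and the identification of $\Comp_I(d)$ as precisely the locus of non-containment, checked in the $\BC$ language (where Bruhat is inclusion) after reducing degree $d$ to degree $1$ via the $\OZ(d-1)$-shift. The only point the paper makes more explicit than your sketch is that the independence of the split $d_1+d_2+1=d$ is not merely ``monotonicity'' but an actual equality (the square property, Proposition~\ref{prop:square} and Corollary~\ref{cor:square}), and the containment/non-containment dichotomy is carried out concretely via $01$-word manipulations in Lemmas~\ref{lem:BCincl} and~\ref{lem:BCcomp}.
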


\subsection{Broader context of results} Here is a what is known about irreducible curve neighborhood components. Curve neighborhoods in homogeneous $G/P$ case are irreducible. In \cite{Songul:thesis}, Aslan shows that the irreducible components of curve neighborhoods in the Affine Flag in Type A have equal dimension. The next example is an example of a curve neighborhood where the (co)dimensions of the irreducible components differ.

\begin{example}
Let $k=3$ and $n=4$ and consider the set of partitions $\BKT(k,2n+1)$ for this example. Then we have that \[\Gamma_1(X(6,5,-1))=X(5,0,0) \cup X(6,-1,-1).\]
Moreover, only one component, $X(6,-1,-1)$, is of ``expected" dimension in the sense that its codimension respects the grading of the quantum Chevalley formula $[X(1)] \star [X(\lambda)] \in \QH^*(\IG(k,2n+1))$. That is,  \[ \text{codim} X(1)+\text{codim} X(6,5,-1)=\text{codim}X(6,-1,-1)+1 \cdot c_1\] where $c_1=2n+2-k$ is the coefficient of the first Chern class of $\IG(k,2n+1)$. See \cite{mihalcea.shifler:qhodd}*{Proposition 11.3, Theorem 11.7} and \cite{Shifler:thesis}*{Example 1.2.4} for further details and examples.
\end{example}

{\em Acknowledgements.} Shifler would like to thank Leonardo Mihalcea for introducing him to curve neighborhoods and the odd symplectic Grassmannian. Shifler is partially supported by the Building Research Excellence (BRE) Program at Salisbury University.

\section{Notations and definitions}

\subsection{Odd symplectic Grassmannians}

Let $E:= \C^{2n+1}$ be an odd-dimensional complex vector space. An \emph{odd symplectic form} $\omega$ on $E$ is a skew-symmetric bilinear form of maximal rank, i.e. with kernel of dimension $1$. It will be convenient to extend the form $\omega$ to a (nondegenerate) symplectic form $\wto$ on an even-dimensional space $\wtE \supset E$, and to identify $E \subset \wtE$ with a coordinate hyperplane $\C^{2n+1} \subset \C^{2n+2}$. 

For that, let $\{ \e_1,\ldots , \e_{n+1}, \e_{\overline{n+1}}, \ldots , \e_{\bar{2}}, \e_{\bar{1}} \}$ be the standard basis of $\wtE:=\C^{2n+2}$, where $\bar{i}=2n+3-i$. Set $|i|=\min\{i,\bar{i} \}$, and consider $\wto$ to be the symplectic form on $\wtE$ defined by 
\[ 
    \wto(\e_i,\e_j)=\delta_{i,\bar{j}} \text{ for all $1 \leq i\leq j \leq \bar{1}$}.
\]
The form $\wto$ restricts to the degenerate skew-symmetric form $\omega$ on \[ E=\C^{2n+1}=\left<\e_1, \e_2,\cdots, \e_{2n+1} \right>\] such that the kernel $\ker \omega$ is generated by $\e_1$. Then 
\[
    \omega(\e_i,\e_j)=\delta_{i,\bar{j}} \text{ for all  $1 \leq i\leq j \leq \bar{2}$}.
\] 
Let $F \subset E$ denote the $2n$-dimensional vector space with basis $\{ \e_2, \e_3,\cdots,\e_{2n+1} \}$.

If $1 \leq k \leq n+1$, the \emph{odd symplectic Grassmannian} $X:= \IG(k, E)$ parametrizes $k$-dimensional linear subspaces of $E$ which are isotropic with respect to $\omega$. The restriction of $\omega$ to $F$ is non-degenerate, hence we can see the odd symplectic Grassmannian as intermediate space
\begin{equation}\label{E:evenodd} 
    \IG(k-1,F) \subset \IG(k,E) \subset \IG(k,\wtE), 
\end{equation} 
sandwiched between two symplectic Grassmannians. This and the more general ``odd symplectic partial flag varieties'' have been studied by Mihai \cite{mihai:odd} and Pech \cite{pech:quantum}. In particular, Mihai showed that $\IG(k,E)$ is a smooth Schubert variety in $\IG(k,\wtE)$. 

There are two degenerate cases, namely, if $k=1$ then $\IG(1,E) = \bP(E)$ and if $k=n+1$ then $\IG(n+1,E)$ is isomorphic to the Lagrangian Grassmannian $\IG(n,F)$.

\subsection{The odd symplectic group}

Proctor's \emph{odd symplectic group} (see \cite{proctor:oddsymgrps}) is the subgroup of $\GL(E)$ which preserves the odd symplectic form $\omega$: 
\[ 
    \Sp_{2n+1}(E) := \{ g \in \GL(E) \mid \omega(g \cdot u, g \cdot v) = \omega(u,v), \forall u,v \in E \}. 
\] 

Let $\Sp_{2n}(F)$ and $\Sp_{2n+2}(\widetilde{E})$ denote the symplectic groups which respectively preserve the symplectic forms $\omega_{\mid F}$ and $\wto$. Then with respect to the decomposition $E = F \oplus \ker \omega$ the elements of the odd symplectic group $\Sp_{2n+1}(E)$ are matrices of the form 
\begin{equation*}
    \Sp_{2n+1}(E) = \left\{  \begin{pmatrix}
        \lambda & a  \\
        0 & S  \\
    \end{pmatrix} \mid \lambda \in \C^*, a \in \C^{2n}, S \in \Sp_{2n}(F) \right\}.
\end{equation*}

The symplectic group $\Sp_{2n}(F)$ embeds naturally into $\Sp_{2n+1}(E)$ by $\lambda = 1$ and $a = 0$, but $\Sp_{2n+1}(E)$ is \emph{not} a subgroup of $\Sp_{2n+2}(\wtE)$.\begin{footnote}{However, Gelfand and Zelevinsky \cite{gelfand.zelevinsky} defined another group $\widetilde{\Sp}_{2n+1}$ closely related to $\Sp_{2n+1}$ such that $\Sp_{2n} \subset \widetilde{\Sp}_{2n+1} \subset \Sp_{2n+2}$.}\end{footnote}~Mihai showed in \cite{mihai:odd}*{Prop 3.3} that there is a surjection $P \to \Sp_{2n+1}(E)$ where $P \subset \Sp_{2n+2}(\wtE)$ is the parabolic subgroup which preserves $\ker \omega$, and the map is given by restricting $g \mapsto g_{|E}$. Then the Borel subgroup $B_{2n+2} \subset \Sp_{2n+2}(\wtE)$ of upper triangular matrices restricts to the (Borel) subgroup $B \subset \Sp_{2n+1}(E)$. Similarly, the maximal torus
\[
    T_{2n+2} := \{ \diag(t_1,\cdots,t_{n+1},t_{n+1}^{-1},\cdots, t_1^{-1}): t_1,\cdots,t_{n+1} \in \C^* \} \subset B_{2n+2}
\]
restricts to the maximal torus 
\[ 
    T=\{ \diag(t_1,\cdots,t_{n+1},t_{n+1}^{-1},\cdots, t_{2}^{-1}): t_1,\cdots,t_{n+1} \in \C^* \} \subset B.
\]
Later on we will also require notation for subgroups of $\Sp_{2n}(F)$, viewed as a subgroup of $\Sp_{2n+1}(E)$. We denote by $B_{2n} \subset B$ the Borel subgroup of upper-triangular matrices in $\Sp_{2n}(F)$ and by $\Tsmall$ the maximal torus. We have
\[
    \Tsmall= \{ \diag(1,t_2,\cdots,t_{n+1},t_{n+1}^{-1},\cdots, t_{2}^{-1}): t_2,\cdots,t_{n+1} \in \C^* \} \subset B_{2n}.
\]
We also denote by $U_{2n}$, $U$, $U_{2n+2}$ the maximal unipotent subgroups.

For $1 \leq k \leq n$, Mihai showed that the odd symplectic group $\Sp_{2n+1}(E)$ acts on $X=\IG(k,E)$ with two orbits:
\begin{align*}
 \Xo &= \{ V \in \IG(k,E) \mid \e_1 \notin V\} \text{ the open orbit} \\
\Xc &= \{ V \in \IG(k,E) \mid \e_1 \in V\} \text{ the closed orbit}.
\end{align*}
The closed orbit $\Xc$ is isomorphic to $\IG(k-1,F)$. Indeed, since $F \cap \ker \omega = \{0\}$ it follows that $\omega$ restricts to a nondegenerate form on $F$. If $k=n+1$ then $\IG(n+1,E)= \Xc$ may be identified to the Lagrangian Grassmannian $\IG(n,F)$. 

From now on we will identify $F \subset E \subset \widetilde{E}$ to $\C^{2n} \subset \C^{2n+1} \subset \C^{2n+2}$ with the bases $\langle \e_2, \ldots , \e_{2n+1} \rangle \subset \langle \e_1 , \ldots , \e_{2n+1} \rangle \subset \langle \e_1, \ldots , \e_{2n+2} \rangle$ introduced in the previous paragraph. The corresponding isotropic Grassmannians will be denoted by $\IG(k-1, 2n) \subset \IG(k,2n+1) \subset \IG(k, 2n+2) $. Similarly $\Sp_{2n+1}(E)$ will be denoted by $\Sp_{2n+1}$ etc.

\subsection{The action of \texorpdfstring{$\Sp_{2n}$}{Sp2n} on \texorpdfstring{$\IG(k,2n+1)$}{the odd symplectic Grassmannian}}\label{s:Sp2naction}

The odd symplectic Grassmannian also has an action of the (semisimple) group $\Sp_{2n}$. This action is \emph{horospherical}, see~\cite{pasquier:horo}. This means that the odd symplectic Grassmannian has an open $\Sp_{2n}$-orbit, denoted by $U$ below, which is a torus bundle over a generalized flag variety. Indeed, under the $\Sp_{2n}$-action $X=\IG(k,2n+1)$ possesses three orbits
\begin{itemize}
    \item an open orbit $\U = \{V \in X \mid V \not\subset F, \e_1 \notin V \}$,isomorphic to a $(\C^*)^k$-bundle over $\IG(k,2n)$ ;
    \item a closed orbit, $\Y= \{ V \in X \mid V \subset F \}$, isomorphic to the symplectic Grassmannian $\IG(k,2n)$;
    \item the closed $\Sp_{2n+1}$-orbit $\Xc$.
\end{itemize}
Note that the reunion $\U \cup \Y$ is the open $\Sp_{2n+1}$-orbit $\Xo$.

Denote by $P_\Y$, $P_\Xc$ the parabolic subgroups of $\Sp_{2n}$ such that $\Y=\Sp_{2n}/P_\Y$, $\Xc=\Sp_{2n}/P_\Xc$, and by $W_\Y$, $W_\Xc$ the associated Weyl groups, which are subgroups of the Weyl group $\Wsmall$ of $\Sp_{2n}$. We also let $W^\Y$ and $W^\Xc$ be minimal coset representatives of $\Wsmall/W_\Y$ and $\Wsmall/W_\Xc$, respectively.

For later use we recall the construction of $\IG(k,2n+1)$ as a horospherical variety. Denote by $V_\Y$ and $V_\Xc$ the irreducible $\Sp_{2n}$-representations corresponding to the minimal projective embedding of $\Y$ and $\Xc$, respectively, and let $v_\Y$ and $v_\Xc$ be the corresponding highest weight vectors. Then 
\[
    \IG(k,2n+1)=\overline{\Sp_{2n} \cdot (v_\Y+v_\Xc)} \subset \bP(V_\Y \oplus V_\Xc)
\]
Let $\pi_\Xc : \Xt_\Xc \to X$ be the blow-up of $\Xc$ in $X$. It is obtained via base change from the blow-up of $\bP(V_\Xc)$ in $\bP(V_\Y\oplus V_\Xc)$. In particular there is a natural projection $p_\Y : \Xt_\Xc \to \Y$, obtained as the restriction of the projection of the former blow-up to $\bP(V_\Y)$. The projection $p_\Y : \Xt_\Xc \to \Y$ restricts to a projection $p_\Y : \Xo \to \Y$, which realises $\Xo$ as a vector bundle over $\Y=\IG(k,2n)$. This bundle is $\Sp_{2n}$-equivariant and thus it is obtained from a $P_\Y$-representation. We write $F_\Y$ for the corresponding locally free sheaf; it is the normal bundle to $\Y$ in $X$. The projection $p_\Y : \Xt_\Xc \to \Y$ realises $\Xt_\Xc$ as the projective bundle $\bP_\Y(F_\Y \oplus \cO_\Y)$. Write $E$ for the exceptional divisor, which is isomorphic to the incidence variety $\Sp_{2n}/(P_\Y \cap P_\Xc)$.  The maps $q_\Y$ and $q_\Xc$ from $E$ to $\Y$ and $\Xc$ are the natural projections from $E$ to $\Sp_{2n}/P_\Y$ and $\Sp_{2n}/P_\Xc$.

%Exchanging the roles of $\Y$ and $\Xc$ we denote by $\pi_\Y : \Xt_\Y \to X$ the blow-up of $\Y$ in $X$. In particular there is a natural projection $p_\Xc : \Xt_\Y \to \Xc$, which restricts to a projection $p_\Xc : \U \cup \Xc \to \Xc$. This projection realises $\U \cup \Xc$ as a $G$-equivariant vector bundle over $\Xc$, which is thus obtained from a $P_\Xc$-representation. We write $F_\Xc$ for the corresponding locally free sheaf; it is the normal bundle to $\Y$ in $X$. The projection $p_\Xc : \Xt_Y \to \Xc$ realises $\Xt_\Y$ as the projective bundle $\P_\Xc(F_\Xc \oplus \cO_Z)$. Write $E_\Y$ for the exceptional divisor.

We get the commutative diagram
\[
	\xymatrix{E \ar@{^(->}[r]^-{j_\Xc}  \ar@/^1.5pc/[rr]^-{q_\Y} \ar[d]_-{q_\Xc} & \Xt_\Xc \ar[r]^-{p_\Y} \ar[d]^-{\pi_\Xc} & \Y \\
  	\Xc \ar@{^(->}[r]^-{i_\Xc} & X & \\} 
	%\xymatrix{E_Y \ar@{^(->}[r]^-{j_Y}  \ar@/^1.5pc/[rr]^-q \ar[d]_-p & \Xt_Y \ar[r]^-{p_Z} \ar[d]^-{\pi_Y} & Z \\
	%Y \ar@{^(->}[r]^-{i_Y} & X & \\}
\]

\subsection{The Weyl group of \texorpdfstring{$\Sp_{2n+2}$}{the symplectic group} and odd symplectic minimal representatives}\label{s:weyl}

There are many possible ways to index the Schubert varieties of isotropic Grassmannians. Here we recall an indexation using
signed permutations. 

Consider the root system of type $C_{n+1}$ with positive roots
\[
    R^+ = \{ t_i \pm t_j \mid 1 \leq  i < j \leq n+1\} \cup \{ 2 t_i \mid 1 \leq  i \leq n+1 \}
\]
and the subset of simple roots 
\[
    \Delta = \{ \alpha_i := t_i-t_{i+1} \mid 1 \leq i \leq n \} \cup \{ \alpha_{n+1}:= 2t_{n+1} \}.
\]
The associated Weyl group $\Wlarge$ is the hyperoctahedral group consisting of \emph{signed permutations}, i.e. permutations $w$ of the elements $\{1, \cdots, n+1,\overline{n+1},\cdots,\overline{1}\}$ satisfying $w(\overline{i})=\overline{w(i)}$ for all $w \in W$. For $1 \leq i \leq n$ denote by $s_i$ the simple reflection corresponding to the root $t_i - t_{i+1}$ and $s_{n+1}$ the simple reflection of $2 t_{n+1}$. Each subset $ I:=\{ i_1 < \ldots < i_r \} \subset \{ 1, \ldots , n+1 \}$ determines a parabolic subgroup $P:=P_I \leq \Sp_{2n+2}(\wtE)$ with Weyl group $W_{P} = \langle s_i \mid i \neq i_j \rangle$ generated by reflections with indices \emph{not} in $I$. Let $\Delta_P:= \{ \alpha_{i_s} \mid i_s \notin \{ i_1, \ldots , i_r \} \}$ and $R_P^+ := \Span_\Z \Delta_P \cap R^+$; these are the positive roots of $P$. Let $\ell \colon W \to \mathbb{N}$ be the length function and denote by $W^{P}$ the set of minimal length representatives of the cosets in $W/W_{P}$. The length function descends to $W/W_P$ by $\ell(u W_P) = \ell(u')$ where $u' \in W^P$ is the minimal length representative for the coset $u W_P$. We have a natural ordering 
\[ 
    1 < 2 < \ldots < n+1 < \overline{n+1} < \ldots < \overline{1},
\]
which is consistent with our earlier notation $\overline{i} := 2n+3 - i$. Let $P=P_k$ to be the maximal parabolic obtained by excluding the reflection $s_k$. Then the minimal length representatives $\Wlarge^P$ have the form $(w(1)<w(2)<\cdots<w(k)| w(k+1)< \ldots < w(n+1) \le n+1)$ if $k < n+1$ and $(w(1)<w(2)<\cdots<w(n+1))$ if $k= n+1$. Since the last $n+1-k$ labels are determined from the first $k$ ones, we will identify an element in $\Wlarge^P$ with the sequence $(w(1)<w(2)<\cdots<w(k))$. %If $w \in W$ is any signed permutation, we denote by $\tilde{w} \in W^P$ the corresponding minimal length representative.

\begin{example}\label{Ex:minlengthrep}
The reflection $s_{t_1+t_2}$ is given by the signed permutation \[s_{t_1+t_2}(1)=\bar{2},  s_{t_1+t_2}(2)=\bar{1}, \mbox{ and }s_{t_1+t_2}(i)=i \mbox{ for all }3 \leq i \leq n+1.\] The minimal length representative of $s_{t_1+t_2}\Wlarge^P$ is $(3<4<\cdots<k<\bar{2}<\bar{1})$.
\end{example}

\subsection{Schubert Varieties in even and odd symplectic Grassmannians} 

Recall that the even symplectic Grassmannian $\Xev=\IG(k,2n+2)$ is a homogeneous space $\Sp_{2n+2}/P$, where $P=P_k$ is the parabolic subgroup generated by the simple reflections $s_i$ with $i \neq k$. For each $w \in \Wlarge^P$ let $\Xev(w)^\circ:=B_{2n+2} w B_{2n+2}/P$ be the \emph{Schubert cell}. This is isomorphic to the space $\C^{\ell(w)}$. Its closure $\Xev(w):=\overline{\Xev(w)^\circ}$ is the \emph{Schubert variety}. We might occasionally use the notation $\Xev(w \Wlarge_P)$ if we want to emphasize the corresponding coset, or if $w$ is not necessarily a minimal length representative. Recall that the Bruhat ordering can be equivalently described by $v \leq w$ if and only if $\Xev(v) \subset \Xev(w)$. Set 
\begin{equation}\label{E:w0} 
    w_0 = \begin{cases} 
            (\overline{2}, \overline{3}, \ldots , \overline{n+1}, 1) & \text{if $i_r < n+1$}; \\ 
            ( 1, \overline{2}, \overline{3}, \ldots , \overline{n+1}) & \text{ if $i_r = n+1$}; 
            \end{cases} 
\end{equation} 
this is an element in $\Wlarge$.
Recall that the odd symplectic Borel subgroup is $B= B_{2n+2} \cap \Sp_{2n+1}$. The following results were proved by Mihai \cite{mihai:odd}*{\S 4}. 

\begin{prop}\label{prop:schubert}
\begin{enumerate}[(a)]
    \item The natural embedding $\iota: X=\IG(k,2n+1) \hookrightarrow \Xev=\IG(k,2n+2)$ identifies $\IG(k,2n+1)$ with the (smooth) Schubert subvariety 
    \[ 
        \Xev(w_0 \Wlarge_P) \subset \IG(k,2n+2).
    \] 
    \item The Schubert cells (i.e. the $B_{2n+2}$-orbits) in $\Xev(w_0)$ coincide with the $B$-orbits in $\IG(k,2n+1)$. In particular, the $B$-orbits in $\IG(k,2n+1)$ are given by the Schubert cells $\Xev(w)^\circ \subset \IG(k,2n+2)$ such that $w \leq w_0$.
\end{enumerate}
\end{prop}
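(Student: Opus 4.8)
The plan is to pin down the image of $\iota$ as an explicit incidence locus, recognise that locus as a Schubert variety and read off $w_0$, and then deduce (b) by showing that the $B_{2n+2}$-action on the image is genuinely the odd symplectic $B$-action.

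\emph{Part (a).} First I would note that $E$ is intrinsically the $\wto$-orthogonal of the radical: since $\wto(\e_1,\e_j)=\delta_{1,\bar j}$ vanishes unless $j=\bar 1=2n+2$, we get $E=\langle\e_1,\dots,\e_{2n+1}\rangle=\e_1^{\perp}$, the orthogonal taken with respect to $\wto$. For $V\in\IG(k,\wtE)$ the two conditions ``$V\subseteq E$ and $\omega|_V=0$'' and ``$V\subseteq E$'' agree because $\omega=\wto|_E$, so $\iota$ is a closed immersion with image exactly
\[ \IG(k,E)\;=\;\{\,V\in\IG(k,\wtE)\mid V\subseteq\e_1^{\perp}\,\}. \]
As $B_{2n+2}$ is upper triangular it fixes $\langle\e_1\rangle$ and hence stabilises $\e_1^{\perp}=E$, so this locus is closed and $B_{2n+2}$-invariant. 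Since $\IG(k,E)$ carries a dense $\Sp_{2n+1}$-orbit it is irreducible, and a closed irreducible $B_{2n+2}$-stable subvariety of the flag variety $\IG(k,\wtE)$ is the closure of its dense $B_{2n+2}$-orbit, i.e. a single Schubert variety $\Xev(w_0\Wlarge_P)$. Smoothness of the image can then be obtained either from the known smoothness of the odd symplectic Grassmannian through the isomorphism $\iota$, or intrinsically by checking that the particular Schubert variety $\Xev(w_0)$ is smooth.

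\emph{Identifying $w_0$.} The representative $w_0$ is read off from the dense cell, which passes through the Bruhat-maximal $T$-fixed point contained in $E$. Using that the cell through $\langle\e_{a_1},\dots,\e_{a_k}\rangle$ with $a_1<\cdots<a_k$ has dimension $\sum_i(a_i-i)$, this maximal point is the isotropic coordinate subspace spanned by the largest admissible indices avoiding $\bar 1=2n+2$. For $k<n+1$ these are $\{\bar 2,\dots,\overline{k+1}\}$, giving $\langle\e_{\bar 2},\dots,\e_{\overline{k+1}}\rangle$, which is precisely the span of the first $k$ entries of $w_0=(\bar 2,\dots,\overline{n+1},1)$; the remaining entries only fix the coset modulo $\Wlarge_P$. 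For $k=n+1$ the radical forces $\e_1\in V$, and the maximal point $\langle\e_1,\e_{\bar 2},\dots,\e_{\overline{n+1}}\rangle$ matches $w_0=(1,\bar 2,\dots,\overline{n+1})$.

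\emph{Part (b).} The key observation is that on the image the $B_{2n+2}$-action factors through the odd symplectic group. Indeed $B_{2n+2}\subset P$ (it fixes $\langle\e_1\rangle=\ker\omega$), $P$ preserves $E=\e_1^{\perp}$, and for any $V\subseteq E$ one has $g\cdot V=(g|_E)\cdot V$; thus the action depends only on the image of $g$ under $P\to\Sp_{2n+1}$, which is $B$. Hence the $B_{2n+2}$-orbits and the $B$-orbits on $\IG(k,E)$ coincide as subsets. Combining this with the standard cell decomposition $\Xev(w_0)=\bigsqcup_{w\le w_0}\Xev(w)^{\circ}$ (Bruhat order in $\Wlarge/\Wlarge_P$) yields both claims: the Schubert cells inside $\Xev(w_0)$ are exactly the $B$-orbits of $\IG(k,2n+1)$, and they are indexed by $w\le w_0$.

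\emph{Main obstacle.} Once the embedding/horospherical picture is in place the conceptual steps are routine; the step demanding genuine care is the convention-sensitive identification of $w_0$, matching the geometric ``largest fixed point avoiding $\e_{\bar 1}$'' to the stated signed permutation and correctly separating the cases $k<n+1$ and $k=n+1$. A secondary point worth verifying explicitly is that the kernel of $P\to\Sp_{2n+1}$ acts trivially on the subspaces $V\subseteq E$, which is exactly what licenses replacing $B_{2n+2}$ by $B$ in (b).
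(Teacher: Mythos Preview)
The paper does not give its own proof of this proposition: it simply attributes the result to Mihai \cite{mihai:odd}*{\S 4}. Your argument is correct and is essentially the standard direct verification one would expect: identify $E=\e_1^{\perp}$, recognise $\{V\in\IG(k,\wtE)\mid V\subseteq E\}$ as a closed irreducible $B_{2n+2}$-stable subvariety, hence a Schubert variety, and pin down $w_0$ from the Bruhat-maximal $T$-fixed point avoiding $\e_{\bar 1}$; for (b), use the paper's stated surjection $B_{2n+2}\to B$ via $g\mapsto g_{|E}$ to match the two orbit decompositions.

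One small caveat: the formula $\sum_i(a_i-i)$ you quote for the cell dimension is the type~A (ordinary Grassmannian) formula and is not correct for the symplectic Grassmannian. This does not affect your argument, since you only need that the Bruhat order on $W^P$ is the componentwise order on the index sequences $(a_1<\cdots<a_k)$, so the maximal fixed point in $E$ is indeed given by the $k$ largest admissible indices avoiding $\bar 1$; but you should not cite the dimension formula as the reason.
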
 

To emphasize that we discuss Schubert cells or varieties in the odd symplectic case, for each $w \leq w_0$ such that $w \in \Wlarge^P$, we denote by $X(w)^\circ$, and $X(w)$, the Schubert cell, respectively the Schubert variety in $\IG(k,2n+1)$. The same Schubert variety $X(w)$, but regarded in the even symplectic Grassmannian is denoted by $\Xev(w)$. For further use we note that $\IG(k, 2n+1)$ has complex codimension $k$ in $\IG(k, 2n+2)$. Further, a Schubert variety $X(w)$ in $\IG(k, 2n+1)$ is included in the closed $\Sp_{2n+1}$-orbit $\Xc$ of if and only if it has a minimal length representative $w \leq w_0$ such that $w(1)=1$. 

Define the set $\Wodd:= \{ w \in \Wlarge \mid w \leq w_0 \}$ and call its elements \emph{odd symplectic permutations}. The set $\Wodd$ consists of permutations $w \in W$ such that $w(j) \neq \bar{1}$ for any $1 \leq j \leq n+1$ \cite{mihai:odd}*{Prop. 4.16}. We also introduce the subset $\Wo$ of odd symplectic permutations $w \in \Wodd$ such that $w(j) \neq 1$ for any $1 \leq j \leq k$. Permutations in $\Wo \cap W^P$ will index Schubert varieties of the open $\Sp_{2n+1}$-orbit $\Xo$.

Later on we will also require notation for the Schubert varieties of the closed $\Sp_{2n}$-orbits $\Y$ and $\Xc$ in $X=\IG(k,2n+1)$ introduced in Section~\ref{s:Sp2naction}. Namely, if $u \in W^\Y$ and $v \in W^\Xc$ we denote by $\Y(u)$ and $\Xc(v)$ the corresponding Schubert varieties of $\Y$ and $\Xc$, respectively. Recall the commutative  diagram from~\ref{s:Sp2naction}:
\[
	\xymatrix{E \ar@{^(->}[r]^-{j_\Xc}  \ar@/^1.5pc/[rr]^-{q_\Y} \ar[d]_-{q_\Xc} & \Xt_\Xc \ar[r]^-{p_\Y} \ar[d]^-{\pi_\Xc} & \Y \\
  	\Xc \ar@{^(->}[r]^-{i_\Xc} & X & \\} 
\]
and consider the varieties $\pi_\Xc(p_\Y^{-1}(\Y(u)))$ and $i_\Xc(\Xc(v))$. In~\cite{GPPS}*{Prop. 5.3}, it is shown that these varieties coincide with the Schubert basis $X(w)$ of $X$. Here we rephrase this identification in terms of the minimal length representatives introduced in Section~\ref{s:weyl}.

\begin{prop}\label{prop:YZ-permutations}
    The odd symplectic permutations $w \in \Wo \cap W^P$ indexing Schubert varieties of the open $\Sp_{2n+1}$-orbit $\Xo$ are in bijection with the elements $u \in W^\Y$ indexing Schubert varieties of the $\Y$-orbit. Explicitly, the bijection $\Phi \colon \Wo \cap W^P \to W^\Y$ is as follows. 
    
    Let $w=(a_1 < \dots <a_r < \bar a_k < \dots < \bar a_{r+1})$, then 
    \[
        \Phi(w) = (a_1-1 < \dots <a_r-1 < \overline{a_k-1} < \dots < \overline{a_{r+1}-1}).
    \]
    
    Similarly, the odd symplectic permutations $w \in \Wodd \cap W^P$ indexing Schubert varieties of the closed $\Sp_{2n+1}$-orbit $\Xo$, i.e., those with $w(1)=1$, are in bijection with the elements $v \in W^\Xc$ indexing Schubert varieties of the $\Xc$-orbit, and the bijection is as follows:
    \[
        \Phi_\Xc(w) = (a_2-1 < \dots <a_r-1 < \overline{a_k-1} < \dots < \overline{a_{r+1}-1}),
    \]
    where $w=(1<a_2<\dots <a_r< \overline{a_k} < \dots < \overline{a_{r+1}})$.
    
    In terms of the Schubert varieties themselves we get
    \begin{itemize}
        \item $X(w)=\pi_\Xc(p_\Y^{-1}(\Y(\Phi(w))))$ if $w \in \Wo \cap W^P$;
        \item $X(w)=i_\Xc(\Xc(\Phi_\Xc(w)))$ if $w \in \Wodd \cap W^P$ is such that $w(1)=1$.
    \end{itemize}
\end{prop}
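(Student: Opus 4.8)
The plan is to establish the bijection $\Phi$ by tracking what the minimal length representative data encodes geometrically, then verify that the map respects the Schubert variety identifications through the blow-up diagram. The key observation is that the cited result from~\cite{GPPS}*{Prop. 5.3} already guarantees that the varieties $\pi_\Xc(p_\Y^{-1}(\Y(u)))$ and $i_\Xc(\Xc(v))$ exhaust the Schubert basis of $X$; what remains is purely combinatorial bookkeeping to match the Weyl group labels. So the real content is translating the geometric bijection into the explicit index shifts $a_i \mapsto a_i - 1$.

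\emph{First} I would set up the dictionary between minimal length representatives in $\Wlarge^P$ (for $\Sp_{2n+2}$, indexing Schubert varieties of $X \subset \IG(k,2n+2)$) and those in $W^\Y$ (for $\Sp_{2n}$, indexing Schubert varieties of $\Y = \IG(k,2n)$). The numerical shift by $1$ reflects the identification of bases $\langle \e_2, \ldots, \e_{2n+1}\rangle \subset \langle \e_1, \ldots, \e_{2n+1}\rangle$: an index $a_i$ in the ambient space $\wtE = \C^{2n+2}$ (labels $1 < \cdots < n+1 < \overline{n+1} < \cdots < \overline{1}$) corresponds to the index $a_i - 1$ relative to $F = \C^{2n}$ after deleting the coordinate $\e_1$ (and symmetrically $\overline{a_i}$ maps to $\overline{a_i - 1}$). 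I would verify that the condition $w \in \Wo \cap W^P$, namely $w(j) \neq 1$ for $1 \leq j \leq k$ and $w(j) \neq \bar 1$ (from $w \leq w_0$), is exactly what ensures the shifted sequence lands in the valid index range $\{1, \ldots, n, \overline{n}, \ldots, \overline{1}\}$ for $\Sp_{2n}$, making $\Phi$ well-defined and invertible.

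\emph{Next} I would confirm the geometric statements in the two bullet points by chasing the diagram. For $w \in \Wo \cap W^P$, the $B$-orbit $X(w)^\circ$ meets the open $\Sp_{2n+1}$-orbit $\Xo$, which $p_\Y$ realizes as a vector bundle over $\Y$; the fiberwise structure means the Schubert variety pulls back cleanly, so $X(w) = \pi_\Xc(p_\Y^{-1}(\Y(\Phi(w))))$ should follow once the label $\Phi(w)$ is matched to the base-direction data of $w$. For the closed-orbit case $w(1) = 1$, the first coordinate is ``used up'' by $\e_1 \in V$, so one discards $a_1 = 1$ and shifts the remaining $a_2, \ldots, a_r, \overline{a_k}, \ldots, \overline{a_{r+1}}$ by the same rule; the identification $\Xc \cong \IG(k-1, F)$ explains why the resulting $\Phi_\Xc(w)$ has one fewer unbarred entry and indexes a Schubert variety in $\IG(k-1, 2n)$ via $i_\Xc$.

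\textbf{The hard part will be} checking that the index shift genuinely respects the Bruhat/length data on both sides, i.e.\ that $\Phi$ and $\Phi_\Xc$ are order-preserving bijections of \emph{minimal length} representatives rather than merely set bijections. One must confirm that applying the shift to a minimal-length representative of $\Wlarge^P$ (with the ordered-window form $(w(1) < \cdots < w(k))$) produces a minimal-length representative of $W^\Y$ or $W^\Xc$, and that lengths transform compatibly with the codimension-$k$ inclusion and the blow-up. I expect the cleanest route is to reduce the order-preservation to the already-established geometric containments from~\cite{GPPS}*{Prop. 5.3}: since $\Y(u) \subset \Y(u')$ iff $u \leq u'$ and pullback along the bundle map $p_\Y$ preserves inclusions, the combinatorial bijection inherits order-compatibility for free, so one need only pin down the explicit formula and its inverse. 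The potential subtlety is the boundary behavior when some $a_i$ equals $n+1$ or when the $w_0$ of~\eqref{E:w0} distinguishes the cases $i_r < n+1$ versus $i_r = n+1$; these edge cases (especially $k = n+1$) deserve a separate check.
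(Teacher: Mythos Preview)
Your proposal is correct and follows essentially the same approach as the paper: invoke \cite[Proposition 5.3]{GPPS} for the geometric content and then translate the indexing from $k$-strict partitions to Weyl group elements via the shift $a_i \mapsto a_i-1$. The paper's proof is in fact a single sentence deferring everything to that citation, so your elaboration on order-preservation, length compatibility, and edge cases, while not incorrect, goes beyond what the paper treats as necessary.
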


\begin{proof}
This result is a direct consequence of \cite[Proposition 5.3]{GPPS}, except that here we are indexing Schubert varieties with Weyl group elements instead of $k$-strict partitions.
\end{proof}

\subsection{The Hecke product}\label{s:hecke}
The Weyl group $\Wlarge$ admits a partial ordering $\leq$ given by the \emph{Bruhat order}. Its covering relations are given by $w < ws_\alpha$ where $\alpha \in R^+$ is a root and $\ell(w) < \ell(w s_\alpha)$. We will use the \emph{Hecke product} on the Weyl group $\Wlarge$. For a simple reflection $s_i$ the product is defined by 
\begin{align*}
    w \cdot s_i =   \begin{cases}
                        w s_i & \text{ if $\ell(ws_i)>\ell(w)$;} \\
                        w & \text{otherwise.}
                    \end{cases}
\end{align*}
The Hecke product gives $\Wlarge$ a structure of an associative monoid; see e.g.~\cite{buch.m:nbhds}*{\S 3} for more details. Given $u,v \in \Wlarge$, the product $uv$ is called \emph{reduced} if $\ell(uv)=\ell(u)+\ell(v)$, or, equivalently, if $uv = u \cdot v$. For any parabolic group $P$, the Hecke product determines a left action $W \times W/\Wlarge_P \to W/\Wlarge_{P}$ defined by 
\[
    u \cdot (w \Wlarge_{P}) = (u \cdot w) \Wlarge_P. 
\] 
We recall the following properties of the Hecke product (cf.~e.g. \cite{buch.m:nbhds}).

For later use we introduce a \emph{modified Hecke product} $\cdot _k$ on the indexing set $\Wo$ of Schubert varieties of the open orbit $\Xo$ of $X=\IG(k,2n+1)$. In this case, we want to multiply by the simple root $s_k$ whether or not it increases the length of the word. Namely
\begin{align*}
    w \cdot_k s_i =   \begin{cases}
                        w s_i & \text{ if $\ell(ws_i)>\ell(w)$ or $i=k$;} \\
                        w & \text{otherwise.}
                    \end{cases}
\end{align*}

Note that the modified Hecke product on $\Xo$ corresponds to the usual Hecke product on the closed $\Sp_{2n}$-orbit $\Y \subset \Xo$, as showed in the following statement. This fact will be useful in computing curve neighbourhoods of the Schubert varieties of the open orbit $\Xo$.

\begin{prop} \label{prop:bijY2W}
Recall the map $\Phi \colon \Wo \cap W^P \to W^\Y$ from Proposition~\ref{prop:YZ-permutations}. We define a map $\psi$ mapping the simple reflections in $W_{2n}$ to reflections in $W$ as follows:
\[
    \psi(s_i) = \begin{cases}
                    s_i & \text{for $1 \leq i \leq k-1$}, \\
                    s_k s_{k+1} s_k & \text{for $i=k$}, \\
                    s_{i+1} & \text{for $k+1 \leq i \leq n$}.
                \end{cases}
\]
Let $w=s_{i_1}s_{i_2} \ldots s_{i_r}.$ Then for any $v \in \Wo$,
\[ 
    \Phi(\left(v \cdot_k \psi(s_{i_1}) \ldots \psi(s_{i_r}) \right) W_P)=\left( \Phi(v W_P) \cdot w \right)W_Y. 
\]
\end{prop}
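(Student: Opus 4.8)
The plan is to prove the statement by induction on the length $r$ of the word $w=s_{i_1}\cdots s_{i_r}$, reducing everything to the case of a single generator $s_i$ of $\Wsmall$. Using associativity of the Hecke product in both $\Wlarge$ and $\Wsmall$, together with the compatibility of the (modified) products with passage to cosets set up in Subsection~\ref{s:hecke}, it suffices to establish the single-generator equivariance
\[
    \Phi\bigl((v \cdot_k \psi(s_i))\,W_P\bigr) = \bigl(\Phi(v W_P)\cdot s_i\bigr)\,W_Y
\]
for every $v$ whose coset $v W_P$ has minimal representative in $\Wo\cap W^P$, together with the assertion that the left-hand element again has minimal representative in $\Wo\cap W^P$. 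Granting this, one peels off $s_{i_r}$, rewrites the inner factor $v\cdot_k\psi(s_{i_1})\cdots\psi(s_{i_{r-1}})$ by the induction hypothesis, and recombines; the domain-preservation guarantees that each intermediate element is a legal input for $\Phi$, so the induction closes. Note that the peeling is done one \emph{generator} $s_{i_j}$ of $\Wsmall$ at a time (so that the composite $\psi(s_k)=s_ks_{k+1}s_k$ is treated as a single step), which is what keeps the intermediate elements inside $\Wo\cap W^P$.

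For the generators $s_i$ with $i\neq k$ the identity is immediate. If $1\le i\le k-1$ then $\psi(s_i)=s_i$, and if $k+1\le i\le n$ then $\psi(s_i)=s_{i+1}$; in both cases the reflection lies in $W_P=\langle s_j:j\neq k\rangle$, so $(v\cdot_k\psi(s_i))W_P=vW_P$ and the left side is $\Phi(vW_P)$. On the right, the corresponding reflection $s_i$ of $\Wsmall$ lies in $W_Y=\langle s_j:j\neq k\rangle$, so $(\Phi(vW_P)\cdot s_i)W_Y=\Phi(vW_P)$ as well. Thus both sides agree and the domain is trivially preserved.

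The entire content therefore lies in the case $i=k$, where $\psi(s_k)=s_ks_{k+1}s_k$ and the modified product $\cdot_k$ forces the two outer multiplications by $s_k$ regardless of length. The composite $s_ks_{k+1}s_k$ is itself a reflection of $\Wlarge$ whose right action exchanges the entries in positions $k$ and $k+2$ (or negates the entry in position $n$, when $k=n$ and the long root $s_{n+1}$ intervenes), while downstairs $s_k$ is exactly the Grassmannian-defining reflection omitted from $W_Y$. I would verify the identity directly on one-line notation: writing the minimal representative as $w=(a_1<\cdots<a_r<\bar a_k<\cdots<\bar a_{r+1})$, I would compute the effect of $v\cdot_k s_k\cdot_k s_{k+1}\cdot_k s_k$ on the entries in positions $k,k+1,k+2$, reduce the result to its $W_P$-minimal form (checking along the way that it still lies in $\Wo\cap W^P$), apply the shift $\Phi$ that subtracts $1$ from every label, and compare with the Hecke action of $s_k$ on the minimal representative $\Phi(vW_P)$ of a Schubert class in $\IG(k,2n)$. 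The two forced multiplications by $s_k$ are precisely what is needed to reproduce the behavior of $s_k$ on $\IG(k,2n)$ in the cases where the naive upstairs length would decrease.

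The main obstacle is exactly this $i=k$ computation, and within it the bookkeeping at the boundary between unbarred values ($\le n+1$) and barred values, interacting with the $-1$ shift built into $\Phi$. One must check, according to whether positions $k,k+1,k+2$ carry barred or unbarred entries (and separately when the long root is involved), that the forced composite $s_ks_{k+1}s_k$ modulo $W_P$ matches the single reflection $s_k$ modulo $W_Y$ under $\Phi$; this is where the modification in $\cdot_k$ is indispensable, since the ordinary Hecke product upstairs would fail to track the $\Sp_{2n}$-sweep of $\Y=\IG(k,2n)$ correctly. As a cross-check, and an alternative route avoiding the case analysis, one can use that $\Phi$ is induced by the $\Sp_{2n}$-equivariant projection $p_\Y\colon\Xo\to\Y$ of Subsection~\ref{s:Sp2naction}, under which a minimal-parabolic sweep upstairs projects to the corresponding sweep of $\Y$; the formula for $\psi$ and the modification $\cdot_k$ then encode exactly how the minimal parabolics of $\Sp_{2n}$, realized inside $\Sp_{2n+2}$, act relative to the fiber direction of $p_\Y$.
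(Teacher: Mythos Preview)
Your approach is exactly the paper's: induction on $r$, with the generators $s_i$ for $i\neq k$ handled by observing that both $\psi(s_i)\in W_P$ and $s_i\in W_Y$ fix the respective cosets, and the case $i=k$ done by an explicit one-line-notation computation. The only difference is that you leave the $i=k$ step as a plan and anticipate a multi-case analysis over barred/unbarred configurations; the paper dispatches it in a few lines by noting that for the minimal representative $\tilde v\in\Wo\cap W^P$ one automatically has $\tilde v(k+1)=1$ (the value $1$ is excluded from positions $1,\ldots,k$ by the definition of $\Wo$, and the tail $\tilde v(k+1)<\cdots<\tilde v(n+1)\le n+1$ is increasing), which collapses the whole computation to the single dichotomy $\tilde v(k)\gtrless\tilde v(k+2)$ and makes the domain-preservation check for the induction immediate.
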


\begin{proof}
We use induction on $r$, the number of reflections in the expression for $w$. Start with $v \in \Wo$ and $w=s_i$ with $i \neq k$. By definition of $\Phi$ we have $\Phi(vW_P) \in W^\Y$, hence $\ell(\Phi(vW_P)s_i)>\ell(\Phi(vW_P))$. We deduce that $\left( \Phi(vW_P) \cdot w \right)W_\Y = \Phi(vW_P) W_\Y$. On the other hand, since $i \neq k$ then $(v \cdot_k \psi(s_i))W_P=vW_P$, hence $\Phi(\left(v \cdot_k \psi(s_i) \right) W_P)=\Phi(vW_P) W_\Y$. 

Now assume $w=s_k$ and let $\tilde{v} \in \Wo \cap W^P$ be the minimal length representative of $v$. We have $\tilde v(k+1)=1$, and $\Phi(vW_P) = (\tilde v(1)-1< \dots< \tilde v(k)-1)$. Hence 
\[
    (\Phi(vW_P) \cdot s_k) W_\Y = \begin{cases}
                            (\tilde v(1)-1< \dots< \tilde v(k)-1) & \text{if $\tilde v(k)>\tilde v(k+2)$,} \\
                            (\tilde v(1)-1 < \dots < \tilde v(k-1)-1 < \tilde v(k+2)-1) & \text{otherwise.}
                        \end{cases}
\]
On the other hand
\[
    (v \cdot_k (s_ks_{k+1}s_k)) W_P = \begin{cases}
                                            (\tilde v(1)< \dots < \tilde v(k)) & \text{if $\tilde v(k)>\tilde v(k+2)$,} \\
                            (\tilde v(1)< \dots<\tilde v(k-1)<\tilde v(k+2)) & \text{otherwise.}
                                        \end{cases}
\]
Therefore in any case $\Phi(\left(v \cdot_k \psi(s_k) \right) W_P)=\left( \Phi(vW_P) \cdot s_k \right)W_Y$. This concludes the proof of the identity for $r=1$.

Now assume that $\Phi(\left(v \cdot_k \psi(s_{i_1}) \ldots \psi(s_{i_r}) \right) W_P)=\left( \Phi(v W_P) \cdot w \right)W_Y$ for any $w=s_{i_1}s_{i_2} \ldots s_{i_r}$ and consider $w'=ws_j$. Let $u:=v \cdot_k \psi(s_{i_1}) \ldots \psi(s_{i_r})$.
By definition of the modified Hecke product
\[
    \Phi(\left(v \cdot_k \psi(s_{i_1}) \ldots \psi(s_{i_r}) \psi(s_j) \right) W_P) = \Phi(\left(u \cdot_k \psi(s_j) \right) W_P) = \left( \Phi(u W_P) \cdot s_j \right)W_Y. 
\]
Here the second equality comes from applying the $r=1$ case. By induction we have $\Phi(u W_P) = (\Phi(vW_P) \cdot w) W_Y$,
hence 
\[
    \left( \Phi(u W_P) \cdot s_j \right)W_Y = (\Phi(vW_P) \cdot w) \cdot s_j W_Y = (\Phi(vW_P) \cdot w') W_Y,
\]
where the second equality comes from the definition of the Hecke product. This concludes the proof.
\end{proof}

We have a similar result for the closed $\Sp_{2n+1}$-orbit $\Xc$.

\begin{prop} \label{prop:bijZ}
Recall the map
\[
    \Phi_\Xc \colon \{ w \in \Wodd \cap W^P \mid w(1)=1 \} \to W^\Xc 
\]
from Proposition~\ref{prop:YZ-permutations}. Let $w=s_{i_1}s_{i_2} \ldots s_{i_r}$. Then for any $v \in \Wodd \cap W^P$ with $v(1)=1$,
\[ 
    \Phi_\Xc(\left(v \cdot s_{i_1+1} \ldots s_{i_r+1} \right) W_P)=\left( \Phi_\Xc(v W_P) \cdot w \right)W^\Xc. 
\]
\end{prop}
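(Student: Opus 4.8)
The plan is to mirror the proof of Proposition~\ref{prop:bijY2W} verbatim in structure, replacing the map $\psi$ by the clean index shift $s_i \mapsto s_{i+1}$ and the modified Hecke product $\cdot_k$ by the ordinary Hecke product $\cdot$. The reason this simplification occurs is that the closed orbit $\Xc \cong \IG(k-1,F)$ is a genuine symplectic Grassmannian inside $\IG(k,E)$ with $F = \langle \e_2,\dots,\e_{2n+1}\rangle$, so the coordinate relabeling $t_{j+1} \mapsto t'_j$ (with $t'_j$ the $C_n$-coordinates on $F$) identifies the simple reflection $s_{i+1}$ of the $C_{n+1}$-group $W$ with the simple reflection $s_i$ of the $C_n$-group $W_{2n}$, and identifies the parabolic $W_P$ (which omits $s_k$) with $W_\Xc$ (which omits $s_{k-1}$). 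In particular the boundary conditions match: $s_{i+1}\in W_P$ if and only if $i\neq k-1$ if and only if $s_i\in W_\Xc$. Here and below the cosets on the right are taken modulo the parabolic subgroup $W_\Xc$, exactly as $W_\Y$ appears in Proposition~\ref{prop:bijY2W}.

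First I would record the explicit form of $\Phi_\Xc$: writing the minimal representative of $v$ as $(1 < a_2 < \cdots < a_k)$, one deletes the entry $1$ in position $1$ and applies the order-preserving, bar-respecting shift by $-1$ to every value, so that $\Phi_\Xc(vW_P) = (a_2-1 < \cdots < a_k-1)$; concretely $\Phi_\Xc(v)(j) = v(j+1)\ominus 1$ for all admissible $j$, where $\ominus 1$ sends an unbarred $a$ to $a-1$ and a barred $\overline{a}$ to $\overline{a-1}$. Since $v(1)=1$ and every reflection we multiply by has the form $s_{i+1}$ with $i+1\ge 2$, position $1$ is never disturbed; hence each Hecke product again fixes the value $1$ in position $1$ and again lies in $\Wodd \cap W^P$, so we stay in the domain of $\Phi_\Xc$ throughout.

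The induction on the word length $r$ reduces everything to the case $r=1$, i.e.\ to the identity $\Phi_\Xc((v\cdot s_{i+1})W_P) = (\Phi_\Xc(vW_P)\cdot s_i)W_\Xc$. When $i\neq k-1$ the reflection $s_{i+1}$ lies in $W_P$ and $s_i$ lies in $W_\Xc$, so both Hecke products leave the respective cosets unchanged and both sides equal $\Phi_\Xc(v)W_\Xc$. The only substantive case is $i=k-1$ (for $1\le k\le n$ the reflection $s_k$ is then short), where right multiplication by $s_k$ swaps positions $k,k+1$: the Hecke product increases length precisely when $v(k)<v(k+1)$, in which case the new minimal representative is $(v(1)<\cdots<v(k-1)<v(k+1))$, and otherwise the coset is unchanged. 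On the $W_\Xc$ side, right multiplication by $s_{k-1}$ swaps $\Phi_\Xc(v)(k-1)=v(k)\ominus 1$ and $\Phi_\Xc(v)(k)=v(k+1)\ominus 1$, increasing length precisely when $v(k)<v(k+1)$ since the shift preserves order; applying $\Phi_\Xc$ to the new $W$-representative reproduces exactly the new $W_\Xc$-representative, so the two cases match termwise.

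For the inductive step I would set $u := v\cdot s_{i_1+1}\cdots s_{i_r+1}$, note $u\in\Wodd\cap W^P$ with $u(1)=1$ by the preservation remark, apply the $r=1$ identity to $u$ and $s_{j+1}$, then substitute the inductive hypothesis $\Phi_\Xc(uW_P)=(\Phi_\Xc(vW_P)\cdot w)W_\Xc$ and use associativity of the Hecke product together with $w'=w\cdot s_j$. The main obstacle is the $i=k-1$ subcase of the base step: one must check that computing the Hecke product directly in $C_{n+1}$ and computing it after shifting into $C_n$ land on the same coset, which comes down to verifying that the length-change criterion comparing $v(k)$ and $v(k+1)$ and the resulting sorted representatives are compatible with the bar-respecting shift $\ominus 1$.
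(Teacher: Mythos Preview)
Your approach mirrors the paper's inductive scheme, and your base case $i=k-1$ is in fact handled more completely than in the paper: you correctly split on whether $v(k)<v(k+1)$, whereas the paper asserts that both Hecke products are length-decreasing in this case, which is false (take $v=(1<2<\cdots<k)$).

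There is, however, a gap in the inductive step that the paper also glosses over. Your claim that $u := v\cdot s_{i_1+1}\cdots s_{i_r+1}$ lies in $W^P$ is not correct: with $k=3$, $n=4$, $v=\id$, and $(i_1,i_2)=(2,1)$ one gets $u=s_3s_2=(1,4,2,3,5)\notin W^P$. Worse, the $r=1$ identity genuinely fails for such $u$: here $\Phi_\Xc((u\cdot s_3)W_P)=(2<3)$ but $(\Phi_\Xc(uW_P)\cdot s_2)W_\Xc=(1<3)W_\Xc$, so one cannot simply ``apply the $r=1$ case to $u$''. The cleanest repair uses precisely the observation you make in your opening paragraph: the assignment $s_i\mapsto s_{i+1}$ extends to a length-preserving Coxeter embedding $\phi\colon W_{2n}\hookrightarrow W$ with image $\{u\in W:u(1)=1\}$ and with $\phi(W_\Xc)=W_P\cap\phi(W_{2n})$. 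Since $\phi$ is length-preserving it intertwines the two Hecke products, and since $\Phi_\Xc$ is just $\phi^{-1}$ on minimal representatives, the identity $\Phi_\Xc((v\cdot\phi(w))W_P)W_\Xc=(\phi^{-1}(v)\cdot w)W_\Xc$ for $v\in W^P$ with $v(1)=1$ follows in one line, with no induction needed.
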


\begin{proof}
We use induction on $r$, the number of reflections in the expression for $w$. 

Start with $v \in \Wodd \cap W^P$ with $v(1)=1$ and $w=s_i$ with $i \neq k-1$. By definition of $\Phi_\Xc$ we have $\Phi_\Xc(vW_P) \in W^\Xc$, hence $\ell(\Phi_\Xc(vW_P)s_i)>\ell(\Phi_\Xc(vW_P))$ since $i \neq k-1$. We deduce that $\left( \Phi_\Xc(vW_P) \cdot s_i \right)W_\Xc = \Phi_\Xc(vW_P) W_\Xc$. On the other hand, since $i+1 \neq k$ then $(v \cdot s_{i+1})W_P=vW_P$, hence $\Phi_\Xc(\left(v \cdot s_{i+1} \right) W_P)=\Phi_\Xc(vW_P) W_\Xc$. Now assume $w=s_{k-1}$, then $\ell(\Phi_\Xc(vW_P)s_{k-1})<\ell(\Phi_\Xc(vW_P))$, hence $\Phi_\Xc(vW_P) \cdot s_{k-1}=\Phi_\Xc(vW_P)$. But in that case we have $v \cdot s_k =v$, hence we still get $\Phi_\Xc(\left(v \cdot s_k \right) W_P)=\left( \Phi_\Xc(v W_P) \cdot s_{k-1} \right)W^\Xc$.

Finally, the proof of the induction step is similar to that in the proof of the induction step in Proposition~\ref{prop:bijY2W}.
\end{proof}

%Now assume that $\Phi_\Xc(\left(v \cdot s_{i_1+1} \ldots s_{i_r+1} \right) W_P)=\left( \Phi_\Xc(v W_P) \cdot w \right)W_\Xc$ for any $w=s_{i_1}s_{i_2} \ldots s_{i_r}$ and consider $w'=ws_j$. Let $u:=v \cdot s_{i_1+1} \ldots s_{i_r+1}$.

%By definition of the Hecke product
%\[
    %\Phi_\Xc(\left(v \cdot s_{i_1+1} \ldots s_{i_r+1} s_{j+1} \right) W_P) = \Phi_\Xc(\left(u \cdot s_{j+1} \right) W_P) = \left( \Phi_\Xc(u W_P) \cdot s_j \right)W_\Xc.
%\]
%Here the second equality comes from applying the $r=1$ case. By induction we have $\Phi_\Xc(u W_P) = (\Phi_\Xc(vW_P) \cdot w) W_\Xc$, hence 
%\[
    %\left( \Phi_\Xc(u W_P) \cdot s_j \right)W_\Xc = (\Phi_\Xc(vW_P) \cdot w) \cdot s_j W_\Xc = (\Phi_\Xc(vW_P) \cdot w') W_\Xc,
%\]
%where the second equality comes from the definition of the Hecke product. This concludes the proof.

We now introduce some special odd symplectic permutations, which will be used to define line neighborhoods in $X=\IG(k,2n+1)$, namely
\[
    \Oo(1) := s_1 \cdots s_{n+1} \cdots s_1(=s_{2t_1})
\] 
for the open $\Sp_{2n+1}$-orbit $\Xo$, and
\[
    \OY(1) := s_1s_2 \cdots s_{k-1}s_{k+1} \cdots s_{n+1} \cdots s_2s_1 \text{ and } \OZ(1) := s_2 \cdots s_{n+1} \cdots s_2
\]
for the closed $\Sp_{2n+1}$-orbit $\Xc$. The following Lemma follows from a direct calculation of the Hecke product. To simplify notation we will distinguish two cases, depending on whether $k<n+1$ or $k=n+1$.

\begin{lemma}\label{lem:comp}
Assume $k<n+1$ and let $w=(1<a_2 < \dots <a_r < \bar a_k < \dots < \bar a_{r+1}) \in \Wodd \cap W^P$ and $v=(b_1 < \dots <b_s < \bar b_k < \dots < \bar b_{s+1}) \in \Wo \cap W^P$, that is, $X(w) \subset \Xc$ and $X(w) \cap \Xo \neq \emptyset$. Then 
\begin{align*}
    w \cdot \OY(1) &=(w(2)<\cdots<\bar j_\Y<\cdots<w(k)), \\
    w \cdot \OZ(1) &= (1<w(3)<\cdots< \bar j_\Xc<\cdots<w(k)), \\
    v \cdot_k \Oo(1) &= (v(2)<v(3)<\cdots< \bar j^\circ<\cdots<v(k)),
\end{align*}
where 
\begin{align*}
    j_\Y &=\min \{2, \dots, n+1\} \setminus \{a_2,\dots,a_k\}, \\
    j_\Xc &= \min \{2, \dots, n+1\} \setminus \{ a_3,\dots,a_k\}, \\
    j^\circ &= \min \{2, \dots n+1\} \setminus \{ b_2,\dots, b_k\}.
\end{align*}
Note that $\bar j_\Y,\bar j_\Xc,\bar j^\circ$ are possibly smaller than $w(2),w(3),v(2)$ or larger than $w(k),v(k)$.

Assume $k=n+1$, in which case $X=\Xc \cong \IG(n,2n)$, and let $w=(1<a_2 < \dots < a_r< \bar a_{n+1} < \dots < \bar a_{r+1})$. Then 
\[
    w \cdot \OZ(1) = (1<w(3)<\cdots< \bar j_\Xc<\cdots<w(k)), 
\]
where $j_\Xc$ equals $a_2$ if $r \geq 2$ and $n+1$ otherwise,
and $\bar j_\Xc$ is possibly smaller than $w(3)$ or possibly larger than $w(k)$.
\end{lemma}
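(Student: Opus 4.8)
The plan is to compute each of the three products directly, by tracking its effect on the one-line notation and invoking the elementary rule for right Hecke multiplication by a simple reflection. Writing $w \in \Wodd \cap W^P$ as a signed permutation $[w(1), \dots, w(n+1)]$ (the barred positions being fixed by $w(\bar i)=\overline{w(i)}$), right multiplication $w \mapsto w s_i$ interchanges the values in positions $i$ and $i+1$ for $1 \le i \le n$, and negates the value in position $n+1$ for $i=n+1$; in the Hecke product $w \cdot s_i$ this operation is performed exactly when it raises the length, that is, when $w(i)<w(i+1)$ (respectively when $w(n+1)$ is unbarred), and the modified product $\cdot_k$ differs only in that the swap at position $k$ is always carried out. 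First I would record these rules, together with the remark that a minimal representative in $W^P$ is recovered by sorting the first $k$ values increasingly (the remaining entries then being the complementary unbarred values in increasing order); since $W_P$ only permutes within the first $k$ positions and within the tail, it therefore suffices to track the multiset of window values and sort at the very end.

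The three distinguished elements are palindromic ``sweeps'': $\OZ(1)=s_2\cdots s_{n+1}\cdots s_2$ and $\Oo(1)=s_1\cdots s_{n+1}\cdots s_1$ are reduced words for the reflections $s_{2t_2}$ and $s_{2t_1}$, while $\OY(1)=s_1\cdots s_{k-1}s_{k+1}\cdots s_{n+1}\cdots s_2 s_1$ is the same type of word with the single letter $s_k$ deleted on the ascent. I would prove the three identities by following the sweep letter by letter. The forward run bubbles values rightward and negates, at position $n+1$, the smallest unbarred value it is allowed to reach; the return run then carries the newly barred value leftward and, after re-sorting, deposits it in its correct window slot, while the value originally occupying the start position is expelled into the tail. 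For $\OZ(1)$ the start position is $2$ and $w(1)=1$ is never touched; for $\OY(1)$ the deleted $s_k$ splits the ascent, so that the value $1$ at position $1$ is the one expelled and the block of positions $\le k$ is bypassed; and for $\Oo(1)$ the forced swap $\cdot_k s_k$ is exactly what blocks the value $1$ (which, for $v \in \Wo$, sits in the tail) from reaching the negation slot. A short induction on the length of the partial word formalises the statement that the barred value produced is the minimum unbarred value reachable by the sweep.

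In each case the resulting barred index is the smallest element of $\{2,\dots,n+1\}$ not among the absolute values of the retained window entries. For $\OY(1)$ the isolation of the upper block makes this the minimum of the tail positions $k+1,\dots,n+1$, which is literally $\min\{2,\dots,n+1\}\setminus\{a_2,\dots,a_k\}=j_\Y$. For $\OZ(1)$ and $\Oo(1)$ the full sweep negates the minimum unbarred value in positions $2,\dots,n+1$, and this minimum is unchanged if one further discards the absolute values of the \emph{retained unbarred} window entries, since each of those exceeds the freed value $a_2$ (respectively $b_1$), which already lies in the admissible set; this is precisely why $j_\Xc=\min\{2,\dots,n+1\}\setminus\{a_3,\dots,a_k\}$ and $j^\circ=\min\{2,\dots,n+1\}\setminus\{b_2,\dots,b_k\}$. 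I expect this reconciliation of the two descriptions of the minimum---together with checking that the deposited barred value is allowed to fall before $w(2)$ or after $w(k)$ once the window is re-sorted (the parenthetical remark in the statement)---to be the main obstacle, since it requires care about which window entries are barred and about the boundary cases $r=1$ and $r=k$.

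Finally I would treat the degenerate case $k=n+1$ separately. Here $X=\Xc\cong\IG(n,2n)$, the window fills all of $\{1,\dots,n+1\}$ so there is no tail, and only the product with $\OZ(1)$ occurs. Running the same sweep, the forward run can only negate the value that has been brought to position $n+1$, which forces the barred index to be $a_2$ when $r\ge 2$ and $n+1$ when $r=1$ (the whole window beyond position $1$ being barred). This yields the stated formula and completes the argument.
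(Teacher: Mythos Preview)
Your approach—tracking the Hecke product letter by letter through the palindromic ``sweep'' words, identifying which value gets bubbled to position $n+1$ and negated, and then reading off the resulting window modulo $W_P$—is precisely the ``direct calculation of the Hecke product'' that the paper invokes without writing out. The paper gives no further argument, so your sketch is in fact considerably more detailed than what appears there; the mechanics you describe (the minimum unbarred value reaching the negation slot, the expelled value leaving the window, and the reconciliation showing that the resulting barred index agrees with the stated $j_\Y,j_\Xc,j^\circ$) are the right ones.
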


%\begin{enumerate}
    %\item We calculate $w \cdot \OZ(1)=(1<w(3)<\cdots< j_2<\cdots<w(k))$ where \[\displaystyle j_2=\max \left( \{n+1,\overline{n+1}, \cdots, \overline{2} \} \backslash \{\pm w(3),\cdots, \pm w(k)\} \right)\] and $j_2$ is possibly smaller than $w(3)$ or possibly larger than $w(k)$.
    %\item We calculate $w \cdot \OY(1)=(w(2)<w(3)<\cdots<j_1<\cdots<w(k))$ where \[\displaystyle j_1=\max \left( \{n+1,\overline{n+1}, \cdots, \overline{2} \} \backslash \{ \pm w(2),\cdots, \pm w(k)\} \right)\] and $j_1$ is possibly smaller than $w(2)$ or possibly larger than $w(k)$.
    %\item We have that $v \cdot_k \Oo(1)=(v(2)<v(3)<\cdots<j^\circ<\cdots<v(k))$ where \[\displaystyle j^\circ=\max \left( \{n+1,\overline{n+1}, \cdots, \overline{2} \} \backslash \{\pm v(2),\cdots, \pm v(k)\} \right)\] and $j^\circ$ is possibly smaller than $v(2)$ or possibly larger than $v(k)$.
%\end{enumerate}

\begin{example}\label{ex:hecke}
Let us illustrate the lemma when $k=4$ and $n=6$. Consider $w=(1<2 <\bar 6< \bar 3)$ and $v=(2<4<6<\bar 5)$. Then $j_\Y=4$ and $j_\Xc=j^\circ=2$, hence
\begin{align*}
    w \cdot \OY(1) &=(2<\bar 6<\bar 4<\bar 3), \\
    w \cdot \OZ(1) &= (1<\bar 6<\bar 3<\bar 2), \\
    v \cdot_k \Oo(1) &= (4<6<\bar 5<\bar 2).
\end{align*}
\end{example}

\section{The moment graph}\label{s:moment} 

Sometimes called the GKM graph, the \emph{moment graph} of a variety with an action of a torus $T$ has a vertex for each $T$-fixed point, and an edge for each $1$-dimensional torus orbit. The description of the moment graphs for flag manifolds is well known, and it can be found e.g in \cite{kumar:kacmoody}*{Ch. XII}. In this section we consider the moment graphs for $X=\IG(k, 2n+1) \subset \Xev = \IG(k, 2n+2)$. As before let $P = P_k \subset \Sp_{2n+2}$ be the maximal parabolic for $\Xev$. Recall that the minimal length representatives in $w \in W^{P}$ are in one to one correspondence to sequences $1 \leq w(1) < \ldots < w(k) \leq \bar{1}$, and that those corresponding to the odd symplectic Grassmannian satisfy in addition that $w(i) \leq \bar{2}$ for $1 \leq i \leq n+1$. 

\subsection{Moment graph structure of \texorpdfstring{$\IG(k,2n+2)$}{IG(k,2n+2)}}

The moment graph of $\Xev$ has a vertex for each $w \in W^P$, and an edge $w \rightarrow ws_\alpha$ for each \[ 
    \alpha \in R^+ \setminus R_{P}^+ = \{ t_i - t_j \mid 1 \leq i \leq k < j \leq n+1 \} \cup \{ t_i + t_j, 2 t_i \mid 1 \leq i <j \leq n+1, i \leq k  \}. 
\]
Geometrically, this edge corresponds to the unique torus-stable curve $C_\alpha(w)$ joining $w$ and $ws_\alpha$. The curve $C_\alpha(w)$ has degree $d$, where $\alpha^\vee + \Delta_P^\vee = d \alpha_k^\vee + \Delta_P^\vee$. 

\begin{defn}\label{def:moment-graph-combinat}
Define the following to describe moment graph combinatorics.
\begin{enumerate}
    \item First we will partition the positive roots into two sets.
    \begin{enumerate}
        \item $R^+_1=\{t_i \pm t_j:1 \leq i \leq k< j \leq n+1 \} \cup \{2t_i: 1 \leq i \leq k \};$ 
        \item $R^+_2=\{t_i+t_j: 1 \leq i <j \leq k \}$;
    \end{enumerate}
    \item Let $u \in W^P$ then define $\varphi(u)=\#\{u(i):u(i) \geq \overline{n+1}\}$; this is the number of `barred' elements in the symplectic permutation $u$;
    \item A \emph{chain of degree $d$} is a path in the (unoriented) moment graph where the sum of the edge degrees equals $d$. We will often use the notation $uW_P     \overset{d}\to vW_P$ to denote such a path.
\end{enumerate}
\end{defn}

The next proposition describes the degrees of edges in the moment graph.

\begin{prop}\label{prop:MGDisciption}
Consider the moment graph of $\IG(k,2n+2)$. Let $v,w \in W^P.$ There is an edge between $v$ and $w$ if $w=vs_\alpha$ for some $\alpha \in R^+$. 
\begin{enumerate}
\item The edge has degree $i \in \{1,2\}$ if $\alpha \in R_i^+;$
\item If $\alpha \in R_1^+$ then $\varphi(w) \leq \varphi(v)+1$;
\item If $\alpha \in R_2^+$ then $\varphi(w) \leq \varphi(v)+2$.
\end{enumerate}
\end{prop}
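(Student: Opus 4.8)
The plan is to establish (1) by a coroot computation and (2)--(3) by analysing the effect of right multiplication by $s_\alpha$ on the ``window'' of a minimal coset representative.

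For (1), recall from the text preceding Definition~\ref{def:moment-graph-combinat} that the edge $v \to vs_\alpha$ has degree $d$ characterised by $\alpha^\vee \equiv d\,\alpha_k^\vee \pmod{\Span_\Z \Delta_P^\vee}$. I would first describe the quotient of the coroot lattice by $\Delta_P^\vee$. Since $\alpha_i^\vee = t_i - t_{i+1}$ for $i \le n$, $\alpha_{n+1}^\vee = t_{n+1}$, and $\Delta_P^\vee = \{\alpha_i^\vee : i \ne k\}$, the relations $t_i \equiv t_{i+1}$ for $i \ne k$ together with $t_{n+1} \equiv 0$ (valid when $k \le n$) force $t_j \equiv 0$ for $j > k$ and $t_1 \equiv \dots \equiv t_k \equiv \alpha_k^\vee$. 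Reading off each type then gives the result: for $t_i \pm t_j$ with $i \le k < j$ and for $2t_i$ with $i \le k$ one finds $\alpha^\vee \equiv \alpha_k^\vee$ (degree $1$), while for $t_i + t_j$ with $i < j \le k$ one finds $\alpha^\vee \equiv 2\alpha_k^\vee$ (degree $2$). This is exactly the partition into $R_1^+$ and $R_2^+$, proving (1); the case $k = n+1$ is identical once one notes that $\alpha_{n+1}^\vee = t_{n+1}$ is now the omitted coroot.

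For (2) and (3) the key observation is that $\varphi$ is a coset invariant: writing $W_P = W_{A_{k-1}} \times W_{C_{n+1-k}}$, right multiplication by the first factor permutes the values $v(1), \dots, v(k)$ among themselves and by the second factor leaves them untouched, so $\varphi(vW_P)$ equals the number of barred elements of the set $S_v := \{v(1), \dots, v(k)\}$. I would therefore work with the minimal representative $v \in W^P$, for which $v(k+1), \dots, v(n+1)$ are all unbarred, and compute the window set $S_w$ for $w = vs_\alpha$ using the standard action of the reflections: $s_{t_i - t_j}$ swaps the entries in positions $i$ and $j$; $s_{t_i+t_j}$ puts $\overline{v(j)}, \overline{v(i)}$ in positions $i, j$; and $s_{2t_i}$ replaces $v(i)$ by $\overline{v(i)}$. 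Writing $\epsilon(a) = 1$ if $a$ is barred and $0$ otherwise, so that $\epsilon(a) + \epsilon(\overline a) = 1$, the computation is immediate. For $\alpha \in R_1^+$ precisely one window entry is modified --- either swapped with the unbarred outside value $v(j)$, or replaced by the barred value $\overline{v(j)}$, or sign-flipped to $\overline{v(i)}$ --- so $\varphi(w) - \varphi(v) \in \{-1, 0, 1\}$ and in particular $\varphi(w) \le \varphi(v) + 1$. For $\alpha \in R_2^+$ the two window entries $v(i), v(j)$ are replaced by $\overline{v(j)}, \overline{v(i)}$, whence $\varphi(w) - \varphi(v) = 2 - 2\bigl(\epsilon(v(i)) + \epsilon(v(j))\bigr) \in \{-2, 0, 2\}$, giving $\varphi(w) \le \varphi(v) + 2$.

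I expect the arithmetic to be entirely routine; the delicacy is conceptual. The step that must be handled cleanly is the reduction to a minimal representative: it is exactly the property that a minimal-length coset representative confines all barred values to positions $\le k$ that forces the $R_1^+$ bound to be $+1$ rather than $+2$. I would also take care to note that each reflection genuinely introduces a new value into the window --- for instance $\overline{v(i)}$ cannot already lie in $S_v$ because $v$ is a signed permutation --- so that the ``removal and insertion'' bookkeeping involves no cancellation or double counting. With these two points in place, the case-by-case verification is short.
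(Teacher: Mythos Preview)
Your proof is correct and follows essentially the same approach as the paper. For part~(1) the paper writes out explicit simple-root decompositions of each $\alpha$ and reads off the coefficient of $\alpha_k$, whereas you pass to the quotient of the coroot lattice by $\Delta_P^\vee$; these are the same computation packaged differently. For parts~(2) and~(3) the paper simply declares them ``clear''; your window analysis supplies the details they omit and is correct as written.
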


\begin{proof} The fact that an edge exists between $v$ and $w$ if $w=s_\alpha$ for some $\alpha \in R^+$ follows from the definition of the moment graph. 

Moreover, if $\alpha \in R_1^+$ then we are in one of the following three situations:
\begin{align*}
    t_i-t_j =& (t_i-t_{i+1}) + (t_{i+1}-t_{i+1}) + \cdots + \mathbf{(t_k-t_{k+1})} + \cdots + (t_{j-1}-t_j),\\
    t_i+t_j =& (t_i-t_{i+1}) + (t_{i+1}-t_{i+1}) + \cdots + \mathbf{(t_k-t_{k+1})} + \cdots + 2(t_{j}+t_{j+1}) \\
        &+ \cdots + 2(t_{n}-t_{n+1}) + 2t_{n+1},\\
    t_i =& (t_i-t_{i+1}) + \cdots + \mathbf{(t_{k}-t_{k+1})} + \cdots + (t_n-t_{n+1}) + t_{n+1}.
\end{align*}
In any case we see that the corresponding edge has degree $1$. On the other hand if $\alpha \in R_2^+$ then
\[
    (t_i-t_{i+1})+\cdots+2(t_j-t_{j-1})+\cdots+\mathbf{2(t_k-t_{k+1})}+\cdots+2(t_n-t_{n+1})+2t_{n+1},
\]
hence the corresponding edge has degree $2$. This proves Part (1) of the statement. Parts (2) and (3) are clear. 
\end{proof}

\begin{example}
    Assume $k=4$ and $n=6$ and consider the vertex indexed by $w=(1<2<\bar 6<\bar 3)$. Let $\alpha_1=t_1-t_5$, $\alpha_2=t_1+t_5$ and $\alpha_3=t_1+t_2$. The minimal length representatives of $ws_{\alpha_i}$ for $i=1,2,3$ are $u_1=(2<4<\bar 6 <\bar 3)$, $u_2=(2<\bar 6 <\bar 4<\bar 3)$, and $u_3=(\bar 6<\bar 3<\bar 2<\bar 1)$. The edges $w \to u_1$, $w \to u_2$ both have degree $1$ while the edge $w \to u_3$ has degree $2$. We also notice that $\phi(w)=2$, $\phi(u_1)=2$, $\phi(u_2)=3$, and $\phi(u_3)=4$, in agreement with Proposition~\ref{prop:MGDisciption}.
\end{example}

The next lemma gives a necessary condition on $u,v \in W^P$ for a chain of degree $d$ to exist from $u$ to $v$.

\begin{lemma}\label{lem:phi_degree}
Let $u,v \in W^P$ be connected by a degree $d$ chain
\[
    (u W_P \overset{d} \to vW_P) = (uW_P \to us_{\alpha_1} W_P \to \dots \to us_{\alpha_1}s_{\alpha_2}\ldots s_{\alpha_t}W_P)
\]
where $vW_P=us_{\alpha_1}s_{\alpha_2}\ldots s_{\alpha_t}W_P$ and the $\alpha_i$ are in $ R^+ \backslash R^+_P$. Then:
\begin{enumerate}
    \item $d=\#\{\alpha_i \in R^+_1\}+2 \cdot \#\{\alpha_i \in R^+_2\}$,
    \item $\varphi(v) \leq d+\varphi(u)$,
    \item if $u=id$ then at least $k-d$ elements of $\{v(1),v(2), \cdots, v(k) \}$ must be smaller than or equal to $k$.
\end{enumerate}
\end{lemma}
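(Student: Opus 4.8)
The plan is to handle the three assertions in order, reducing each to the single-edge data supplied by Proposition~\ref{prop:MGDisciption}. Part (1) is immediate: by Definition~\ref{def:moment-graph-combinat}(3) the degree of a chain is the sum of the degrees of its edges, and by Proposition~\ref{prop:MGDisciption}(1) the $i$-th edge has degree $1$ exactly when $\alpha_i \in R_1^+$ and degree $2$ exactly when $\alpha_i \in R_2^+$; summing gives $d = \#\{\alpha_i \in R_1^+\} + 2\#\{\alpha_i \in R_2^+\}$.

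For Part (2) I would telescope Proposition~\ref{prop:MGDisciption}(2)--(3) along the chain. Set $C_i := us_{\alpha_1}\cdots s_{\alpha_i}W_P$ and let $\delta_i \in \{1,2\}$ denote the degree of the $i$-th edge. Since $\varphi$ is a coset invariant, applying those two parts to the minimal length representative of $C_{i-1}$ yields $\varphi(C_i) \le \varphi(C_{i-1}) + \delta_i$. Chaining these inequalities and using Part~(1) in the form $\sum_i \delta_i = d$ produces $\varphi(v) \le \varphi(u) + d$.

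Part (3) carries the real content, and the first thing to notice is that Part (2) does not suffice: $\varphi$ counts only the \emph{barred} entries (those $\ge \overline{n+1}$), whereas the claim involves the coarser threshold $k$, and an entry can exceed $k$ while staying unbarred. I would therefore introduce the coset invariant
\[
    N(w) := \#\{\, x \in \{w(1),\dots,w(k)\} \mid x > k \,\},
\]
with $N(id)=0$ because $\{id(1),\dots,id(k)\}=\{1,\dots,k\}$. Since exactly $k-N(v)$ of the entries $v(1),\dots,v(k)$ are $\le k$, the assertion is equivalent to $N(v)\le d$, which I would again obtain by showing that an edge of degree $\delta$ raises $N$ by at most $\delta$ and summing along the chain as in Part~(2).

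This edge-wise estimate is the step that needs care and is the main obstacle. Working with the minimal length representative $w$ of the current coset (so $w(1)<\dots<w(k)$ and the complementary block $w(k+1)<\dots<w(n+1)$ consists of unbarred values $\le n+1$), I would run through the root types in $R_1^+\cup R_2^+$. The reflection $s_{t_i-t_j}$ with $i\le k<j$ replaces $w(i)$ by $w(j)\le n+1$; $s_{t_i+t_j}$ with $i\le k<j$ replaces $w(i)$ by $\overline{w(j)}\ge n+2>k$; and $s_{2t_i}$ with $i\le k$ swaps $w(i)$ for $\overline{w(i)}$. Each is a single substitution inside $\{w(1),\dots,w(k)\}$, so $N$ changes by at most $1$, matching the degree $1$ assigned by Proposition~\ref{prop:MGDisciption}(1). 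The only degree-$2$ type, $s_{t_i+t_j}$ with $i<j\le k$, replaces the two entries $w(i),w(j)$ by $\overline{w(i)},\overline{w(j)}$, so $N$ rises by at most $2$. The delicate bookkeeping is to confirm that in the degree-$1$ cases a single \emph{small} value ($\le k$) is evicted and replaced by at most one \emph{large} value; here the unbarred second block of the minimal length representative is exactly what forces $\overline{w(j)}>k$ in the $t_i+t_j$ case and rules out any hidden second eviction. Summing $N(C_i)\le N(C_{i-1})+\delta_i$ over the chain and using $\sum_i\delta_i=d$ gives $N(v)\le N(id)+d=d$, completing Part~(3).
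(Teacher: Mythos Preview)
Your proof is correct and follows essentially the same approach as the paper's: Parts (1) and (2) are handled identically, and for Part (3) both arguments track how many entries $\le k$ can be pushed above $k$ by an edge of a given degree. Your version is simply more explicit, introducing the invariant $N(w)$ and carrying out the root-by-root verification that the paper summarises in a single sentence.
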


\begin{proof}
For (1) just recall from Proposition \ref{prop:MGDisciption} that edges corresponding to $\alpha_i \in R^+_1$ have degree $1$ while edges corresponding to $\alpha_i \in R^+_2$ have degree $2$. For (2), use (1) and apply Proposition \ref{prop:MGDisciption} (2,3) to the chain. Finally, for (3) notice that all $k$ entries of $id=(1<2<\dots<k)$ are smaller than or equal to $k$, and that a degree $i$ edge ($i=1,2$) sends at most $i$ entries smaller than or equal to $k$ to entries larger than $k$.
\end{proof}

\subsection{Moment graph structure of \texorpdfstring{$\IG(k,2n+1)$}{IG(k,2n+1)}}
In this subection we will assume that $k<n+1.$ The moment graph of $X$ is the full subgraph of that of $\Xev$ determined by the vertices $w \in W^P \cap \Wodd$. Notice that the orbits of $T$ and $T_{2n+2}$ coincide, therefore we do not distinguish between the moment graphs for these tori. 

\begin{prop}\label{prop:T-fixed-YZ}
The $\Tsmall$-fixed points of $\IG(k,2n+1)$ are all contained in $\Y$ or $\Xc$.
\end{prop}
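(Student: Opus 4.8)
The plan is to identify the $T_{2n}$-fixed points of $\IG(k,2n+1)$ combinatorially and then show each such point lies in $\Y$ or $\Xc$. Recall that the full torus $T_{2n+2}$ (equivalently $T$, since the orbits coincide) acts on $\Xev = \IG(k,2n+2)$ with isolated fixed points indexed by the coordinate subspaces $V_w = \langle \e_{w(1)}, \dots, \e_{w(k)} \rangle$ for $w \in W^P$; the $T_{2n+2}$-fixed points of $X = \IG(k,2n+1)$ are exactly those with $w \in W^P \cap \Wodd$. The subtlety is that we are acting by the \emph{smaller} torus $\Tsmall$, which by definition fixes $\e_1$ (its first diagonal entry is $1$); so $\Tsmall$ may have positive-dimensional fixed loci, and I must argue that every $\Tsmall$-fixed point nonetheless sits inside $\Y \cup \Xc$.

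\textbf{First} I would record the orbit description from Section~\ref{s:Sp2naction}: a subspace $V \in X$ lies in $\Xc$ iff $\e_1 \in V$, lies in $\Y$ iff $V \subset F = \langle \e_2, \dots, \e_{2n+1}\rangle$, and lies in the open orbit $\U$ iff $V \not\subset F$ and $\e_1 \notin V$. Thus the claim is equivalent to: \emph{no $\Tsmall$-fixed point lies in $\U$}. So I would take $V \in \U$ and produce a nontrivial $\Tsmall$-orbit through $V$, i.e. show $V$ is not $\Tsmall$-fixed. \textbf{Next}, the key step is to exploit that $V \not\subset F$ but $\e_1 \notin V$: this means $V$ meets the hyperplane configuration nontrivially in a way that couples the $\e_1$-direction to some $\e_j$ with $j \geq 2$. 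Concretely, a vector $v \in V$ must have a nonzero $\e_1$-component (since $V \not\subset F$), and since $\e_1 \notin V$ that same vector must also have a nonzero $\e_j$-component for some $j \in \{2, \dots, 2n+1\}$. Because $\Tsmall$ acts trivially on $\e_1$ but with a nontrivial character on $\e_j$, rescaling the $\e_j$-coordinate relative to $\e_1$ moves the line $\langle v \rangle$, and hence moves $V$; this shows $V$ is not fixed.

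\textbf{The main obstacle} will be making the last argument fully rigorous when $V$ is $k$-dimensional rather than a line: I need to verify that the induced $\Tsmall$-action genuinely moves the whole subspace $V$ and not merely a chosen vector, and that the isotropy condition (isotropy with respect to $\omega$) is preserved along the orbit so the moving subspace stays inside $X$. The cleanest way to handle this is to diagonalize: since $\Tsmall$ is a torus acting linearly on $E$, any $\Tsmall$-fixed point $V$ must be spanned by $\Tsmall$-weight vectors, i.e. by a subset of the basis $\{\e_1, \dots, \e_{2n+1}\}$ up to combining $\e_1$ with other vectors in the \emph{same} weight space. The only degeneracy is that $\e_1$ has the same ($\Tsmall$-)weight as nothing else (its character is trivial, whereas each $\e_j$ for $j \geq 2$ has a nontrivial character $t_{|j|}^{\pm 1}$), so in fact $\Tsmall$ still acts with distinct weights on $\e_2, \dots, \e_{2n+1}$ and trivially only on $\e_1$. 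Hence a $\Tsmall$-fixed $V$ must be a coordinate subspace $\langle \e_{i_1}, \dots, \e_{i_k} \rangle$, and then either $\e_1$ is among the $\e_{i_j}$ (giving $V \in \Xc$) or it is not (giving $V \subset F$, i.e. $V \in \Y$). \textbf{Finally} I would conclude that $\U$ contains no $\Tsmall$-fixed point, completing the proof.
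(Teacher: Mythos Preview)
Your argument is correct and takes a different route from the paper's. The paper proceeds by a counting argument: it observes that the $\Tsmall$-fixed points of $\Y$ and $\Xc$ (which are the Schubert points of these $\Sp_{2n}$-homogeneous spaces) are also fixed by the larger torus $T_{2n+2}$, and that their number already equals the rank of $H^*(X)$, hence accounts for all $T_{2n+2}$-fixed points. You instead argue directly from the weight decomposition of $E$ under $\Tsmall$: since the characters on $\e_1,\e_2,\dots,\e_{2n+1}$ are pairwise distinct (trivial on $\e_1$, nontrivial and mutually distinct on the rest), every $\Tsmall$-stable subspace is a coordinate subspace, and the dichotomy ``$\e_1 \in V$'' versus ``$V \subset F$'' is then immediate. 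Your approach is more elementary---it avoids invoking the cohomology rank and the fixed-point count---and it also handles cleanly a point the paper leaves implicit, namely that the \emph{a priori} larger $\Tsmall$-fixed locus does not exceed the $T_{2n+2}$-fixed locus.
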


\begin{proof}
The $\Tsmall$-fixed points of $\Y$ and $\Xc$, which are both $\Sp_{2n}$-homogeneous spaces, are given by $u P_\Y$ for $u \in W^\Y$ and $v P_\Xc$ for $v \in W^\Xc$. Since both $\Y$ and $\Xc$ embed $\Sp_{2n}$-equivariantly into $X=\IG(k,2n+1)$, these points are also $\Tsmall$-fixed points of $X$.
    
To see that there are no other $\Tsmall$-fixed points in $X$, notice that all the $\Tsmall$-fixed points above are also fixed by $T$ and $T_{2n+2}$. But there are as many $T_{2n+2}$-fixed points in $X$ as the rank of its cohomology, and that is also the number of points of the form $u P_\Y$ for $u \in W^\Y$ and $v P_\Xc$ for $v \in W^\Xc$.
\end{proof}

\begin{lemma}
Let $1 \leq d \leq k$, and $v \in W^P \cap \Wodd$. If $\varphi(v)<d$ then there exists $u \in  W^P \cap \Wodd$ such that $u>v$ (for the Bruhat order) and $\varphi(u)=d$.
\end{lemma}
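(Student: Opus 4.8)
The plan is to translate everything into the combinatorics of the increasing sequences indexing $W^P$. Recall that an element $v \in W^P$ is identified with its first block $1 \le v(1) < v(2) < \cdots < v(k) \le \overline{1}$, that $v \in \Wodd$ exactly when no entry equals $\overline{1}$, that the Bruhat order restricts to componentwise domination ($v \le u \iff v(i) \le u(i)$ for all $i$), and that $\varphi(v)$ counts the entries $v(i) \ge \overline{n+1}$. Since the ordering places all barred letters above all unbarred ones and the sequence is increasing, the barred entries of $v$ are precisely its last $p := \varphi(v)$ entries; in particular $v(1), \dots, v(k-p)$ are unbarred, i.e.\ at most $n+1$. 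The goal is to build $u \in W^P \cap \Wodd$ with $u \ge v$ and exactly $d$ barred entries.

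The construction I propose keeps the first $k-d$ entries unchanged, $u(i) = v(i)$ for $1 \le i \le k-d$ (these are unbarred because $p < d$ forces $k-p > k-d$), and fills the top $d$ positions with barred letters. Two features make this work almost for free. First, every barred letter exceeds every unbarred letter, so at any position where $v$ is unbarred the inequality $u(i) \ge v(i)$ is automatic once $u(i)$ is barred. Second, there are enough barred letters available: the admissible barred absolute values are $\{2, 3, \dots, n+1\}$ (we must avoid $1$, since $\overline{1}$ is forbidden in $\Wodd$), which is $n$ values; removing the at most $k-d$ absolute values used by the retained entries $v(1), \dots, v(k-d)$ leaves at least $n-(k-d) = (n-k)+d \ge d$ choices, the inequality being exactly where the standing hypothesis $k < n+1$ (i.e.\ $k \le n$) is used.

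The one point that needs genuine argument is domination among the barred entries. Here the key observation is that the $p$ barred letters of $v$ themselves lie in the admissible pool, because their absolute values are distinct from those of $v(1), \dots, v(k-d)$. Selecting for $u$ the $d$ largest admissible barred letters and placing them increasingly in positions $k-d+1, \dots, k$, a standard extremal fact---the top $p$ elements of a set dominate, term by term, any $p$ of its elements---gives $u(i) \ge v(i)$ at the positions where $v$ is also barred. Combined with the free inequalities from the previous paragraph, this yields $u \ge v$; and $u \ne v$ because $\varphi(u) = d > p = \varphi(v)$, so $u > v$ strictly. By construction $u$ is an increasing sequence with distinct absolute values avoiding $\overline{1}$, hence $u \in W^P \cap \Wodd$ with $\varphi(u) = d$, as required.

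I expect the bookkeeping of absolute values to be the only real obstacle: one must simultaneously guarantee distinctness of absolute values, enough barred letters, and the componentwise inequality at the barred slots, and all three hinge on $k \le n$. An equivalent and perhaps more transparent route is to induct on $\varphi$, raising it by one at each step either by flipping an unbarred letter $a \ge 2$ to its barred partner $\overline{a}$ (which preserves absolute values and strictly increases the entry) or, in the degenerate case $v = (1 < \text{barred} < \cdots < \text{barred})$ with $d = k$, by replacing the entry $1$ with a barred letter on a fresh absolute value; the existence of such a fresh value is again guaranteed by $n-(k-1) \ge 1$. Either way the hypothesis $k < n+1$ is what makes the barred alphabet large enough to accommodate $d$ barred entries above $v$.
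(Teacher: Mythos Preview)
Your proof is correct. Both you and the paper keep the first $k-d$ entries of $v$ unchanged and fill the remaining $d$ positions with barred letters, but you populate those positions differently. The paper reuses the absolute values already present in $v$, setting
\[
u=(a_1<\dots<a_{k-d}<\bar a_k<\dots<\bar a_{k-d+1}),
\]
where $a_1<\dots<a_k$ are the absolute values of the entries of $v$; the componentwise comparison $u\ge v$ is then immediate from $a_1<\dots<a_k$, with no need for an extremal argument. Your construction instead draws the barred entries from the pool $\{2,\dots,n+1\}\setminus\{v(1),\dots,v(k-d)\}$ and selects its $d$ smallest elements (equivalently, the $d$ largest admissible barred letters). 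This requires the extra bookkeeping you carry out---checking that the barred absolute values of $v$ lie in the pool and invoking the term-by-term domination of the $p$ smallest elements---but it buys you something: it covers the edge case $d=k$ with $v(1)=1$. There the paper's $u$ would have last entry $\bar a_1=\bar 1$ and fall outside $\Wodd$, whereas your construction explicitly excludes absolute value $1$ from the pool. So the paper's argument is the more economical one when it applies, while yours is uniformly valid across the full stated range $1\le d\le k$.
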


\begin{proof}
Our assumptions imply that $v=(a_1<\dots<a_r<\bar a_k<\dots<\bar a_{r+1})$ with $r>k-d$. The element $u=(a_1<\dots<a_{k-d}<\bar a_k < \dots <\bar a_{k-d+1})$ clearly satisfies $\varphi(u)=d$, and it is larger than $v$ for the Bruhat order since $r>k-d$ and $a_1 < \dots <a_k$.
\end{proof}

\begin{prop}\label{prop:open-orbit-GKM}
The function $\Phi \colon \Wo \cap W^P \to W^\Y$ from Proposition~\ref{prop:YZ-permutations} induces a moment graph isomorphism between:
\begin{itemize}
    \item the full subgraph $MG_{\Xo}$ of the moment graph of $\IG(k,2n+1)$ induced by the $T$-fixed points contained in $\Xo$,
    \item the moment graph $MG_\Y$ of $\Y=\IG(k,2n)$.
\end{itemize}
The graph isomorphism is in the sense that the graphs are labeled with 1's and 2's instead of coroots.
\end{prop}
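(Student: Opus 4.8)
The plan is to check the three things that make $\Phi$ a moment graph isomorphism: a bijection on vertices ($T$-fixed points), a bijection on edges (one-dimensional torus orbits), and equality of the degree labels $1,2$ that the two graphs carry on their edges.

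For vertices, I would first pin down the vertex set of $MG_{\Xo}$. By Proposition~\ref{prop:T-fixed-YZ} every $T$-fixed point of $X$ lies in $\Y$ or $\Xc$, and such a point $w\in W^P\cap\Wodd$ lies in $\Xc$ exactly when $w(1)=1$; since the orbits are disjoint, the $T$-fixed points contained in $\Xo$ are precisely those with $w(1)\neq1$, that is, those indexed by $\Wo\cap W^P$. Proposition~\ref{prop:YZ-permutations} then says $\Phi$ is a bijection of this set onto $W^\Y$, the vertex set of $MG_\Y$. For the rest of the argument it is cleanest to record a vertex by its value-set: a vertex of $MG_{\Xo}$ is an isotropic $k$-subset of $\{2,\dots,n+1,\overline{n+1},\dots,\bar2\}$, a vertex of $MG_\Y$ an isotropic $k$-subset of $\{1,\dots,n,\bar n,\dots,\bar1\}$, and $\Phi$ is the shift $a\mapsto a-1$, $\bar a\mapsto\overline{a-1}$, which is visibly a bijection between them.

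For edges I would argue root by root. An edge of the ambient moment graph of $\IG(k,2n+2)$ out of $v$ comes from a root $\alpha\in R^+\setminus R^+_P$ of $C_{n+1}$ and, on value-sets, either replaces one value by another (for $t_i-t_j$), by a bar of another (for $t_i+t_j$), or by its own bar (for $2t_i$). Using that $v(k+1)=1$ for every $v\in\Wo\cap W^P$, I would single out which edges stay inside $\Xo$: a root $t_i-t_{k+1}$ pulls the value $1$ into the set and lands in $\Xc$, a root $t_i+t_{k+1}$ introduces $\bar1$ and leaves $\IG(k,2n+1)$ altogether, while every other root keeps both endpoints in $\Wo\cap W^P$. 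The surviving roots are then matched with the roots of $C_n$ governing $MG_\Y$ by the index correspondence that deletes the distinguished position $k+1$ (positions $\le k$ are fixed, positions $\ge k+2$ drop by one); this is exactly the correspondence underlying the map $\psi$ of Proposition~\ref{prop:bijY2W}. A short count shows it is a bijection of root sets, and because $\Phi$ only shifts values while the matched roots were chosen to perform the shifted value-operation, one gets $\Phi(v s_\alpha)=\Phi(v)\,s_\beta$ directly.

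Finally, for the labels I would compute the degree of each surviving $C_{n+1}$-root relative to node $k$ and compare with that of its $C_n$-partner relative to node $k$ of $\Y$: the roots $t_i\pm t_j$ with $j\ge k+2$ and $2t_i$ have degree $1$ and match the degree-$1$ roots $u_i\pm u_{j-1}$, $2u_i$, while $t_i+t_j$ with $i<j\le k$ has degree $2$ and matches $u_i+u_j$. A purely geometric route via the bundle $p_\Y\colon\Xo\to\Y$ would give the edge bijection quickly but not the labels, since the two degree notions are defined through different ambient Grassmannians; this is why I would do the edge and degree steps combinatorially. I expect the main obstacle to be the bookkeeping around the distinguished position $k+1$: one must show cleanly that the edges suppressed in passing to the open-orbit subgraph are exactly the roots with no $C_n$-counterpart, and that the one ``long'' correspondence $u_k-u_{k+1}\leftrightarrow t_k-t_{k+2}$ (reflecting $\psi(s_k)=s_ks_{k+1}s_k$) still preserves the degree. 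Everything else reduces to the shift $a\mapsto a-1$.
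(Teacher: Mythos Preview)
Your proposal is correct and follows essentially the same three-step structure as the paper's proof: vertices via the bijection $\Phi$ of Proposition~\ref{prop:YZ-permutations}, edges via a root-by-root matching, and degree preservation via the case split of Proposition~\ref{prop:MGDisciption}. The only notable difference is that you are more explicit than the paper about \emph{which} edges out of a vertex $v\in\Wo\cap W^P$ fail to stay in $\Xo$ (namely those coming from $t_i\pm t_{k+1}$, using $v(k+1)=1$); the paper simply declares the map $C(w,v)\mapsto C(\Phi(w),\Phi(v))$ to be ``clearly a bijection'' without isolating these excluded roots. Your aside linking the correspondence $u_k-u_{k+1}\leftrightarrow t_k-t_{k+2}$ to $\psi(s_k)=s_ks_{k+1}s_k$ is accurate but not needed for the argument itself---the moment-graph matching works at the level of (not necessarily simple) reflections, and the degree check for this edge is no different from the other $t_i-t_j$ cases with $j\geq k+2$.
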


\begin{proof}
Recall that the $T$-fixed points in the open $\Sp_{2n+1}$-orbit $\Xo$ are indexed by the elements of $\Wo \cap W^P$. From Proposition~\ref{prop:YZ-permutations} we know that $\Phi$ is bijective, therefore the vertices of $MG_{\Xo}$ and $MG_\Y$ are in bijection.

Moreover, the edges of $MG_{\Xo}$ correspond to the $T$-stable curves in $\Xo$,
\[
    E^{\circ} = \{ C(w,v) \mid w,v \in \Wo \cap W^P, v=ws_{\alpha} \text{ for some $\alpha \in R^+$} \}.
\] 
Likewise, the edges of $MG_\Y$ correspond to the $T$-stable curves in $\IG(k,2n)$: 
\[
    E_\Y= \{ C(w,v) \mid w,v \in W^\Y, v=ws_{\alpha} \text{ for some $\alpha \in R_{2n}^+$} \}.
\]
The map $\Phi_E \colon E^{\circ} \to E$ given by
\[
    \Phi_E(C(w,v))=C(\Phi(w),\Phi(v)),
\]
is clearly a bijection between both edge sets.

To conclude we show that the edge degrees are preserved. Let $C(w,v) \in E^\circ$. Then $v=ws_\alpha$ for some $\alpha\in R^+$, and we have:
\begin{enumerate}
    \item $v=ws_{t_i \pm t_j}$ if and only if $\Phi(v)=\Phi(w)s_{t_i \pm t_{j-1}}$ for $1 \leq i \leq k$ and $k+1 \leq j \leq n+1$;
    \item $v=ws_{2t_i}$ if and only if $\Phi(v)=\Phi(w)s_{2t_i}$ for $1 \leq i \leq k$;
    \item $v=ws_{t_i+t_j}$ if and only if $\Phi(v)=\Phi(w)s_{t_i+t_{j}}$ for $1 \leq i < j \leq k$.
\end{enumerate}
Thus by Proposition \ref{prop:MGDisciption}, the degree $\deg C(u,v)$ in $\Xo$ is equal to the degree of $C(\Phi(u),\Phi(v))$ in $\Y$.
\end{proof}

%\begin{remark}
%Note that the open orbit $X^\circ$ is isomorphic to the total space of the dual tautological bundle on $\IG(k,2n)$. Moreover, this isomorphism is $\Sp_{2n}(F)$-equivariant. Therefore it induces a bijection on the level of $T_{2n}$-fixed points and $T_{2n}$-fixed curves, where 
%\[
%T_{2n} = \{ diag(t_2,\dots,t_{n+1},t_{n+1}^{-1},\dots,t_2^{-1} )\colon t_2,\dots,t_{n+1} \in \C^* \}
%\]
%is the maximal torus of $\Sp_{2n}(F)$. Moreover, it is easy to see that any $T_{2n}$-fixed point of $X^\circ$ is also fixed by $T$. This should recover the result of Proposition~\ref{prop:open-orbit-GKM}.
%\end{remark}
The next Theorem gives a description of the $T$-fixed curves in the moment graph that has a $T$-fixed point in the closed orbit and another $T$-fixed in the open orbit. We reserve its proof until Subsection~\ref{subs:proofofthm:ZtoYlines} as it requires combinatorial objects which are yet to be defined.

\begin{thm} \label{thm:ZtoYlines}
Let $w,v \in W^P \cap \Wodd$ where $v=w s_\alpha$ for some root $\alpha$, $X(w) \subset \Xc$, and $X(v) \cap \Xo \neq \emptyset$. Then 
\begin{enumerate}
    \item The $T$-stable curve $C(w,v)$ from $w$ to $v$ in the moment graph has degree 1;
    \item The inequality $\ell(v) >\ell(w)$ holds.
\end{enumerate}
\end{thm}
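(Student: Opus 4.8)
The plan is to read the edge $C(w,v)$ off the action of $s_\alpha$ on the underlying isotropic subsets, deduce the degree from which family of roots $\alpha$ can belong to, and then obtain the length inequality from the behaviour of $\ell$ under right multiplication by $s_\alpha$. First I would fix notation: identify each $u\in W^P$ with the isotropic $k$-subset $S_u=\{u(1)<\dots<u(k)\}\subset\{1<\dots<n+1<\overline{n+1}<\dots<\bar1\}$, so that the coset $uW_P$, and hence the $T$-fixed point, is determined by $S_u$, and the set underlying $v$ is the set of first-block values of $ws_\alpha$. Since $C(w,v)$ is an edge of the moment graph I may assume $\alpha\in R^+\setminus R^+_P=R_1^+\sqcup R_2^+$. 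The hypotheses translate into membership conditions: $X(w)\subset\Xc$ together with $w\in\Wodd$ give $1\in S_w$ and $\bar1\notin S_w$, while $X(v)\cap\Xo\neq\emptyset$ (that is $v\in\Wo$) together with $v\in\Wodd$ give $1\notin S_v$ and $\bar1\notin S_v$. In particular $1=\min S_w=w(1)$.

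For part (1) I would go through the four families in $R^+\setminus R^+_P$ and record how $s_\alpha$ changes $S_w$: a root $2t_i$ with $i\le k$ replaces $w(i)$ by $\overline{w(i)}$; a root $t_i+t_j$ with $i<j\le k$ (these are the elements of $R_2^+$) replaces $\{w(i),w(j)\}$ by $\{\overline{w(i)},\overline{w(j)}\}$; a root $t_i-t_j$ with $i\le k<j$ replaces $w(i)$ by the unbarred value $w(j)\le n+1$; and a root $t_i+t_j$ with $i\le k<j$ replaces $w(i)$ by $\overline{w(j)}$. Since $1$ must leave $S_w$ and $1=w(1)$ is its least element, in each family the relevant index must be $i=1$. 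In the first two families this forces $\overline{w(1)}=\bar1$ into $S_v$, contradicting $\bar1\notin S_v$; hence those cases cannot occur. Therefore $\alpha$ belongs to one of the last two families, both contained in $R_1^+$, and Proposition~\ref{prop:MGDisciption}(1) yields $\deg C(w,v)=1$.

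For part (2) I would use that in the two surviving cases $i=1$ and $m:=w(j)$ is unbarred with $2\le m\le n+1$ (because $j>k$ and $w(j)\neq w(1)=1$). Since $w(t_1)=t_1$ and $w(t_j)=t_m$, one has $w(\alpha)=t_1-t_m$ or $w(\alpha)=t_1+t_m$, which is a positive root in either case, so $\ell(ws_\alpha)>\ell(w)$ in $W$. Because taking minimal coset representatives is order-preserving for the Bruhat order, and $\alpha\notin R_P$ so that $wW_P\neq ws_\alpha W_P$, it follows that $w=[w]<[ws_\alpha]=v$, and therefore $\ell(v)>\ell(w)$.

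The step I expect to be most delicate is the bookkeeping in part (1): one must translate each reflection into its effect on the isotropic subset correctly and then invoke \emph{both} the open-orbit condition $1\notin S_v$ and the odd-symplectic condition $\bar1\notin S_v$ to eliminate the degree-$2$ roots of $R_2^+$ (and the root $2t_1$). A secondary subtlety in part (2) is that $v$ is merely the minimal representative of $ws_\alpha W_P$ and need not equal $ws_\alpha$, so the length may increase by more than one; this is exactly why I compare $w$ and $v$ through the order-preserving projection $W\to W^P$ rather than through $\ell(ws_\alpha)$ directly.
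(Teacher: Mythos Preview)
Your proof is correct. For part (1) it follows essentially the same path as the paper: both arguments observe $w(1)=1$, rule out $\alpha=2t_1$ and $\alpha=t_1+t_j$ with $j\le k$ because either would force $\bar 1$ into the first $k$ values of $v$, and conclude that $\alpha=t_1\pm t_j$ with $k<j\le n+1$, hence $\alpha\in R_1^+$ and the edge has degree~$1$. Your treatment is simply a more explicit case enumeration of the same idea.

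For part (2), however, your route is genuinely different and considerably shorter. The paper translates $w$ and $v$ into their $\BKT$-partitions $\alpha,\beta$ and proves $|\alpha|\ge|\beta|$ by a direct (and somewhat involved) comparison of the parts, splitting into the cases where the new entry $s$ lands in the middle of the word versus at the end, and further into $s\ge k$ versus $s<k$. You bypass all of this by noting that with $w(1)=1$ and $w(j)=m\in\{2,\dots,n+1\}$ one has $w(\alpha)\in\{t_1-t_m,\,t_1+t_m\}\subset R^+$, so $w<ws_\alpha$ in $W$; then the standard fact that the projection $W\to W^P$ to minimal coset representatives is Bruhat order--preserving, together with $s_\alpha\notin W_P$ (since $\alpha\notin R_P$), gives $w<v$ in $W^P$ and hence $\ell(v)>\ell(w)$. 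This Coxeter-theoretic argument is cleaner and avoids the partition bookkeeping entirely; the paper's approach has the minor advantage of yielding an explicit lower bound $|\alpha|-|\beta|\ge s-k$ on the length difference, which your argument does not produce but which is not needed for the statement.
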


The next proposition uses Lemma~\ref{lem:comp} to prove the ``square" property. It will later be interpreted in the following way. If $X(w) \subset \Xc$ is a Schubert variety of the closed $\Sp_{2n+1}$-orbit, then its line neighborhood generally consists of two connected components, $X(w \cdot \OY(1))$, which intersects $\Xo$, and $X(w \cdot \OZ(1))$, which is contained in $\Xc$. Now the proposition says that the line neighborhood of $X(w \cdot \OY(1))$ coincides with the connected component of the line neighborhood of $X(w \cdot \OZ(1))$ which intersects $\Xo$.

\begin{prop} \label{prop:square}
Let $w \in \Wodd \cap W^P$ be such that $X(w) \subset \Xc$. Then 
\[
    \left( \left(w \cdot \OZ(1) W_P\right) \cdot \OY(1) \right) W_P = \left(\left(w \cdot \OY(1) W_P\right) \cdot_k \Oo(1) \right)W_P.
\]
\end{prop}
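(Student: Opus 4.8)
The plan is to reduce everything to Lemma~\ref{lem:comp} and then verify a small combinatorial identity about which ``barred'' index is inserted at each step. Write $w=(1<a_2<\dots<a_r<\bar a_k<\dots <\bar a_{r+1})\in \Wodd\cap W^P$ with $w(1)=1$, so that $\{|w(1)|,\dots,|w(k)|\}=\{1,a_2,\dots,a_k\}$ is a $k$-element subset of $\{1,\dots,n+1\}$; here $\{1,a_2,\dots,a_r\}$ are the unbarred absolute values and $\{a_{r+1},\dots,a_k\}$ the barred ones. I assume $k<n+1$ and $r\ge 2$ (when $k=n+1$ the open orbit is empty, and the cases $r\le 1$ are degenerate and must be checked directly against the relevant clause of Lemma~\ref{lem:comp}). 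The key observation is that each of the three operations $\cdot\,\OZ(1)$, $\cdot\,\OY(1)$, $\cdot_k\Oo(1)$ appearing in Lemma~\ref{lem:comp} does exactly one thing to the signed value set of its argument: it deletes a single unbarred entry and inserts a single barred entry $\bar\jmath$. Thus both sides of the claimed identity are obtained from $w$ by deleting two unbarred entries and inserting two barred entries, and it suffices to check that both the deleted pair and the inserted pair agree on the two sides.

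For the deletions this is immediate from Lemma~\ref{lem:comp}: on the left, $\cdot\,\OZ(1)$ deletes $a_2$ and $\cdot\,\OY(1)$ then deletes the surviving entry $1$; on the right, $\cdot\,\OY(1)$ deletes $1$ and $\cdot_k\Oo(1)$ then deletes $v(1)=a_2$, where $v=w\cdot\OY(1)$. This last point uses $r\ge 2$, which guarantees that the smallest entry of $v$ is the unbarred value $a_2$ rather than a barred one. Hence both sides delete exactly the unbarred pair $\{1,a_2\}$, leaving the unbarred value set $\{a_3,\dots,a_r\}$ and the barred value set $\{a_{r+1},\dots,a_k\}$, to which two new barred indices are then adjoined.

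It remains to compare the inserted pairs. Set $A'=\{a_3,\dots,a_k\}$ (the absolute values of $w$ other than $1$ and $a_2$) and $S=\{2,\dots,n+1\}\setminus A'$, and let $s_1<s_2$ be the two smallest elements of $S$; note $a_2\in S$. On the left side the first insertion is $j_\Xc=\min(\{2,\dots,n+1\}\setminus A')=s_1$, and the second, applying Lemma~\ref{lem:comp} to $w\cdot\OZ(1)$ whose relevant absolute value set is $A'\cup\{s_1\}$, is $\min(S\setminus\{s_1\})=s_2$; so the left side inserts $\{s_1,s_2\}$. On the right side, $j_\Y=\min(S\setminus\{a_2\})$ and then $j^\circ=\min(S\setminus\{j_\Y\})$. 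If $a_2=s_1$ this gives $\{j_\Y,j^\circ\}=\{s_2,s_1\}$, while if $a_2=s_m$ with $m\ge 2$ it gives $\{s_1,s_2\}$; either way the right side inserts $\{s_1,s_2\}$ as well. Since the inserted indices are always adjoined as \emph{barred} entries, both sides yield the unbarred value set $\{a_3,\dots,a_r\}$ and the barred value set $\{a_{r+1},\dots,a_k,s_1,s_2\}$. As an element of $W^P$ is determined by its unbarred and barred value sets, the two cosets in $W/W_P$ coincide.

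The only real subtlety — the ``main obstacle'' — is the bookkeeping of signs together with the claim that in $v\cdot_k\Oo(1)$ the deleted entry $v(1)$ genuinely has absolute value $a_2$; this is precisely why the presence of at least one unbarred entry of $w$ besides $w(1)=1$ matters, and the degenerate cases where $w$ has no such entry, as well as $k=n+1$, have to be treated separately. Everything else is the symmetric ``peel off the two smallest missing indices of $S$ in either order'' computation carried out above.
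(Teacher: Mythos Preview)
Your proof is correct and follows essentially the same approach as the paper: both apply Lemma~\ref{lem:comp} twice to each side and then compare the pair of inserted ``barred'' indices, with your case split $a_2=s_1$ versus $a_2\neq s_1$ corresponding exactly to the paper's split $j_\Xc=i_\Y$ versus $j_\Xc<i_\Y$. Your value-set framing is a bit cleaner; the only difference is that you explicitly set aside $r\le 1$, whereas the paper's argument is phrased uniformly in the $a_i$ and so covers that case without separate treatment.
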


\begin{proof}
We have $w=(1<a_2<\dots<a_r<\bar a_k<\dots<\bar a_{k-r})$. We use Lemma~\ref{lem:comp} throughout. Evaluating the left side of the equation we have
\begin{align*}
    w \cdot \OZ(1) \cdot \OY(1) &= (1<w(3)<\cdots<\bar j_\Xc<\cdots<w(k)) \cdot \OY(1)\\
        &=(w(3)<\cdots< \bar j_\Y<\cdots<\bar j_\Xc<\cdots<w(k))
\end{align*}
where 
\[
    j_\Xc = \min \left( \{2, \cdots, n+1 \} \setminus \{a_3,\cdots, a_k\} \right)
\] 
and 
\[
    j_\Y=\min \left( \{2, \cdots, n+1 \} \setminus \{a_3,\cdots,a_k,j_\Xc\} \right).
\]
We now evaluate the right side of the equation.
\begin{align*}
    w \cdot \OY(1) \cdot_k \Oo(1) &= (w(2)<w(3)<\cdots<\bar i_\Y<\cdots<w(k)) \cdot_k O^\circ(1)\\
        &= (w(3)<\cdots<\bar i^\circ<\cdots<\bar i_\Y<\cdots<w(k))
\end{align*}
where 
\begin{align*}
    i_\Y &= \min \left( \{2, \cdots, n+1 \} \setminus \{a_2,\dots,a_k\} \right), \\
    i^\circ &= \min \left( \{2, \cdots, n+1 \} \setminus \{a_3,\dots,a_k,i_\Y\} \right),
\end{align*}
and $i^\circ$ is possibly smaller than $i_\Y$.

By definition $j_\Xc \leq i_\Y$. If $j_\Xc=i_\Y$ then $j_\Y$ and $i^\circ$ are clearly also equal, and the result follows. On the other hand, if $j_\Xc < i_\Y$ then $j_\Xc$ must be equal to $a_2$, so by their definitions $j_\Y=i_\Y$. Moreover
$j_\Xc$ can only be different from $i^\circ$ if 
\[
    i_\Y=\min \left( \{2, \cdots, n+1 \} \setminus \{a_3,\dots,a_k\} \right) =j_\Xc,
\]
which would contradict our assumption that $j_\Xc < i_\Y$. Therefore $j_\Xc=i^\circ$ and $j_\Y=i_\Y$, which concludes the proof.
\end{proof}

\begin{example}
Assume $k=4$, $n=6$, and $w=(1<2<\bar 6<\bar 3)$. Then $w \cdot \OY(1)=(2<\bar 6<\bar 4<\bar 3)$ and $w \cdot \OZ(1)=(1<\bar 6 <\bar 3<\bar 2)$, see Example~\ref{ex:hecke}. Applying Lemma~\ref{lem:comp} we get that $w \cdot \OY(1) \cdot_k \Oo(1)$ and $w \cdot \OZ(1) \cdot \OY(1)$ are both equal to $(\bar 6<\bar 4<\bar 3<\bar 2)$, as claimed in Proposition~\ref{prop:square}.
\end{example}

\begin{cor}\label{cor:square}
Let $w \in \Wodd \cap W^P$ be such that $X(w) \subset \Xc$. Then if $d_1+d_2=d-1$,
\[
    \left( w \cdot \OY(d) \right) W_P = \left( \left( \left(w \cdot \OZ(d_1) \right) W_P \cdot \OY(1) \right) W_P  \cdot_k \Oo(d_2) \right) W_P
\]
\end{cor}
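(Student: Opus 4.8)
The plan is to prove Corollary~\ref{cor:square} by induction on $d_2$, using Proposition~\ref{prop:square} as the base mechanism and the definitions of the recursive elements $\OY(d)$, $\OZ(d)$, $\Oo(d)$ to unwind one line at a time. First I would recall the recursive definitions: $\OY(d) = \OY(d-1)\cdot \OY(1)$, $\OZ(d)=\OZ(d-1)\cdot \OZ(1)$, and $\Oo(d)=\Oo(d-1)\cdot_k \Oo(1)$ (these should be the definitions of the recursively-defined elements referenced in the introduction and Section~\ref{s:Partitions}). The goal identity compresses a degree-$d$ line neighborhood that passes through the closed orbit $\Xc$ for $d_1$ steps, crosses once from $\Xc$ to $\Xo$ via $\OY(1)$, and then moves $d_2$ steps inside $\Xo$ via $\cdot_k\Oo(\,\cdot\,)$.

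The key reduction is to handle the crossing. I would start from the right-hand side and peel off the last factor of $\Oo(d_2)$: by associativity of the (modified) Hecke action on $W/W_P$, it suffices to show that after $d_1$ steps in $\Xc$ the single crossing $\cdot\,\OY(1)$ followed by $d_2$ steps in the open orbit equals $d_2+1$ crossings worth of $\OY$ applied to $w\cdot\OZ(d_1)$. The engine for this is precisely Proposition~\ref{prop:square}, which says, for any $w'$ with $X(w')\subset\Xc$, that
\[
    \left(\left(w'\cdot\OZ(1)\,W_P\right)\cdot\OY(1)\right)W_P = \left(\left(w'\cdot\OY(1)\,W_P\right)\cdot_k\Oo(1)\right)W_P.
\]
Reading this backwards, it converts one $\cdot_k\Oo(1)$ on an element of the form $w'\cdot\OY(1)$ into a $\cdot\,\OY(1)$ applied to $w'\cdot\OZ(1)$, i.e. it trades an open-orbit step for a closed-orbit step while preserving the final coset. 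The induction on $d_2$ then repeatedly applies this square identity: each application absorbs one $\Oo(1)$ into an extra $\OZ(1)$, advancing the closed-orbit index by one and removing one open-orbit step, while the single $\OY(1)$ crossing persists.

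Concretely, the base case $d_2=0$ (so $d_1=d-1$) reduces to showing $\bigl(w\cdot\OY(d)\bigr)W_P = \bigl(\bigl(w\cdot\OZ(d-1)\bigr)W_P\cdot\OY(1)\bigr)W_P$, which is just the definition $\OY(d)=\OZ(d-1)\cdot\OY(1)$ read on cosets together with the fact that $\OY(d)$ is the element governing the open-orbit component (this should already be established where $\OY(d)$ is defined). For the inductive step I would write $\Oo(d_2)=\Oo(d_2-1)\cdot_k\Oo(1)$, apply the induction hypothesis to the expression with $d_1$ replaced by $d_1+1$ and $d_2$ by $d_2-1$, and then invoke Proposition~\ref{prop:square} once (with $w'=w\cdot\OZ(d_1)$) to convert the trailing $\Oo(1)$ into an $\OZ(1)$, yielding $w\cdot\OZ(d_1+1)=w\cdot\OZ(d_1)\cdot\OZ(1)$ inside the closed orbit and matching indices on the left-hand side.

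The main obstacle will be the bookkeeping of parabolic cosets and the associativity of the mixed product: one must be careful that the modified Hecke product $\cdot_k$ on $\Xo$ and the ordinary Hecke product $\cdot$ on $\Xc$ interact correctly through the single crossing $\OY(1)$, and that all the intermediate elements genuinely satisfy the hypotheses ($X(w\cdot\OZ(d_1))\subset\Xc$) needed to re-apply Proposition~\ref{prop:square}. I expect the closure of the hypothesis under $\cdot\,\OZ(1)$—i.e. that multiplying by $\OZ(1)$ keeps the Schubert variety inside $\Xc$, which follows from the form of $w\cdot\OZ(1)$ in Lemma~\ref{lem:comp} (it again has first entry $1$)—to be the key verification making the induction go through, and I would flag the coset-associativity of the left action $W\times W/W_P\to W/W_P$ (Subsection~\ref{s:hecke}) as the formal property justifying each regrouping.
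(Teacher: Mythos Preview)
Your approach is essentially the same as the paper's: both arguments iterate Proposition~\ref{prop:square} to slide the decomposition $(d_1,d_2)$ along the line $d_1+d_2=d-1$ one step at a time, and then appeal to one extremal instance of the definition of $\OY(d)$. The only difference is direction: the paper decreases $d_1$ to $0$ and finishes with $\OY(d)=\OY(1)\cdot_k\Oo(d-1)$, whereas you decrease $d_2$ to $0$ and finish with $\OY(d)=\OZ(d-1)\cdot\OY(1)$. Both endpoints are instances of the actual definition $\OY(d):=\OZ(d_1)\cdot\OY(1)\cdot_k\Oo(d_2)$ for $d_1+d_2+1=d$.

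One correction: the recursion you state at the outset, $\OY(d)=\OY(d-1)\cdot\OY(1)$, is \emph{not} the paper's definition and is in fact false in general (applying $\OY(1)$ twice does not make sense once you are already in $\Xo$). Fortunately you never use this erroneous form; your base case correctly invokes $\OY(d)=\OZ(d-1)\cdot\OY(1)$, and your inductive step only needs the (correct) recursions for $\OZ$ and $\Oo$ together with Proposition~\ref{prop:square}. Your closing remarks about verifying $X(w\cdot\OZ(d_1))\subset\Xc$ via Lemma~\ref{lem:comp} and about coset associativity are apt and match what the paper uses implicitly.
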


\begin{proof}
Note that in the following proof, even though we will be working with $W_P$-cosets, we may omit them from the notation to improve readability.

By definition of $\OZ(d_1)$, if $d_1 \geq 1$ we have
\[
    \left(w \cdot \OZ(d_1) \right) \cdot \OY(1) = \left(\left(w \cdot \OZ(d_1-1)\right) \cdot \OZ(1)\right) \cdot \OY(1).
\]
Applying Proposition~\ref{prop:square} on the right-hand side we deduce
\[
    \left(\left(w \cdot \OZ(d_1-1)\right) \cdot \OZ(1)\right) \cdot \OY(1) = \left(\left(w \cdot \OZ(d_1-1)\right) \cdot \OY(1)\right) \cdot_k \Oo(1),
\]
and therefore
\begin{align*}
    \left( \left( \left(w \cdot \OZ(d_1) \right) \cdot \OY(1) \right) \cdot_k \Oo(d_2) \right) &= \left(\left( \left( \left(w \cdot \OZ(d_1-1) \right) \cdot \OY(1) \right) \cdot_k \Oo(1) \right) \cdot_k \Oo(d_2) \right) \\
    &=\left( \left(w \cdot \OZ(d_1-1) \right) \cdot \OY(1) \right) \cdot_k \Oo(d_2+1),
\end{align*}
where the second equality comes from the definition of $\Oo(d_2+1)$. Iterating this process $d_1-1$ more times we obtain
\[
    \left( \left( \left(w \cdot \OZ(d_1) \right) \cdot \OY(1) \right) \cdot_k \Oo(d_2) \right) = \left(w \cdot \OY(1) \right) \cdot_k \Oo(d_1+d_2) = \left(w \cdot \OY(1) \right) \cdot_k \Oo(d-1).
\]
As $\OY(1) \cdot_k \Oo(d-1)=\OY(d)$, the result follows.
\end{proof}

Figure~\ref{fig:IG(2,6)} illustrates the moment graphs of $\IG(2,5)$ and $\IG(2,6)$. The blue portion corresponds to vertices and edges outside the Schubert variety $\IG(2,5)$, while other vertices and edges form the moment graph of $\IG(2,5)$. Within this moment graph lies that of $\Xc=\IG(1,4)= \mathbb{P}^3$, depicted in red, and that of $\Y=\IG(2,4) \subset \Xo$, depicted in black. The green edges link torus-fixed points of the closed $\Sp_{2n+1}$-orbit $\Xc$ to torus-fixed points of the open $\Sp_{2n+1}$-orbit $\Xo$. Note that the edges involved in Proposition~\ref{prop:square} are of that type.

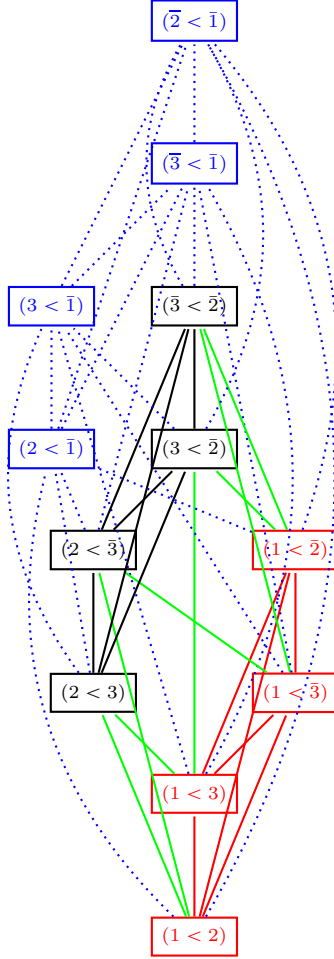
\begin{figure}
\caption{The moment graph of $\IG(2,5)$ inside that of $\IG(2,6)$, without degree labels. }
\label{fig:IG(2,6)}
\begin{tikzpicture}[-,>=stealth',shorten >=1pt,auto,node distance=1.9cm,
                    thick,main node/.style={draw,font=\sffamily\tiny\bfseries}]

  \node[main node] (1) {$(\bar{3}<\bar{2})$};
  \node[main node] (2) [below of=1] {$(3<\bar{2})$};
    \node[main node] (3) [below left of=2] {$(2<\bar{3})$};
  \node[main node] (4) [ red, below right of=2] {$(1<\bar{2})$};
  \node[main node] (5) [below of=3]{$(2<3)$};
  \node[main node] (6) [ red, below of=4]{$(1<\bar{3})$};
   \node[main node] (7) [red, below right of=5]{$(1<3)$};
   \node[main node] (8) [ red, below of=7]{$(1<2)$};
   \node[main node] (9) [blue,left of=2] {$(2<\bar{1})$};
    \node[main node] (10) [blue, left of=1] {$(3<\bar{1})$};
     \node[main node] (11) [blue, above of=1] {$(\overline{3}<\bar{1})$};
\node[main node] (12) [blue, above of=11] {$(\overline{2}<\bar{1})$};

  \path[every node/.style={font=\sffamily\large}]
    (8) edge [red] node {}  (7)
         edge [green] node {} (5)
          edge [red] node {} (6)
          edge [green] node {} (3)
          edge [red] node {} (4)
          edge [blue,dotted,bend left] node {} (9)
          edge [blue, dotted,bend right] node {} (12)
    (7) edge [green] node {} (5)
          edge [green] node {} (2)
          edge [red] node  {} (4) 
          edge [red] node {} (6)
            edge [blue,dotted] node {} (10)
          edge [blue, dotted,bend right] node {} (11)
    (6) edge [green] node [right] {} (3)
          edge [green] node [right] {} (1)
          edge [red] node [right] {} (4)
             edge [blue,dotted] node {} (10)
          edge [blue, dotted] node {} (11)
    (5) edge  node {} (3)
          edge  node {} (1)
          edge node {} (2)
           edge [blue,dotted] node {} (9)
          edge [blue, dotted, bend left] node {} (10)
    (4) edge [green] node {} (2)      
          edge [green] node {} (1)
           edge [blue,dotted] node {} (9)
          edge [blue, dotted,bend right] node {} (12)
    (3) edge  node {} (1)
          edge  node [right] {} (2)
          edge [blue,dotted] node {} (9)
          edge [blue, dotted] node {} (11)
    (2) edge node {} (1)
    edge [blue,dotted] node {} (10)
    edge [blue, dotted,bend right] node {} (12)
    (1)edge [blue,dotted] node {} (11)
          edge [blue, dotted,bend left] node {} (12)
       (9)edge [blue,dotted] node {} (10)
          edge [blue, dotted] node {} (11)
          edge [blue, dotted] node {} (12)
        (10)edge [blue,dotted] node {} (11)
          edge [blue, dotted] node {} (12)
          (11)edge [blue,dotted] node {} (12);
\end{tikzpicture}
\end{figure} 

\section{Curve Neighborhoods}\label{s:curvenbhds} 

Let $\cX \in \{ X, \Xev \}$ and let $d \in H_2(\cX,\Z)$ be an effective degree. Recall that the moduli space of genus $0$, degree $d$ stable maps with two marked points $\Mb_{0,2}(\cX,d)$ is endowed with two evaluation maps $\ev_i \colon \Mb_{0,2}(\cX,d) \to \cX$, $i=1,2$ which stable maps at the $i$-th marked point.

Let $\Omega \subset \cX$ be a closed subvariety. The \emph{curve neighborhood} of $\Omega$ is the subscheme 
\[ 
    \Gamma_d(\Omega) := \ev_2( \ev_1^{-1} \Omega) \subset \cX
\] 
endowed with the reduced scheme structure. This notion was introduced by Buch, Chaput, Mihalcea and Perrin \cite{BCMP:qkfin} to help study the quantum K-theory ring of cominuscule Grassmannians. It was analyzed further for any homogeneous space by Buch and Mihalcea \cite{buch.m:nbhds}, in relation to $2$-point K-theoretic Gromov-Witten invariants, and to a new proof of the quantum Chevalley formula. Often, estimates for the dimension of the curve neighborhoods provide vanishing conditions for certain Gromov-Witten invariants. %In this paper we will use this technique to prove vanishing of ``Chevalley" Gromov-Witten invariants of degree $d \geq 2$ in $X=\IG(k,2n+1)$. 

\begin{remark}\label{rmk:curve-nbhd}
We start with the observation (going back to \cite{BCMP:qkfin}) that if $\Omega$ is a Schubert variety of $\cX$, then $\Gamma_d(\Omega)$ must be a (finite) union of Schubert varieties, stable under the same Borel subgroup. This follows because $\Omega$ is stable under the appropriate Borel subgroup, and $\ev_1, \ev_2$ are proper, equivariant maps; thus $\Gamma_d(\Omega)$ is closed and Borel stable. 
\end{remark}

It is possible to say more for homogeneous spaces. Indeed, it was proved in \cite{BCMP:qkfin} that the curve neighborhood of any Schubert variety of a homogeneous space $V=G/P$ is again a Schubert variety. This Schubert variety was described in \cite{buch.m:nbhds}, namely, $\Gamma_d(V(w)) = V(w \cdot z_d W_P )$, where $z_d \in W$ is \emph{defined} by the condition that $\Gamma_d(1.P) = V(z_d W_P)$. The Weyl group element $z_d$ can be constructed recursively.

Coming back to $\Xev$. The maximal elements of the set $\{\beta \in R^+ \setminus R_{P}^+: \beta^\vee+\Delta_{P}^\vee \leq d \}$ are called \emph{maximal roots} of $d$. The following follows from \cite{buch.m:nbhds}*{Corollary 4.12}. %The root $\beta \in R^+ \setminus R_{P}^+$ is called \emph{$P$-cosmall} if $\beta$ is a maximal root of $\beta^{\vee}+\Delta_{P}^{\vee} \in H_2(\Xev)$. In type $C_{n+1}$, the $P$-cosmall roots are the roots $2t_i$ for $1 \leq i \leq n+1$, and $t_i-t_j$ for $1 \leq i<j \leq n+1$. 
The following follows from \cite{buch.m:nbhds}*{Corollary 4.12}.

\begin{prop}\label{prop:zd} Let $d \in H_2(\IG(k,2n+2))$ be an effective degree. If $\alpha \in R^+ \setminus R_{P}^+$ is a maximal root of $d$, then $s_\alpha \cdot z_{d - \alpha^\vee} W_P = z_d W_P$.
\end{prop}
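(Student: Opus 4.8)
The plan is to recognize Proposition~\ref{prop:zd} as a direct instance of the general recursion for curve neighborhoods of the base point of a homogeneous space established by Buch and Mihalcea. Since $\Xev=\IG(k,2n+2)=\Sp_{2n+2}/P$ is homogeneous, the curve neighborhood $\Gamma_d(1.P)$ is again a Schubert variety $\Xev(z_d W_P)$, and $z_d$ is determined by a greedy recursion in $W/W_P$. The first step is to match the combinatorial data of our setting with that of \cite{buch.m:nbhds}: the effective degrees $d\in H_2(\Xev)\cong\Z$; the edge-degree function on the moment graph recorded in Proposition~\ref{prop:MGDisciption}, where an edge $w\to ws_\beta$ has degree $1$ if $\beta\in R_1^+$ and degree $2$ if $\beta\in R_2^+$; and the notion of a maximal root of $d$ as a maximal element of $\{\beta\in R^+\setminus R_P^+ : \beta^\vee+\Delta_P^\vee\le d\}$. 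Once these are seen to coincide with the conventions of \cite{buch.m:nbhds}, their Corollary~4.12 applies essentially verbatim.

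Conceptually, the reason the recursion holds is that $\Xev(z_d W_P)$ is the maximal Schubert variety reachable from $1.P$ by a genus-zero degree-$d$ curve, equivalently the maximal $W_P$-coset reachable by a degree-$d$ chain from $id$ in the moment graph. The second step is to decompose such a chain by splitting off its first edge: a maximal degree-$d$ chain out of $id$ passes through $s_\beta W_P$ for some $\beta\in R^+\setminus R_P^+$ with $\deg(C_\beta)=e\le d$ and then continues with a degree-$(d-e)$ chain, whose maximal endpoint is $s_\beta\cdot z_{d-\beta^\vee}W_P$ by the definition of $z_{d-\beta^\vee}$ together with the left Hecke action on $W/W_P$. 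Choosing $\beta$ to realize the overall maximum gives $z_d W_P = s_\beta\cdot z_{d-\beta^\vee}W_P$, and one checks that the optimal first edge is furnished by a maximal root.

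I expect the main obstacle to be the strengthening implicit in the statement, namely that \emph{every} maximal root $\alpha$ of $d$ (and not only the particular root selected by the greedy step) satisfies $s_\alpha\cdot z_{d-\alpha^\vee}W_P=z_d W_P$. Establishing this requires showing that distinct maximal roots of $d$ all lead to the same coset, which in \cite{buch.m:nbhds} is extracted from associativity and monotonicity of the Hecke product combined with the fact that $z_{d-\alpha^\vee}$ does not depend on which maximal root was removed. One also needs $d-\alpha^\vee$ to be an effective degree so that $z_{d-\alpha^\vee}$ is defined, but this is automatic: $\alpha$ being a maximal root of $d$ forces $\alpha^\vee+\Delta_P^\vee\le d$, so $d-\alpha^\vee\ge 0$. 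With these verifications in hand, Proposition~\ref{prop:zd} is an immediate transcription of \cite{buch.m:nbhds}*{Cor.~4.12}.
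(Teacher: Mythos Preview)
Your proposal is correct and matches the paper's approach: the paper does not give its own proof of Proposition~\ref{prop:zd} at all, it simply states that the result follows from \cite{buch.m:nbhds}*{Corollary~4.12}. Your write-up supplies the conceptual explanation that the paper omits, correctly identifying the statement as a direct transcription of the Buch--Mihalcea recursion for $z_d$ in the homogeneous case $\Xev=\Sp_{2n+2}/P$.
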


\begin{cor}\label{cor:evest} 
\begin{enumerate}[(a)]
    \item If $k >1$ then there is an equality $z_1 W_P = s_{2 t_1} W_P$ and the minimal length representative of $z_1 W_P$ is $(2<3<\cdots<k<\overline{1})$. 
    \item For $d \geq 1$ it follows that $z_dW_P=s_{2t_1}\cdot s_{2t_1} \cdot \ldots \cdot s_{2t_1}W_P$ (where $s_{2t_1}$ appears $d$-times)
\end{enumerate}
\end{cor}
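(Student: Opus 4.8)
The plan is to deduce Corollary~\ref{cor:evest} directly from the recursive formula in Proposition~\ref{prop:zd}, by first identifying the maximal root of the degree $d=1$ and then iterating.

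For part (a), I would start by computing the set $\{\beta \in R^+ \setminus R_P^+ : \beta^\vee + \Delta_P^\vee \leq \alpha_k^\vee + \Delta_P^\vee\}$, i.e.\ the roots whose associated curve has degree $\leq 1$. By Proposition~\ref{prop:MGDisciption}, the degree-$1$ roots are exactly those in $R_1^+$, while the roots in $R_2^+$ give degree $2$. So the maximal roots of $d=1$ are the maximal elements of $R_1^+ = \{t_i \pm t_j : 1 \leq i \leq k < j \leq n+1\} \cup \{2t_i : 1 \leq i \leq k\}$ under the partial order induced by the coroot comparison. Since $z_0 = \id$, Proposition~\ref{prop:zd} gives $z_1 W_P = s_\alpha \cdot z_0 W_P = s_\alpha W_P$ for $\alpha$ a maximal root of $1$. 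The key computation is to check that $\alpha = 2t_1$ is the (unique, modulo $W_P$) maximal root and that $s_{2t_1} W_P$ has the claimed minimal length representative. Applying the signed permutation $s_{2t_1}$ (which sends $1 \mapsto \bar 1$ and fixes all other $|i|$) to $\id = (1<2<\cdots<k)$ yields $(\bar 1 < 2 < \cdots < k)$, and reordering into increasing minimal-length-representative form gives $(2<3<\cdots<k<\bar 1)$, exactly as stated. This uses $k>1$ so that the entry $\bar 1$ is genuinely reordered past the remaining small entries; for $k=1$ the space is $\bP^{2n}$ and the formula degenerates.

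For part (b), I would proceed by induction on $d$. The claim is that $2t_1$ is a maximal root of \emph{every} degree $d \geq 1$, so that Proposition~\ref{prop:zd} can be applied repeatedly with the same root. Concretely, since $(2t_1)^\vee + \Delta_P^\vee = \alpha_k^\vee + \Delta_P^\vee$ corresponds to degree $1$, and for any $d \geq 1$ the root $2t_1$ still satisfies $\beta^\vee + \Delta_P^\vee \leq d\,\alpha_k^\vee + \Delta_P^\vee$, I need to verify it remains maximal among roots of degree $\leq d$. Then Proposition~\ref{prop:zd} gives $z_d W_P = s_{2t_1} \cdot z_{d-1} W_P$, and unwinding the recursion yields $z_d W_P = s_{2t_1} \cdot s_{2t_1} \cdots s_{2t_1} W_P$ with $d$ factors.

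The main obstacle I expect is the maximality verification, specifically confirming that $2t_1$ (rather than, say, a root of the form $t_1 + t_j$ in $R_2^+$ with large coroot) is the relevant maximal root at each stage, and handling the coroot partial order carefully. The roots in $R_2^+$ contribute degree $2$, so at degree $1$ they are simply unavailable; the subtlety is that one must confirm no degree-$1$ root dominates $2t_1$ and that the recursion genuinely re-selects $2t_1$ rather than a composite of smaller roots. This is a finite root-system computation in type $C_{n+1}$ and should follow from the explicit coroot expansions already recorded in the proof of Proposition~\ref{prop:MGDisciption}, but it is the step requiring genuine care rather than mechanical substitution.
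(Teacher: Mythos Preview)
Your approach is correct and essentially identical to the paper's: both apply Proposition~\ref{prop:zd} with the observation that $2t_1$ is a maximal root of every degree $d \geq 1$, then iterate. Your worry about maximality is easily resolved since $2t_1$ is the highest root of the type $C_{n+1}$ root system, hence dominates every element of $R^+ \setminus R_P^+$ regardless of degree; the paper simply asserts this without the extra discussion you give.
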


\begin{proof} The first part follows directly from Proposition \ref{prop:zd}. For part (b), notice that $2t_1$ is a maximal root of $d=1$, therefore $z_1 W_P = s_{2 t_1} W_P$. By the recursion in Proposition \ref{prop:zd}, since $2t_1$ is a maximal element of any $d \geq 1$ we obtain $z_d W_P=s_{2t_1}\cdot s_{2t_1} \cdot \ldots \cdot s_{2t_1}W_P$ (where $s_{2t_1}$ appears $d$-times). 
\end{proof}

Let $w \in W^P \cap \Wodd$ and let $d \in H_2(X,\Z)$ be an effective degree. As mentioned above, the curve neighborhood $\Gamma_d(X(w))$ of $X(w)$ is a closed, $B$-stable subvariety of $X$, therefore it must be a union of Schubert varieties: 
\[ 
    \Gamma_d(X(w)) = X(w^1) \cup  \cdots \cup X(w^r)
\] 
where $w^i \in W^P \cap \Wodd$. As noticed in \cite{buch.m:nbhds}*{\S 5.2} and \cite{mare.mihalcea}*{Cor. 5.5}, the permutations $w^i$ can be determined combinatorially from the moment graph. 

\begin{prop}\label{prop:moment-odd} 
Let $w \in W^P \cap \Wodd$. In the moment graph of $X=\IG(k,2n+1)$, let $\{v^1, \cdots, v^s\}$ be the maximal vertices (for the Bruhat order) which can be reached from any $u \leq w$ using a path of degree $d$ or less. Then $\Gamma_d(X(w))=X(v^1) \cup \cdots \cup X(v^s)$.
\end{prop}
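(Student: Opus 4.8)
The plan is to reduce the statement to a purely combinatorial description of which $T$-fixed points lie in $\Gamma_d(X(w))$, and then to run the moment-graph/localization argument of \cite{buch.m:nbhds}*{\S 5.2} and \cite{mare.mihalcea}*{Cor. 5.5}, the only genuinely new point being the verification that its hypotheses survive the passage from a homogeneous space to the quasi-homogeneous $X$. First I would record the reduction. By Remark~\ref{rmk:curve-nbhd} the subscheme $\Gamma_d(X(w))$ is closed and $B$-stable, hence a finite union of Schubert varieties; since a Schubert variety $X(v)$ equals $\overline{B\,vP/P}$ and satisfies $X(v) \subseteq X(v')$ whenever $v \le v'$, being closed and $B$-stable means $\Gamma_d(X(w))$ contains $X(v)$ if and only if it contains the fixed point $vP/P$. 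Thus the whole statement is equivalent to the assertion that a $T$-fixed point $v \in W^P \cap \Wodd$ lies in $\Gamma_d(X(w))$ if and only if there is a chain of degree at most $d$ in the moment graph of $X$ from some $u \le w$ to $v$; passing to the Bruhat-maximal such $v$ then collapses the union to $X(v^1) \cup \dots \cup X(v^s)$.

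Next I would prove this equivalence. For the geometric direction, a chain $u = u_0 \to u_1 \to \dots \to u_t = v$ of total degree $d' \le d$ is a connected union of $T$-stable rational curves whose dual graph is a path, hence a nodal genus-$0$ curve. Parametrizing it, placing the first marked point at the fixed point $u$ and the second at $v$, and, if $d' < d$, replacing one component by a degree-$(d-d')$-inflated multiple cover of the same $T$-stable curve (which leaves the endpoints and the image unchanged), produces a genus-$0$ stable map of degree exactly $d$ in $\Mb_{0,2}(X,d)$ with $\ev_1 = u \in X(w)$ and $\ev_2 = v$. Hence $v \in \Gamma_d(X(w))$.

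For the converse I would exploit $T$-equivariance directly. The map $\ev_2 \circ \ev_1^{-1}$ is $T$-equivariant, so $\Gamma_d(X(w))$ is $T$-stable; if the $T$-fixed point $v$ lies in it, then $\ev_2^{-1}(v) \cap \ev_1^{-1}(X(w))$ is a nonempty $T$-stable closed subscheme of the projective scheme $\Mb_{0,2}(X,d)$ and therefore contains a $T$-fixed stable map $f$ (a nonempty projective $T$-scheme has a fixed point by Borel). A genus-$0$ $T$-fixed stable map has image a connected union of one-dimensional $T$-orbit closures with all nodes and marked points at $T$-fixed points, so $f$ encodes a chain in the moment graph of total image-degree at most $d$ running from $\ev_1(f) = u$ to $\ev_2(f) = v$; and since $u$ is a $T$-fixed point of $X(w)$ we have $u \le w$, completing the equivalence.

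The remaining point, and the one I expect to be the main obstacle, is that $X = \IG(k,2n+1)$ is only quasi-homogeneous, so the cited homogeneous-space results cannot be quoted verbatim; I would instead verify that $X$ still satisfies the structural hypotheses that drive the argument above. Concretely: $X$ is smooth and projective, being a smooth Schubert variety in $\Xev$ by Mihai \cite{mihai:odd}; it has finitely many $T$-fixed points and finitely many one-dimensional $T$-orbits, with moment graph the full subgraph of that of $\Xev$ on $W^P \cap \Wodd$ (as recorded in the text preceding Proposition~\ref{prop:MGDisciption}); and the edge degrees are governed by Proposition~\ref{prop:MGDisciption}, so the chains appearing above are exactly the degree-$\le d$ paths of the statement, their degree arithmetic being the content of Lemma~\ref{lem:phi_degree}. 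Granting these GKM-type properties, the $T$-localization description of $\Mb_{0,2}(X,d)^T$ is the same as in the homogeneous case, the equivalence holds, and taking Bruhat-maximal reachable vertices yields $\Gamma_d(X(w)) = X(v^1) \cup \dots \cup X(v^s)$, as claimed.
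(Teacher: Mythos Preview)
Your proposal is correct and follows essentially the same route as the paper's proof: both reduce to characterizing which $T$-fixed points lie in $\Gamma_d(X(w))$, construct the ``easy'' inclusion by turning a moment-graph chain into a degree-$d$ stable map, and obtain the converse by producing a $T$-fixed stable map landing at $v$ and reading off a chain. The only difference is packaging: the paper invokes \cite{mare.mihalcea}*{Lemma 5.3} as a black box for the converse, whereas you unpack that step via Borel's fixed-point theorem on $\ev_2^{-1}(v)\cap \ev_1^{-1}(X(w))$ and explicitly verify the GKM-type hypotheses survive the passage from $\Xev$ to $X$; this extra care is appropriate and does not change the argument's substance.
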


\begin{proof}
Let $Z_{w,d}=X(v^1) \cup \cdots \cup X(v^s)$. Let $v:= v^i \in Z_{w,d}$ be one of the maximal $T$-fixed points. By the definition of $v$ and the moment graph there exists a chain of $T$-stable rational curves of degree less than or equal to $d$ joining $u \leq w$ to $v$. It follows that there exists a degree $d$ stable map joining $u \leq w$ to $v$. Therefore $v \in \Gamma_d(X(w))$, thus $X(v) \subset \Gamma_d(X(w))$, and finally $Z_{w,d} \subset \Gamma_d(X(w))$.

For the converse inclusion, let $v \in \Gamma_d(X(w))$ be a $T$-fixed point. By \cite{mare.mihalcea}*{Lemma 5.3} there exists a $T$-stable curve joining a fixed point $u \in X(w)$ to $v$. This curve corresponds to a path of degree $d$ or less from some $u \leq w$ to $v$ in the moment graph of $\IG(k,2n+1)$. By maximality of the $v^i$ it follows that $v \leq v^i$ for some $i$, hence $v  \in X(v^i) \subset Z_{w,d}$, which completes the proof.
\end{proof}

\subsection{Curve neighborhoods in the open orbit \texorpdfstring{$\Xo$}{Xo}}

We now consider the curve neighborhoods of Schubert varieties which intersect the open $\Sp_{2n+1}$-orbit $\Xo$. Such Schubert varieties are indexed by the elements of $\Wo \cap W^P$, see Proposition~\ref{prop:YZ-permutations}. Our strategy consists of three steps:
\begin{itemize}
    \item we show in Proposition~\ref{prop:irredopen} that $\Gamma_d(X(w))$ is irreducible, hence a Schubert variety, when $w \in \Wo \cap W^P$;
    \item we compute the curve neighborhood of a particular Schubert variety $X(id_\Y)$ in Proposition~\ref{prop:curveidY};
    \item we deduce in Proposition~\ref{prop:nbhd-open} the curve neighborhood of any Schubert variety which intersect $\Xo$ using the $\Sp_{2n}$-action on the $T$-fixed points of the open orbit.
\end{itemize}

\begin{prop} \label{prop:irredopen}
Let $X(w) \subset \IG(k,2n+1)$ be a Schubert variety such that $X(w) \cap X^\circ \neq \emptyset$ and let $d$ be an effective degree. Then $\Gamma_d(X(w))$ is irreducible.
\end{prop}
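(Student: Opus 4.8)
The plan is to prove irreducibility combinatorially, by reducing the computation of $\Gamma_d(X(w))$ to the \emph{homogeneous} symplectic Grassmannian $\Y=\IG(k,2n)$, where curve neighborhoods are known to be irreducible. By Proposition~\ref{prop:moment-odd}, $\Gamma_d(X(w))=X(v^1)\cup\cdots\cup X(v^s)$, where the $v^i$ are the maximal vertices (for the Bruhat order) reachable from some $u\leq w$ by a chain of degree at most $d$ in the moment graph of $X$. Hence $\Gamma_d(X(w))$ is irreducible precisely when these maximal reachable vertices reduce to a single vertex, so it suffices to show that the poset of reachable vertices has a unique maximum. The case $d=0$ is trivial since $\Gamma_0(X(w))=X(w)$, so I assume $d\geq 1$.

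First I would transport the problem to $\Y$. By Proposition~\ref{prop:T-fixed-YZ} every $T$-fixed point of $X$ lies in $\Y$ or in $\Xc$, and by Proposition~\ref{prop:open-orbit-GKM} the bijection $\Phi$ of Proposition~\ref{prop:YZ-permutations} identifies the full moment subgraph $MG_{\Xo}$ on the fixed points of $\Xo$ with the moment graph $MG_\Y$ of $\IG(k,2n)$, preserving the degree labels; since the Bruhat order is recovered from the (length-oriented) moment graph, $\Phi$ is in fact an isomorphism of Bruhat posets $\Wo\cap W^P\xrightarrow{\sim}W^\Y$. Because $\Y$ is homogeneous, its curve neighborhoods are irreducible \cite{BCMP:qkfin}, so in $MG_\Y$ the vertices reachable from any Bruhat ideal $\{u'\leq u\}$ by a chain of degree $\leq d$ have the single maximum $u\cdot z_d$ \cite{buch.m:nbhds}. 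The goal is to establish the analogous uniqueness in $MG_X$ when the starting ideal is $\{u\leq w\}$ with $w\in\Wo\cap W^P$.

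The decisive step is to show that leaving $\Xo$ gains nothing: every maximal vertex reachable from $\{u\leq w\}$ lies in $\Xo$, and the maximum reached from the whole ideal equals the maximum reached from its sub-ideal of $\Xo$-vertices. Here Theorem~\ref{thm:ZtoYlines} is the essential input: any moment-graph edge joining a vertex of $\Xc$ to a vertex of $\Xo$ has degree $1$ and increases length from $\Xc$ up to $\Xo$. Combined with Lemma~\ref{lem:comp}, this shows that from every non-maximal $\Xc$-vertex there is a degree-$1$ crossing edge going up into $\Xo$, so no maximal reachable vertex can sit in $\Xc$; and it shows that edges from $\Xo$ down into $\Xc$ strictly decrease length, which I would use to re-route any \emph{mixed} chain (one that starts in $\Xc$, or dips from $\Xo$ into $\Xc$ and returns) into a chain of no greater degree that stays in $MG_{\Xo}$ and reaches at least as high a vertex. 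I expect this control of mixed chains to be the main obstacle, precisely because a $\Xc$-vertex $u\leq w$ can cross up to an $\Xo$-vertex that is \emph{not} itself $\leq w$; the needed monotonicity—that such chains never exceed the homogeneous bound—should follow from the length-orientation of Theorem~\ref{thm:ZtoYlines} together with the degree estimate $\varphi(v)\leq d+\varphi(u)$ of Lemma~\ref{lem:phi_degree}.

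Granting this, the maximal vertices reachable from $\{u\leq w\}$ in $MG_X$ correspond under $\Phi$ to the maximal vertices reachable from the image ideal $\{u'\leq\Phi(w)\}$ in $MG_\Y$, using that $\Phi$ is a Bruhat isomorphism and hence carries the $\Xo$-part of the Bruhat ideal of $w$ onto that of $\Phi(w)$. By the homogeneous computation this reachable set has the unique maximum $\Phi(w)\cdot z_d$. Therefore $\Gamma_d(X(w))=X\!\left(\Phi^{-1}(\Phi(w)\cdot z_d)\right)$ is a single Schubert variety, and in particular irreducible, as claimed.
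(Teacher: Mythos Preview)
Your approach is genuinely different from the paper's, and the paper's route is worth knowing: it is purely geometric and sidesteps the moment-graph combinatorics entirely. The paper shows that $\ev_2\colon \Mb_{0,2}(X,d)\to X$ is a locally trivial fibration with irreducible fibers over the open orbit $\Xo$ (using that $X$ is $\Sp_{2n+1}$-split, which it deduces from the $B_{2n+2}$-splitting of $X$ as a Schubert variety in $\IG(k,2n+2)$). Since $X(w)\cap\Xo$ is dense in $X(w)$, the Gromov--Witten variety $\ev_2^{-1}(X(w))$ is irreducible by the openness of $\ev_2$ together with a standard irreducibility criterion, and then $\Gamma_d(X(w))=\ev_1(\ev_2^{-1}(X(w)))$ is irreducible as a continuous image. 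No chain-rerouting is needed.

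Your combinatorial plan has a real gap precisely where you flag it. The re-routing step is the whole content of the argument, and you have not carried it out. Concretely, the Bruhat ideal $\{u\le w\}$ contains vertices of $\Xc$ (for instance $id$), and from such a $u\in\Xc$ a degree-$1$ crossing edge lands at some $u'\in\Xo$ which need \emph{not} satisfy $u'\le w$; hence $\Phi(u')$ need not lie in the Bruhat ideal of $\Phi(w)$ in $\Y$, so the ``image ideal'' argument in your final paragraph does not go through as stated. Theorem~\ref{thm:ZtoYlines} gives you that crossing edges have degree $1$ and go length-upward from $\Xc$ to $\Xo$, but this alone does not prove that every mixed chain is dominated by a chain staying in $MG_{\Xo}$ from some $u''\le w$: you must actually produce such a dominating chain. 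Note also that the paper's own combinatorial analysis (Lemma~\ref{lem:Ocirc}) treats only the special start $id_\Y$ and even there \emph{uses} the geometric irreducibility of Proposition~\ref{prop:irredopen} to know in advance that $A_\Y(d)$ has a unique maximum; so you cannot borrow that argument without circularity. If you want to salvage the combinatorial approach, you would need an explicit lemma of the form ``if $u\in\Xc$, $u\le w$ with $w\in\Xo$, and $u\to v$ is a chain of degree $\le d$ in $MG_X$, then $v\le w\cdot_k\Oo(d)$,'' proved directly from Lemma~\ref{lem:comp} and the explicit description of $\Oo(d)$---and that is substantially more work than what you have sketched.
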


\begin{proof}
First apply \cite{BCMP:qkfin}*{Prop. 2.3} to the evaluation morphism $ev_2 \colon \overline{\mathcal{M}}_{0,2}(\IG(k,2n+1),d) \to \IG(k,2n+1)$ to deduce that this morphism is a locally trivial fibration over the open orbit, and that the fibers over the open orbit are irreducible. The map is $\Sp_{2n+1}$-equivariant and both varieties are irreducible (see \cite{GPPS}*{Theorem 2.5} for the irreducibility of the moduli space). We also need $\IG(k,2n+1)$ to be $\Sp_{2n+1}$-split in order to apply \cite{BCMP:qkfin}*{Prop. 2.3}. We already know it is $B_{2n+2}$-split by \cite{BCMP:qkfin}*{Prop. 2.2}, since it is a Schubert variety of $\IG(k,2n+2)$. This $B_{2n+2}$-splitting is the inverse of the isomorphism $U_{2n+2} \to \Omega^\circ$ from \cite{BCMP:qkfin}*{Prop. 2.2}. Here $U_{2n+2}$ denotes the maximal unipotent subgroup of $\Sp_{2n+2}$, and $\Omega^\circ$ is the big cell in $\IG(k,2n+1)$. Let $s : \Omega^\circ \to U_{2n+2}$ denote this splitting. Composing with the map $B_{2n+2} \to \Sp_{2n+1}, g \mapsto g_{\mid E}$ gives us a morphism $\Omega^\circ \to \Sp_{2n+1}$ which is a $\Sp_{2n+1}$-splitting.

Now let us check that if $X(w) \subset \IG(k,2n+1)$ is a Schubert variety such that $X(w) \cap X^\circ \neq \emptyset$, then the Gromov-Witten variety $ev_2^{-1}(X(w))$ is irreducible. We use Lemma 5.8.12 from \cite{stacks-project}. We look at the map $ev_2^{-1}(X(w)) \to X(w)$, which is open. We have $X(w)$ irreducible, and there does exist a dense collection of points over which the fibre is irreducible (all the points in $X(w) \cap X^\circ$). So the Gromov-Witten variety $ev_2^{-1}(X(w))$ is be irreducible.

Then $\Gamma_d(X(w))$ is the image by $ev_1$ of the irreducible variety $ev_2^{-1}(X(w))$, so it is also irreducible. Moreover, we know that it is a disjoint union of Schubert varieties in $\IG(k,2n+1)$, as observed in Remark~\ref{rmk:curve-nbhd}. Therefore $\Gamma_d(X(w))$ is of the form $X(v)$ for some $v \in W^P \cap W^{odd}$, hence it is irreducible.
\end{proof}

From Proposition~\ref{prop:T-fixed-YZ} we know that all the $T$-fixed points of the open $\Sp_{2n+1}$-orbit are in fact contained in the $\Sp_{2n}$-orbit $\Y =\IG(k,2n)$. Therefore there is a $T$-fixed point in $\Y$ which plays an analogous role to that of the Schubert point $X(id)$. Namely, we define
\[
    id_\Y = (2 < 3 < \dots < k) \in \Wlarge^P.
\]
Recall the notation $\Oo(1) := s_1 \cdots s_{n+1} \cdots s_1$ from Section~\ref{s:hecke} and define
\[ 
    \Oo(d) = \Oo(d-1) \cdot_k \Oo(1)
\]
for $d>1$.

\begin{prop} \label{prop:curveidY}
The curve neighborhood of the Schubert variety $X(id_\Y)$ is given by 
\[
    \Gamma_d(X(id_\Y)) = X(id_\Y \cdot_k \Oo(d)).
\]
\end{prop}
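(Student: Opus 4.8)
The plan is to reduce the computation to the even symplectic Grassmannian $\Y=\IG(k,2n)$ via the moment graph isomorphism of Proposition~\ref{prop:open-orbit-GKM}, and then to translate the answer back through the correspondence between the modified and ordinary Hecke products (Proposition~\ref{prop:bijY2W}). First I would record that $id_\Y \in \Y \subset \Xo$, so $X(id_\Y) \cap \Xo \neq \emptyset$; by Proposition~\ref{prop:irredopen} the curve neighborhood $\Gamma_d(X(id_\Y))$ is irreducible, hence equal to $X(v)$ for a single $v \in W^P \cap \Wodd$. It therefore only remains to identify $v$. Since the length-$0$ (degree-$0$) path shows $id_\Y$ is reachable from itself, Proposition~\ref{prop:moment-odd} forces $v \geq id_\Y$, whence $X(v) \supseteq X(id_\Y)$ still meets $\Xo$; as the only Schubert varieties disjoint from $\Xo$ are those contained in $\Xc$ (namely those with $v(1)=1$), we conclude $v \in \Wo \cap W^P$, so that $\Phi(v)$ is defined.

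Next I would compute $v$ from the moment graph. By Proposition~\ref{prop:moment-odd} and irreducibility, $v$ is the \emph{unique} maximal vertex reachable by a degree $\leq d$ path from some $u \leq id_\Y$. Restricting attention to paths lying inside $\Xo$ and applying the degree-preserving graph isomorphism $\Phi\colon MG_{\Xo}\to MG_\Y$, the maximal $\Xo$-vertex reachable from $id_\Y$ corresponds to the maximal vertex reachable in $MG_\Y$ from $\Phi(id_\Y)=id$. By the homogeneous theory this maximal vertex is the element $z_d^\Y$ defined by $\Gamma_d(\Y(id))=\Y(z_d^\Y)$, and by the analogue of Corollary~\ref{cor:evest} for $\IG(k,2n)$ it equals the $d$-fold Hecke power of the long-root reflection $s_{2t_1}$ of $\Sp_{2n}$. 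Transferring the corresponding path back through $\Phi^{-1}$ yields the lower bound $\Phi^{-1}(z_d^\Y) \leq v$.

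The main obstacle is the matching upper bound, because $X(id_\Y)$ also contains the Schubert point $X(id) \subset \Xc$, so the reachable set in the \emph{full} moment graph of $X$ also contains vertices obtained by first travelling inside $\Xc$ and then crossing into $\Xo$; a priori such a detour could overshoot the $\Xo$-internal maximum. I would control this using Theorem~\ref{thm:ZtoYlines}: every edge joining $\Xc$ to $\Xo$ has degree $1$ and is directed upward, from $\Xc$ to $\Xo$. Combined with the square identities of Proposition~\ref{prop:square} and Corollary~\ref{cor:square} --- which show that moving $\OZ(d_1)$ inside $\Xc$, crossing by $\OY(1)$, and then moving by $\Oo(d_2)$ lands at the \emph{same} $\Xo$-vertex as a direct application of $\OY(d_1+d_2+1)$ --- I would argue that any $\Xo$-vertex reached by a $\Xc$-detour of total degree $\leq d$ is bounded above by a vertex reached by a purely $\Xo$-internal degree $\leq d$ path from $id_\Y$. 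Hence the unique maximal reachable vertex is $\Phi^{-1}(z_d^\Y)$, giving $v=\Phi^{-1}(z_d^\Y)$.

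Finally I would identify this with $id_\Y \cdot_k \Oo(d)$ by showing $\Phi(id_\Y \cdot_k \Oo(d)) = z_d^\Y$ and invoking injectivity of $\Phi$. The $d=1$ identity $\Phi(w \cdot_k \Oo(1)) = \Phi(w)\cdot s_{2t_1}$, valid for all $w \in \Wo \cap W^P$, follows from the explicit formula for $w\cdot_k\Oo(1)$ in Lemma~\ref{lem:comp} together with the definition of $\Phi$ (equivalently, from Proposition~\ref{prop:bijY2W} applied to a Hecke word for $s_{2t_1}$ under the map $\psi$). Then the recursions $\Oo(d)=\Oo(d-1)\cdot_k\Oo(1)$ and $z_d^\Y = z_{d-1}^\Y \cdot s_{2t_1}$ give, by induction on $d$, the equality $\Phi(id_\Y \cdot_k \Oo(d)) = \Phi(id_\Y)\cdot (s_{2t_1})^{\cdot d} = z_d^\Y$, completing the identification $v = id_\Y \cdot_k \Oo(d)$.
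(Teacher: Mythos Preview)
Your reduction to $\Y$ via Proposition~\ref{prop:open-orbit-GKM} and the identification $\Phi(id_\Y\cdot_k\Oo(d))=z_d^\Y$ via Proposition~\ref{prop:bijY2W} are correct and coincide with the paper's line of argument. The lower bound $v\geq id_\Y\cdot_k\Oo(d)$ is established exactly as the paper does (this is the content of Lemma~\ref{lem:Ocirc} together with the moment graph isomorphism).

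The gap is in your handling of $\Xc$-detours. The square identities of Proposition~\ref{prop:square} and Corollary~\ref{cor:square} are equalities in the Hecke monoid; they do not, by themselves, bound the endpoint of an arbitrary moment-graph path that wanders into $\Xc$. To turn them into a path bound you would need (i) that every degree-$d_1$ path inside $\Xc$ from $u$ ends at most at $u\cdot\OZ(d_1)$, (ii) that every crossing edge lands at most at $(\,\cdot\,)\cdot\OY(1)$, (iii) that every degree-$d_2$ path inside $\Xo$ ends at most at $(\,\cdot\,)\cdot_k\Oo(d_2)$, and (iv) that the resulting bound $\OY(d)$ is $\leq id_\Y\cdot_k\Oo(d)$. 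Item (iii) is essentially Proposition~\ref{prop:nbhd-open}, which in the paper is deduced \emph{from} Proposition~\ref{prop:curveidY}, so invoking it here is circular; item (iv) is a nontrivial Bruhat comparison that you never carry out; and you do not address paths with multiple $\Xc$/$\Xo$ crossings.

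The paper avoids all of this with a one-line invariant argument (see Lemma~\ref{lem:Ocirc}). Recall $\varphi(u)$ counts barred entries of $u$; Lemma~\ref{lem:phi_degree} gives $\varphi(\text{end})\leq\varphi(\text{start})+d$ along any degree-$d$ chain. Now observe that a $\Xo\to\Xc$ edge has degree $1$ (Theorem~\ref{thm:ZtoYlines}) but cannot increase $\varphi$, since the reflection involved is $s_{t_1\pm t_j}$ with $j>k$ and replaces a non-barred entry by $1$. Hence any chain from $id_\Y$ that touches $\Xc$ wastes at least one unit of degree with no gain in $\varphi$, forcing its endpoint to have $\varphi<d$; but $\varphi(id_\Y\cdot_k\Oo(d))=d$, so such an endpoint cannot dominate $id_\Y\cdot_k\Oo(d)$. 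The same $\varphi$-loss argument disposes of paths starting at $u<id_\Y$ (all such $u$ lie in $\Xc$). This is the missing idea in your proof.
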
 

To prove Proposition~\ref{prop:curveidY} we take a closer look at chains of curves from $id_\Y$.

\begin{defn}
Let $A_\Y(d)$ denote the set of $T$-fixed points in $\Y$ connected to $id_\Y$ by a degree $d$ chain, that is, 
\[
    A_\Y(d) = \left\{ u \in W^P \cap \Wodd \mid u(1) \neq 1,  \text{ and there exists $id_YW_P \overset{d} {\longrightarrow} uW_P$} \right\}.
\]
Here $\varphi(u)=\#\{u(i) \mid u(i) \geq \overline{n+1}\}$, see Definition~\ref{def:moment-graph-combinat}.
\end{defn}

\begin{lemma}\label{lem:Ocirc}
Let $k<n+1$ and $1 \leq d \leq k$. The maximal element of $A_\Y(d)$ (w.r.t the Bruhat order) is $id_\Y \cdot_k \Oo(d)$. The minimal length representative of $id_\Y \cdot_k \Oo(d)$ is:
\begin{align*}
    \begin{cases}
        (d+2<d+3<\cdots<k<\overline{d+1}<\cdots<\overline{3}<\overline{2}) & \text{if $1 \leq d<k$;} \\
        (\overline{k+1}<\overline{k}<\cdots<\overline{3}<\overline{2}) &\text{if $d=k$.}
    \end{cases}
\end{align*}
\end{lemma}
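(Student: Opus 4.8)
The plan is to prove the two assertions separately: first compute the minimal length representative of $id_\Y \cdot_k \Oo(d)$ explicitly by induction on $d$, and then show via the moment graph that this element is the Bruhat-maximal point of $A_\Y(d)$. The conceptual backbone is the identification furnished by Proposition~\ref{prop:open-orbit-GKM} of the moment graph $MG_{\Xo}$ with that of the homogeneous space $\Y=\IG(k,2n)$: under the degree-preserving isomorphism $\Phi$ the point $id_\Y$ corresponds to the Schubert point of $\IG(k,2n)$, so the statement is really the transfer to $\Xo$ of the (Buch--Mihalcea type) curve neighborhood of a point in a symplectic Grassmannian. Writing $id_\Y=(2<3<\cdots<k+1)$ for its minimal representative, I record once and for all that $\varphi(id_\Y)=0$ and that all $k$ entries are $\le k+1$.

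For the explicit formula I would induct on $d$ using the description of $v \cdot_k \Oo(1)$ in Lemma~\ref{lem:comp}. In the base case $j^\circ=\min\left(\{2,\dots,n+1\}\setminus\{3,\dots,k+1\}\right)=2$, so $id_\Y \cdot_k \Oo(1)=(3<4<\cdots<k+1<\overline 2)$, matching the claim at $d=1$. For the inductive step, assuming $id_\Y \cdot_k \Oo(d-1)=(d+1<\cdots<k+1<\overline d<\cdots<\overline 2)$, the first entry is $d+1$ and the absolute values among the remaining entries are $\{2,\dots,d\}\cup\{d+2,\dots,k+1\}$, whence $j^\circ=d+1$; applying Lemma~\ref{lem:comp} drops the entry $d+1$ and inserts $\overline{d+1}$, producing $(d+2<\cdots<k+1<\overline{d+1}<\cdots<\overline 2)$, exactly $id_\Y \cdot_k \Oo(d)$. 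The terminal case $d=k$ is the step that consumes the last unbarred entry, giving $(\overline{k+1}<\cdots<\overline 2)$. In particular each step increases $\varphi$ by one and traverses a single degree-$1$ edge (as $\Oo(1)$ corresponds to $s_{2t_1}\in R_1^+$), so the chain $id_\Y \to id_\Y\cdot_k\Oo(1)\to\cdots\to id_\Y\cdot_k\Oo(d)$ has degree $d$ and already witnesses $id_\Y\cdot_k\Oo(d)\in A_\Y(d)$.

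It then remains to prove maximality: every $v\in A_\Y(d)$ satisfies $v\le id_\Y\cdot_k\Oo(d)=:M_d$. I would extract two bounds for any degree-$d$ chain from $id_\Y$ to $v$. From Lemma~\ref{lem:phi_degree}(2) one gets $\varphi(v)\le d+\varphi(id_\Y)=d$; and from the analogue of Lemma~\ref{lem:phi_degree}(3) at least $k-d$ entries of $v$ are $\le k+1$. For the second bound I transfer the chain through $\Phi$ to $MG_{\IG(k,2n)}$, where $id_\Y$ becomes the identity and the threshold becomes $k$, the transfer being legitimate because any excursion of the chain into the closed orbit $\Xc$ can only decrease length by Theorem~\ref{thm:ZtoYlines} and so never helps reach a larger endpoint in $\Y$. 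Granting these two bounds, a direct counting argument in the ordered alphabet $1<\cdots<n+1<\overline{n+1}<\cdots<\overline 1$ forces $v\le M_d$ entry by entry: the top $d$ entries of any admissible $v$ are at most $\overline{d+1}<\cdots<\overline 2$ (the $d$ largest letters available, since $\overline 1$ is forbidden in $\Wodd$), while the condition that at least $k-d$ of the necessarily unbarred letters lie in $\{2,\dots,k+1\}$ squeezes the bottom $k-d$ entries to be at most $d+2,\dots,k+1$; comparing with the $M_d$ computed above gives the inequality.

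The hard part will be the maximality half, and within it the clean establishment of the second bound: the threshold in Lemma~\ref{lem:phi_degree}(3) is tied to the parabolic of $\IG(k,2n+2)$ and does not apply verbatim to chains starting at $id_\Y$, so one must route through the isomorphism $\Phi$ while controlling chains that stray into $\Xc$. A secondary technical point is promoting the entrywise inequalities to an actual Bruhat relation: since $\IG(k,2n)$ is not cominuscule this is not literally coordinatewise comparison, so I would invoke the combinatorial description of the Bruhat order via the $k$-strict or BC-partitions of Section~\ref{s:Partitions} to justify $v\le M_d$ rigorously.
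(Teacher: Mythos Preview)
Your computation of the minimal representative by induction via Lemma~\ref{lem:comp} is correct and coincides with the paper's argument (modulo the paper's typo: $id_\Y$ should indeed be $(2<\cdots<k+1)$ and the unbarred block in the formula should end at $k+1$, as you have it).

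For maximality the paper proceeds differently. It first invokes Proposition~\ref{prop:irredopen} to conclude that $A_\Y(d)$ has a \emph{unique} maximum $z_d$, then shows via a $\varphi$-count (using Theorem~\ref{thm:ZtoYlines} to see that the step into $\Xc$ has degree~$1$ yet does not raise $\varphi$) that any chain from $id_\Y$ through $\Xc$ ends at a vertex with $\varphi<d$; since $\varphi(v_d)=d$, the maximal chain must stay in $\Y$, and then the degree-preserving isomorphism $\Phi$ of Proposition~\ref{prop:open-orbit-GKM} together with the Buch--Mihalcea recursion in $\IG(k,2n)$ identifies $z_d=v_d$. Your route avoids irreducibility and instead bounds every $v\in A_\Y(d)$ entrywise, in the spirit of the paper's later Lemma~\ref{lem:OY(d)}. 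That is a legitimate and arguably more elementary alternative: the paper buys brevity by quoting the homogeneous case, while your approach is self-contained.

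There is one genuine soft spot. Your justification for the second bound (at least $k-d$ entries of $v$ are $\le k+1$) routes through $\Phi$ and then appeals to Theorem~\ref{thm:ZtoYlines} to dismiss excursions into $\Xc$ because they ``decrease length''. That inference does not follow: Theorem~\ref{thm:ZtoYlines} controls length, not the reachable set, and there is no reason a priori why an arbitrary $v\in A_\Y(d)$ reached via $\Xc$ should also be reachable by a chain staying in $\Y$. The paper's $\varphi$-argument only shows this for the \emph{maximum}, which it knows exists by irreducibility---a fact you are not using. The fix is simple and makes your argument cleaner: do not transfer through $\Phi$ at all. The proof of Lemma~\ref{lem:phi_degree}(3) adapts verbatim to $id_\Y=(2<\cdots<k+1)$ with threshold $k+1$ in place of $k$, since a degree-$i$ edge still alters at most $i$ of the first $k$ positions; this holds for \emph{any} chain in the moment graph of $\IG(k,2n+1)$, regardless of whether it visits $\Xc$. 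With this direct bound your entrywise comparison goes through. Finally, your worry about promoting entrywise inequalities to a Bruhat relation is unnecessary: for minimal coset representatives in $W^P$ of type~$C$ the Bruhat order \emph{is} coordinatewise comparison in the alphabet $1<\cdots<n+1<\overline{n+1}<\cdots<\overline 1$ (equivalently, containment of BC-partitions, cf.\ Section~\ref{ss:BC}); the paper itself uses exactly this in the proof of Lemma~\ref{lem:OY(d)}.
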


\begin{proof} 
To check that the minimal length representative of $id_\Y \cdot_k \Oo(d) =: v_d$ is as stated we repeatedly apply Lemma~\ref{lem:comp}.

We now prove that $v_d$ is the maximal element of $A_\Y(d)$. First, since the curve neighborhood $\Gamma_d(X(id_\Y))$ is irreducible by Proposition \ref{prop:irredopen}, we know that $A_\Y(d)$ has a maximum element, which we denote by $z_d$ . 

Next we argue that a chain $id_\Y W_P\overset{d} {\longrightarrow} z_d$ cannot have any $T$-fixed point in the closed $\Sp_{2n+1}$-orbit $\Xc$. Indeed let $id_\Y W_P \overset{d} {\longrightarrow} uW_P$ be a chain that includes at least one $T$-fixed point in $\Xc$. Such a chain is of the form 
\[
    id_\Y \to u_1 \to \cdots \to u_s \to u_{s+1} \to \cdots \to u_r \to u 
\] 
where $u_s \in \Y$, $u_{s+1} \in \Xc$, and the degrees of the edges add up to $d$. Write $u_s=u_{s+1} s_\alpha$. By Theorem \ref{thm:ZtoYlines} we know that the edge $u_s \to u_{s+1}$ has degree one, and that $\ell(u_s) > \ell(u_{s+1})$. Since $u_s \in \Y$ and $u_{s+1} \in \Xc$ the positive root $\alpha$ must be of the form $t_1 \pm t_j$ for $k>j$. Therefore, the number $\varphi(u_{s+1})$ of barred entries in $u_{s+1}$ cannot exceed $\varphi(u_s)$. We decompose the degree $d$ chain as follows:
\[
    id_\Y \overset{d_1}{\longrightarrow} u_s \overset{1}{\longrightarrow} u_{s+1} \overset{d_2}{\longrightarrow} u,
\]
where $d_1+d_2+1=d$. By Lemma~\ref{lem:phi_degree} we know that $\varphi(u_s) \leq \varphi(id_\Y)+d_1=d_1$ and $\varphi(u) \leq \varphi(u_{s+1})+d_2$. Since $\varphi(u_{s+1}) \leq \varphi(u_s)$ it follows that $\varphi(u) \leq d_1+d_2<d$. 

However, if we start from $id_\Y$ and apply the reflection $s_{2t_1}$ $d$ times, taking minimal $W_P$-coset representatives at each step, we obtain the element $v_d$ defined earlier. This gives us a degree $d$ chain $id_Y \to v_d$, and we clearly have $\varphi(v_d)=d$. If $u$ was the maximal element of $A_\Y(d)$ then we would have $u \geq v_d$, which is impossible since $\varphi(u)<\varphi(v_d)$. Thus a chain $id_\Y \overset{d} {\longrightarrow} z_d$ has all its $T$-fixed points in $\Y$, and we also have that $v_d \leq z_d$ since there is a degree $d$ chain $id_Y \to v_d$.

We now compute the curve neighbourhood of the Schubert point in $\Y \cong \IG(k,2n)$, namely $Y(id)$. Here $id$ denotes the identity in the smaller Weyl group $W_{2n}$. We remark that $\Phi(id_\Y)=id$. Applying the Buch-Mihalcea Recursion, see Proposition~\ref{prop:zd}, we get that 
\[
    \Gamma_d(Y(id)) = Y(s_{2t_1} \cdot \ldots \cdot s_{2t_1} W_\Y)  (\text{ $s_{2t_1}$ appearing $d$-times})
\]
and we easily compute that
\[
    s_{2t_1} \cdot \ldots \cdot s_{2t_1} W_\Y = (d+1<d+2<\dots <k< \bar d < \dots < \bar 2 <\bar 1) = \Phi(v_d).
\]
This means the maximal degree $d$ chain from $id$ in $\IG(k,2n)$ ends at $\Phi(v_d)$. 

Now recall that the chain $id_\Y \overset{d} {\longrightarrow} z_d$ has all its $T$-fixed points in $\Y$, hence we can take its image by $\Phi$. This image is a degree $d$ chain from $id \to \Phi(z_d)$, hence $\Phi(z_d) \leq \Phi(v_d)$ by maximality of $\Phi(v_d)$. Since $\Phi$ is clearly order-preserving is follows that $z_d \leq v_d$, hence finally $z_d=v_d$, which concludes the proof.
\end{proof}

\begin{proof}[Proof of Proposition \ref{prop:curveidY}]
Combining Propositions~\ref{prop:moment-odd} and~\ref{prop:irredopen} we get that $\Gamma_d(X(id_\Y))=X(u_d)$, where $u_d$ is the maximal vertex (for the Bruhat order), which can be reached from some $u \leq id_\Y$ using a path of degree $d$ or less. Let $u \to u_d$ be such a path. Note that $u_d \in \Y$, otherwise the whole curve neighborhood $\Gamma_d(X(id_\Y))$ would be contained in $\Xc$, which is impossible since $id_\Y \not \in \Xc$. If $u=id_\Y$ it follows from Lemma~\ref{lem:Ocirc} that $u_d=id_\Y \cdot_k \Oo(d)$, hence the result. But if $u<id_\Y$ then $u \in \Xc$, and we can show as in the proof of Lemma~\ref{lem:Ocirc} that $\varphi(u_d)<\varphi(u)+d$, hence $\varphi(u_d)<d$, and we argue as before that this contradicts the maximality of $u_d$.
\end{proof}

From Proposition \ref{prop:curveidY} we deduce the situation for an arbitrary Schubert variety $X(w)$ intersecting $\Xo$.

\begin{prop}\label{prop:nbhd-open}
Let $X(w) \subset \IG(k,2n+1)$ be a Schubert variety such that $X(w) \cap X^\circ \neq \emptyset$ and $d$ be an effective degree. Then $\Gamma_d(X(w))=X(w \cdot_k O^\circ(d))$. 
\end{prop}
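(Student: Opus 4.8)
The plan is to combine irreducibility with the moment-graph isomorphism onto the homogeneous space $\Y=\IG(k,2n)$, where curve neighborhoods are already understood, and then to control the excursions into the closed orbit by a $\varphi$-counting argument. By Proposition~\ref{prop:irredopen} the neighborhood $\Gamma_d(X(w))$ is irreducible, hence a single Schubert variety $X(v)$ with $v\in W^P\cap\Wodd$; since $X(w)\subseteq X(v)$ and $X(w)\cap\Xo\neq\emptyset$ we get $X(v)\cap\Xo\neq\emptyset$, so in fact $v\in\Wo\cap W^P$. By Proposition~\ref{prop:moment-odd}, $v$ is the Bruhat-maximal vertex of the moment graph of $X$ reachable from some $u\leq w$ by a chain of degree at most $d$. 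Thus the whole problem reduces to identifying this maximal vertex.

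First I would establish the lower bound $v\geq w\cdot_k\Oo(d)$. Since $\Y$ is homogeneous, Buch–Mihalcea gives $\Gamma_d(\Y(\Phi(w)))=\Y(\Phi(w)\cdot z_d W_\Y)$, where the analogue of Corollary~\ref{cor:evest} for $\IG(k,2n)$ yields $z_d=(s_{2t_1})^{\cdot d}$; equivalently there is a degree-$d$ chain in $MG_\Y$ from some $u'\leq\Phi(w)$ to $\Phi(w)\cdot z_d$. Using the degree-preserving isomorphism $\Phi\colon MG_{\Xo}\to MG_\Y$ of Proposition~\ref{prop:open-orbit-GKM}, which is order-preserving (as in the proof of Lemma~\ref{lem:Ocirc}), I pull this chain back to a degree-$d$ chain inside $MG_{\Xo}$ (a full subgraph of the moment graph of $X$) from $\Phi^{-1}(u')\leq w$ to $\Phi^{-1}(\Phi(w)\cdot z_d)$. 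By Proposition~\ref{prop:bijY2W} this endpoint equals $w\cdot_k\Oo(d)$, the element $\Oo(d)$ being by construction the $\psi$-transport of the reduced word of $z_d$, consistent with Lemma~\ref{lem:Ocirc}. Hence $w\cdot_k\Oo(d)$ is reachable, so $v\geq w\cdot_k\Oo(d)$.

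For the matching upper bound $v\leq w\cdot_k\Oo(d)$ the difficulty — and the main obstacle — is that a chain realizing $v$ need not stay inside $\Xo$: it may dip into $\Xc$ and return. If \emph{some} chain to $v$ lies entirely in $\Xo$, then applying $\Phi$ gives a degree-$d$ chain in $MG_\Y$ from $\Phi(u)\leq\Phi(w)$ to $\Phi(v)$, whence $\Phi(v)\leq\Phi(w)\cdot z_d=\Phi(w\cdot_k\Oo(d))$ and we are done by order-preservation. To force this, I would re-run the $\varphi$-counting of Proposition~\ref{prop:curveidY} relative to $w$. The inputs are Theorem~\ref{thm:ZtoYlines} (every $\Xc$–$\Xo$ edge has degree $1$ and increases length, so crossing from $\Xo$ into $\Xc$ is length-decreasing along a root $t_1\pm t_j$ and cannot raise the number of barred entries $\varphi$) and Lemma~\ref{lem:phi_degree}(2) (a degree-$e$ chain raises $\varphi$ by at most $e$). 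Together these show that any chain from $u\leq w$ visiting $\Xc$ spends at least one unit of degree on a crossing that does not raise $\varphi$, so its endpoint satisfies $\varphi\leq\varphi(w)+d-1$.

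Finally I would close the argument using that $\varphi$ is Bruhat-monotone (if $u\geq u'$ then every barred entry of $u'$ remains barred in $u$). From the lower bound, $\varphi(v)\geq\varphi(w\cdot_k\Oo(d))=\min\{k,\varphi(w)+d\}$. If $\varphi(w)+d\leq k$ this reads $\varphi(v)\geq\varphi(w)+d$, which is incompatible with the bound $\varphi\leq\varphi(w)+d-1$ for chains through $\Xc$; hence a maximal chain must stay in $\Xo$ and the upper bound follows as above. If instead $\varphi(w)+d>k$, then $w\cdot_k\Oo(d)$ is already the maximal element of $\Wo\cap W^P$ (all $k$ entries barred), so $v\leq w\cdot_k\Oo(d)$ holds simply because $v\in\Wo\cap W^P$. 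In either case $v=w\cdot_k\Oo(d)$, completing the proof. I expect the $\varphi$-bookkeeping across possibly several excursions into $\Xc$ to be the only genuinely delicate point; everything else is transport through $\Phi$ together with the already-established homogeneous and base-point results.
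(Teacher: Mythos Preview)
Your lower bound is fine: pulling back a maximal chain in $\Y$ through the moment-graph isomorphism $\Phi$ does yield a degree-$d$ chain in $X$ ending at $w\cdot_k\Oo(d)$, and Proposition~\ref{prop:bijY2W} gives the needed identification $\Phi^{-1}(\Phi(w)\cdot z_d)=w\cdot_k\Oo(d)$ (this is made explicit for $d=1$ in Lemma~\ref{lem:Ocirc1}).

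The upper bound, however, has two genuine gaps.

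First, your $\varphi$-bound ``a chain from $u\leq w$ that visits $\Xc$ satisfies $\varphi\leq\varphi(w)+d-1$'' relies on there being an $\Xo\to\Xc$ crossing, which costs a degree without raising $\varphi$. But when the starting point $u$ itself lies in $\Xc$ (and such $u\leq w$ certainly exist, e.g.\ $u=id$), the chain need only cross \emph{out} of $\Xc$; a $\Xc\to\Xo$ edge replaces the entry $1$ by some $m$, and when $m$ is barred this raises $\varphi$ by $1$. You then get only $\varphi(v)\leq\varphi(u)+d\leq\varphi(w)+d$, which is no contradiction. Concretely, with $k=2$, $n=2$, $w=id_\Y=(2<3)$, $d=1$: from $u=id=(1<2)\leq w$ the degree-$1$ edge to $\OY(1)=(2<\bar 3)$ achieves $\varphi=1=\varphi(w)+d$, so your bound fails even though the conclusion $(2<\bar 3)\leq w\cdot_k\Oo(1)=(3<\bar 2)$ happens to hold.

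Second, your handling of the case $\varphi(w)+d>k$ is incorrect: having all $k$ entries barred does \emph{not} make $w\cdot_k\Oo(d)$ the maximal element of $\Wo\cap W^P$. Take $k=2$, $n=4$, $w=(\bar 5<\bar 4)$, $d=1$: then $\varphi(w)+d=3>k$, yet $w\cdot_k\Oo(1)=(\bar 4<\bar 2)$ is strictly below the maximal element $(\bar 3<\bar 2)$. So ``$v\leq w\cdot_k\Oo(d)$ simply because $v\in\Wo\cap W^P$'' does not follow.

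The paper sidesteps both issues by a different route: it establishes only the single base case $w=id_\Y$ (Proposition~\ref{prop:curveidY}) and then invokes the recursion of \cite[Theorem~5.1]{buch.m:nbhds}, transporting the result to arbitrary $w\in\Wo\cap W^P$ via the $\Sp_{2n}$-action on $\Y$ rather than by a direct moment-graph comparison. Your approach may well be repairable, but controlling chains that originate in $\Xc$ and the regime $\varphi(w)+d>k$ are not the routine bookkeeping you suggest.
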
 

\begin{proof}
From Propositions~\ref{prop:moment-odd} and~\ref{prop:irredopen} we know that the curve neighborhood $\Gamma_d(X(w))$ is a Schubert variety. The fact that it is $X(w \cdot_k O^\circ(d))$ follows now directly from Proposition~\ref{prop:curveidY} and the recursion of~\cite[Theorem 5.1]{buch.m:nbhds}.
\end{proof}

\subsection{Curve neighborhoods in the \texorpdfstring{$\Xc$}{Z}-orbit}

In this section we describe the curve neighborhoods of Schubert varieties contained in the closed $\Sp_{2n+1}$-orbit $\Xc$. The following result shows that such a curve neighborhood is in general \emph{not} irreducible; instead, it can have two connected components.

\begin{prop}
Let $X(w) \subset \IG(k,2n+1)$ be a Schubert variety such that $X(w) \subset \Xc$ and let $d$ be an effective degree. Then $\Gamma_d(X(w))$ has one or two connected components. More precisely, there is always a component intersecting the open $\Sp_{2n+1}$-orbit $\Xo$, and there may be an additional component contained in the closed $\Sp_{2n+1}$-orbit $\Xc$.
\end{prop}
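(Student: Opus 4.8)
The plan is to read off $\Gamma_d(X(w))$ from the moment graph description of Proposition~\ref{prop:moment-odd} and to sort the maximal reachable vertices according to which $\Sp_{2n+1}$-orbit they lie in. Since $X(w)\subset\Xc$, every $u\le w$ satisfies $X(u)\subseteq X(w)\subset\Xc$, so each admissible starting vertex has first entry $1$; by Proposition~\ref{prop:T-fixed-YZ} any reachable endpoint $v$ then lies either in $\Xc$ (when $v(1)=1$) or in the $\Y$-part of $\Xo$ (when $v(1)\ne 1$). I would split the maximal reachable vertices into an open family and a closed family and prove that each family contributes a single irreducible Schubert variety.

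For the open family, Theorem~\ref{thm:ZtoYlines} says that every edge from $\Xc$ into $\Xo$ has degree one and increases length, so the cheapest passage out of $X(w)$ into $\Xo$ is the line neighbourhood $X(w\cdot\OY(1))$, which meets $\Xo$. By Proposition~\ref{prop:irredopen} the curve neighbourhood of any Schubert variety meeting $\Xo$ is irreducible, and Proposition~\ref{prop:nbhd-open} evaluates the remaining degree-$(d-1)$ neighbourhood of $X(w\cdot\OY(1))$ as the single Schubert variety $X\bigl(w\cdot\OY(1)\cdot_k\Oo(d-1)\bigr)$. Corollary~\ref{cor:square} shows that crossing into $\Xo$ after any number of steps spent inside $\Xc$ yields the same endpoint $w\cdot\OY(d)$, so the open family contributes exactly the irreducible component $X(w\cdot\OY(d))$; this component meets $\Xo$ and is present for every $d\ge 1$.

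For the closed family I would use that $\Xc\cong\IG(k-1,2n)$ is $\Sp_{2n}$-homogeneous: restricted to the $\Xc$-vertices the moment graph of $X$ is precisely that of $\IG(k-1,2n)$, so chains remaining in $\Xc$ compute the homogeneous curve neighbourhood $X(w\cdot\OZ(d))\subset\Xc$, irreducible by \cite{BCMP:qkfin, buch.m:nbhds}. The delicate point is that a chain may leave $\Xc$ and return, and I must exclude that such a detour reaches a strictly larger vertex of $\Xc$. I would control this with the barred-entry count $\varphi$: exactly as in the proof of Lemma~\ref{lem:Ocirc}, a passage $\Xo\to\Xc$ cannot raise $\varphi$, while Lemma~\ref{lem:phi_degree} bounds the total gain of $\varphi$ along a degree-$d$ chain by $d$. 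A detour spends one unit of degree entering $\Xo$ and one returning, so the $\varphi$ attainable at a closed endpoint reached through $\Xo$ is at most $\varphi(w)+(d-1)$; since staying in $\Xc$ already realises $\varphi(w)+d$ (or the maximal value of $\varphi$, in which case $w\cdot\OZ(d)$ is the largest closed vertex outright) and $v\le v'$ forces $\varphi(v)\le\varphi(v')$, no detour endpoint can dominate $w\cdot\OZ(d)$. Hence the maximal closed endpoint is $w\cdot\OZ(d)$, reached without leaving $\Xc$.

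Combining the two families gives $\Gamma_d(X(w))=X(w\cdot\OY(d))\cup X(w\cdot\OZ(d))$, the first irreducible piece meeting $\Xo$ and the second contained in $\Xc$. The number of maximal components is then settled by a single Bruhat comparison: if $w\cdot\OZ(d)\le w\cdot\OY(d)$ the closed piece is absorbed into the open one and a single component remains, while otherwise the two Schubert varieties are distinct and there are two. This gives the asserted one or two components. I expect the main obstacle to be the closed-family argument of the third paragraph---ruling out that an excursion into $\Xo$ and back reaches a larger vertex of $\Xc$---which is exactly where the interplay of Theorem~\ref{thm:ZtoYlines} with the degree/$\varphi$ bookkeeping of Lemma~\ref{lem:phi_degree} is needed.
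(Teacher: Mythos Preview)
Your open-family argument is correct: once a chain touches $\Xo$, Propositions~\ref{prop:irredopen} and~\ref{prop:nbhd-open} together with Corollary~\ref{cor:square} force the endpoint below the single vertex $w\cdot\OY(d)$, regardless of when the crossing happens. This is essentially how the paper handles the open component in Proposition~\ref{prop:crvnbdweyl}.

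The closed-family argument has a gap. Your $\varphi$-count shows that a detour endpoint $v\in\Xc$ has $\varphi(v)\le\varphi(w)+d-1$, so $v$ cannot \emph{dominate} $w\cdot\OZ(d)$; but this does not rule out $v$ being Bruhat-incomparable to $w\cdot\OZ(d)$, which would yield a second maximal closed vertex and hence a second closed component. (Your saturated-case parenthetical is also false as stated: for $k=3$, $n=4$, $w=(1<\bar 5<\bar 4)$ one has $\varphi(w\cdot\OZ(1))=k-1$ while $w\cdot\OZ(1)=(1<\bar 4<\bar 2)$ lies strictly below the longest closed element $(1<\bar 3<\bar 2)$.) The correct argument is already implicit in your open-family paragraph: if a chain to $v\in\Xc$ visits some $u'\in\Xo$, then $v\in\Gamma_{d'}(X(u'))$ for some $d'<d$, and by Proposition~\ref{prop:irredopen} this is a single Schubert variety indexed by an element of $\Wo$, so $v$ is \emph{strictly} dominated by a reachable $\Xo$-vertex and therefore cannot be maximal in $\Gamma_d(X(w))$. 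Hence any maximal closed vertex is reached by a chain that never leaves $\Xc$, and irreducibility of curve neighbourhoods in the homogeneous space $\Xc\cong\IG(k-1,2n)$ then gives at most one---which is exactly what the paper invokes via \cite{BCMP:qkfin} in the proof of Proposition~\ref{prop:curvidZ}. The $\varphi$-bookkeeping is not needed for this step.
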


This time our strategy consists of two steps:
\begin{itemize}
    \item we compute the curve neighborhood of the Schubert point $X(id)$ in Proposition~\ref{prop:curvidZ};
    \item we deduce in Proposition~\ref{prop:crvnbdweyl} the curve neighborhood of any Schubert variety contained in $\Xc$ using the $\Sp_{2n}$-action on the $T$-fixed points of the open orbit.
\end{itemize}

To state our results we recall the elements $\OY(1) = s_1s_2 \cdots s_{k-1}s_{k+1} \cdots s_{n+1} \cdots s_2s_1$ and $\OZ(1) = s_2 \cdots s_{n+1} \cdots s_2$ from Section~\ref{s:hecke}, and for $d>1$ we define
\[ 
    \OZ(d) := \OZ(d-1) \cdot \OZ(1) \text{ and } \OY(d) := \OZ(d_1) \cdot \OY(1) \cdot_k O^{\circ}(d_2) \text{ where $d_1+d_2+1=d$}. 
\] 

\begin{prop} \label{prop:curvidZ}
The curve neighborhood of the Schubert point is given by
\[
    \Gamma_d(X(id)) = X(\OY(d)) \cup X(\OZ(d)).
\]
Here $X(\OZ(d))$ is contained in the closed orbit, while $X(\OY(d))$ intersects the open orbit.
\end{prop}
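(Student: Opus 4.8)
The plan is to apply Proposition~\ref{prop:moment-odd}. Since $id=(1<2<\cdots<k)$ is the minimum of $W^P\cap\Wodd$, the only $u\le id$ is $id$ itself, so $\Gamma_d(X(id))$ is the union of the Bruhat-maximal vertices reachable from $id$ by a chain of degree at most $d$ in the moment graph of $X=\IG(k,2n+1)$. I would split these maximal vertices according to whether they lie in the closed orbit $\Xc$ (i.e. $v(1)=1$) or meet the open orbit $\Xo$ (i.e. $v(1)\neq 1$), and prove that the maximal vertex of the first type is $\OZ(d)$ and of the second type is $\OY(d)$.

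For the lower bounds I would exhibit explicit degree-$d$ chains. On the closed-orbit side, the full subgraph of the moment graph on the vertices of $\Xc$ is, degree-preservingly, the moment graph of $\Xc\cong\IG(k-1,2n)$, with the map $\Phi_\Xc$ of Proposition~\ref{prop:bijZ} the relevant isomorphism; since $\OZ(d)$ corresponds to the Buch--Mihalcea element $z_d$ of $\IG(k-1,2n)$ (Proposition~\ref{prop:zd}), there is a degree-$d$ chain $id\overset{d}{\to}\OZ(d)$ lying entirely in $\Xc$. On the open-orbit side I would use the single degree-$1$ edge $id\overset{1}{\to}\OY(1)$: by Lemma~\ref{lem:comp} one has $id\cdot\OY(1)=(2<\cdots<k<\overline{k+1})=id\,s_{t_1+t_{k+1}}$, and $t_1+t_{k+1}\in R_1^+$ yields a degree-$1$ curve crossing from $\Xc$ into $\Xo$. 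Since $X(\OY(1))\cap\Xo\neq\emptyset$, Proposition~\ref{prop:nbhd-open} (equivalently the moment-graph isomorphism $\Xo\cong\Y$ of Proposition~\ref{prop:open-orbit-GKM}) provides a degree-$(d-1)$ chain $\OY(1)\overset{d-1}{\to}\OY(1)\cdot_k\Oo(d-1)$ inside $\Xo$, and $\OY(1)\cdot_k\Oo(d-1)=\OY(d)$ by Corollary~\ref{cor:square} with $d_1=0$. Concatenating gives $X(\OZ(d))\cup X(\OY(d))\subseteq\Gamma_d(X(id))$.

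For the upper bound on open-orbit vertices, let $v$ meet $\Xo$ and let $id\overset{d}{\to}v$ be a reaching chain; at its last crossing $u_s\to u_{s+1}$ from $\Xc$ into $\Xo$, Theorem~\ref{thm:ZtoYlines} forces the edge to have degree $1$, so the chain decomposes as $id\overset{d_1}{\to}u_s\overset{1}{\to}u_{s+1}\overset{d_2}{\to}v$ with $d_1+d_2+1=d$ and the last segment inside $\Xo$. The closed-orbit bound gives $u_s\le\OZ(d_1)$, monotonicity of the Hecke and modified Hecke products gives $v\le u_{s+1}\cdot_k\Oo(d_2)\le\OZ(d_1)\cdot\OY(1)\cdot_k\Oo(d_2)$, and the right-hand side equals $\OY(d)$ by Corollary~\ref{cor:square}; hence $v\le\OY(d)$.

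The delicate point, and the step I expect to be the main obstacle, is the upper bound on closed-orbit vertices: I must rule out that a chain which wanders out into $\Xo$ and returns to $\Xc$ reaches a vertex strictly above $\OZ(d)$. The key is that by Theorem~\ref{thm:ZtoYlines} every edge joining $\Xc$ and $\Xo$ has degree $1$ and is length-increasing from $\Xc$ to $\Xo$, so every return edge $\Xo\to\Xc$ is length-decreasing. Reducing to length-increasing chains (which suffice to compute maximal reachable vertices, in the spirit of the greedy Buch--Mihalcea recursion of Proposition~\ref{prop:zd}), an increasing chain ending in $\Xc$ can never have left $\Xc$, so the maximal closed-orbit vertex is computed entirely within $\Xc\cong\IG(k-1,2n)$ and equals $\OZ(d)$; the $\varphi$-degree estimate of Lemma~\ref{lem:phi_degree}, used exactly as in the proof of Lemma~\ref{lem:Ocirc}, supplies the quantitative control ensuring excursions cannot help. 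Finally, since $\OZ(d)\subset\Xc$ while $X(\OY(d))$ meets $\Xo$, the two Schubert varieties are genuinely distinct, which gives the asserted decomposition into (at most) two components.
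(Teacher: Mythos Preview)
Your overall plan and the lower-bound constructions are fine, and your open-orbit upper bound via chain decomposition and Corollary~\ref{cor:square} is a genuinely different route from the paper. The paper does not decompose chains at crossings; instead it proves directly (Lemma~\ref{lem:OY(d)}) that every $u\in A_\Xc(d)$ satisfies $u\le\OY(d)$ by an explicit entry-by-entry comparison using Lemma~\ref{lem:phi_degree}(3). Your approach is more structural but, as you use ``the closed-orbit bound gives $u_s\le\OZ(d_1)$'', it presupposes the closed-orbit upper bound for every $d_1<d$, so you would at minimum need to set the whole argument up as an induction on $d$.

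The real gap is the closed-orbit upper bound itself. Your reduction to length-increasing chains is not justified: the fact that every edge between $\Xc$ and $\Xo$ is length-increasing from $\Xc$ to $\Xo$ (Theorem~\ref{thm:ZtoYlines}) does \emph{not} imply that a maximal vertex in $\Xc$ is reachable by an increasing chain---a chain can drop in length on the return step and still reach, after further increasing steps inside $\Xc$, a vertex you cannot reach with the same total degree staying in $\Xc$. The greedy principle you invoke from Proposition~\ref{prop:zd} is a theorem about homogeneous $G/P$; in the quasi-homogeneous $X$ it is exactly what is at stake. Your appeal to the $\varphi$-estimate ``as in Lemma~\ref{lem:Ocirc}'' does not transfer: there $\varphi$ bounds the number of barred entries of an open-orbit endpoint, which is irrelevant for comparing two vertices that both lie in $\Xc$. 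The paper avoids this moment-graph difficulty altogether: it argues geometrically, citing the irreducibility result \cite{BCMP:qkfin}*{Proposition~3.2} to conclude that at most one irreducible component of $\Gamma_d(X(id))$ lies in $\Xc$, and then identifies that component with the curve neighborhood computed inside the homogeneous $\Xc\cong\IG(k-1,2n)$ via the Buch--Mihalcea recursion, which gives $X(\OZ(d))$. If you want to keep your combinatorial strategy, you need an honest proof that excursions into $\Xo$ cannot improve the $\Xc$-endpoint---and that is not supplied by any of the lemmas you cite.
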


\begin{defn}
Let $A_\Xc(d)$ denote the set of $T$-fixed points in $\Y$ such that there is a degree $d$ chain from $id$ to those $T$-fixed points. That is, 
\[ 
    A_\Xc(d) = \left\{ u \in W^P \cap\Wodd \mid u(1) \neq 1 \text{ and there exists $idW_P \overset{d} {\longrightarrow} uW_P$} \right\}.
\]
Note that $A_\Xc(d) \neq \emptyset$ as soon as $d \neq 0$.
\end{defn}

\begin{lemma}\label{lem:OY(d)}
Let $k<n+1$ and $1 \leq d \leq k$. The maximal element (w.r.t. the Bruhat order) of $A_\Xc(d)$ is $\OY(d)$. The minimal length representative of $\OY(d)$ is:
\[
    \begin{cases}
        (2<3<\cdots<k<\overline{k+1}) & \text{if $d=1$}; \\
        (d+1<d+2<\cdots<k<\overline{k+1}<\overline{d}<\cdots<\overline{3}<\overline{2}) & \text{if $1<d<k$}; \\
        (\overline{k+1}<\overline{k}<\cdots<\overline{3}<\overline{2}) & \text{if $d=k$}.
    \end{cases}
\]
\end{lemma}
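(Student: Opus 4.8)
The plan is to follow the philosophy of the proof of Lemma~\ref{lem:Ocirc}, but accounting for the fact that a degree $d$ chain starting at the Schubert point $id \in \Xc$ must cross from the closed orbit $\Xc$ into the open orbit $\Xo$ in order to land in $A_\Xc(d)$. First I would verify that the stated signed permutations are indeed the minimal length representatives of $\OY(d)$; this is a direct (if tedious) computation using the recursive definition $\OY(d)=\OZ(d-1)\cdot\OY(1)$ together with repeated applications of Lemma~\ref{lem:comp}, and I would not dwell on it. The same recursion simultaneously exhibits $\OY(d)\in A_\Xc(d)$: the degree $d-1$ chain realizing $id\cdot\OZ(d-1)$ inside $\Xc$, followed by the single degree $1$ crossing edge corresponding to $\OY(1)$ (whose existence and length-increasing behaviour are guaranteed by Theorem~\ref{thm:ZtoYlines}), produces a degree $d$ chain from $id$ to $\OY(d)$ ending in $\Xo$.

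The heart of the argument is to show that every $u \in A_\Xc(d)$ satisfies $u \le \OY(d)$, so that $\OY(d)$ is the unique maximum. Fix a degree $d$ chain $id \overset{d}{\to} u$ with $u \in \Xo$, and locate the \emph{first} vertex $u_{s+1}$ lying in $\Xo$, with $u_s \in \Xc$ its predecessor. By Theorem~\ref{thm:ZtoYlines} the edge $u_s \to u_{s+1}$ has degree $1$ and $\ell(u_{s+1})>\ell(u_s)$, so the chain decomposes as
\[
    id \overset{d_1}{\longrightarrow} u_s \overset{1}{\longrightarrow} u_{s+1} \overset{d_2}{\longrightarrow} u, \qquad d_1+d_2+1=d,
\]
where the first segment lies entirely in $\Xc$. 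I would then bound each segment separately. The segment in $\Xc$ gives $u_s \le id \cdot \OZ(d_1)$, since under the identification $\Xc \cong \IG(k-1,2n)$ of Proposition~\ref{prop:bijZ} it is a degree $d_1$ chain from the Schubert point, bounded by the even symplectic curve neighborhood (cf.~\cite{ShiflerWithrow}, using the analogue of Proposition~\ref{prop:open-orbit-GKM} to match edge degrees in the moment graph of $\Xc$). The final segment gives $u \le u_{s+1}\cdot_k\Oo(d_2)$ by Proposition~\ref{prop:nbhd-open}, which identifies $\Gamma_{d_2}(X(u_{s+1}))$ with the single irreducible Schubert variety $X(u_{s+1}\cdot_k\Oo(d_2))$; via Proposition~\ref{prop:moment-odd} this bound holds for \emph{any} degree $d_2$ path from $u_{s+1}$, so even excursions of this segment back through $\Xc$ cannot exceed it.

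It remains to bound the crossing edge, and this is the step I expect to be the main obstacle. The claim is that any degree $1$ crossing satisfies $u_{s+1}\le u_s\cdot\OY(1)$. Writing $u_s=(1<a_2<\dots)$, the only degree $1$ roots removing the value $1$ are $t_1-t_j$ and $t_1+t_j$ with $j>k$; the reflection $s_{2t_1}$ is excluded, as it would introduce the forbidden value $\bar 1$ and leave $\Wodd$. Such a crossing deletes $1$ and inserts either the unbarred value $j$ or the barred value $\bar j$, whereas by Lemma~\ref{lem:comp} the element $u_s\cdot\OY(1)$ inserts the \emph{largest} available barred value $\bar j_\Y$, with $j_\Y=\min\{2,\dots,n+1\}\setminus\{a_2,\dots,a_k\}$. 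Since inserting a larger value in sorted position yields a larger element in the Grassmannian Bruhat order, every such crossing endpoint is $\le u_s\cdot\OY(1)$, establishing the bound. Finally, chaining the three bounds through monotonicity of the (modified) Hecke product and invoking Corollary~\ref{cor:square} for $w=id$ gives
\[
    u \le u_{s+1}\cdot_k\Oo(d_2) \le \bigl(u_s\cdot\OY(1)\bigr)\cdot_k\Oo(d_2) \le \bigl((id\cdot\OZ(d_1))\cdot\OY(1)\bigr)\cdot_k\Oo(d_2) = \OY(d),
\]
which completes the maximality argument. As a sanity check one verifies $\varphi(\OY(d))=d$ while Lemma~\ref{lem:phi_degree} forces $\varphi(u)\le d$ for all $u\in A_\Xc(d)$, consistent with (though weaker than) the inequality $u\le\OY(d)$.
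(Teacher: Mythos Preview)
Your argument is correct, but it takes a genuinely different route from the paper's. The paper proves maximality of $\OY(d)$ by a direct entrywise comparison: for an arbitrary $u\in A_\Xc(d)$ it writes $u=(a_1<\dots<a_{k-r}<\bar a_k<\dots<\bar a_{k-r+1})$, uses Lemma~\ref{lem:phi_degree} to force $a_1\le d+1,\dots,a_{k-d}\le k$, uses $u\in\Wodd$ and $u(1)\neq 1$ to bound the last $d-1$ entries by $\bar d,\dots,\bar 2$, and finally argues that the $(k-d+1)$st entry is at most $\overline{k+1}$. No decomposition of the chain, no Hecke monotonicity, and no appeal to Proposition~\ref{prop:nbhd-open} or Corollary~\ref{cor:square} is needed; the argument is entirely self-contained at the level of the moment-graph constraints.

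Your approach is more structural: it factors the chain at the first crossing into $\Xo$, bounds the $\Xc$-segment by the even symplectic curve neighborhood $\OZ(d_1)$, bounds the crossing by $\OY(1)$ (the key combinatorial observation being that the inserted value is at most $\bar j_\Y$, since $u_s(k+1)=j_\Y$), bounds the tail via Proposition~\ref{prop:nbhd-open}, and reassembles via Corollary~\ref{cor:square}. This is perfectly valid---there is no circularity, since Proposition~\ref{prop:nbhd-open} and Corollary~\ref{cor:square} are established earlier---and it has the virtue of making explicit \emph{why} the square property governs the answer. The cost is that it imports more machinery (Hecke-product monotonicity, the $\Xc$-analogue of Proposition~\ref{prop:open-orbit-GKM}, and the irreducibility result of Proposition~\ref{prop:irredopen} via Proposition~\ref{prop:nbhd-open}), whereas the paper's proof needs nothing beyond Lemma~\ref{lem:phi_degree}. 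For the monotonicity of $\cdot_k$ with $\Oo(d_2)$ on the right, the cleanest justification is the one you essentially give: by Proposition~\ref{prop:nbhd-open} the operation $v\mapsto v\cdot_k\Oo(d_2)$ computes a curve neighborhood, and curve neighborhoods are tautologically monotone in the input Schubert variety.
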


\begin{proof}
To check that the minimal length representative of $\OY(d)$ is as stated we repeatedly apply Lemma~\ref{lem:comp}. 

First we notice that there is a degree $d$ chain from $id$ to $\OY(d)$, obtained by applying once the reflection $s_{t_1+t_{k+1}}$, and $d-1$ times the reflection $s_{2t_1}$, taking minimal $W_P$-coset representatives at each step. It follows that $\OY(d) \in A_\Xc(d)$.

Now consider $u \in A_\Xc(d)$; if we can prove that $u \leq \OY(d)$ then we will be able to conclude that $\OY(d)$ is the maximal element of $A_\Xc(d)$. Since $u \in A_\Xc(d)$ we must have $r:=\varphi(u) \leq d$, hence $u$ is of the form
\[
    (a_1 < \dots < a_{k-r} < \bar a_k < \dots < \bar a_{k-r+1})
\]
We know from part (3) of Lemma~\ref{lem:phi_degree} that at least $k-d$ elements of $a_1,\dots,a_{k-r},\bar a_k,\dots,\bar a_{k-r+1}$ must be smaller than or equal to $k$. This means $a_1,\dots,a_{k-d}$ are all smaller than or equal to $k$. Therefore we must have $a_1 \leq d+1,a_2 \leq d+2, \dots, a_{k-d} \leq k$. Hence the $k-d$ first entries of $u$ are smaller than or equal to the corresponding entries of $\OY(d)$. 
Moreover, as $u \in \Wodd \cap W_P$ and $u(1) \neq 1$, none of the $a_i$ can be equal to $1$. It follows that the last $d-1$ entries of $u$ are smaller than or equal to the corresponding entries of $\OY(d)$. To conclude that $u \leq \OY(d)$ it only remains to show that $u(k-d+1) \leq \overline{k+1}$. If $u(k-d+1)$ is not a barred element (i.e., if $r=\varphi(u)<d$), then the equality is clearly true. Otherwise $u(k-d+1)=\bar a_k$, and we need to show that $a_k \geq k+1$. Indeed, if we had $a_k \leq k$, then $a_{k-d+1},\dots,a_k$ must all be smaller than or equal to $k$. As $a_1,\dots,a_{k-d}$ are also smaller than or equal to $k$, one of the $a_i$ must be equal to $1$, a contradiction since $u \in \Y$, hence $u \leq \OY(d)$ as claimed.
\end{proof}
%TODO add cosets everywhere there needed

\begin{proof}[Proof of Proposition \ref{prop:curvidZ}]
By Proposition~\ref{prop:moment-odd} we have
\[
    \Gamma_d(X(id)) = X(v^1) \cup \cdots \cup X(v^s),
\]
where $v^1, \cdots, v^s$ are the maximal vertices which can be reached from $id$ using a path of degree $d$ or less. Among these $v^i$, some are in $\Y$ while the others are in $\Xc$. Our first goal is to prove that exactly one of the $v^i$ is in $\Y$. It is immediate from Lemma~\ref{lem:OY(d)} that at least one of the $v^i$ is in $\Y$ as long as $d \geq 1$, as there always exists a degree $d$ curve from $id$ to $\OY(d)$. Now take any $v^i \in \Y$; by definition of $A_\Xc(d)$, $v^i$ must be an element of $A_\Xc(d)$, and by maximality and Lemma~\ref{lem:OY(d)} we must have $v^i=\OY(d)$.

%We now focus our attention on the vertices $v^i$ which are in $\Xc$, namely, we want to prove that there is at most one of them. We claim that $\Gamma_d(X(id)) \cap \Xc = \Gamma_d(\Xc(id)) \subset \Xc \cong \IG(k-1,2n)$. {\color{blue} I still don't know how to prove this.} {\color{purple}I'm pretty sure that the equality $\Gamma_d(X(id)) \cap \Xc = \Gamma_d(\Xc(id))$ never holds. Instead, I think we need to look at the curves that are contained in $Z$. Then, the union of the points on those curves is equal to $\Gamma_d(\Xc(id))$.

%So, there are two types of curves. Those that are contained in $\Xc$ and those that intersect the open orbit. The latter is considered in the first paragraph of this proof. 

%One needs to be careful, but the moduli space corresponding to $Z$ is a subvariety/subscheme of the moduli space corresponding to $\IG$. Then the evaluation maps corresponding to $Z$ is a restriction of the evaluation maps corresponding to $\IG$. Piecing these ideas together should complete this picture. I don't think this part is necessary but it never hurts to include it. There might also be an even easier way to see this.}

There is at most one irreducible component of $\Gamma_d(X(id))$ contained in the closed orbit, otherwise this would contradict the irreducibility result in \cite[Proposition 3.2]{BCMP:qkfin}. Applying the Buch-Mihalcea Recursion, see Proposition~\ref{prop:zd}, we get that 
\[
    \Gamma_d(\Xc(id)) = \Xc(s_{2t_1} \cdot \ldots \cdot s_{2t_1} W_\Xc)  (\text{$s_{2t_1}$ appearing $d$-times})
\]
and we easily compute that
\[
    s_{2t_1} \cdot \ldots \cdot s_{2t_1} W_\Xc = (d+1<d+2<\dots <k-1< \bar d < \dots < \bar 2 <\bar 1).
\]
On the other hand, a quick calculation using Lemma~\ref{lem:comp} shows that the minimal length representative of $\OZ(d)$ is
\[
    (1<d+2<d+3<\dots <k< \overline{d+1} < \dots < \bar 3 <\bar 2).
\]
It is immediate that the image by $\Phi_\Xc$ of the above element is $(d+1<d+2<\dots <k-1< \bar d < \dots < \bar 2 <\bar 1)$.

It follows that $\Gamma_d(X(id))= X(\OY(d)) \cup X(\OZ(d))$ as claimed.
\end{proof}

The situation for an arbitrary Schubert variety $X(w) \subset \Xc$ is as follows.

\begin{prop} \label{prop:crvnbdweyl}
Let $w \in \Wlarge^P \cap \Wodd$ be an odd symplectic minimal length representative such that $X(w) \subset \Xc$ (i.e. $w(1)=1$). Then the cosets $w \cdot \OY(d) \Wlarge^P$ and $w \cdot \OZ(d) \Wlarge^P$ have representatives in $\Wodd$ and 
\[
    \Gamma_d(X(w)) = X(w \cdot \OY(d) \Wlarge^P) \cup X(w \cdot \OZ(d) \Wlarge^P). 
\]
\end{prop}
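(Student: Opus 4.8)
The plan is to mirror the proof of Proposition~\ref{prop:curvidZ} (the case $w=id$) and reduce the general statement to it, using the moment-graph description of curve neighborhoods together with the $\Sp_{2n}$-equivariant combinatorics encoded by the Hecke product. By Proposition~\ref{prop:moment-odd}, $\Gamma_d(X(w)) = X(v^1) \cup \cdots \cup X(v^s)$, where the $v^i$ are the maximal vertices (for the Bruhat order) reachable from some $v \leq w$ by a path of degree at most $d$ in the moment graph of $\IG(k,2n+1)$. Since $X(w) \subset \Xc$ we have $w(1)=1$, and as $\Xc$ is closed, every $v \leq w$ also satisfies $v(1)=1$, so all starting vertices lie in $\Xc$. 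I would first settle the well-definedness claim — that $w \cdot \OY(d)\Wlarge^P$ and $w \cdot \OZ(d)\Wlarge^P$ admit representatives in $\Wodd$ — by induction on $d$ using the explicit formulas of Lemma~\ref{lem:comp} for $w \cdot \OY(1)$, $w \cdot \OZ(1)$ and $\cdot_k \Oo(1)$, which preserve the shape $w(j) \leq \bar 2$ characterizing $\Wodd$. I would then split the maximal vertices $v^i$ according to whether they lie in $\Xc$ or meet $\Xo$, and analyze the two resulting components separately.

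For the component contained in $\Xc$: the closed orbit $\Xc \cong \IG(k-1,2n)$ is $\Sp_{2n}$-homogeneous, so by \cite{BCMP:qkfin} its intrinsic curve neighborhoods are irreducible, giving at most one $v^i$ in $\Xc$. A $\varphi$-counting argument as in the proof of Lemma~\ref{lem:OY(d)}, relying on Lemma~\ref{lem:phi_degree} and the degree-one/length-increasing nature of boundary edges (Theorem~\ref{thm:ZtoYlines}), shows that any path reaching a vertex of $\Xc$ can be replaced by one lying entirely in $\Xc$ without decreasing its endpoint. Hence the $\Xc$-component is the image of the intrinsic curve neighborhood $\Gamma_d(\Xc(\Phi_\Xc(w))) = \Xc(\Phi_\Xc(w) \cdot z_d)$, where $z_d$ is the Buch--Mihalcea element of $\IG(k-1,2n)$. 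Writing $\OZ(d)$ as the $(+1)$-shifted product corresponding to $z_d$ and invoking the Hecke-compatibility of $\Phi_\Xc$ from Proposition~\ref{prop:bijZ}, this component is exactly $X(w \cdot \OZ(d))$.

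For the component meeting $\Xo$: By Theorem~\ref{thm:ZtoYlines}, any path from a starting vertex $v \leq w$ in $\Xc$ to a vertex of $\Xo$ crosses the boundary through a single degree-one, length-increasing edge. Splitting such a path into a degree-$d_1$ path inside $\Xc$, one crossing edge, and a degree-$d_2$ path inside $\Xo$ with $d_1+d_2+1=d$, the maximal $\Xc$-endpoint before crossing is $w \cdot \OZ(d_1)$ by the previous paragraph, the crossing produces the open-orbit line-neighborhood vertex $w \cdot \OZ(d_1) \cdot \OY(1)$, and travel inside $\Xo$ is governed by Proposition~\ref{prop:nbhd-open}, yielding $(w \cdot \OZ(d_1) \cdot \OY(1)) \cdot_k \Oo(d_2)$. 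By Corollary~\ref{cor:square} this equals $w \cdot \OY(d)$ independently of the chosen splitting, so every open-orbit vertex reached is dominated by $w \cdot \OY(d)$, while the explicit chain using one reflection $s_{t_1+t_{k+1}}$ and copies of $s_{2t_1}$ translated by $w$ (as in Lemma~\ref{lem:OY(d)}) shows this bound is attained. Uniqueness of this component then follows from the maximality of $w \cdot \OY(d)$, exactly as in Proposition~\ref{prop:curvidZ}.

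The hard part will be the open-orbit component: one must show both that $w \cdot \OY(d)$ dominates every reachable vertex meeting $\Xo$ and that it is attained, while ruling out more than one such component. The subtlety is that a degree-$d$ path from $X(w)$ may oscillate between $\Xc$ and $\Xo$, so the clean travel--cross--travel decomposition must be justified by the degree-one, length-increasing nature of boundary edges (Theorem~\ref{thm:ZtoYlines}) together with the $\varphi$-count of Lemma~\ref{lem:phi_degree}, and the independence of the endpoint from the location of the crossing rests essentially on the square property of Corollary~\ref{cor:square}. This is precisely where the case $w=id$ demanded the most care, and the general case reduces to it through Hecke-product monotonicity and the $\Sp_{2n}$-equivariance of the moment-graph combinatorics.
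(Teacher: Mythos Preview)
Your route differs substantially from the paper's. The paper's proof is essentially two lines: the case $d=1$ is imported from \cite[Theorem~9.2]{mihalcea.shifler:qhodd}, and for $d>1$ one simply invokes the recursions $\OY(d)=\OY(1)\cdot_k\Oo(d-1)$ and $\OZ(d)=\OZ(d-1)\cdot\OZ(1)$, so that the open-orbit component is governed by the Buch--Mihalcea recursion in $\Y$ (via Proposition~\ref{prop:nbhd-open}) and the closed-orbit component by the recursion in $\Xc$. Unpacked, this is the induction $\Gamma_d(X(w))=\Gamma_1\bigl(\Gamma_{d-1}(X(w))\bigr)$: the inductive hypothesis makes the right-hand side equal to $\Gamma_1\bigl(X(w\cdot\OY(d-1))\bigr)\cup\Gamma_1\bigl(X(w\cdot\OZ(d-1))\bigr)$, and applying Proposition~\ref{prop:nbhd-open} to the first piece, the $d=1$ case to the second, and then collapsing the two resulting open-orbit terms via Corollary~\ref{cor:square}, gives the statement for $d$. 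This avoids ever having to track degree-$d$ paths directly.

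Your direct moment-graph analysis, modeled on the $w=id$ case, is a legitimate alternative strategy, but the sketch has a real gap in the $\Xc$-component step, and the oscillation issue you flag for the $\Xo$-component is not independent of it. You claim a $\varphi$-count shows that any path ending in $\Xc$ can be replaced by one lying entirely in $\Xc$ without lowering the endpoint. But Lemma~\ref{lem:phi_degree} only bounds the number of barred entries; it says nothing about whether an excursion through $\Xo$ can land you \emph{higher inside} $\Xc$ than $w\cdot\OZ(d)$. A subpath $\Xc\to\Xo\to\Xc$ spends two degree-$1$ edges on crossings (Theorem~\ref{thm:ZtoYlines}), yet the intermediate $\Xo$-segment can still raise entries other than the one reset to $1$ on re-entry. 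Likewise, to handle an oscillating path to $\Xo$ you split at the last crossing and need to bound the $\Xc$-vertex just before it---but that is exactly the $\Xc$-component statement for a strictly smaller degree. Both difficulties thus dissolve under induction on $d$, which is precisely the paper's approach; your direct argument would need a genuinely new mechanism (not $\varphi$-counting) to close these loops without it.
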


\begin{proof}
The $d=1$ case follows from \cite[Theorem 9.2]{mihalcea.shifler:qhodd}. For $d>1$, since $\OY(d)=\Oo(d-1) \cdot_k \OY(1)$, the open orbit component follows from the Buch-Mihalcea recursion in $\Y$ while the closed orbit component follows from the recursion in $\Xc$.
\end{proof}

\section{Partitions} \label{s:Partitions}

So far all our results on curve neighborhoods have been expressed in terms of Weyl group elements, specifically in terms of elements of $\Wodd \cap W^P$. However as for Grassmannians there are alternative indexations for Schubert varieties in terms of partitions. These indexations will allow us to refine the results of Section~\ref{s:curvenbhds};  in particular we will be able to say explicitly when the curve neighborhood of a Schubert variety in the closed $\Sp_{2n+1}$-orbit has two components.

In this section we introduce two families of partitions indexing Schubert varieties of  (even or odd) symplectic Grassmannians, namely \emph{$BC$-partitions} in Section~\ref{ss:BC} and \emph{$BKT$-partitions} in Section~\ref{ss:BKT}. In Section~\ref{ss:nbhd-part} we reformulate our results in terms of these indexations.

\subsection{\texorpdfstring{$BC$}{BC}-partitions}\label{ss:BC}

In~\cite{ShiflerWithrow} an indexing set for type $B$ and $C$ isotropic Grassmannians is introduced. The advantage of this set of partitions is that the Bruhat order corresponds to inclusion of the Young diagrams, which is not the case in general for the $BKT$-partitions we will introduce in the next section; the drawback is that codimension cannot be readily computed by summing the parts of the partition. We call these collection of partitions $BC$-partitions.\footnote{The ``BC" comes from the fact that this set of partitions is ``{\bf B}ruhat {\bf C}ompatible" for isotropic Grassmannians in Types {\bf B} and {\bf C}.}

If $1 \leq k \leq N-1$ we denote by $\Part(k,N)$ the set of partitions whose Young diagram fits inside a $k \times (N-k)$ rectangle, namely
\begin{equation}
    \Part(k,N) = \left\{ \mu = (\mu_1 \geq \mu_2 \geq \dots \geq \mu_k \geq 0) \mid \mu_1 \leq N-k \right\}.
\end{equation}

If $N=2n+2$ is even we also encode elements of $\Part(k,2n+2)$ by sequences of $0$s and $1$s, called \emph{$01$-words}, as follows. The boundary of the Young diagram of $\mu \in \Part(k,2n+2)$ consists of $2n+2$ steps, either horizontal of vertical, going from the northeast corner of the $k \times (2n+2-k)$ rectangle to its southwest corner. The total number of vertical steps is $k$. We associate to $\mu$ a $01$-word, denoted by $D(\mu)$, as follows: if the $i$-th step is horizontal we set $D(\mu)(i)=0$, otherwise $D(\mu)=1$.

\begin{defn}
    Let $\BC(k,2n+2)$ denote the set of partitions $\mu \in \Part(k,2n+2)$ such that if $D(\mu)(i)=D(\mu)(2n+3-i)$ for some $1 \leq i \leq n+1$, then $D(\mu)(i)=0$. We also let $\BCodd$ denote the set of partitions in $\BC(k,2n+2)$ whose  first column has $k$ boxes, and we define
    \[
        \BC(k,2n+1) := \{ \lambda \in \Part(k,2n+1) \mid \lambda + 1^k \in \BC(k,2n+2)\} \cong \BCodd.
    \]
    We call the elements of $\BC(k,2n+2)$ \emph{$BC$-partitions}.
\end{defn}

\begin{example}\label{ex:BC_permissible}
    If $\lambda \in \BC(k,2n+1)$ where $\lambda_1=2n+1-k$ then we have
    \[
        \lambda_1-\lambda_2 \geq k-\ell(\lambda),
    \]
    where $\ell(\lambda)$ denotes the number of parts of $\lambda$.
\end{example}

Elements of $\BC(k,2n+2)$ are in bijection with $W^P$, while elements of $\BCodd$, hence $\BC(k,2n+1)$, are in bijection with $\Wodd \cap W^P$. If $\lambda \in \BC(k,2n+1)$ we define
\[
    D(\lambda) := D(\lambda+1^k),
\]
noting that $\lambda+1^k \in \BC(k,2n+2)$.

\begin{lemma}
If $\mu \in \BC(k,2n+2)$, let $r$ be the number of $1$ in the $n+1$ first entries of the associated $01$-word $D(\mu)$. We denote by $1 \leq a_1<\dots<a_r \leq n+1$ the integers such that $D(\mu)(a_i)=1$, and by $1 \leq a_{r+1},\dots,a_k \leq n+1$ those such that $D(\mu)(2n+3-a_i)=1$. 
    
The map $\BC(k,2n+2) \to W^P$ which to $\mu$ associates the Weyl group element 
\[
    (a_1<\dots<a_r<\bar a_k<\dots<\bar a_{r+1})
\]
is a bijection, which restricts to a bijection $\BCodd \to \Wodd \cap W^P$.
\end{lemma}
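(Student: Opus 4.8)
The plan is to construct an explicit inverse and check that the two maps compose to the identity in both directions; the whole statement then reduces to one combinatorial observation about signed permutations together with its dual for $01$-words.

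The key fact is that for any $w \in W^P$, written $(w(1) < \cdots < w(k))$, no value of $\{1,\dots,n+1\}$ can occur both barred and unbarred among the entries $w(1),\dots,w(k)$. Indeed, $w$ is a signed permutation, so $w(\bar i)=\overline{w(i)}$; if $w(i)=j$ and $w(i')=\bar j$ with $1 \le i,i' \le k$, then $w(\bar i)=\bar j = w(i')$, and since $\bar i$ lies among $\bar 1,\dots,\overline{n+1}$ while $i'$ lies among $1,\dots,n+1$, we have $\bar i \neq i'$, contradicting injectivity of $w$. Consequently, if $a_1<\cdots<a_r$ are the unbarred entries and $\bar a_k < \cdots < \bar a_{r+1}$ the barred ones (so that $a_{r+1}<\cdots<a_k$), the sets $\{a_1,\dots,a_r\}$ and $\{a_{r+1},\dots,a_k\}$ are disjoint, and together they form $k$ distinct elements of $\{1,\dots,n+1\}$.

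I would then define the candidate inverse $W^P \to \Part(k,2n+2)$ sending such a $w$ to the $01$-word with $D(i)=1$ precisely at the positions $i=a_1,\dots,a_r$ and $i=2n+3-a_{r+1},\dots,2n+3-a_k$, and $D(i)=0$ elsewhere. This word has exactly $k$ ones, so it encodes a genuine partition. To see it lands in $\BC(k,2n+2)$, note that for $1 \le i \le n+1$ one has $D(i)=1$ iff $i \in \{a_1,\dots,a_r\}$ and $D(2n+3-i)=1$ iff $i \in \{a_{r+1},\dots,a_k\}$; by the disjointness above these never hold simultaneously, which is exactly the defining condition of $\BC(k,2n+2)$. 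Reading this dictionary in the opposite direction shows at once that the map of the statement is well defined---the $\BC$ condition is precisely the disjointness needed to assemble a valid signed permutation out of the positions of the $1$s---and that the two assignments are mutually inverse, since each merely records which of the $n+1$ symmetric pairs $\{i,2n+3-i\}$ carries a $1$ and on which side it sits. This proves the first bijection.

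For the restriction to $\BCodd$ I would translate each side's extra condition into the same entry of the $01$-word. On the Weyl side, $w \in \Wodd$ means $\bar 1$ is not among $w(1),\dots,w(k)$, i.e. $1 \notin \{a_{r+1},\dots,a_k\}$, i.e. $D(2n+2)=0$. On the partition side, a direct computation of the boundary path shows the $k$-th (last) vertical step of $\mu$ sits at position $2n+2-\mu_k$; hence $D(2n+2)=0$ iff $\mu_k \ge 1$, i.e. iff the first column of $\mu$ has all $k$ boxes, which is the definition of $\BCodd$. Thus the bijection carries $\BCodd$ onto $\Wodd \cap W^P$. The only genuine care required is bookkeeping: fixing the orientation of the boundary path and the decreasing indexing $\bar a_k<\cdots<\bar a_{r+1}$ of the barred entries so that the two descriptions line up, which is where essentially all the (routine) effort goes.
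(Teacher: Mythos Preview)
Your proof is correct and is essentially what the paper has in mind; the paper itself declares the proof ``immediate'' and gives no argument, so you have simply written out the routine verification that the $\BC$ condition on the $01$-word is exactly the disjointness $\{a_1,\dots,a_r\}\cap\{a_{r+1},\dots,a_k\}=\emptyset$ needed to build a signed permutation, together with the observation $D(2n+2)=0 \iff \mu_k\ge 1 \iff \bar 1\notin\{w(1),\dots,w(k)\}$ for the odd restriction.
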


The proof of the lemma is immediate, and we deduce in particular that the Schubert varieties in $\IG(k,2n+1)$ are indexed by the partitions of $\BCodd$, or equivalently of $\BC(k,2n+1)$. We will use this indexation throughout the section.

\begin{example}
Let $k=5$, $n=7$, and $w=(1<6<\bar{8}<\bar{7}<\bar{2}) \in \Wodd$. Then $\mu=(\mu_1 \geq \mu_2 \geq \mu_3 \geq \mu_4 \geq \mu_5) \in \BCodd \subset \BC(5,2\cdot 7+2)$ is given by $\mu = (11,7,5,5,1)$ and the corresponding partition in $\BC(5,2\cdot 7+1)$ is $\lambda=(10,6,4,4,0)$.

Pictorially, 
\[
    \begin{tikzpicture}[scale=1/3]
        \draw[fill=gray] (0,0) rectangle (1,5);
        \draw[very thick] (0,0) -- (0,5) ;
        \draw[very thick] (1,0) -- (1,5) ;
        \draw[very thick] (2,1) -- (2,5) ;
        \draw[very thick] (3,1) -- (3,5) ;
        \draw[very thick] (4,1) -- (4,5) ;
        \draw[very thick] (5,1) -- (5,5) ;
        \draw[very thick] (6,3) -- (6,5) ;
        \draw[very thick] (7,3) -- (7,5) ;
        \draw[very thick] (8,4) -- (8,5) ;
        \draw[very thick] (9,4) -- (9,5) ;
        \draw[very thick] (10,4) -- (10,5) ;
        \draw[very thick] (11,4) -- (11,5) ;
        
        \draw[very thick] (0,0) -- (1,0) ;
        \draw[very thick] (0,1) -- (5,1) ;
        \draw[very thick] (0,2) -- (5,2) ;
        \draw[very thick] (0,3) -- (7,3) ;
        \draw[very thick] (0,4) -- (11,4) ;
        \draw[very thick] (0,5) -- (11,5) ;
        
        \node[fill=white,inner sep=1pt] at (3,3) {\large {$\mu$}};
        
        \node[fill=white,inner sep=1pt] at (12,3) {\large {$-$}};
        
        \draw[very thick,fill=gray] (13,0) rectangle (14,5);
        \draw[very thick] (13,1) -- (14,1) ;
        \draw[very thick] (13,2) -- (14,2) ;
        \draw[very thick] (13,3) -- (14,3) ;
        \draw[very thick] (13,4) -- (14,4) ;
        
        \node[fill=white,inner sep=1pt] at (15,3) {\large {$=$}};
        
        \draw[very thick] (16,1) -- (16,5) ;
        \draw[very thick] (17,1) -- (17,5) ;
        \draw[very thick] (18,1) -- (18,5) ;
        \draw[very thick] (19,1) -- (19,5) ;
        \draw[very thick] (20,1) -- (20,5) ;
        \draw[very thick] (21,3) -- (21,5) ;
        \draw[very thick] (22,3) -- (22,5) ;
        \draw[very thick] (23,4) -- (23,5) ;
        \draw[very thick] (24,4) -- (24,5) ;
        \draw[very thick] (25,4) -- (25,5) ;
        \draw[very thick] (26,4) -- (26,5) ;
        
        \draw[very thick] (16,1) -- (20,1) ;
        \draw[very thick] (16,2) -- (20,2) ;
        \draw[very thick] (16,3) -- (22,3) ;
        \draw[very thick] (16,4) -- (26,4) ;
        \draw[very thick] (16,5) -- (26,5) ;
        
        \node[fill=white,inner sep=1pt] at (18,3) {\large {$\lambda$}};
    \end{tikzpicture}
\]
\end{example}

We will introduce the following definition which will be used later to describe certain curve neighborhood components.

\begin{defn}
Let $\mu \in \Part(k,N)$. Let $\mu^t$ denote the transpose of $\mu$, that is, the partition whose Young diagram is obtained by reflecting that of $\mu$ along the northwest-to-southeast diagonal. 

We say that $\mu$ is \emph{$m$-wingtip symmetric}\footnote{To the best of the authors knowledge this property is unnamed in the literature. The part of the name ``wingtip" came from considering Young diagrams as a fixed wing aircraft.} if $m$ is the largest nonnegative integer such that 
\[ 
    D(\mu)(i)=D(\mu^t)(i) 
\] 
for $1 \leq i \leq m$. Furthermore, we will say $\lambda \in \BC(k,2n+1)$ is $m$-wingtip symmetric if $\lambda+1^k \in \BC(k,2n+2)$ is $m$-wingtip symmetric.
\end{defn}

\begin{example}
Consider $\mu=(10 \geq 8 \geq 3 \geq 1 \geq 0) \in \BC(5,2 \cdot 7+2)$. The partition $\mu$ is $3$-wingtip symmetric. It corresponds to the 01-word $0100100000100101$. Here the first and last character, the second and second to last, and the third and third to last character have opposite parity. Pictorially,
%\[\mu=\yng(10,8,3,1,0).   \] 
\[
    \begin{tikzpicture}[scale=1/3]
        \draw[very thick] (0,0) -- (0,5) ;
        \draw[very thick] (1,1) -- (1,5) ;
        \draw[very thick] (2,2) -- (2,5) ;
        \draw[very thick] (3,2) -- (3,5) ;
        \draw[very thick] (4,3) -- (4,5) ;
        \draw[very thick] (5,3) -- (5,5) ;
        \draw[very thick] (6,3) -- (6,5) ;
        \draw[very thick] (7,3) -- (7,5) ;
        \draw[very thick] (8,3) -- (8,5) ;
        \draw[very thick] (9,4) -- (9,5) ;
        \draw[very thick] (10,4) -- (10,5) ;
        
        \draw[very thick] (0,1) -- (1,1) ;
        \draw[very thick] (0,2) -- (3,2) ;
        \draw[very thick] (0,3) -- (8,3) ;
        \draw[very thick] (0,4) -- (10,4) ;
        \draw[very thick] (0,5) -- (11,5) ;
        
        \node[fill=white,inner sep=1pt] at (-2,3) {\large {$\mu=$}};
        
        \draw[red,very thick] (0,0) -- (0,1) ;
        \draw[red,very thick] (0,1) -- (1,1) ;
        \draw[red,very thick] (1,1) -- (1,2) ;
        
        \draw[red,very thick] (9,4) -- (10,4) ;
        \draw[red,very thick] (10,4) -- (10,5) ;
        \draw[red,very thick] (10,5) -- (11,5) ;
    \end{tikzpicture}
\]
The first 3 bottom left hand boundary edges yield the partition $\gamma=\yng(1,0)$ and the first 3 top right hand boundary edges yields the partition $\gamma^t$.
\end{example}

For Schubert varieties $X(\lambda)$ intersecting the open orbit we introduce partitions $\lambda^{\Oo(d)}$ which will be used in Section~\ref{ss:nbhd-part} to recover a description of the curve neighborhood $\Gamma_d(X(\lambda))$.

\begin{defn}
Let $\lambda \in \BC(k,2n+1)$ such that $X(\lambda) \cap \Xo \neq \emptyset$ (equivalently, $\lambda_1<2n+1-k$). Then define $\lambda^{\Oo(d)}$ in the following way:
\begin{enumerate}
    \item for $d=1$ define $\lambda^{\Oo(1)}=(\lambda_2-1,\lambda_3-1,\cdots,\lambda_{\ell(\lambda)}-1,0,\cdots,0).$
    \item for $d>1$ define $\lambda^{\Oo(d)}=(\lambda^{\Oo(d-1)})^{\Oo(1)}.$
\end{enumerate}
Explicitly, $\lambda^{\Oo(1)}$ is constructed by removing a hook from $\lambda$.
\end{defn}

For Schubert varieties $X(\lambda)$ contained in the closed $\Sp_{2n+1}$-orbit we similarly introduce the partitions $\lambda^{\OY(d)}$ and $\lambda^{\OZ(d)}$.

\begin{defn}
Let $\lambda \in \BC(k,2n+1)$ be $m$-wingtip symmetric such that $X(\lambda) \subset \Xc$ (equivalently, $\lambda_1=2n+1-k$). Then define $\lambda^{\OY(d)}$ in the following way.
\begin{enumerate} 
    \item If $d=1$ then $\lambda^{\OY(1)}$ is defined by
        \begin{enumerate}
            \item If $D(\lambda)(\bar m)=1$ is the $i$th $1$ in the $01$-word $D(\lambda)$, then 
                \[
                     \lambda^{\OY(1)}:=(\lambda_2-1 \geq \cdots \geq \lambda_{i-1}-1, \lambda_i,\lambda_i, \lambda_{i+1}, \cdots, \lambda_k ); 
                \] 
                Equivalently this means that $D(\lambda)(\bar m)=1$ corresponds to a vertical step at the end of the $i$th row of $\lambda$;
            \item If $D(\lambda)(\bar m)= 0$ corresponds to a horizontal step at the bottom of the $i$th row and in the $j$th column, then  
                \[
                    \lambda^{\OY(1)}:=(\lambda_2-1\geq \cdots \geq \lambda_{i}-1, j, \lambda_{i+1}, \cdots, \lambda_k ). 
                \]
        \end{enumerate}
    \item For $d>1$, 
        \[
            \lambda^{\OY(d)}=\left(\lambda^{\OY(1)}\right)^{O^\circ(d-1)}.
        \]
\end{enumerate}
\end{defn}

\begin{example}
Consider $\lambda=(10,10,3,1,1) \in \BC(5,2\cdot 7+1)$. The partition $\lambda$ is $4$-wingtip symmetric and the corresponding $01$-word is
\[
    D(\lambda) = 1100000001001100.
\]
We see that $D(\lambda)(\bar 4)=1$, and this corresponds the $4$th vertical step, hence $i=4$. Therefore Part (1)(a) of the definition says that $\lambda^{\OY(1)}=(9,2,1,1,1)$. 

Note that the Weyl group element associated with $\lambda$ is $w=(1<2<\bar 7<\bar 4<\bar 3)$, and that $w \cdot \OY(1)=(2<\bar 7<\bar 5<\bar 4<\bar 3)$ by Lemma~\ref{lem:comp}. As expected $w \cdot \OY(1)$ is the Weyl group element associated with $\lambda^{\OY(1)}$. 

We also see that $\lambda^{\OY(2)}=(1)$ and $\lambda^{\OY(3)}=\emptyset$, in agreement with Lemma~\ref{lem:comp}. Pictorially:

\begin{tikzpicture}[scale=1/3]
\draw[very thick,lightgray] (0,0) rectangle (1,5) ;
\draw[very thick,lightgray] (1,0) rectangle (2,5) ;
\draw[very thick,lightgray] (2,0) rectangle (3,5) ;
\draw[very thick,lightgray] (3,0) rectangle (4,5) ;
\draw[very thick,lightgray] (4,0) rectangle (5,5) ;
\draw[very thick,lightgray] (5,0) rectangle (6,5) ;
\draw[very thick,lightgray] (6,0) rectangle (7,5) ;
\draw[very thick,lightgray] (7,0) rectangle (8,5) ;
\draw[very thick,lightgray] (8,0) rectangle (9,5) ;
\draw[very thick,lightgray] (9,0) rectangle (10,5) ;

\draw[very thick,lightgray] (0,0) rectangle (10,1) ;
\draw[very thick,lightgray] (0,1) rectangle (10,2) ;
\draw[very thick,lightgray] (0,2) rectangle (10,3) ;
\draw[very thick,lightgray] (0,3) rectangle (10,4) ;

\draw[very thick,fill=gray] (9,4) rectangle (10,5) ;
\draw[very thick,fill=gray] (2,3) rectangle (10,4) ;
\draw[very thick,fill=gray] (1,2) rectangle (3,3) ;

\draw[very thick] (0,4) rectangle (10,5) ;
\draw[very thick] (0,3) rectangle (10,4) ;
\draw[very thick] (0,2) rectangle (3,3) ;
\draw[very thick] (0,1) rectangle (1,2) ;
\draw[very thick] (0,0) rectangle (1,1) ;

\draw[very thick] (0,0) rectangle (1,5) ;
\draw[very thick] (1,2) rectangle (2,5) ;
\draw[very thick] (2,2) rectangle (3,5) ;
\draw[very thick] (3,3) rectangle (4,5) ;
\draw[very thick] (5,3) rectangle (6,5) ;
\draw[very thick] (6,3) rectangle (7,5) ;
\draw[very thick] (7,3) rectangle (8,5) ;
\draw[very thick] (8,3) rectangle (9,5) ;
\draw[blue, line width=1.5mm] (1,2) -- (1,3) ;
\draw[red, very thick] (0,0) -- (1,0) ;
\draw[red, very thick] (1,0) -- (1,2) ;
\draw[red, very thick] (10,3) -- (10,4) ;
\draw[red, very thick] (10,3) -- (8,3) ;

\node[fill=white,inner sep=1pt] at (4,2) {\large {$\lambda$}};

\draw[very thick,->] (10.5,3) -- (11.5,3);

\draw[very thick,lightgray] (12,0) rectangle (13,5) ;
\draw[very thick,lightgray] (13,0) rectangle (14,5) ;
\draw[very thick,lightgray] (14,0) rectangle (15,5) ;
\draw[very thick,lightgray] (15,0) rectangle (16,5) ;
\draw[very thick,lightgray] (16,0) rectangle (17,5) ;
\draw[very thick,lightgray] (17,0) rectangle (18,5) ;
\draw[very thick,lightgray] (18,0) rectangle (19,5) ;
\draw[very thick,lightgray] (19,0) rectangle (20,5) ;
\draw[very thick,lightgray] (20,0) rectangle (21,5) ;
\draw[very thick,lightgray] (21,0) rectangle (22,5) ;

\draw[very thick,lightgray] (12,0) rectangle (22,1) ;
\draw[very thick,lightgray] (12,1) rectangle (22,2) ;
\draw[very thick,lightgray] (12,2) rectangle (22,3) ;
\draw[very thick,lightgray] (12,3) rectangle (22,4) ;

\draw[very thick,fill=gray] (13,4) rectangle (21,5) ;
\draw[very thick,fill=gray] (12,0) rectangle (13,4) ;
\draw[very thick,fill=gray] (13,3) rectangle (14,4) ;

\draw[very thick] (12,4) rectangle (21,5) ;
\draw[very thick] (12,3) rectangle (14,4) ;
\draw[very thick] (12,2) rectangle (13,3) ;
\draw[very thick] (12,1) rectangle (13,2) ;
\draw[very thick] (12,0) rectangle (13,1) ;

\draw[very thick] (12,3) rectangle (13,5) ;
\draw[very thick] (13,4) rectangle (14,5) ;
\draw[very thick] (14,4) rectangle (15,5) ;
\draw[very thick] (15,4) rectangle (16,5) ;
\draw[very thick] (16,4) rectangle (17,5) ;
\draw[very thick] (17,4) rectangle (18,5) ;
\draw[very thick] (18,4) rectangle (19,5) ;
\draw[very thick] (19,4) rectangle (20,5) ;

\node[fill=white,inner sep=1pt] at (17,3) {\large {$\lambda^{\OY(1)}$}};

\draw[very thick,->] (22.5,3) -- (23.5,3);

\draw[very thick,lightgray] (24,0) rectangle (25,5) ;
\draw[very thick,lightgray] (25,0) rectangle (26,5) ;
\draw[very thick,lightgray] (26,0) rectangle (27,5) ;
\draw[very thick,lightgray] (27,0) rectangle (28,5) ;
\draw[very thick,lightgray] (28,0) rectangle (29,5) ;
\draw[very thick,lightgray] (29,0) rectangle (30,5) ;
\draw[very thick,lightgray] (30,0) rectangle (31,5) ;
\draw[very thick,lightgray] (31,0) rectangle (32,5) ;
\draw[very thick,lightgray] (32,0) rectangle (33,5) ;
\draw[very thick,lightgray] (33,0) rectangle (34,5) ;

\draw[very thick,lightgray] (24,0) rectangle (34,1) ;
\draw[very thick,lightgray] (24,1) rectangle (34,2) ;
\draw[very thick,lightgray] (24,2) rectangle (34,3) ;
\draw[very thick,lightgray] (24,3) rectangle (34,4) ;

\draw[very thick,fill=gray] (24,4) rectangle (25,5) ;

\node[fill=white,inner sep=1pt] at (27,3) {\large {$\lambda^{\OY(2)}$}};

\draw[very thick,->] (34.5,3) -- (35.5,3);

\node[fill=white,inner sep=1pt] at (39,3) {\large {$\lambda^{\OY(3)}=\emptyset$}};
\end{tikzpicture}
\end{example}

\begin{example}
Consider $\lambda=(10,9,9,3) \in \BC(5,2\cdot 7+1)$. The partition $\lambda$ is $4$-wingtip symmetric and the corresponding $01$-word is
\[
    D(\lambda) = 1011000000100010.
\]
We see that $D(\lambda)(\bar 4)=0$, and this corresponds a horizontal step at the bottom of the $4$th row and in the $3$rd column, hence $i=4$ and $j=3$. Therefore Part (1)(b) of the definition says that $\lambda^{\OY(1)}=(8,8,2,2)$. Note that the Weyl group element associated with $\lambda$ is $w=(1<3<4<\bar 6<\bar 2)$, and that $w \cdot \OY(1)=(3<4<\bar 6<\bar 5<\bar 2)$ by Lemma~\ref{lem:comp}, which is the Weyl group element associated with $\lambda^{\OY(1)}$.

We also see that $\lambda^{\OY(2)}=(7,1,1)$ and $\lambda^{\OY(3)}=\emptyset$, in agreement with Lemma~\ref{lem:comp}. Pictorially:
\begin{tikzpicture}[scale=1/3]
\draw[very thick,lightgray] (0,0) rectangle (1,5) ;
\draw[very thick,lightgray] (1,0) rectangle (2,5) ;
\draw[very thick,lightgray] (2,0) rectangle (3,5) ;
\draw[very thick,lightgray] (3,0) rectangle (4,5) ;
\draw[very thick,lightgray] (4,0) rectangle (5,5) ;
\draw[very thick,lightgray] (5,0) rectangle (6,5) ;
\draw[very thick,lightgray] (6,0) rectangle (7,5) ;
\draw[very thick,lightgray] (7,0) rectangle (8,5) ;
\draw[very thick,lightgray] (8,0) rectangle (9,5) ;
\draw[very thick,lightgray] (9,0) rectangle (10,5) ;

\draw[very thick,lightgray] (0,0) rectangle (10,1) ;
\draw[very thick,lightgray] (0,1) rectangle (10,2) ;
\draw[very thick,lightgray] (0,2) rectangle (10,3) ;
\draw[very thick,lightgray] (0,3) rectangle (10,4) ;

\draw[very thick,fill=gray] (8,4) rectangle (10,5) ;
\draw[very thick,fill=gray] (8,2) rectangle (9,4) ;
\draw[very thick,fill=gray] (2,2) rectangle (8,3) ;
\draw[very thick,fill=gray] (2,1) rectangle (3,2) ;

\draw[very thick] (0,4) rectangle (10,5) ;
\draw[very thick] (0,3) rectangle (9,4) ;
\draw[very thick] (0,2) rectangle (9,3) ;
\draw[very thick] (0,1) rectangle (3,2) ;
\draw[very thick] (0,4) rectangle (1,5) ;
\draw[very thick] (1,1) rectangle (2,5) ;
\draw[very thick] (2,1) rectangle (3,5) ;
\draw[very thick] (3,2) rectangle (4,5) ;
\draw[very thick] (4,2) rectangle (5,5) ;
\draw[very thick] (5,2) rectangle (6,5) ;
\draw[very thick] (6,2) rectangle (7,5) ;
\draw[very thick] (7,2) rectangle (8,5) ;
\draw[very thick] (8,2) rectangle (9,5) ;

\draw[blue, line width=1.5mm] (2,1) -- (2,2) ;
\draw[red, very thick] (0,0) -- (0,1) ;
\draw[red, very thick] (0,1) -- (2,1) ;
\draw[red, very thick] (9,4) -- (10,4) ;
\draw[red, very thick] (9,2) -- (9,4) ;

\node[fill=white,inner sep=1pt] at (4,1) {\large {$\lambda$}};

\draw[very thick,->] (10.5,3) -- (11.5,3);

\draw[very thick,lightgray] (12,0) rectangle (13,5) ;
\draw[very thick,lightgray] (13,0) rectangle (14,5) ;
\draw[very thick,lightgray] (14,0) rectangle (15,5) ;
\draw[very thick,lightgray] (15,0) rectangle (16,5) ;
\draw[very thick,lightgray] (16,0) rectangle (17,5) ;
\draw[very thick,lightgray] (17,0) rectangle (18,5) ;
\draw[very thick,lightgray] (18,0) rectangle (19,5) ;
\draw[very thick,lightgray] (19,0) rectangle (20,5) ;
\draw[very thick,lightgray] (20,0) rectangle (21,5) ;
\draw[very thick,lightgray] (21,0) rectangle (22,5) ;

\draw[very thick,lightgray] (12,0) rectangle (22,1) ;
\draw[very thick,lightgray] (12,1) rectangle (22,2) ;
\draw[very thick,lightgray] (12,2) rectangle (22,3) ;
\draw[very thick,lightgray] (12,3) rectangle (22,4) ;

\draw[very thick,fill=gray] (19,4) rectangle (20,5) ;
\draw[very thick,fill=gray] (13,3) rectangle (20,4) ;
\draw[very thick,fill=gray] (13,1) rectangle (14,3) ;
\draw[very thick,fill=gray] (12,1) rectangle (13,2) ;

\draw[very thick] (12,4) rectangle (20,5) ;
\draw[very thick] (12,3) rectangle (20,4) ;
\draw[very thick] (12,2) rectangle (14,3) ;
\draw[very thick] (12,1) rectangle (14,2) ;
\draw[very thick] (13,1) rectangle (14,5) ;
\draw[very thick] (15,3) rectangle (16,5) ;
\draw[very thick] (17,3) rectangle (18,5) ;
\draw[very thick] (19,3) rectangle (20,5) ;

\node[fill=white,inner sep=1pt] at (17,2) {\large {$\lambda^{\OY(1)}$}};

\draw[very thick,->] (22.5,3) -- (23.5,3);

\draw[very thick,lightgray] (24,0) rectangle (25,5) ;
\draw[very thick,lightgray] (25,0) rectangle (26,5) ;
\draw[very thick,lightgray] (26,0) rectangle (27,5) ;
\draw[very thick,lightgray] (27,0) rectangle (28,5) ;
\draw[very thick,lightgray] (28,0) rectangle (29,5) ;
\draw[very thick,lightgray] (29,0) rectangle (30,5) ;
\draw[very thick,lightgray] (30,0) rectangle (31,5) ;
\draw[very thick,lightgray] (31,0) rectangle (32,5) ;
\draw[very thick,lightgray] (32,0) rectangle (33,5) ;
\draw[very thick,lightgray] (33,0) rectangle (34,5) ;

\draw[very thick,lightgray] (24,0) rectangle (34,1) ;
\draw[very thick,lightgray] (24,1) rectangle (34,2) ;
\draw[very thick,lightgray] (24,2) rectangle (34,3) ;
\draw[very thick,lightgray] (24,3) rectangle (34,4) ;

\draw[very thick,fill=gray] (24,4) rectangle (31,5) ;
\draw[very thick,fill=gray] (25,4) rectangle (26,5) ;
\draw[very thick,fill=gray] (27,4) rectangle (28,5) ;
\draw[very thick,fill=gray] (29,4) rectangle (30,5) ;
\draw[very thick,fill=gray] (24,3) rectangle (25,4) ;
\draw[very thick,fill=gray] (24,2) rectangle (25,3) ;

\node[fill=white,inner sep=1pt] at (27,3) {\large {$\lambda^{\OY(2)}$}};

\draw[very thick,->] (34.5,3) -- (35.5,3);

\node[fill=white,inner sep=1pt] at (39,3) {\large {$\lambda^{\OY(3)}=\emptyset$}};
\end{tikzpicture}
\end{example}

\begin{defn}
Let $\lambda \in \BC(k,2n+1)$ such that $X(\lambda) \subset \Xc$ (equivalently, $\lambda_1=2n+1-k$). Then define $\lambda^{\OZ(d)}$ in the following way:
\begin{enumerate}
    \item for $d=1$ define $\lambda^{\OZ(1)}=(\lambda_1,\lambda_3-1,\cdots,\lambda_{\ell(\lambda)}-1,0,\cdots,0)$;
    \item for $d>1$ define $\lambda^{\OZ(d)}=(\lambda^{\OZ(d-1)})^{\OZ(1)}.$
\end{enumerate}
Explicitly, $\lambda^{\OZ(1)}$ is constructed by removing the second row of $\lambda$ as well as a box from all subsequent rows.
\end{defn}

\begin{example}
Consider $\lambda=(10,10,3,1,1) \in \BC(5,2\cdot 7+1)$. We see that $\lambda^{\OZ(1)}=(10,2)$. 

Note that the Weyl group element associated with $\lambda$ is $w=(1<2<\bar 7<\bar 4<\bar 3)$, and that $w \cdot \OZ(1)=(1<\bar 7<\bar 4<\bar 3<\bar 2)$ by Lemma~\ref{lem:comp}. As expected $w \cdot \OZ(1)$ is the Weyl group element associated with $\lambda^{\OZ(1)}$. 

We also see that $\lambda^{\OZ(d)}=(10)$ for $d \geq 2$, in agreement with Lemma~\ref{lem:comp}.
\end{example}

\subsection{BKT-partitions}\label{ss:BKT}

In~\cite{BKT2} an alternative indexation is introduced for Schubert varieties of odd symplectic Grassmannians, in terms of what the authors call \emph{$k$-strict partitions}, and which we will call $BKT$-partitions. This is analogous the classical indexation of Schubert varieties of type A Grassmannians using partitions or Young diagrams. In particular with this indexation the number of boxes in the Young diagram indexing a Schubert variety recovers the codimension of said variety. In~\cite{pech:quantum} a variant of this indexation is extended to the odd symplectic Grassmannian $X=\IG(k,2n+1)$.

\begin{defn}
We say that a partition $\beta \in \Part(k,2n+2)$ is \emph{$(n+1-k)$-strict} if $\beta_j>\beta_{j+1}$ whenever $\beta_j>n+1-k$. We denote the set of such partitions by $\BKT(k,2n+2)$. We also let $\BKTodd$ denote the set of partitions in $\beta \in \BKT(k,2n+2)$ such that either $\beta_1=2n+2-k$ or $\beta_k \geq 1$; in other words, if the first column is not full, then the first row must be full.
We define
\begin{align*}
    \BKT(k,2n+1) &:= \{ \beta-1^k \mid \beta \in \BKT(k,2n+2)\} \cong \BKTodd.
\end{align*}
We call the elements of $\BKT(k,2n+2)$ \emph{$BKT$-partitions}.
\end{defn}

There is a bijection between $\BKT(k,2n+2)$ and the set $\Wlarge^P$ of minimal length representatives given by:
\begin{align*}
    \beta \mapsto w &\text{ is defined by $w(j)=2n+3-k-\beta_j+\#\{ i<j \mid \beta_i+\beta_j \leq 2(n+1-k)+j-i\}$},\\
    w \mapsto \beta &\text{ is defined by $\beta_j=2n+3-k-w(j)+\#\{i<j \mid w(i)+w(j)>2n+3\}$},
\end{align*}
see \cite{BKT2}*{Proposition 4.3}. Recall that the minimal length representative of the element $w_0$ defined in Equation \eqref{E:w0} indexes $\IG(k,2n+1)$ as a Schubert variety inside $\IG(k,2n+2)$. Under the bijection above, the coset of $w_0 \Wlarge_P$ corresponds to the $(n+1-k)$-strict partition $1^k:= (1,1, \ldots , 1)$ if $k < n+1$ and to $(k,0,\ldots , 0 )$ if $k=n+1$. The minimal length representatives for odd symplectic permutations $w \in \Wodd$ are in bijection with the subset $\BKTodd$ of $\BKT(k,2n+2)$ consisting of those $(n+1-k)$-strict partitions satisfying the additional condition that if $\beta_k = 0$ then $\beta_1 = 2n+2 - k$. \begin{footnote} {One word of caution: the Bruhat order does not translate into partition inclusion. For example, $(2n+2-k, 0, \ldots , 0) \leq (1, 1, \ldots 1) $ in the Bruhat order for $k < n+1$.}\end{footnote} 

The equivalent indexing set $\BKT(k,2n+1)$ is introduced in~\cite{pech:quantum}. It is more convenient in the context of the odd symplectic Grassmannians since the sum of the parts of $\alpha \in \BKT(k,2n+1)$ still indexes the codimension in $\IG(k,2n+1)$ of the corresponding Schubert variety. Namely, for $\alpha \in \BKT(k,2n+1)$ define $|\alpha| = \alpha_1 + \ldots + \alpha_k$. If $w$ corresponds to $\alpha$ then $\ell(w)=k(2n+1-k)-\frac{k(k-1)}{2}-|\alpha|$, i.e. the codimension of the Schubert variety $X(w)$ in $X$ equals $|\alpha|$; see \cite{BKT2}*{Proposition 4.4} and \cite{pech:quantum}*{Section 1.1.1}. 

Pictorially, the partitions in $\BKT(k,2n+1)$ are obtained by removing the full first column $1^k$ from the partitions in $\BKT(k,2n+2)$, regardless of whether a part equal to $0$ is present or not.

\begin{example}
Let $k=5$, $n=7$, and $w=(1<6<\bar{8}<\bar{7}<\bar{2}) \in \Wodd$. Then $\beta=(\beta_1 \geq \beta_2 \geq \beta_3 \geq \beta_4 \geq \beta_5) \in \BKT(k,2n+2)$ is given by $\beta = (11, 6, 3,3,0)$ and the corresponding partition in $\BKT(k,2n+1)$ is $\alpha=(10,5,2,2,-1)$.
Pictorially, 
\[ 
    \tableau{6}{&{}&{}&{}&{}&{}&{}&{}&{}&{}&{}&{}\\&{}&{}&{}&{}&{}&{}\\&{}&{}&{}\\ &{}&{}&{}}   - \tableau{6}{{}&\\ {}&\\ {}&\\ {}&\\ {}} = \tableau{6}{&{}&{}&{}&{}&{}&{}&{}&{}&{}&{}\\&{}&{}&{}&{}&{}\\&{}&{}\\ &{}&{} \\ {}&&&&&&& } 
\]   
\end{example}

\begin{example} Let $k= n+1 =5$, so $\IG(5,9) \simeq \IG(4,8)$ is the Lagrangian Grassmannian. Then the codimension $0$ class is the $(-1)$-strict partition $\alpha = (4,-1,-1,-1,-1)= \tableau{5}{&{}&{}&{}&{}\\{}\\{}\\{}\\{}} $.
\end{example}

\begin{defn}\label{defn:Ypartitions}
Let $\alpha \in \BKT(k,2n+1)$ where $\alpha_1<2n+1-k$. Define $\alpha^{\Oo(d)}$ in the following way:
\begin{enumerate}
    \item If $\alpha_1+\alpha_j > 2(n-k)+j-1$ for all $2 \leq j \leq k$ then define 
    \[ 
        \alpha^{\Oo(1)}=(\alpha_2 \geq \alpha_3 \geq \cdots \geq \alpha_k \geq 0) \in \BKT(k,2n+1); 
    \] 
    \item Otherwise, find the smallest $j$ such that $\alpha_1+\alpha_j \leq 2(n-k)+j-1$. Define 
    \[ 
        \alpha^{\Oo(1)}=(\alpha_2 \geq \alpha_3 \geq \cdots \geq \alpha_{j-1} \geq \alpha_j-1 \geq \cdots \geq \alpha_{k}-1 \geq 0) \in  \BKT(k,2n+1) 
    \] where $-1$'s are replaced by 0;
\item Define $\alpha^{\Oo(d)}=\left( \alpha^{\Oo(d-1)}\right)^{\Oo(1)}$ for $d>1$.
\end{enumerate}
\end{defn}

\begin{example}
Consider the case $n=7$ and $k=6$. Let $w=(6<8<\bar{7}<\bar{5}<\bar{3}<\bar{2}) \in W^P \cap \Wodd$. Then $w\cdot \Oo(1) = (8<\bar{7}<\bar{5}<\bar{4}<\bar{3}<\bar{2})$ and 
%\[ 
%    \alpha = \yng(4,2,1,1) \text{ and } \alpha^{\OY(1)}=\yng(2,1). 
%\] 

\begin{center}
\begin{tikzpicture}[scale=1/3]
\draw[very thick,lightgray] (0,0) rectangle (1,6) ;
\draw[very thick,lightgray] (1,0) rectangle (2,6) ;
\draw[very thick,lightgray] (2,0) rectangle (3,6) ;
\draw[very thick,lightgray] (3,0) rectangle (4,6) ;
\draw[very thick,lightgray] (4,0) rectangle (5,6) ;
\draw[very thick,lightgray] (5,0) rectangle (6,6) ;
\draw[very thick,lightgray] (6,0) rectangle (7,6) ;
\draw[very thick,lightgray] (7,0) rectangle (8,6) ;
\draw[very thick,lightgray] (8,0) rectangle (9,6) ;
\draw[very thick,lightgray] (0,5) rectangle (9,6) ;
\draw[very thick,lightgray] (0,0) rectangle (9,1) ;
\draw[very thick,lightgray] (0,1) rectangle (9,2) ;
\draw[very thick,lightgray] (0,2) rectangle (9,3) ;
\draw[very thick,lightgray] (0,3) rectangle (9,4) ;

\draw[very thick,fill=gray] (0,5) rectangle (4,6) ;
\draw[very thick,fill=gray] (0,2) rectangle (1,3) ;
\draw[very thick,fill=blue] (2,4) rectangle (6,5) ;
\draw[very thick,fill=blue] (1,3) rectangle (5,4) ;
\draw[very thick,fill=blue] (1,2) rectangle (5,3) ;

\draw[very thick,red] (3,4) rectangle (3,5) ;
\draw[very thick,red] (4,3) rectangle (4,4) ;
\draw[very thick,red] (5,2) rectangle (5,3) ;

\draw[very thick,red] (.2,-1.2) rectangle (.2,-2.2) ;
\node[fill=white,inner sep=1pt] at (4.5,-1.6) {\tiny {$=2(n-k)+\text{row}-1$}};
\draw[very thick,fill=blue] (.2,-2.8) rectangle (4.2,-3.8) ;
\draw[very thick] (1.2,-3.8) rectangle (2.2,-2.8) ;
\draw[very thick] (2.2,-3.8) rectangle (3.2,-2.8) ;
\node[fill=white,inner sep=1pt] at (5.5,-3.3) {\tiny {$=\alpha_1$}};

\draw[very thick,fill=gray] (9,-1.2) rectangle (10,-2.2) ;
\node[fill=white,inner sep=1pt] at (13.5,-1.7) {\tiny {is a box to delete}};

\draw[very thick] (0,5) rectangle (4,6) ;
\draw[very thick] (0,4) rectangle (2,5) ;
\draw[very thick] (0,3) rectangle (1,4) ;
\draw[very thick] (0,2) rectangle (1,3) ;
\draw[very thick] (1,4) rectangle (2,6) ;
\draw[very thick] (2,5) rectangle (3,6) ;
\draw[very thick] (2,2) rectangle (3,4) ;
\draw[very thick] (3,2) rectangle (4,3) ;
\draw[very thick] (4,4) rectangle (5,5) ;

\node[fill=white,inner sep=1pt] at (1,1) {\large {$\alpha$}};

\draw[very thick,->] (10.5,3) -- (11.5,3);

\draw[very thick,lightgray] (12,0) rectangle (13,6) ;
\draw[very thick,lightgray] (13,0) rectangle (14,6) ;
\draw[very thick,lightgray] (14,0) rectangle (15,6) ;
\draw[very thick,lightgray] (15,0) rectangle (16,6) ;
\draw[very thick,lightgray] (16,0) rectangle (17,6) ;
\draw[very thick,lightgray] (17,0) rectangle (18,6) ;
\draw[very thick,lightgray] (18,0) rectangle (19,6) ;
\draw[very thick,lightgray] (19,0) rectangle (20,6) ;
\draw[very thick,lightgray] (20,0) rectangle (21,6) ;

\draw[very thick,lightgray] (12,0) rectangle (21,1) ;
\draw[very thick,lightgray] (12,2) rectangle (21,3) ;
\draw[very thick,lightgray] (12,4) rectangle (21,5) ;
\draw[very thick,lightgray] (12,5) rectangle (21,6) ;

\draw[very thick] (12,5) rectangle (14,6) ;
\draw[very thick] (12,4) rectangle (13,6) ;

\node[fill=white,inner sep=1pt] at (16,4) {\large {$\alpha^{\Oo(1)}$}};
\end{tikzpicture}
\end{center}

This uses part (2) of Definition \ref{defn:Ypartitions} and we have that $j=4$ since the fourth row is the first occurrence of the red line being to the right of the shaded blue region.
\end{example}

\begin{remark}
    Note the similarity between the notation $\alpha^{\Oo(1)}$ above, and the notation $\lambda^{\Oo(1)}$ in Section~\ref{ss:BC}; this is because if $\alpha \in \BKT(k,2n+1)$ and $\lambda \in \BC(k,2n+1)$ correspond to the same Weyl group element $w$, then $\alpha^{\Oo(1)}$ and $\lambda^{\Oo(1)}$ will both correspond to $w \cdot \Oo(1)$.
\end{remark}

Let $\alpha \in \BKT(k,2n+1)$ and $\ell_i(\alpha)=\max \{j \mid \alpha_j >i\}$.

\begin{defn}
Let $\alpha \in \BKT(k,2n+1)$ where $\alpha_1=2n+1-k$. Define $\alpha^{\OY(d)}$ in the following way:
\begin{enumerate}
    \item For $d=1$, 
    \[ 
        \alpha^{\OY(1)} = (\alpha_2 \geq \alpha_3 \geq \cdots \geq \alpha_{\ell_{-1}(\alpha)} \geq \overbrace{0 \geq \cdots \geq 0}^{k+1-\ell_{-1}(\alpha)}) \in \BKT(k,2n+1); 
    \]
    \item For $d>1$, 
    \[
        \alpha^{\OY(d)} = \left(\alpha^{\OY(1)}\right)^{\Oo(d-1)}\]
\end{enumerate}
\end{defn}

\begin{example}
Consider the case $n=7$ and $k=6$. Let $w = (1<7<\bar 8<\bar 5<\bar 4 < \bar 2 ) \in W^P \cap \Wodd$. Then $w \cdot \OY(1)=(7<\bar 8<\bar 5< \bar 4< \bar 3< \bar 2)$ and 
%\[ 
%    \alpha = \tableau{10}{&{}&{}&{}&{}&{}&{}&{}&{}&{}\\ &{}&{}&{}\\ &{}\\ \\ \\ {}&&&&&&&&} 
%    \text{ and } \alpha^{\OY(1)}=\tableau{10}{&{}&{}&{}\\ &{} }.
%\]

\begin{center}
\begin{tikzpicture}[scale=1/3]
\draw[very thick,lightgray] (0,0) rectangle (1,6) ;
\draw[very thick,lightgray] (1,0) rectangle (2,6) ;
\draw[very thick,lightgray] (2,0) rectangle (3,6) ;
\draw[very thick,lightgray] (3,0) rectangle (4,6) ;
\draw[very thick,lightgray] (4,0) rectangle (5,6) ;
\draw[very thick,lightgray] (5,0) rectangle (6,6) ;
\draw[very thick,lightgray] (6,0) rectangle (7,6) ;
\draw[very thick,lightgray] (7,0) rectangle (8,6) ;
\draw[very thick,lightgray] (8,0) rectangle (9,6) ;

\draw[very thick,lightgray] (0,0) rectangle (9,1) ;
\draw[very thick,lightgray] (0,1) rectangle (9,2) ;
\draw[very thick,lightgray] (0,2) rectangle (9,3) ;
\draw[very thick,lightgray] (0,3) rectangle (9,4) ;

\draw[very thick,fill=gray] (0,5) rectangle (9,6) ;
\draw[very thick,fill=gray] (-1,0) rectangle (0,1) ;

\draw[very thick] (0,5) rectangle (9,6) ;
\draw[very thick] (0,4) rectangle (3,5) ;
\draw[very thick] (0,3) rectangle (1,4) ;
\draw[very thick] (0,1) rectangle (0,3) ;
\draw[very thick] (-1,0) rectangle (0,1) ;
\draw[very thick] (0,3) rectangle (1,6) ;
\draw[very thick] (2,4) rectangle (3,6) ;
\draw[very thick] (4,5) rectangle (5,6) ;
\draw[very thick] (6,5) rectangle (7,6) ;
\draw[very thick] (8,5) rectangle (9,6) ;

\node[fill=white,inner sep=1pt] at (2,3) {\large {$\alpha$}};

\draw[very thick,->] (10.5,3) -- (11.5,3);

\draw[very thick,lightgray] (12,0) rectangle (13,6) ;
\draw[very thick,lightgray] (13,0) rectangle (14,6) ;
\draw[very thick,lightgray] (14,0) rectangle (15,6) ;
\draw[very thick,lightgray] (15,0) rectangle (16,6) ;
\draw[very thick,lightgray] (16,0) rectangle (17,6) ;
\draw[very thick,lightgray] (17,0) rectangle (18,6) ;
\draw[very thick,lightgray] (18,0) rectangle (19,6) ;
\draw[very thick,lightgray] (19,0) rectangle (20,6) ;
\draw[very thick,lightgray] (20,0) rectangle (21,6) ;

\draw[very thick,lightgray] (12,0) rectangle (21,1) ;
\draw[very thick,lightgray] (12,2) rectangle (21,3) ;
\draw[very thick,lightgray] (12,4) rectangle (21,5) ;
\draw[very thick,lightgray] (12,5) rectangle (21,6) ;

\draw[very thick] (12,5) rectangle (15,6) ;
\draw[very thick] (12,4) rectangle (13,5) ;
\draw[very thick] (13,5) rectangle (14,6) ;

\node[fill=white,inner sep=1pt] at (16,4) {\large {$\alpha^{\OY(1)}$}};
\end{tikzpicture}
\end{center}

This uses part (2) of Definition \ref{defn:Zpartitions} and $j=4$.
\end{example}

\begin{defn}\label{defn:Zpartitions}
Let $\alpha \in  \BKT(k,2n+1)$ where $\alpha_1=2n+1-k$. Define $\alpha^{\OZ(d)}$ in the following way:
\begin{enumerate}
    \item If $\alpha_2+\alpha_j > 2(n-k)+j-2$ for all $3 \leq j \leq k$ then define 
    \[ 
        \alpha^{\OZ(1)}=(\alpha_1 \geq \alpha_3 \geq \cdots \geq \alpha_k \geq -1) \in \BKT(k,2n+1); 
    \] 
    \item Otherwise, find the smallest $j$ such that $\alpha_2+\alpha_j \leq 2(n-k)+j-2$. Define 
    \[ 
        \alpha^{\OZ(1)}=(\alpha_1 \geq \alpha_3 \geq \cdots \geq \alpha_{j-1} \geq \alpha_j-1 \geq \cdots \geq \alpha_{k}-1 \geq -1) \in \BKT(k,2n+1) 
    \] 
    where $-2$'s are replaced by $-1$'s;
\item Define $\alpha^{\OZ(d)}=\left( \alpha^{\OZ(d-1)}\right)^{\OZ(1)}$ for $d>1$.
\end{enumerate}
\end{defn}

\begin{example}
Consider the case $n=7$ and $k=6$. Let $w=(1<7<\bar 8<\bar 5<\bar 4 < \bar 2 ) \in W^P \cap \Wodd$. Then $w \cdot \OZ(1)=(1<\bar 8<\bar 5< \bar 4< \bar 3< \bar 2)$ and 
%\[ 
%    \alpha = \tableau{10}{&{}&{}&{}&{}&{}&{}&{}&{}&{}\\ &{}&{}&{}\\ &{}\\ \\ \\ {}&&&&&&&&} 
%    \text{ and } \alpha^{\OZ(1)}=\tableau{10}{&{}&{}&{}&{}&{}&{}&{}&{}&{}\\ &{}\\ {}&&&&&&&&\\{}&&&&&&&& \\ {}&&&&&&&&\\ {}&&&&&&&&}. 
%\] 

\begin{center}
\begin{tikzpicture}[scale=1/3]
\draw[very thick,lightgray] (0,0) rectangle (1,6) ;
\draw[very thick,lightgray] (1,0) rectangle (2,6) ;
\draw[very thick,lightgray] (2,0) rectangle (3,6) ;
\draw[very thick,lightgray] (3,0) rectangle (4,6) ;
\draw[very thick,lightgray] (4,0) rectangle (5,6) ;
\draw[very thick,lightgray] (5,0) rectangle (6,6) ;
\draw[very thick,lightgray] (6,0) rectangle (7,6) ;
\draw[very thick,lightgray] (7,0) rectangle (8,6) ;
\draw[very thick,lightgray] (8,0) rectangle (9,6) ;
\draw[very thick,lightgray] (0,5) rectangle (9,6) ;
\draw[very thick,lightgray] (0,0) rectangle (9,1) ;
\draw[very thick,lightgray] (0,1) rectangle (9,2) ;
\draw[very thick,lightgray] (0,2) rectangle (9,3) ;
\draw[very thick,lightgray] (0,3) rectangle (9,4) ;

\draw[very thick,fill=gray] (0,4) rectangle (3,5) ;
\draw[very thick, fill=green] (-1,1) rectangle (0,2) ;
\draw[very thick, fill=green] (-1,2) rectangle (0,3) ;

\draw[very thick] (0,5) rectangle (9,6) ;
\draw[very thick] (0,4) rectangle (3,5) ;
\draw[very thick] (0,3) rectangle (1,4) ;
\draw[very thick] (0,1) rectangle (0,3) ;
\draw[very thick] (-1,0) rectangle (0,1) ;
\draw[very thick] (1,4) rectangle (2,6) ;
\draw[very thick] (3,5) rectangle (4,6) ;
\draw[very thick] (5,5) rectangle (6,6) ;
\draw[very thick] (7,5) rectangle (8,6) ;

\draw[very thick,fill=blue] (1,3) rectangle (4,4) ;
\draw[very thick,fill=blue] (0,2) rectangle (3,3) ;
\draw[very thick,fill=blue] (0,1) rectangle (3,2) ;
\draw[very thick,red] (3,3) rectangle (3,4) ;
\draw[very thick,red] (4,2) rectangle (4,3) ;
\draw[very thick,red] (5,1) rectangle (5,2) ;
\draw[very thick] (1,1) rectangle (2,4) ;

%\draw[very thick] (0,-1) rectangle (9,-4) ;
\draw[very thick,red] (.2,-1.2) rectangle (.2,-2.2) ;
\node[fill=white,inner sep=1pt] at (4.5,-1.6) {\tiny {$=2(n-k)+\text{row}-2$}};
\draw[very thick,fill=blue] (.2,-2.8) rectangle (3.2,-3.8) ;
\draw[very thick] (1.2,-3.8) rectangle (2.2,-2.8) ;
\draw[very thick] (2.2,-3.8) rectangle (3.2,-2.8) ;
\node[fill=white,inner sep=1pt] at (4.5,-3.3) {\tiny {$=\alpha_2$}};
\draw[very thick,fill=gray] (9,-1.2) rectangle (10,-2.2) ;
\node[fill=white,inner sep=1pt] at (13.5,-1.7) {\tiny {is a box to delete}};
\draw[very thick,fill=green] (9,-2.8) rectangle (10,-3.8) ;
\node[fill=white,inner sep=1pt] at (13.5,-3.3) {\tiny {is a box to add}};
\draw[very thick,fill=green] (-1,-1) rectangle (0,0) ;

\node[fill=white,inner sep=1pt] at (5,4) {\large {$\alpha$}};

\draw[very thick,->] (9.5,3) -- (10.5,3);

\draw[very thick,lightgray] (12,0) rectangle (13,6) ;
\draw[very thick,lightgray] (13,0) rectangle (14,6) ;
\draw[very thick,lightgray] (14,0) rectangle (15,6) ;
\draw[very thick,lightgray] (15,0) rectangle (16,6) ;
\draw[very thick,lightgray] (16,0) rectangle (17,6) ;
\draw[very thick,lightgray] (17,0) rectangle (18,6) ;
\draw[very thick,lightgray] (18,0) rectangle (19,6) ;
\draw[very thick,lightgray] (19,0) rectangle (20,6) ;
\draw[very thick,lightgray] (20,0) rectangle (21,6) ;

\draw[very thick,lightgray] (12,0) rectangle (21,1) ;
\draw[very thick,lightgray] (12,2) rectangle (21,3) ;
\draw[very thick,lightgray] (12,4) rectangle (21,5) ;
\draw[very thick,lightgray] (12,5) rectangle (21,6) ;

\draw[very thick] (12,5) rectangle (21,6) ;
\draw[very thick] (12,4) rectangle (13,5) ;
\draw[very thick] (11,0) rectangle (12,4) ;
\draw[very thick] (13,5) rectangle (14,6) ;
\draw[very thick] (15,5) rectangle (16,6) ;
\draw[very thick] (17,5) rectangle (18,6) ;
\draw[very thick] (19,5) rectangle (20,6) ;
\draw[very thick] (11,1) rectangle (12,2) ;
\draw[very thick] (11,3) rectangle (12,3) ;

\node[fill=white,inner sep=1pt] at (16,4) {\large {$\alpha^{\OZ(1)}$}};
\end{tikzpicture}
\end{center}

This uses part (2) of Definition~\ref{defn:Zpartitions} and we have that $j=4$ since the fourth row is the first occurrence of the red line being to the right of the shaded blue region.
\end{example}

\subsection{Curve neighborhoods in terms of partitions}\label{ss:nbhd-part}

We now rephrase Proposition~\ref{prop:YZ-permutations}, Proposition~\ref{prop:nbhd-open}, and Proposition~\ref{prop:crvnbdweyl} in terms of both $BC$-partitions and $BKT$-partitions.

From Section~\ref{ss:BC} we see that the set of partitions $\lambda \in \BC(k,2n+1)$ with $\lambda_1<2n+1-k$, denoted by $\BC^\circ$ and which indexes Schubert varieties of the open $\Sp_{2n+1}$-orbit $\Xo$, is in bijection with the elements of $\BC(k,2n)$, which index Schubert varieties of the $\Y$-orbit. Explicitly, the bijection $\BC^\circ \to \BC(k,2n)$ is just the identity. Abusing notation we denote it by 
\[
    \Phi \colon \BC^\circ \to \BC(k,2n),
\]
as it is the `partitions' equivalent of the bijection $\Phi \colon \Wo \cap W^P \to W_\Y$ from Proposition~\ref{prop:YZ-permutations}.

Similarly, partitions $\lambda \in \BC(k,2n+1)$  indexing Schubert varieties of the closed $\Sp_{2n+1}$-orbit $\Xc$, i.e., those with $\lambda_1=2n+1-k$, are in bijection with the elements of $\BC(k-1,2n)$, which index Schubert varieties of the $\Xc$-orbit, and the bijection is as follows. If $\lambda \in \BC(k,2n+1)$ is such that $\lambda_1=2n+1-k$ then
\[
    \Phi_\Xc(\lambda) = (\lambda_2 \geq \ldots \geq \lambda_k).
\]

For partitions in $\BKT(k,2n+1)$ the situation is analogous; we obtain a bijection
\[
    \Phi \colon \BKT^\circ \to \BKT(k,2n),
\]
where $\BKT^\circ$ is the set of partitions $\alpha \in \BKT(k,2n+1)$ with $\alpha_1<2n+1-k$, which indexes Schubert varieties of $\Xo$. This bijection is simply the identity. We also have that partitions $\alpha \in \BKT(k,2n+1)$ with $\alpha_1=2n+1-k$, which index Schubert varieties of the $\Xc$-orbit, are in bijection with the elements of $\BKT(k-1,2n)$, and the bijection is given by
\[
    \Phi_\Xc(\alpha) = (\alpha_2+1 \geq \ldots \geq \alpha_k+1).
\]

Now Proposition~\ref{prop:YZ-permutations} can be reformulated.
\begin{prop}\label{prop:YZ-Part}
The Schubert varieties of $\IG(k,2n+1)$ are related to those of the closed $\Sp_{2n}$-orbits $\Y$ and $\Xc$ as follows:
\begin{itemize}
    \item $X(\lambda) = \pi_\Xc(p_\Y^{-1}(\Y(\Phi(\lambda))))$ if $\lambda \in \BC^\circ$ (and similarly for $\alpha \in \BKT^\circ$);
    \item $X(\lambda)=i_\Xc(\Xc(\Phi_\Xc(\lambda)))$ if $\lambda \in \BC(k,2n+1)$ is such that $\lambda_1=2n+1-k$ (and similarly for $\alpha \in \BKT(k,2n+1)$ with $\alpha_1=2n+1-k$).
\end{itemize}
\end{prop}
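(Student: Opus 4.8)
The plan is to reduce everything to Proposition~\ref{prop:YZ-permutations}, which already records the two geometric identities $X(w)=\pi_\Xc(p_\Y^{-1}(\Y(\Phi(w))))$ and $X(w)=i_\Xc(\Xc(\Phi_\Xc(w)))$ in terms of Weyl group elements. Since the Schubert varieties of $X=\IG(k,2n+1)$, of $\Y=\IG(k,2n)$, and of $\Xc=\IG(k-1,2n)$ are all indexed simultaneously by Weyl group elements and by partitions ($BC$ or $BKT$) through the explicit dictionaries of Sections~\ref{ss:BC} and~\ref{ss:BKT}, the only thing left to prove is that the partition maps $\Phi$ and $\Phi_\Xc$ of Section~\ref{ss:nbhd-part} correspond, under those dictionaries, to the Weyl group maps $\Phi$ and $\Phi_\Xc$ of Proposition~\ref{prop:YZ-permutations}. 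Concretely, for each orbit and each family of partitions I would check commutativity of the square whose horizontal arrows are the partition-to-permutation dictionaries and whose vertical arrows are the two versions of $\Phi$ (resp. $\Phi_\Xc$); once these squares commute we get $\Y(\Phi(\lambda))=\Y(\Phi(w))$ and $\Xc(\Phi_\Xc(\lambda))=\Xc(\Phi_\Xc(w))$, and the displayed identities follow verbatim.

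For the $BC$ case the verification is a direct $01$-word computation. For $\lambda\in\BC^\circ$ the condition $\lambda_1<2n+1-k$ forces the first and last symbols of the length-$(2n+2)$ word $D(\lambda)=D(\lambda+1^k)$ to be horizontal steps; deleting these two symbols yields exactly the length-$2n$ word of $\lambda$ read in $\BC(k,2n)$. On the permutation side this deletion is the reindexing $i\mapsto i-1$ carrying the surviving frame $\{2,\dots,n+1,\overline{n+1},\dots,\bar 2\}$ onto the type $C_n$ frame $\{1,\dots,n,\bar n,\dots,\bar 1\}$, which sends $w=(a_1<\dots<\bar a_{r+1})$ to $\Phi(w)=(a_1-1<\dots<\overline{a_{r+1}-1})$; hence $\Phi(\lambda)=\lambda$ indexes $\Y(\Phi(w))$. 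For the closed orbit, $\lambda_1=2n+1-k$ is equivalent to $w(1)=1$ (full first row $\leftrightarrow$ $X(w)\subset\Xc$), and deleting this first part and re-reading the word produces $\Phi_\Xc(\lambda)=(\lambda_2\geq\dots\geq\lambda_k)\in\BC(k-1,2n)$, the partition of $\Phi_\Xc(w)=(a_2-1<\dots)$.

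For the $BKT$ case I would run the same argument through the dictionary $\beta_j=2n+3-k-w(j)+\#\{i<j\mid w(i)+w(j)>2n+3\}$ of \cite{BKT2}*{Prop.~4.3}, combined with the shift $\beta\mapsto\beta-1^k$ defining $\BKT(k,2n+1)$. The open orbit again gives the identity map, and the closed orbit should give $\Phi_\Xc(\alpha)=(\alpha_2+1\geq\dots\geq\alpha_k+1)$. I expect the closed-orbit $BKT$ computation to be the main obstacle: passing from $X$ to $\Xc$ replaces the threshold $2n+3$ by $2n+1$ and shifts every entry, so one must check that the correction term $\#\{i<j\mid w(i)+w(j)>\cdot\}$ is unchanged, which holds because the deleted entry $w(1)=1$ contributes to no pair exceeding either threshold. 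Since for $BKT$ the closed-orbit geometric identity is already \cite{GPPS}*{Prop.~5.3}, I can in fact cite that result and use the bookkeeping only to pin down the explicit formula for $\Phi_\Xc$; assembling the four matchings then completes the proof.
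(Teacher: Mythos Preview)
Your proposal is correct and is essentially the same argument the paper has in mind: the paper does not give a separate proof of this proposition at all, simply presenting it as a reformulation of Proposition~\ref{prop:YZ-permutations} once the partition bijections $\Phi$ and $\Phi_\Xc$ have been written down in Section~\ref{ss:nbhd-part}. Your explicit verification that the partition dictionaries intertwine with the Weyl-group dictionaries is exactly the content that justifies calling it a reformulation, and is more detailed than what the paper records.
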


The main elements of the proof of the next proposition are in Subsection \ref{subs:proofofcrvnbdpart}.

\begin{prop} \label{prop:crvnbdpart}
Let $I \in \{\BC(k,2n+1),\BKT(k,2n+1) \}$ and let $\lambda \in I$. 
\begin{enumerate}
    \item If $X(\lambda) \cap \Xo \neq \emptyset$ then 
    \[
        \Gamma_d(X(\lambda))=X\left(\lambda^{\Oo(d)}\right). 
    \]
    \item If $X(\lambda) \subset \Xc$ then $\Gamma_d(X(\lambda)) = X\left(\lambda^{\OY(d)}\right) \cup X\left(\lambda^{\OZ(d)}\right)$. In some cases, $X\left(\lambda^{\OZ(d)}\right) \subset X\left(\lambda^{\OY(d)}\right)$.
\end{enumerate}
\end{prop}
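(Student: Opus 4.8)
The plan is to deduce everything from the Weyl-group-theoretic descriptions already established, namely Proposition~\ref{prop:nbhd-open} for the open orbit and Proposition~\ref{prop:crvnbdweyl} for the closed orbit, by transporting them through the bijections between $\BC(k,2n+1)$ (resp. $\BKT(k,2n+1)$) and $\Wodd \cap W^P$ recalled in Sections~\ref{ss:BC} and~\ref{ss:BKT}. Concretely, if $w$ denotes the odd symplectic permutation corresponding to $\lambda$, then it suffices to prove that under these bijections the partition operations $\lambda \mapsto \lambda^{\Oo(d)}$, $\lambda \mapsto \lambda^{\OY(d)}$, and $\lambda \mapsto \lambda^{\OZ(d)}$ correspond respectively to the Hecke operations $w \mapsto w \cdot_k \Oo(d)$, $w \mapsto w \cdot \OY(d)$, and $w \mapsto w \cdot \OZ(d)$. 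Once this dictionary is verified, Part~(1) is immediate from Proposition~\ref{prop:nbhd-open} and the first identity of Part~(2) is immediate from Proposition~\ref{prop:crvnbdweyl}.

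Because $\lambda^{\Oo(d)}=(\lambda^{\Oo(d-1)})^{\Oo(1)}$ and $\lambda^{\OZ(d)}=(\lambda^{\OZ(d-1)})^{\OZ(1)}$, while $\lambda^{\OY(d)}=(\lambda^{\OY(1)})^{\Oo(d-1)}$, and the corresponding Weyl-group elements satisfy the matching recursions $\Oo(d)=\Oo(d-1)\cdot_k\Oo(1)$, $\OZ(d)=\OZ(d-1)\cdot\OZ(1)$, and (by Corollary~\ref{cor:square}) $\OY(d)=\OY(1)\cdot_k\Oo(d-1)$, I would reduce the verification of the dictionary to the three base cases $\Oo(1)$, $\OY(1)$, $\OZ(1)$. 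For these the comparison is a direct combinatorial calculation against Lemma~\ref{lem:comp}. In the $\BC$ setting the cases of $\Oo(1)$ (removing a hook) and $\OZ(1)$ (deleting the second row together with one box from each later row) translate transparently into the effect of $v\cdot_k\Oo(1)$ and $w\cdot\OZ(1)$ on the entries $v(i)$, $w(i)$ described in Lemma~\ref{lem:comp}. The delicate case is $\OY(1)$: the two subcases in the definition are governed by the wingtip-symmetry index $m$ and by whether $D(\lambda)(\bar m)$ equals $0$ or $1$, and I would match these against the value $\bar j_\Y=\overline{\min(\{2,\dots,n+1\}\setminus\{a_2,\dots,a_k\})}$ appearing in $w\cdot\OY(1)$, checking that the inserted or shifted part is exactly the one predicted by the position $j_\Y$.

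For the $\BKT$ indexation the same reduction to $d=1$ applies, but the translation is more involved because the bijection between $k$-strict partitions and Weyl elements mixes the parts $\beta_j$ with the correction term $\#\{i<j\mid w(i)+w(j)>2n+3\}$. Here I would verify that the arithmetic conditions $\alpha_1+\alpha_j>2(n-k)+j-1$ (for $\Oo(1)$) and $\alpha_2+\alpha_j>2(n-k)+j-2$ (for $\OZ(1)$), as well as the statistic $\ell_{-1}(\alpha)$ (for $\OY(1)$), are precisely the combinatorial shadow of the positions $j^\circ$, $j_\Xc$, $j_\Y$ of Lemma~\ref{lem:comp}; the examples following Definitions~\ref{defn:Ypartitions} and~\ref{defn:Zpartitions} already confirm this on representative partitions, so the general argument is bookkeeping on the correction term.

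Finally, for the containment assertion $X(\lambda^{\OZ(d)})\subset X(\lambda^{\OY(d)})$, I would work in the $\BC$ indexation, where the Bruhat order is simply inclusion of Young diagrams, so the statement becomes the concrete question of when $\lambda^{\OZ(d)}\subseteq\lambda^{\OY(d)}$; this can be read off the explicit shapes produced by the base moves and their iterates, and one may cross-check using that $w\cdot\OY(d)$ always meets $\Xo$ while $w\cdot\OZ(d)$ stays in $\Xc$. I expect the principal obstacle to be exactly the $\OY(1)$ case in the $\BC$ setting: reconciling the two wingtip-symmetry subcases with Lemma~\ref{lem:comp} requires carefully relating the $01$-word of $\lambda$ to the barred entries of $w$, and then checking that this matching is stable under the recursion passing from $d=1$ to general $d$.
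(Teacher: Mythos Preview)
Your proposal is correct and follows essentially the same architecture as the paper: reduce to the Weyl-group statements (Propositions~\ref{prop:nbhd-open} and~\ref{prop:crvnbdweyl}), then verify the dictionary between the partition operations and the Hecke operations in the base case $d=1$ and induct. The paper packages those $d=1$ verifications as Lemmas~\ref{lem:Ocirc1}, \ref{lem:OZ1}, \ref{lem:O11Part}, and \ref{lem:O11BKT}.

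One minor methodological difference is worth noting. For $\Oo(1)$ and $\OZ(1)$ you propose a direct check against Lemma~\ref{lem:comp}; the paper instead routes through the horospherical structure, using the maps $\Phi$ and $\Phi_\Xc$ (Propositions~\ref{prop:bijY2W} and~\ref{prop:bijZ}) to reduce to the already-known curve neighborhood formulas in the even Grassmannians $\Y\cong\IG(k,2n)$ and $\Xc\cong\IG(k-1,2n)$ (Lemmas~\ref{lem:Lambdaline} and~\ref{lem:Partline}). Your direct route is perfectly viable and arguably more self-contained; the paper's route has the virtue of explaining \emph{why} the partition operations look the way they do, by identifying them with hook removals on the even side. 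For $\OY(1)$ your diagnosis matches the paper's exactly: this is the delicate case, handled via wingtip symmetry in the $\BC$ setting (Lemma~\ref{lem:O11Part}) and via the auxiliary Lemmas~\ref{lem:partBKTa}--\ref{lem:partBKTb} in the $\BKT$ setting. Finally, the containment clause ``in some cases $X(\lambda^{\OZ(d)})\subset X(\lambda^{\OY(d)})$'' is only a remark at this stage; the paper does not prove it inside this proposition but defers the precise analysis to Lemma~\ref{lem:BCincl}.
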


\begin{proof}
Part (1) follows from Proposition~\ref{prop:nbhd-open} and Lemma~\ref{lem:Ocirc1}. Precisely, let $\lambda \in I$ be such that $X(\lambda) \cap \Xo \neq \emptyset$, and denote by $v \in \Wo \cap W^P$ be the corresponding Weyl group element, i.e., $\Psi_I(\lambda)=v$. By Proposition~\ref{prop:nbhd-open} we have $\Gamma_d(X(v))=X(v \cdot_k \Oo(d))$. For $d=1$, Lemma~\ref{lem:Ocirc1} implies that $\Psi_I(\lambda^{\Oo(1)})=v \cdot_k \Oo(1)$, hence $\Gamma_1(X(\lambda))=X\left(\lambda^{\Oo(1)}\right)$. The case $d>1$ follows by induction, given that $\Oo(d)=\Oo(d-1) \cdot_k \Oo(1)$.

For part (2), if $\lambda \in I$ is such that $X(\lambda) \subset \Xc$, denote by $u$ the corresponding Weyl group element, i.e., $\Psi_I(\lambda)=u$. By Proposition~\ref{prop:crvnbdweyl} we have $\Gamma_d(X(u))=X(u \cdot \OY(d)) \cup X(u \cdot \OZ(d))$. For $d=1$, Lemmas \ref{lem:O11Part} and \ref{lem:O11BKT} imply that $\Psi_I(\lambda^{\OY(1)}) = u \cdot \OY(1)$, and Lemma \ref{lem:OZ1} implies that $\Psi_I(\lambda^{\OZ(1)}) = u \cdot \OZ(1)$, hence the result. The case $d>1$ follows again by induction.
\end{proof}

\begin{cor} \label{cor:curvgreed}
Let $I \in \{\BC(k,2n+1),\BKT(k,2n+1) \}$. Let $\lambda \in I$ where $X(\lambda) \subset \Xc$ (equivalently, $\lambda_1=2n+1-k$). Then 
\[
    \lambda^{\OY(d)} = \left( \left(\lambda^{\OZ(d_1)}\right)^{\OY(1)}\right)^{\Oo(d_2)} \text{ where $d_1+d_2=d-1$.}
\]
\end{cor}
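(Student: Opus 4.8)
The plan is to transport the claimed partition identity to the level of Weyl group elements, where it becomes precisely Corollary~\ref{cor:square}. Write $w = \Psi_I(\lambda) \in \Wodd \cap W^P$; since $\lambda_1 = 2n+1-k$ we have $X(\lambda) \subset \Xc$, equivalently $w(1)=1$, so $w$ lies in the domain of Corollary~\ref{cor:square}. The whole point is that the three families of partition operations were defined so as to mirror the corresponding Hecke products under $\Psi_I$, and so the corollary is essentially a restatement of Corollary~\ref{cor:square} on the partition side.

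First I would assemble the dictionary between the partition operations and the Hecke products under $\Psi_I$. The $d=1$ cases are exactly the content of the correspondence lemmas already used in the proof of Proposition~\ref{prop:crvnbdpart}: for $\mu$ in the closed orbit one has $\Psi_I(\mu^{\OZ(1)}) = \Psi_I(\mu)\cdot\OZ(1)$ and $\Psi_I(\mu^{\OY(1)}) = \Psi_I(\mu)\cdot\OY(1)$ by Lemmas~\ref{lem:OZ1},~\ref{lem:O11Part},~\ref{lem:O11BKT}, while for $\nu$ in the open orbit $\Psi_I(\nu^{\Oo(1)}) = \Psi_I(\nu)\cdot_k\Oo(1)$ by Lemma~\ref{lem:Ocirc1}. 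The higher-degree statements then follow by induction from the inductive definitions $\mu^{\OZ(d)}=(\mu^{\OZ(d-1)})^{\OZ(1)}$, $\nu^{\Oo(d)}=(\nu^{\Oo(d-1)})^{\Oo(1)}$, and $\mu^{\OY(d)}=(\mu^{\OY(1)})^{\Oo(d-1)}$, together with the matching recursions $\OZ(d)=\OZ(d-1)\cdot\OZ(1)$, $\Oo(d)=\Oo(d-1)\cdot_k\Oo(1)$, and $\OY(d)=\OY(1)\cdot_k\Oo(d-1)$ at the Weyl group level.

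The one point requiring care is to track the orbit type of each intermediate partition, so that the correct Hecke product ($\cdot$ on $\Xc$, $\cdot_k$ on $\Xo$) is applied at each stage. Starting from $\lambda$ with $\lambda_1=2n+1-k$, the operation $\OZ(d_1)$ leaves the first part unchanged, so $\lambda^{\OZ(d_1)}$ still indexes a Schubert variety in $\Xc$ and corresponds to $w\cdot\OZ(d_1)$; applying $\OY(1)$ then lands in the open orbit, corresponding to $(w\cdot\OZ(d_1))\cdot\OY(1)$; and the subsequent $\Oo(d_2)$ keeps us in the open orbit, corresponding to $\cdot_k\Oo(d_2)$. Hence
\[
    \Psi_I\!\left(\left(\left(\lambda^{\OZ(d_1)}\right)^{\OY(1)}\right)^{\Oo(d_2)}\right) = \left(\left(w\cdot\OZ(d_1)\right)\cdot\OY(1)\right)\cdot_k\Oo(d_2),
\]
while on the left-hand side the dictionary gives $\Psi_I(\lambda^{\OY(d)}) = w\cdot\OY(d)$.

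Finally, Corollary~\ref{cor:square} yields the equality of $W_P$-cosets $(w\cdot\OY(d))W_P = (((w\cdot\OZ(d_1))\cdot\OY(1))\cdot_k\Oo(d_2))W_P$ whenever $d_1+d_2=d-1$. Since $\Psi_I$ is a bijection, the two partitions above coincide, which is the asserted identity. I expect the only genuine obstacle to be the bookkeeping in the dictionary---verifying that each intermediate partition remains in the expected orbit so that the matching (modified or ordinary) Hecke product applies, and that the inductive definitions on both sides line up; once this is in place the corollary reduces immediately to Corollary~\ref{cor:square}.
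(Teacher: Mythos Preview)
Your proposal is correct and follows exactly the paper's approach: the paper's proof reads ``This follows immediately from Corollary~\ref{cor:square} and Lemmas~\ref{lem:Ocirc1}, \ref{lem:OZ1}, \ref{lem:O11Part}, and \ref{lem:O11BKT},'' which is precisely the dictionary-and-transport argument you have written out in detail.
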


\begin{proof}
This follows immediately from Corollary~\ref{cor:square} and Lemmas~\ref{lem:Ocirc1}, \ref{lem:OZ1}, \ref{lem:O11Part}, and \ref{lem:O11BKT}.
\end{proof}

\section{Classification of Irreducible components of curve neighborhoods}

We will define two sets that will index those partitions in $\BC(k,2n+1)$ and $\BKT(k,2n+1)$ where the associated curve neighborhoods have two components.
We start by introducing notation for this section. 

Let $\ell^d_i(\lambda)=\# \{j>d \mid \lambda_j>i \}$. % and $m^d_i(\lambda)=\max \{j>d \mid \lambda_j>i \}$. 
Explicitly, $\ell^d_i(\lambda)$ counts, among the $k-d$ last parts of $\lambda$, how many are larger than $i$.%, while $m^d_i$ denotes the index of the last such part.

\begin{defn}
We define:
\begin{align*}
    \Comp_{\BC}(d) &:= \{ \lambda \in \BC(k,2n+1) \mid \lambda_1 = 2n+1-k, \lambda_{d+1}-\ell_{d-1}^{d+1}(\lambda)-d=2(n+1-k) \}; \\
    \Comp_{\BKT}(d) &:= \{ \alpha \in \BKT(k,2n+1) \mid \alpha_1=2n+1-k, \alpha_2^{\OZ(d-1)}-\ell_{-1}^2(\alpha^{\OZ(d-1)})=2(n+1-k)\}.
\end{align*}
\end{defn}

Note that if $I \in \{\BC(k,2n+1), \BKT(k,2n+1)\}$ and $\lambda \not \in \Comp_I(d_1)$ for some $d_1 \geq 1$, then $\lambda \not \in \Comp_I(d)$ for any $d \geq d_1$. The explanation for this fact is geometrical. Namely, if $X(\lambda) \subset \Xc$ and $\Gamma_{d_1}(X(\lambda))$ has no irreducible component contained in $\Xc$, then 
\[ 
    \Gamma_d(X(\lambda)) = \Gamma_{d-d_1}(\Gamma_{d_1}(X(\lambda))) = \Gamma_{d-d_1}\left(X\left(\lambda^{\OY(d_1)}\right)\right) = X\left( \left(\lambda^{\OY(d_1)}\right)^{\Oo(d-d_1)}\right) 
\] 
is irreducible by Corollary \ref{cor:curvgreed}.

We now state the main theorem of this section.

\begin{thm} \label{thm:mainthmsec}
Let $I \in \{\BC(k,2n+1), \BKT(k,2n+1)\}$. If $\lambda \in I$ then
\[ 
    \Gamma_d(X(\lambda)) =  \begin{cases}
                                  X\left(\lambda^{\OY(d)}\right) \cup X\left(\lambda^{\OZ(d)}\right) & \text{ if $\lambda \in \Comp_I(d)$}\\ X\left(\lambda^{\OY(d)}\right) & \text{ if $\lambda_1=2n+1-k$ and $\lambda \notin \Comp_{I}(d)$} \\
                            X\left( \lambda^{\Oo(d)}\right) & \text{if $\lambda_1 < 2n+1-k$.}
                            \end{cases}
\]
Moreover, in the first case $(\lambda \in \Comp_I(d))$, the Schubert varieties $X\left(\lambda^{\OY(d)}\right)$ and  $X\left(\lambda^{\OZ(d)}\right)$ form two irreducible components.
\end{thm}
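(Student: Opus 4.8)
The plan is to build directly on Proposition~\ref{prop:crvnbdpart}, which already supplies the shape of the answer; the only thing left to decide is \emph{when} the two pieces $X(\lambda^{\OY(d)})$ and $X(\lambda^{\OZ(d)})$ are genuinely distinct irreducible components. First I would dispose of the open-orbit branch: if $\lambda_1 < 2n+1-k$ then $X(\lambda) \cap \Xo \neq \emptyset$, and Proposition~\ref{prop:crvnbdpart}(1) gives $\Gamma_d(X(\lambda)) = X(\lambda^{\Oo(d)})$, matching the third case. For the remaining cases $\lambda_1 = 2n+1-k$ (so $X(\lambda) \subset \Xc$), Proposition~\ref{prop:crvnbdpart}(2) already yields $\Gamma_d(X(\lambda)) = X(\lambda^{\OY(d)}) \cup X(\lambda^{\OZ(d)})$, so the entire content of the theorem reduces to a containment dichotomy between these two pieces.

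Next I would observe that only one containment is ever possible. The partition $\lambda^{\OY(d)}$ corresponds to a Weyl group element in $\Wo$, so $X(\lambda^{\OY(d)}) \cap \Xo \neq \emptyset$, whereas $X(\lambda^{\OZ(d)}) \subset \Xc$. Hence $X(\lambda^{\OY(d)})$ can never sit inside $X(\lambda^{\OZ(d)})$, and the union has two irreducible components precisely when $X(\lambda^{\OZ(d)}) \not\subseteq X(\lambda^{\OY(d)})$. Since containment of Schubert varieties is equivalent to the Bruhat order, which for $\BC$-partitions is literally Young-diagram inclusion, this becomes a purely combinatorial comparison of $\lambda^{\OY(d)}$ and $\lambda^{\OZ(d)}$, and the two branches of the theorem are distinguished by whether this inclusion holds.

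To make the comparison tractable I would apply Corollary~\ref{cor:curvgreed} with $d_1 = d-1$ and $d_2 = 0$, giving $\lambda^{\OY(d)} = (\lambda^{\OZ(d-1)})^{\OY(1)}$, while by definition $\lambda^{\OZ(d)} = (\lambda^{\OZ(d-1)})^{\OZ(1)}$. Writing $\mu := \lambda^{\OZ(d-1)}$ (which still has $\mu_1 = 2n+1-k$), the problem collapses to the single-degree question: does $X(\mu^{\OZ(1)}) \subseteq X(\mu^{\OY(1)})$? This is exactly the regime in which the sets $\Comp_I(d)$ were defined: the $\BKT$ condition $\alpha_2^{\OZ(d-1)} - \ell_{-1}^2(\alpha^{\OZ(d-1)}) = 2(n+1-k)$ is manifestly a condition on $\mu$, and the $\BC$ condition $\lambda_{d+1} - \ell_{d-1}^{d+1}(\lambda) - d = 2(n+1-k)$ is the same condition transported back through the $d-1$ applications of $\OZ(1)$ (note $(\lambda^{\OZ(d-1)})_2 = \lambda_{d+1} - (d-1)$). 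I would then compute $\mu^{\OZ(1)}$ and $\mu^{\OY(1)}$ from their explicit definitions and check box-by-box that $\mu^{\OZ(1)} \subseteq \mu^{\OY(1)}$ holds if and only if the $\Comp$-equality \emph{fails}.

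The main obstacle is this final combinatorial comparison, especially on the $\BC$ side. The $\OZ(1)$-operation is the clean ``delete the second row and one box from each subsequent row,'' but $\OY(1)$ is governed by the wingtip-symmetry case split, so I expect to track the position $\bar m$ carefully and argue that the unique row where inclusion can break is row $2$ of $\mu$ (equivalently row $d+1$ of $\lambda$), with the value $2(n+1-k)$ being precisely the threshold at which the extra box produced on the $\OZ$ side escapes the diagram of $\lambda^{\OY(d)}$. Finally, I would transfer the $\BKT$ statement through the bijection $\Phi_\Xc$, using that $\OZ(1)$, $\OY(1)$, and $\Oo(1)$ are compatible with the passage between the $\BC$- and $\BKT$-indexations because all three correspond to the same Hecke products of Weyl group elements, so the dichotomy established for $\BC$-partitions transports verbatim.
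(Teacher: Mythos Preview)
Your proposal is correct and follows essentially the same route as the paper: dispose of the open-orbit case via Proposition~\ref{prop:crvnbdpart}(1), then for $\lambda_1=2n+1-k$ reduce the general-$d$ comparison of $\lambda^{\OY(d)}$ and $\lambda^{\OZ(d)}$ to the $d=1$ comparison for $\mu:=\lambda^{\OZ(d-1)}$ (the paper packages this reduction as Lemma~\ref{lem:compadd}/\ref{lem:compaddBKT} and Corollary~\ref{cor:curvgreed}), and finally settle the $d=1$ case by an explicit $01$-word/partition computation (Lemmas~\ref{lem:BCincl} and~\ref{lem:BCcomp}), transferring between $\BC$ and $\BKT$ via Lemma~\ref{lem:compsetcompat}. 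One small slip: for $\BC$-partitions the correspondence is order-\emph{reversing}, so $X(\mu^{\OZ(1)})\subseteq X(\mu^{\OY(1)})$ translates to $\mu^{\OY(1)}\subseteq\mu^{\OZ(1)}$ as Young diagrams, not the inclusion you wrote; this does not affect the argument.
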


\begin{example} \label{exam:minthm}
Let $k=5$, $n=7$, and consider the following partitions in $\BC(5,15)$:
\[
    \lambda = (10,9,9,5), \mu = (9,8,8,3,1).
\]
We see that $\lambda \in \Comp_{\BC}(d)$ for $d=1,2$ but $\lambda \not \in \Comp_{\BC}(d)$ for $d \geq 3$. We compute
\[
    \lambda^{\OY(1)} = (8,8,4,2), \lambda^{\OZ(1)} = (10,8,4),
\]
which are indeed incomparable for inclusion, then
\[
    \lambda^{\OY(2)} = (7,3,1), \lambda^{\OZ(2)} = (10,3),
\]
which are still incomparable, and finally
\[
    \lambda^{\OY(3)} = (2) \subset \lambda^{\OZ(3)} = (10).
\]

For $\mu$ we obtain
\[
    \mu^{\Oo(1)} = (7,7,2), \mu^{\Oo(2)} = (6,1).
\]
\end{example}

To prove Theorem~\ref{thm:mainthmsec} we require several lemmas, which we now state and prove.

\begin{lemma} \label{lem:compadd}
If $d_2 \geq 1$ then the partition $\lambda$ is in $\Comp_{\BC}(d_1+d_2)$ if and only if $\lambda^{\OZ(d_1)} \in \Comp_{\BC}(d_2)$.
\end{lemma}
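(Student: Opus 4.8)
The plan is to reduce the entire statement to a single closed formula for the parts of $\lambda^{\OZ(d_1)}$, after which the two defining equalities of $\Comp_{\BC}$ become literally the same equation up to an index shift. Throughout I would assume $\lambda_1 = 2n+1-k$, since otherwise both membership conditions fail automatically (each requires its partition to have maximal first part) and $\lambda^{\OZ(d_1)}$ is not even defined.

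First I would record the formula
\[
    \lambda^{\OZ(d_1)}_1 = \lambda_1, \qquad \lambda^{\OZ(d_1)}_j = \max(\lambda_{j+d_1} - d_1,\, 0) \quad (j \geq 2),
\]
proved by an easy induction on $d_1$. The base case $d_1=1$ is exactly the definition of $\OZ(1)$ (delete the second row and strip one box from every later row), and the inductive step uses that $\OZ(1)$ acts on any closed-orbit partition $\nu$ by $\nu_1 \mapsto \nu_1$ and $\nu_j \mapsto \max(\nu_{j+1}-1,0)$ for $j \geq 2$. The $\max$ with $0$ simply records that negative parts are replaced by $0$. One can sanity-check this against the worked examples, e.g. $(10,9,9,5)^{\OZ(1)}=(10,8,4)$ and $(10,9,9,5)^{\OZ(2)}=(10,3)$.

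Next I would translate the two conditions. Write $\mu = \lambda^{\OZ(d_1)}$. Since $\mu_1=\lambda_1$, the clause ``$\cdot_1 = 2n+1-k$'' agrees on both sides. Using $d_2 \geq 1$ and the formula, $\mu_{d_2+1} = \max(\lambda_{d_1+d_2+1}-d_1, 0)$, while for $j > d_2+1$ (so $j \geq 3$) one checks $\mu_j > d_2-1 \iff \lambda_{j+d_1} > d_1+d_2-1$; reindexing by $j' = j+d_1$ then gives $\ell^{d_2+1}_{d_2-1}(\mu) = \ell^{d_1+d_2+1}_{d_1+d_2-1}(\lambda)$. Substituting, the equality defining $\mu \in \Comp_{\BC}(d_2)$,
\[
    \mu_{d_2+1} - \ell^{d_2+1}_{d_2-1}(\mu) - d_2 = 2(n+1-k),
\]
becomes precisely $\lambda_{d_1+d_2+1} - \ell^{d_1+d_2+1}_{d_1+d_2-1}(\lambda) - (d_1+d_2) = 2(n+1-k)$, which is the equality defining $\lambda \in \Comp_{\BC}(d_1+d_2)$.

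The one delicate point, and the only real obstacle, is the clipping $\max(\cdot,0)$: a priori the linear identity $\mu_{d_2+1} = \lambda_{d_1+d_2+1}-d_1$ could break if $\lambda_{d_1+d_2+1}-d_1 < 0$. I would dispose of this by observing that whenever either equality holds the relevant entry is forced positive, so no clipping occurs. Indeed the target $2(n+1-k)$ is $\geq 0$ and $\ell \geq 0$, $d_2 \geq 1$, so the $\mu$-side equality forces $\mu_{d_2+1} \geq d_2 \geq 1$, and the $\lambda$-side equality forces $\lambda_{d_1+d_2+1} \geq d_1+d_2 > d_1$. In either direction the linear substitution is valid, giving the equivalence. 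The characterization of $\ell^{d_2+1}_{d_2-1}(\mu)$ is itself robust to clipping, since any clipped entry equals $0 \leq d_2-1$ and so is correctly excluded from the count.
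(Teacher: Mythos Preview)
Your proof is correct and follows essentially the same approach as the paper: compute the parts of $\lambda^{\OZ(d_1)}$ explicitly and observe that the defining equations of $\Comp_{\BC}$ match up under the index shift. The paper reduces to the case $d_1=1$ via the recursion $\lambda^{\OZ(d)}=(\lambda^{\OZ(d-1)})^{\OZ(1)}$ and then checks that single step, whereas you write down the closed formula $\lambda^{\OZ(d_1)}_j=\max(\lambda_{j+d_1}-d_1,0)$ and handle all $d_1$ at once; these are minor organizational variants of the same argument. Your explicit treatment of the clipping $\max(\cdot,0)$ is a point the paper glosses over (it simply writes $\lambda^{\OZ(1)}_{d_2+1}=\lambda_{d_2+2}-1$), so if anything your version is slightly more careful.
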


\begin{proof}
It follows from the recursive definition of $\lambda^{\OZ(d)} = \left( \lambda^{\OZ(d-1)} \right)^{\OZ(1)}$ that it is enough to prove the result for $d_1=1$. Recall that
\[
    \lambda^{\OZ(1)} = (2n+1-k \geq \lambda_3-1 \geq \dots \geq \lambda_{\ell(\lambda)}-1),
\]
hence $\lambda^{\OZ(1)}_{d_2+1}=\lambda_{d_2+2}-1$. Moreover, the number of parts of $\lambda^{\OZ(1)}$ which are at least equal to $d_2$ is one less than the number of parts of $\lambda$ which are at least equal to $d_2+1$. This translates as
\[
    \ell_{d_2-1}^{d_2+1}(\lambda^{\OZ(1)}) = \ell_{d_2}^{d_2+2}(\lambda),
\]
therefore
\[
    \lambda^{\OZ(1)}_{d_2+1} - \ell_{d_2-1}^{d_2+1}(\lambda^{\OZ(1)}) - d_2 = \lambda_{d_2+2} - \ell_{d_2}^{d_2+2}(\lambda) -(d_2+1).
\]
It is now immediate that $\lambda^{\OZ(1)} \in \Comp_{\BC}(d_2)$ if and only if $\lambda \in \Comp_{\BC}(d_2+1)$.
\end{proof}

The next result follows immediately from the recursive definition of $\Comp_{\BKT}(d)$.
\begin{lemma} \label{lem:compaddBKT}
If $d_2 \geq 1$ then the partition $\alpha$ is in $\Comp_{\BKT}(d_1+d_2)$ if and only if $\alpha^{\OZ(d_1)} \in \Comp_{\BKT}(d_2)$.
\end{lemma}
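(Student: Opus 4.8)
The plan is simply to unfold both sides of the claimed equivalence using the definition of $\Comp_{\BKT}(d)$ together with the recursion $\alpha^{\OZ(d)}=(\alpha^{\OZ(d-1)})^{\OZ(1)}$ from Definition~\ref{defn:Zpartitions}. Recall that $\alpha\in\Comp_{\BKT}(d)$ means precisely that $\alpha_1=2n+1-k$ and that, writing $\mu:=\alpha^{\OZ(d-1)}$, one has $\mu_2-\ell_{-1}^2(\mu)=2(n+1-k)$. The key structural fact I would establish first is that iterating the operator $\OZ$ is additive in the degree, that is
\[
    (\alpha^{\OZ(d_1)})^{\OZ(d_2-1)}=\alpha^{\OZ(d_1+d_2-1)},
\]
which follows by an immediate induction from the recursive definition (this is the same reduction to a single application that is used in the proof of Lemma~\ref{lem:compadd}).

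Next I would record the elementary observation that applying $\OZ(1)$ leaves the first part unchanged: in both cases (1) and (2) of Definition~\ref{defn:Zpartitions} the first entry of $\alpha^{\OZ(1)}$ is $\alpha_1$. Hence, by induction, $(\alpha^{\OZ(d_1)})_1=\alpha_1$ for every $d_1$, and in particular the condition $\alpha_1=2n+1-k$ is equivalent to $(\alpha^{\OZ(d_1)})_1=2n+1-k$. This also guarantees that $\OZ$ remains applicable at each stage, since the operator is only defined on partitions with first part equal to $2n+1-k$ (those indexing Schubert varieties inside $\Xc$); the hypothesis $d_2\geq 1$ ensures that all degrees appearing in the two membership conditions are nonnegative, so the definition of $\Comp_{\BKT}$ is legitimately invoked on both sides.

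With these two facts in hand the equivalence is purely formal. Spelling out $\alpha^{\OZ(d_1)}\in\Comp_{\BKT}(d_2)$ gives the two requirements $(\alpha^{\OZ(d_1)})_1=2n+1-k$ and $((\alpha^{\OZ(d_1)})^{\OZ(d_2-1)})_2-\ell_{-1}^2((\alpha^{\OZ(d_1)})^{\OZ(d_2-1)})=2(n+1-k)$. Using additivity of the iteration the second requirement becomes $(\alpha^{\OZ(d_1+d_2-1)})_2-\ell_{-1}^2(\alpha^{\OZ(d_1+d_2-1)})=2(n+1-k)$, and using the first-part observation the first requirement becomes $\alpha_1=2n+1-k$; together these are exactly the two conditions defining $\alpha\in\Comp_{\BKT}(d_1+d_2)$. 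Thus the two sets of conditions coincide and the lemma follows. There is essentially no genuine obstacle here: the only points demanding care are confirming the additivity of $\OZ$-iteration and checking that the first part is preserved, both of which are immediate from Definition~\ref{defn:Zpartitions}; the content of the statement is entirely absorbed into the already-recursive shape of the definition of $\Comp_{\BKT}$.
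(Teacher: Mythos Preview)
Your proposal is correct and takes essentially the same approach as the paper, which offers no proof beyond the remark that the result ``follows immediately from the recursive definition of $\Comp_{\BKT}(d)$.'' Your argument is a faithful and careful expansion of that remark, making explicit the additivity $(\alpha^{\OZ(d_1)})^{\OZ(d_2-1)}=\alpha^{\OZ(d_1+d_2-1)}$ and the preservation of the first part under $\OZ$, both of which are indeed immediate from Definition~\ref{defn:Zpartitions}.
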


Lemma~\ref{lem:compsetcompat}, shows that $\Comp_{\BC}(d)$ and $\Comp_{\BKT}(d)$ index the same Schubert varieties.

\begin{lemma}\label{lem:compsetcompat}
The following equality holds
\[
    \left\{X(\lambda) \subset \IG(k,2n+1) \mid \lambda \in \Comp_{\BC}(d) \} = \{ X(\lambda) \subset \IG(k,2n+1) \mid \lambda \in \Comp_{\BKT}(d) \right\}.
\]
\end{lemma}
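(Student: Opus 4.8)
The plan is to prove the following pointwise refinement, from which the displayed set equality is immediate: for each odd symplectic permutation $w \in \Wodd \cap W^P$ with $w(1)=1$ (equivalently $X(w) \subset \Xc$), if $\lambda \in \BC(k,2n+1)$ and $\alpha \in \BKT(k,2n+1)$ are the two partitions indexing the single Schubert variety $X(w)$, then $\lambda \in \Comp_{\BC}(d)$ if and only if $\alpha \in \Comp_{\BKT}(d)$. The first observation is that the defining conditions of $\Comp_{\BC}(d)$ and $\Comp_{\BKT}(d)$ both carry the clause $\lambda_1 = 2n+1-k$, resp.\ $\alpha_1 = 2n+1-k$, and that under the dictionaries of Sections~\ref{ss:BC} and~\ref{ss:BKT} each of these is equivalent to $w(1)=1$. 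Hence both families on either side of the asserted equality consist of Schubert varieties $X(w)$ contained in $\Xc$, and we only ever need to compare the two conditions on such $w$.

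Next I would reduce to the case $d=1$. By Lemma~\ref{lem:compadd} we have $\lambda \in \Comp_{\BC}(d)$ iff $\lambda^{\OZ(d-1)} \in \Comp_{\BC}(1)$, and by Lemma~\ref{lem:compaddBKT} we have $\alpha \in \Comp_{\BKT}(d)$ iff $\alpha^{\OZ(d-1)} \in \Comp_{\BKT}(1)$. Iterating Lemma~\ref{lem:OZ1}, the partitions $\lambda^{\OZ(d-1)}$ and $\alpha^{\OZ(d-1)}$ index the same Schubert variety $X(w \cdot \OZ(d-1))$, and $w \cdot \OZ(d-1)$ still fixes $1$ by Lemma~\ref{lem:comp}, so it again lies in $\Xc$. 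Thus the full statement for $(w,d)$ follows from the $d=1$ statement applied to $w \cdot \OZ(d-1)$, and it suffices to prove, for every $w$ with $w(1)=1$, the equivalence of the two $d=1$ conditions $\lambda_2 - \ell^2_0(\lambda) - 1 = 2(n+1-k)$ and $\alpha_2 - \ell^2_{-1}(\alpha) = 2(n+1-k)$.

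For the $d=1$ comparison I would pass everything through the Weyl group element $w = (1 < a_2 < \dots < a_r < \bar a_k < \dots < \bar a_{r+1})$. The $01$-word bijection gives $\lambda_j = 2n+1-k - w(j) + j$, while the $k$-strict bijection gives $\alpha_j = 2n+2-k - w(j) + c_j$ with $c_j = \#\{i<j : w(i)+w(j) > 2n+3\}$; since $w(1)=1$ forces $c_2=0$, one gets $\lambda_2 = 2n+3-k-a_2$ and $\alpha_2 = \lambda_2 - 1$. Substituting, the $BC$ condition collapses to $a_2 + \ell^2_0(\lambda) = k$ and the $BKT$ condition to $a_2 + \ell^2_{-1}(\alpha) = k$. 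So the entire problem reduces to the single combinatorial identity $\ell^2_0(\lambda) = \ell^2_{-1}(\alpha)$, and I would prefer to anchor this to the geometry: both sides should equal the statement that $\Gamma_1(X(w))$ genuinely has two components, i.e.\ $w \cdot \OZ(1) \not\le w \cdot \OY(1)$ in the Bruhat order, which by Lemma~\ref{lem:comp} is a transparent comparison of the inserted barred entries $\bar j_\Xc$ and $\bar j_\Y$.

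The main obstacle is precisely the identity $\ell^2_0(\lambda) = \ell^2_{-1}(\alpha)$. These counts are \emph{a priori} different: the $BC$ encoding records a nonzero box in row $j$ exactly when $w(j) \le 2n-k+j$, while the $BKT$ encoding records one when $w(j) \le 2n+2-k+c_j$, and the correction terms $c_j$ make the two row-by-row records disagree in general. Reconciling the \emph{totals} over $j \ge 3$ is where the structure of odd symplectic permutations must be used: the isotropy of $w$ (no pair $\{i,\bar i\}$ occurs among $w(1),\dots,w(k)$) together with $w(1)=1$ controls the possible values of $c_j$ tightly enough to force the two counts to agree. This is the step I expect to require the most careful case analysis; once it is established, the remainder is the formal bookkeeping with the two dictionaries and the recursions already in place.
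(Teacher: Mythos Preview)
Your outline is the same as the paper's: reduce to $d=1$ via Lemmas~\ref{lem:compadd}, \ref{lem:compaddBKT} and~\ref{lem:OZ1}, then pass to the common Weyl element and compare the two $d=1$ conditions through $\alpha_2=\lambda_2-1$ together with the identity $\ell^2_0(\lambda)=\ell^2_{-1}(\alpha)$. The paper simply asserts this last identity; you are more cautious and single it out as the step requiring a case analysis.

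The problem is that this identity is \emph{false} in general (even under the hypothesis $\lambda_1=2n+1-k$), so no case analysis will close the gap. Take $k=4$, $n=6$ and $w=(1<2<\bar 6<\bar 3)$, the element of Example~\ref{ex:hecke}. One finds $\lambda=(9,9,3,1)\in\BC(4,13)$ and $\alpha=(9,8,1,-1)\in\BKT(4,13)$, hence $\ell^2_0(\lambda)=2$ while $\ell^2_{-1}(\alpha)=1$. In your reformulation this reads $a_2+\ell^2_0(\lambda)=4=k$ but $a_2+\ell^2_{-1}(\alpha)=3$, so $\lambda\in\Comp_{\BC}(1)$ whereas $\alpha\notin\Comp_{\BKT}(1)$ with the definitions as written. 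Since Example~\ref{ex:hecke} already exhibits $w\cdot\OZ(1)=(1<\bar 6<\bar 3<\bar 2)\not\le(2<\bar 6<\bar 4<\bar 3)=w\cdot\OY(1)$, the curve neighbourhood $\Gamma_1(X(w))$ genuinely has two components, and it is the $\BC$ side that is correct. The defect therefore lies upstream of your argument, in the stated definition of $\Comp_{\BKT}(d)$; both your proof and the paper's need adjusting once that is repaired. Your suggested geometric anchor---showing that each $d=1$ condition is equivalent to $w\cdot\OZ(1)\not\le w\cdot\OY(1)$ via Lemma~\ref{lem:comp}---is the natural route to pinning down the correct $\BKT$ criterion.
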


\begin{proof}
Let $X(\lambda) \subset \IG(k,2n+1)$ where $\lambda \in \Comp_{\BC}(d)$. By Lemma~\ref{lem:compadd} we have $\lambda^{\OZ(d-1)} \in \Comp_{\BC}(1)$.

Then $\mu := \lambda^{\OZ(d-1)}$ corresponds to a partition $\beta \in \BKT(k,2n+1)$, and $\beta=\alpha^{\OZ(d-1)}$ for some $\alpha \in \BKT(k,2n+1)$. Explicitly, $\alpha$ is such that $X(\alpha)=X(\lambda)$.

We have $\alpha^{\OZ(d-1)}_1=2n+1-k$, $\alpha^{\OZ(d-1)}_2 = \lambda_2^{\OZ(d-1)}-1$, and $\ell_{-1}^2(\alpha^{\OZ(d-1)})=\ell_{0}^{2}(\lambda^{\OZ(d-1)})$. So, $\mu_2^{\OZ(d-1)}-\ell_{-1}^2(\alpha^{\OZ(d-1)}) = 2(n+1-k)$ implies that $\alpha \in \Comp_{\BKT}(d)$. So $X(\lambda)=X(\alpha)$ and the set on the left side of the equality is a subset of the set on the right side of the equality. The reverse inclusion follows by reversing the argument, replacing $BC$ with $BKT$ and Lemma~\ref{lem:compadd} with Lemma~\ref{lem:compaddBKT}. The result follows.
\end{proof}

\begin{lemma}\label{lem:BCincl}
If $\lambda \in \BC(k,2n+1) \setminus \Comp_{\BC}(d)$ and $\lambda_1=2n+1-k$ then $X\left(\lambda^{\OZ(d)}\right) \subset X\left(\lambda^{\OY(d)}\right)$. In particular this implies that $\Gamma_d(X(\lambda))$ is irreducible.
\end{lemma}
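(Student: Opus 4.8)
The plan is to reduce everything to the base degree $d=1$ and then to a part-by-part comparison of two $BC$-partitions. For the reduction, set $\mu:=\lambda^{\OZ(d-1)}$ (so $\mu=\lambda$ when $d=1$). Since $\OZ(1)$ leaves the first part unchanged we have $\mu_1=2n+1-k$, and by Lemma~\ref{lem:compadd} the hypothesis $\lambda\notin\Comp_{\BC}(d)$ gives $\mu\notin\Comp_{\BC}(1)$. Moreover $\lambda^{\OZ(d)}=\mu^{\OZ(1)}$ by definition, while Corollary~\ref{cor:curvgreed} taken with $d_1=d-1$, $d_2=0$ yields $\lambda^{\OY(d)}=(\lambda^{\OZ(d-1)})^{\OY(1)}=\mu^{\OY(1)}$. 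Thus it is enough to treat $d=1$: assuming $\mu\in\BC(k,2n+1)$ with $\mu_1=2n+1-k$ and $\mu\notin\Comp_{\BC}(1)$, show $X(\mu^{\OZ(1)})\subseteq X(\mu^{\OY(1)})$.

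By the compatibility of the $BC$-indexation with the Bruhat order (Section~\ref{ss:BC}), inclusion of $BC$-Young diagrams corresponds to the reverse inclusion of Schubert varieties; compare the inclusions displayed in Example~\ref{exam:minthm}. Hence the statement is equivalent to the partition containment $\mu^{\OY(1)}\subseteq\mu^{\OZ(1)}$, i.e. $\mu^{\OY(1)}_i\le\mu^{\OZ(1)}_i$ for every $i$, and it is this inequality that I would establish.

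To prove the base case I would pass to the Weyl group. Let $u$ be the odd symplectic permutation of $\mu$; since $X(\mu)\subset\Xc$ we have $u(1)=1$. By Lemma~\ref{lem:comp}, $u\cdot\OY(1)$ is obtained from $u$ by deleting the leading entry $1$ and inserting $\bar j_\Y$, whereas $u\cdot\OZ(1)$ is obtained by keeping $1$, deleting $u(2)$, and inserting $\bar j_\Xc$; the two agree in the entries $u(3),\dots,u(k)$. The governing identity is $j_\Xc=\min(j_\Y,a_2)$, so that $j_\Xc=j_\Y$ exactly when $a_2>j_\Y$. The heart of the argument is to show that $\mu\notin\Comp_{\BC}(1)$ is precisely this condition $a_2>j_\Y$: here one rewrites the defining relation $\mu_2-\ell^2_0(\mu)-1<2(n+1-k)$ in terms of $u$, using that $\Comp_{\BC}(1)$ is exactly the equality case of the $BC$-inequality $\mu_2\le 2(n+1-k)+\ell^2_0(\mu)$ recorded in Example~\ref{ex:BC_permissible}. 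Once $j_\Xc=j_\Y$ is known, the sorted words $u\cdot\OZ(1)$ and $u\cdot\OY(1)$ differ only by the leading $1$ in place of $a_2$, and since $1$ is the minimum of the total order this forces $u\cdot\OZ(1)\le u\cdot\OY(1)$ entrywise, i.e. the required Schubert inclusion.

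I expect the main obstacle to be precisely this translation between the arithmetic description of $\Comp_{\BC}(1)$ and the combinatorial quantities $j_\Y,j_\Xc,a_2$ produced by Lemma~\ref{lem:comp}, together with the degenerate case in which $u$ has no unbarred entry other than the leading $1$ (so that $u(2)$ is barred and the roles of $u(2)$ and $a_2$ separate); this case must be handled directly, and one checks that the leading $1$ of $u\cdot\OZ(1)$ still dominates the barred leading entry of $u\cdot\OY(1)$, so the inclusion persists. Finally, the ``in particular'' assertion is immediate: Proposition~\ref{prop:crvnbdpart} gives $\Gamma_d(X(\lambda))=X(\lambda^{\OY(d)})\cup X(\lambda^{\OZ(d)})$, and the inclusion just proved collapses this union to the single Schubert variety $X(\lambda^{\OY(d)})$, which is irreducible.
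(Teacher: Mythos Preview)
Your reduction to $d=1$ via $\mu:=\lambda^{\OZ(d-1)}$ is correct and is exactly what the paper does: it first writes down $\lambda^{\OZ(d-1)}$ explicitly, then computes both $\lambda^{\OZ(d)}=(\lambda^{\OZ(d-1)})^{\OZ(1)}$ and $\lambda^{\OY(d)}=(\lambda^{\OZ(d-1)})^{\OY(1)}$.

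Where you diverge is in the $d=1$ comparison. The paper stays on the partition side. From the $01$-word of $\mu=\lambda^{\OZ(d-1)}$ and the inequality coming from $\lambda\notin\Comp_{\BC}(d)$, it reads off the wingtip-symmetry index $m=k-\ell_{d-1}^{d+1}(\lambda)-2$, observes $D(\mu)(\bar m)=1$ lands in a row with $\mu_i=0$, and hence obtains the explicit formula
\[
\lambda^{\OY(d)}=(\lambda_{d+1}-d,\dots,\lambda_{\ell_{d-1}^{d+1}(\lambda)+d+1}-d,0,\dots,0),
\]
which agrees with $\lambda^{\OZ(d)}$ in every part after the first and is strictly smaller in the first part. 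That is the whole proof.

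Your route via Lemma~\ref{lem:comp} is different and, as you anticipate, the content sits entirely in the unproved translation ``$\mu\notin\Comp_{\BC}(1)\Leftrightarrow a_2>j_\Y$''. That equivalence is true when $r\ge 2$: from $\mu_j=2n+1-k+j-u(j)$ one gets $\mu_2=2n+3-k-a_2$, and using Example~\ref{ex:BC_permissible} one checks $a_2\ge k-\ell(\mu)+2$ with equality exactly on $\Comp_{\BC}(1)$; separately the $01$-word tail $0\,1^{k-\ell(\mu)}\,0$ forces $j_\Y=k-\ell(\mu)+2$ when $a_2>k-\ell(\mu)+2$ and $j_\Y>a_2$ otherwise. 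But carrying this out is essentially reproducing the paper's $01$-word analysis in disguise, so you do not save work. The degenerate case $r=1$ is more delicate than your one-line remark suggests: here $u(2)=\bar a_k$ (not $a_2$), the identity $j_\Xc=\min(j_\Y,a_2)$ still holds but $j_\Xc=a_2$ is possible, and a naive reading of the displayed formula in Lemma~\ref{lem:comp} then produces a repeated entry, so you would need to recompute $u\cdot\OZ(1)$ carefully before comparing. In short: your plan can be completed, but the ``main obstacle'' you flag is the entire argument, and the paper's direct partition computation is both shorter and avoids the $r=1$ case distinction.
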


\begin{proof} 
It is easy to show by induction on $d$ that the partition $\lambda^{\OZ(d)}$ is given by
\[
    \lambda^{\OZ(d)}=(2n+1-k \geq \lambda_{2+d}-d \geq \cdots \geq \lambda_{\ell_{d}^{d+2}(\lambda)+d+2} -d> \overbrace{ 0 \geq \cdots \geq 0}^{k-\ell_{d}^{d+2}(\lambda)-2}).
\]
To find an expression of $\lambda^{\OY(d)}$ we note that $\lambda^{\OY(d)}=\left(\lambda^{\OZ(d-1)}\right)^{\OY(1)}$. As before we find that
\[
    \lambda^{\OZ(d-1)}=(2n+1-k \geq \lambda_{2+(d-1)}-(d-1) \geq \cdots \geq \lambda_{\ell_{d-1}^{d+1}(\lambda)+d+1} -(d-1)> \overbrace{ 0 \geq \cdots \geq 0}^{k-\ell_{d-1}^{d+1}(\lambda)-2}).
\]
Therefore the 01-word corresponding to $\lambda^{\OZ(d-1)}$ takes the following form
\[ 
    1\overbrace{0\cdots \cdots \cdots 0}^{2n-k-\lambda_{d+1}+d}1w_1w_2\cdots w_j0 \overbrace{1 \cdots \cdots \cdots 1}^{k-\ell_{d-1}^{d+1}(\lambda)-2}0,
\]  
From Example~\ref{ex:BC_permissible} we deduce that
\[
    2n-k-\lambda_{d+1}+d \geq k-\ell_{d-1}^{d+1}(\lambda)-2,
\]
and since $\lambda \in \Comp_{\BC}(d)$ we know that the inequality is strict.

Therefore $\lambda^{\OZ(d-1)}$ is $m$-wingtip symmetric, where $m = k-\ell_{d-1}^{d+1}(\lambda)-2$. Moreover $D(\lambda^{\OZ(d-1)})(\bar m)=1$ corresponds to a vertical step at the end of Row $i=\ell_{d-1}^{d+1}(\lambda)+d+2$ of $\lambda^{\OZ(d-1)}$, and $\lambda^{\OZ(d-1)}_i=0$. Hence by definition
\[
    \lambda^{\OY(d)} = \left(\lambda^{\OZ(d-1)}\right)^{\OY(1)} = (\lambda_{2+(d-1)}-d \geq \lambda_{3+(d-1)}-d \geq \dots \geq \lambda_{\ell_{d-1}^{d+1}(\lambda)+d+1} -d \geq \overbrace{ 0 \geq \cdots \geq 0}^{k-\ell_{d-1}^{d+1}(\lambda)-1}).
\]
Therefore, all but the first part of $\lambda^{\OY(d)}$ and $\lambda^{\OZ(d)}$ are identical, and clearly
\[
    \lambda^{\OY(d)}_1 = \lambda_{d+1}-d < \lambda^{\OZ(d)}_1=2n+1-k,
\]
hence $\lambda^{\OY(d)} \subset \lambda^{\OZ(d)}$. 
\end{proof}

\begin{lemma}\label{lem:BCcomp}
Let $I \in \{\BC(k,2n+1), \BKT(k,2n+1)\}$. Let $\lambda \in I$ such that $X(\lambda) \subset \Xc$ (equivalently, $\lambda_1=2n+1-k$). Then
\[
    \Gamma_1(X(\lambda)) =  \begin{cases}
                                X \left( \lambda^{\OY(1)} \right) \cup X\left(\lambda^{\OZ(1)}\right) & \text{if $\lambda \in  \Comp_{I}(1)$} \\
                                X \left( \lambda^{\OY(1)} \right) & \text{if $\lambda \not \in \Comp_{I}(1)$.}
                            \end{cases}
\]
\end{lemma}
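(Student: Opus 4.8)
The plan is to begin from the unconditional two-term description $\Gamma_1(X(\lambda)) = X(\lambda^{\OY(1)}) \cup X(\lambda^{\OZ(1)})$ furnished by Proposition~\ref{prop:crvnbdpart}(2), and to decide in each case whether the union genuinely has two components. First I would record that one of the two possible containments can never occur: writing $w$ for the Weyl element of $\lambda$, Lemma~\ref{lem:comp} shows that $w \cdot \OZ(1)$ has first entry $1$, so $X(\lambda^{\OZ(1)}) \subset \Xc$, whereas $w \cdot \OY(1)$ has first entry $w(2) \ge 2$, so $X(\lambda^{\OY(1)})$ meets the open orbit $\Xo$. As $\Xc$ is closed and $\Sp_{2n+1}$-stable, this forces $X(\lambda^{\OY(1)}) \not\subset X(\lambda^{\OZ(1)})$. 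Hence the curve neighborhood collapses to a single Schubert variety exactly when $X(\lambda^{\OZ(1)}) \subset X(\lambda^{\OY(1)})$, and since both varieties are irreducible, the whole statement reduces to proving that this last containment holds if and only if $\lambda \notin \Comp_I(1)$.

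For $I = \BC(k,2n+1)$ I would settle the two directions separately, recalling (as in the proof of Lemma~\ref{lem:BCincl}) that for $BC$-partitions Schubert containment is read off from containment of the associated Young diagrams. The direction $\lambda \notin \Comp_{\BC}(1) \Rightarrow X(\lambda^{\OZ(1)}) \subset X(\lambda^{\OY(1)})$ is exactly Lemma~\ref{lem:BCincl} with $d=1$, giving $\Gamma_1(X(\lambda)) = X(\lambda^{\OY(1)})$. For the converse I would use the characterization coming from Example~\ref{ex:BC_permissible}: for $\lambda$ with $\lambda_1 = 2n+1-k$ one always has $\lambda_2 \le 2(n+1-k) + \ell(\lambda) - 1$, and unwinding the definition of $\Comp_{\BC}(1)$ shows that $\lambda \in \Comp_{\BC}(1)$ is precisely the equality case. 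Feeding this equality into the $01$-word analysis of Lemma~\ref{lem:BCincl} --- where the strict inequality produced diagrams $\lambda^{\OY(1)}$ and $\lambda^{\OZ(1)}$ with the same number of parts, differing only in a strictly smaller first part of $\lambda^{\OY(1)}$ --- I expect the boundary case to shift the relevant $\bar m$-step so that $\lambda^{\OY(1)}$ acquires one extra nonzero part. Concretely, $\lambda^{\OZ(1)} = (\lambda_1, \lambda_3 - 1, \dots, \lambda_{\ell(\lambda)} - 1, 0, \dots)$ has at most $\ell(\lambda) - 1$ nonzero parts, while the insertion built into the definition of $\lambda^{\OY(1)}$ keeps $\ell(\lambda)$ of them; thus $\lambda^{\OY(1)}$ has a strictly longer first column and cannot be contained in $\lambda^{\OZ(1)}$. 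This gives $X(\lambda^{\OZ(1)}) \not\subset X(\lambda^{\OY(1)})$, so the two Schubert varieties are distinct irreducible components, as claimed.

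Finally I would transfer the result to $I = \BKT(k,2n+1)$ using Lemma~\ref{lem:compsetcompat}. Given $\alpha \in \BKT(k,2n+1)$ with associated Weyl element $w$, let $\lambda \in \BC(k,2n+1)$ be the partition attached to the same $w$; then $X(\alpha) = X(\lambda)$, and likewise $X(\alpha^{\OY(1)}) = X(\lambda^{\OY(1)})$ and $X(\alpha^{\OZ(1)}) = X(\lambda^{\OZ(1)})$, since each pair corresponds to $w \cdot \OY(1)$ and $w \cdot \OZ(1)$ respectively. By Lemma~\ref{lem:compsetcompat} the conditions $\alpha \in \Comp_{\BKT}(1)$ and $\lambda \in \Comp_{\BC}(1)$ describe the same Schubert variety, so the statement for $\alpha$ is literally the one already proved for $\lambda$. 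I expect the main obstacle to be the converse combinatorial step in the $BC$ case: tracking the $01$-word in the boundary (equality) case to confirm that $\lambda^{\OY(1)}$ really does retain one more nonzero part than $\lambda^{\OZ(1)}$, including the degenerate subcases where trailing parts of $\lambda^{\OZ(1)}$ vanish (e.g. $\lambda_{\ell(\lambda)} = 1$), which only shorten $\lambda^{\OZ(1)}$ further and reinforce the non-containment.
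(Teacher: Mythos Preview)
Your approach is correct and essentially the same as the paper's: both reduce to $BC$-partitions via Lemma~\ref{lem:compsetcompat}, invoke Lemma~\ref{lem:BCincl} for $\lambda \notin \Comp_{\BC}(1)$, and for $\lambda \in \Comp_{\BC}(1)$ show incomparability of $\lambda^{\OY(1)}$ and $\lambda^{\OZ(1)}$ by comparing first parts and numbers of nonzero parts (the paper carries out your anticipated $01$-word length computation explicitly, confirming $\ell(\lambda^{\OY(1)})=\ell(\lambda)>\ell(\lambda^{\OZ(1)})$). Your orbit argument that $X(\lambda^{\OY(1)}) \not\subset X(\lambda^{\OZ(1)})$ because one meets $\Xo$ while the other lies in $\Xc$ is a mild variation on the paper's purely combinatorial check $\lambda^{\OY(1)}_1 < \lambda^{\OZ(1)}_1$.
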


\begin{proof}
By Lemma \ref{lem:compsetcompat} it suffices to consider the set of partitions $\BC(k,2n+1)$. Let $\lambda \in \Comp_{\BC}(1)$ be $m$-wingtip symmetric. 

The $01$-word corresponding to $\lambda$ is of the form
\[ 
    1\overbrace{0\cdots \cdots \cdots 0}^{2n+1-k-\lambda_{2}}1w_1w_2\cdots w_j0 \overbrace{1 \cdots \cdots \cdots 1}^{k-2-\ell_{0}^{2}(\lambda)}0.
\]
Since
\[ 
    \lambda_{2}-\ell_{0}^{2}(\lambda)-1=2(n+1-k) \text{ (equivalently, } 2n+1-k-\lambda_2=k-2-\ell_0^2(\lambda)),
\] 
looking at the $01$-word we see that  $m \geq k-\ell_0^2(\lambda)$, hence we can rewrite the 01-word corresponding to $\lambda$ as
\[ 
    {\color{blue}1}\overbrace{0\cdots \cdots \cdots 0}^{2n+1-k-\lambda_{2}}{\color{red}1}w_1w_2\cdots w_{i-1}{\color{blue}0}\underbrace{w_{i+1} \cdots w_j{\color{red}0} \overbrace{1 \cdots \cdots \cdots 1}^{k-2-\ell_{0}^{2}(\lambda)}0}_{m}.
\]
The 01-word corresponding to $\lambda^{\OY(1)}$ is  
\[ 
    {\color{blue}0}\overbrace{0\cdots \cdots \cdots 0}^{2n+1-k-\lambda_{2}}1w_1w_2\cdots w_{i-1}{\color{blue}1}\underbrace{w_{i+1} \cdots w_j0 \overbrace{1 \cdots \cdots \cdots 1}^{k-2-\ell_{0}^{2}(\lambda)}0}_{m}
\] 
and the 01-word corresponding to $\lambda^{\OZ(1)}$ is 
\[ 
    1\overbrace{0\cdots \cdots \cdots 0}^{2n+1-k-\lambda_{2}}{\color{red}0}w_1w_2\cdots w_{i-1}0\underbrace{w_{i+1} \cdots w_j{\color{red}1} \overbrace{1 \cdots \cdots \cdots 1}^{k-2-\ell_{0}^{2}(\lambda)}0}_{m}.
\]
Translating the 01-words into partitions it follows that
\begin{enumerate}
    \item If $D(\lambda)(\overline m )= 1$ corresponds to a vertical step at the end of the $i$th row then 
    \[
        \lambda^{\OY(1)}:=(\lambda_2-1 \geq \cdots \geq \lambda_{i-1}-1 \geq \lambda_i\geq \lambda_i \geq  \lambda_{i+1} \geq \cdots \geq \lambda_{\ell(\lambda)} >0 \geq \cdots\geq 0 ); 
    \] 
    \item If $D(\lambda)(\overline m )= 0$ corresponds to a horizontal step at the bottom of the $i$th row and in the $j$th column then  
    \[
        \lambda^{\OY(1)}:=(\lambda_2-1\geq \cdots \geq \lambda_{i}-1 \geq j \geq \lambda_{i+1} \geq \cdots \geq \lambda_{\ell(\lambda)}>0 \geq \cdots \geq 0 ); 
    \]
\end{enumerate}
and  
\[
    \lambda^{\OZ(1)}=(2n+1-k \geq \lambda_{3}-1 \geq \cdots \geq \lambda_{\ell(\lambda)} -1 \geq  0 \geq \cdots \geq 0).
\]
As $\lambda^{\OY(1)}_1<\lambda^{\OZ(1)}_1$ and $\ell(\lambda^{\OY(1)})=\ell(\lambda)>\ell(\lambda^{\OY(1)})$, the two partitions are not comparable for inclusion and it follows that there are two irreducible curve neighborhood components.

The case where $\lambda \in \BC(k,2n+1) \setminus \Comp_{\BC}(1)$ follows from Lemma \ref{lem:BCincl}.
\end{proof}

\begin{thm} \label{thm:BKTcomp}
Let $I \in \{\BC(k,2n+1), \BKT(k,2n+1)\}$. Let $\lambda \in I$ such that $X(\lambda) \subset \Xc$ (equivalently, $\lambda_1=2n+1-k$). Then
\[
    \Gamma_d(X(\lambda)) =  \begin{cases}
                                X\left(\lambda^{\OY(d)}\right) \cup X\left(\lambda^{\OZ(d)}\right) & \text{$\lambda \in \Comp_{I}(d)$}\\
                                X\left(\lambda^{\OY(d)}\right) & \text{$\lambda \in I \setminus \Comp_{I}(d)$}
                            \end{cases}
\]
\end{thm}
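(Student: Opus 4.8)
The plan is to reduce the statement to three facts already in hand: the decomposition $\Gamma_d(X(\lambda)) = X(\lambda^{\OY(d)}) \cup X(\lambda^{\OZ(d)})$ of Proposition~\ref{prop:crvnbdpart}(2), the containment criterion of Lemma~\ref{lem:BCincl}, and the base case $d=1$ of Lemma~\ref{lem:BCcomp}. First I would observe that, by Lemma~\ref{lem:compsetcompat} together with Propositions~\ref{prop:crvnbdpart} and~\ref{prop:YZ-Part}, both the hypothesis ``$\lambda\in\Comp_I(d)$'' and the varieties $X(\lambda^{\OY(d)})$, $X(\lambda^{\OZ(d)})$ depend only on the Schubert variety $X(\lambda)$ and not on the indexing set $I$; hence it suffices to prove the theorem for $I=\BC(k,2n+1)$, the $\BKT$ case following after translating $\Comp_{\BKT}$ into $\Comp_{\BC}$ via Lemma~\ref{lem:compsetcompat} and $\lambda^{\OY(d)},\lambda^{\OZ(d)}$ into their common underlying Weyl group elements. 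Since Proposition~\ref{prop:crvnbdpart}(2) already gives $\Gamma_d(X(\lambda)) = X(\lambda^{\OY(d)}) \cup X(\lambda^{\OZ(d)})$, the only remaining content is to decide whether this union has one or two components. The case $\lambda\notin\Comp_{\BC}(d)$ is immediate: Lemma~\ref{lem:BCincl} gives $X(\lambda^{\OZ(d)})\subset X(\lambda^{\OY(d)})$, so the union collapses to $X(\lambda^{\OY(d)})$, matching the second line of the theorem.

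For the case $\lambda\in\Comp_{\BC}(d)$ I would prove, by induction on $d$, that $X(\lambda^{\OY(d)})$ and $X(\lambda^{\OZ(d)})$ are incomparable for inclusion. One direction is automatic and orbit-theoretic: $X(\lambda^{\OZ(d)})\subset\Xc$ while $X(\lambda^{\OY(d)})$ meets the open orbit $\Xo$ (it is the component ``always intersecting $\Xo$'' produced in Proposition~\ref{prop:crvnbdweyl}), so $X(\lambda^{\OY(d)})\not\subset X(\lambda^{\OZ(d)})$ for every $d$. It remains to show $X(\lambda^{\OZ(d)})\not\subset X(\lambda^{\OY(d)})$. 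The base case $d=1$ is exactly Lemma~\ref{lem:BCcomp}, where incomparability is read off from the first rows and the number of parts of $\lambda^{\OY(1)}$ and $\lambda^{\OZ(1)}$. For the inductive step with $d\ge 2$, I would apply Lemma~\ref{lem:compadd} (with $d_1=1$, $d_2=d-1\ge 1$) to get $\lambda^{\OZ(1)}\in\Comp_{\BC}(d-1)$, noting $X(\lambda^{\OZ(1)})\subset\Xc$ since $(\lambda^{\OZ(1)})_1=\lambda_1=2n+1-k$. The inductive hypothesis applied to $\lambda^{\OZ(1)}$ yields that $X\big((\lambda^{\OZ(1)})^{\OZ(d-1)}\big)$ and $X\big((\lambda^{\OZ(1)})^{\OY(d-1)}\big)$ are incomparable. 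Finally I would identify these with $X(\lambda^{\OZ(d)})$ and $X(\lambda^{\OY(d)})$: the equality $(\lambda^{\OZ(1)})^{\OZ(d-1)}=\lambda^{\OZ(d)}$ is the iterated recursive definition of $\lambda^{\OZ(d)}$ (a $d$-fold application of $\OZ(1)$), while $(\lambda^{\OZ(1)})^{\OY(d-1)}=\lambda^{\OY(d)}$ follows from two applications of Corollary~\ref{cor:curvgreed}, both sides being equal to $\big((\lambda^{\OZ(1)})^{\OY(1)}\big)^{\Oo(d-2)}$ (using $d_1=1,d_2=d-2$ on the left and $d_1=0,d_2=d-2$ on the right, with $\OZ(0)=\id$). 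This gives $X(\lambda^{\OZ(d)})\not\subset X(\lambda^{\OY(d)})$ and completes the induction, hence the first line of the theorem.

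The main obstacle is concentrated entirely in the base case and in the bookkeeping of the inductive identities: one must know that $\Comp$ is the exact dividing line, i.e. that membership in $\Comp_{\BC}(1)$ forces incomparability (Lemma~\ref{lem:BCcomp}) while non-membership forces containment (Lemma~\ref{lem:BCincl}), and that these conditions propagate correctly up the $\OZ$-recursion. I expect the subtlest point to be that the equivalence $\Comp_{\BC}(d)\ni\lambda\iff\lambda^{\OZ(1)}\in\Comp_{\BC}(d-1)$ (Lemma~\ref{lem:compadd}) meshes cleanly with the identification $(\lambda^{\OZ(1)})^{\OY(d-1)}=\lambda^{\OY(d)}$, since this is precisely where the $\OZ$- and $\OY$-recursions must be shown compatible via Corollary~\ref{cor:curvgreed} (ultimately the square property). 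Once this is in place, the $\BKT$-to-$\BC$ transfer through Lemmas~\ref{lem:compsetcompat} and~\ref{lem:compaddBKT} is routine.
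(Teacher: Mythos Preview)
Your proof is correct and follows essentially the same route as the paper: reduce to the $\BC$ case via Lemma~\ref{lem:compsetcompat}, handle $\lambda\notin\Comp_{\BC}(d)$ by Lemma~\ref{lem:BCincl}, and handle $\lambda\in\Comp_{\BC}(d)$ by reducing to the $d=1$ case (Lemma~\ref{lem:BCcomp}) through Lemma~\ref{lem:compadd} and the recursive identifications from Corollary~\ref{cor:curvgreed}. The only organizational difference is that you peel off one $\OZ(1)$ at a time and induct on $d$, whereas the paper applies Lemma~\ref{lem:compadd} with $d_1=d-1$, $d_2=1$ to jump straight to $\lambda^{\OZ(d-1)}\in\Comp_{\BC}(1)$ and then invokes Lemma~\ref{lem:BCcomp} once, using $\Gamma_d(X(\lambda))=\Gamma_1(X(\lambda^{\OZ(d-1)}))$; this is the same argument compressed into a single step.
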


\begin{proof}
By Lemma \ref{lem:compsetcompat} it suffices to consider the set of partitions $\BC(k,2n+1)$. By Lemma \ref{lem:compadd} we have that $\lambda \in \Comp_{\BC}(d)$ if and only if $\lambda^{\OZ(d-1)} \in \Comp_{\BC}(1)$. The result for $\lambda \in \Comp_{\BC}(d)$ follows from Lemma \ref{lem:BCcomp} since  $\Gamma_d(X(\lambda))=\Gamma_1(X(\lambda^{\OZ(d-1)}))$. The case where $\lambda \in \BC(k,2n+1) \setminus \Comp_{\BC}(d)$ follows from Lemma \ref{lem:BCincl}.
\end{proof}

\begin{figure}\label{fig:IG(5,13)}
\caption{Consider the case $k=5$ and $n=6$. Here we are starting from the Schubert point. Each edge is the $T$-fixed line ($d=1)$ in the moment graph of $\IG(5,13)$ that is adjacent to a maximal vertex. Observe the independence of minimal path length for the curve neighborhood component intersecting the open orbit. The ``Buch-Mihalcea" recursion that we write down should account for this independence.}
\begin{tikzpicture}[-,>=stealth',shorten >=1pt,auto,node distance=4cm,
                    thick,main node/.style={draw,font=\sffamily\tiny\bfseries}]

  \node[main node] (1) {$(\bar{6}<\bar{5}<\bar{4}<\bar{3}<\bar{2})$, $\emptyset$};
  \node[main node] (2) [below left of=1] {$(5<\bar{6}<\bar{4}<\bar{3}<\bar{2})$, $\tableau{6}{&{}&{}&{}&{}}$};
  \node[main node] (3) [red, below right of=1] {$(1<\bar{5}<\bar{4}<\bar{3}<\bar{2})$, $ \red \tableau{6}{&{}&{}&{}&{}&{}&{}&{}&{}\\ {}&&&&&&&&\\ {}&&&&&&&&\\ {}&&&&&&&&\\ {}&&&&&&&&}$};
   \node[main node] (4) [below of=2] {$(4<5<\bar{6}<\bar{3}<\bar{2})$, $\tableau{6}{&{}&{}&{}&{}&{}\\&{}&{}&{}&{}}$};
     \node[main node] (5) [red, below of=3] {$(1<5<\bar{4}<\bar{3}<\bar{2})$, $ \red \tableau{6}{&{}&{}&{}&{}&{}&{}&{}&{}\\ &{}&{}&{}&{}\\ {}&&&&&&&&\\ {}&&&&&&&&\\ {}&&&&&&&&}$};
      \node[main node] (6) [below of=4] {$(3<4<5<\bar{6}<\bar{2})$, $\tableau{6}{&{}&{}&{}&{}&{}&{}\\&{}&{}&{}&{}&{}\\&{}&{}&{}&{}}$};
\node[main node] (7) [red, below of=5] {$(1<4<5<\bar{3}<\bar{2})$, $ \red \tableau{6}{&{}&{}&{}&{}&{}&{}&{}&{}\\ &{}&{}&{}&{}&{}\\ &{}&{}&{}&{}\\ {}&&&&&&&&\\ {}&&&&&&&&}$};
 \node[main node] (8) [below of=6] {$(2<3<4<5<\bar{6})$, $\tableau{6}{&{}&{}&{}&{}&{}&{}&{}\\&{}&{}&{}&{}&{}&{}\\&{}&{}&{}&{}&{}\\&{}&{}&{}&{}}$};
 \node[main node] (9) [red, below of=7] {$(1<3<4<5<\bar{2})$, $ \red \tableau{6}{&{}&{}&{}&{}&{}&{}&{}&{}\\ &{}&{}&{}&{}&{}&{}\\ &{}&{}&{}&{}&{}\\ &{}&{}&{}&{}\\ {}&&&&&&&&}$};
  \node[main node] (10) [red, below right of=8] {$(1<2<3<4<5)$, $\red \tableau{6}{&{}&{}&{}&{}&{}&{}&{}&{}\\&{}&{}&{}&{}&{}{}&{}&{}\\&{}&{}&{}&{}{}&{}&{}\\&{}&{}&{}{}&{}&{}\\&{}&{}&{}&{}}$};

  \path[every node/.style={font=\sffamily\large}]
       (1) edge node {$\cdot_k O^\circ(1)$}  (2)
       (1) edge node {$\cdot \OY(1)$}  (3)
       (2) edge node {$\cdot_k O^\circ(1) $}  (4)
       (3) edge [red] node {$\cdot \OZ(1) $}  (5)
       (2) edge  node {$\cdot \OY(1)$}  (5)
       (4) edge  node {$\cdot_k O^\circ(1) $}  (6)
       (4) edge  node {$\cdot \OY(1)$}  (7)
       (5) edge [red] node {$ \cdot \OZ(1)$}  (7)
       (6) edge node {$\cdot_k O^\circ(1) $}  (8)
       (6) edge node {$ \cdot \OY(1)$}  (9)
       (7) edge [red] node {$ \cdot \OZ(1)$}  (9)
       (8) edge node {$ \cdot \OY(1)$}  (10)
       (9) edge [red] node {$ \cdot \OZ(1)$}  (10);
\end{tikzpicture}
\end{figure}
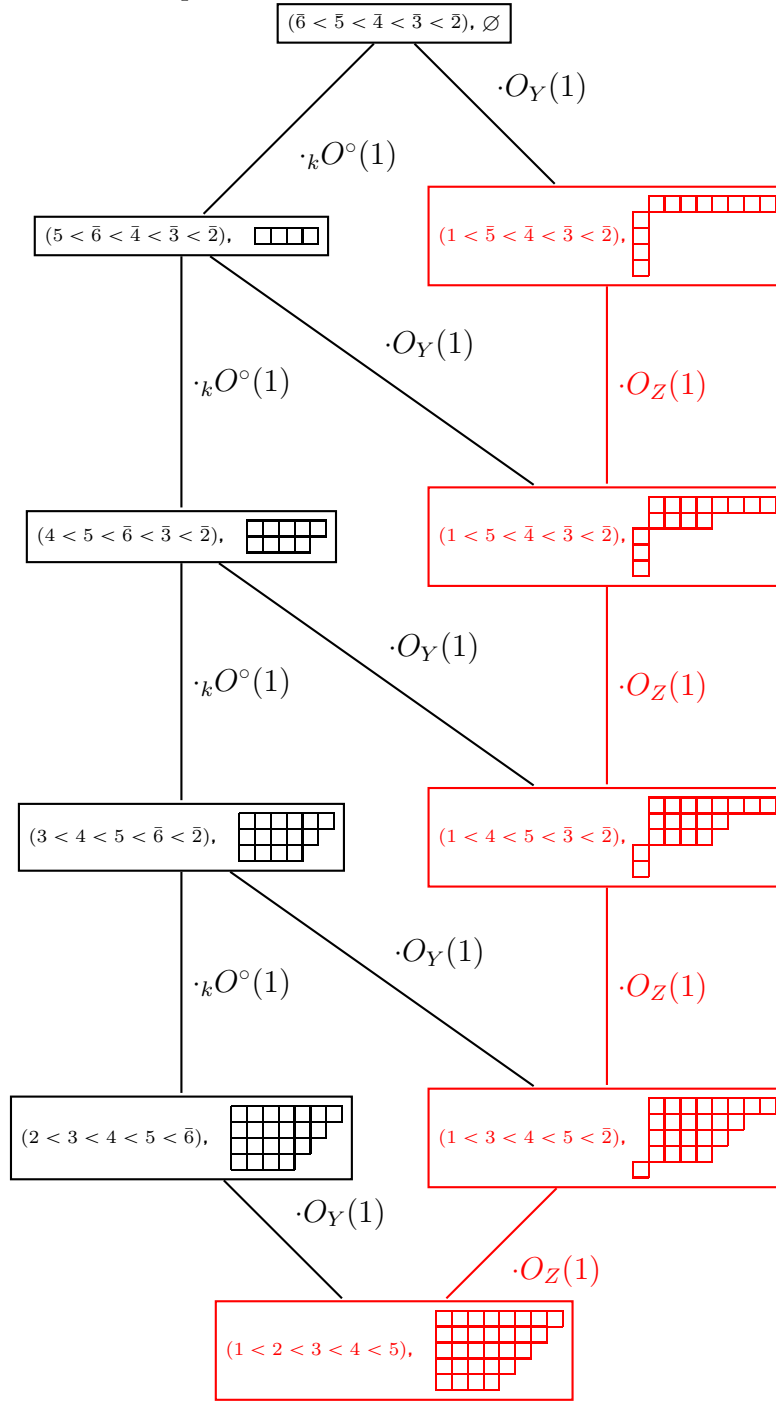

\section{Technical Results}

In this section we regroup some results. Two of the results are from literature and the other are proofs that are omitted from their respective sections.

\subsection{Curve neighborhood calculations for the even symplectic Grassmannian}
Next we will define notation for the statements of the upcoming lemmas. Let $\Gamma_d(Y(\mu))$, for $\mu \in \BC(k,2n)$, denote the degree $d$ curve neighborhood of $X(\mu)$ in $Y \cong \IG(k,2n)$. We define $\Gamma_d(Y(\beta))$ similarly for $\beta \in \BKT(k,2n)$. The next two lemmas follow from \cite[Theorem 5.18]{ShiflerWithrow}.

\begin{lemma} \label{lem:Lambdaline}
Let $\beta \in  \BKT(k,2n)$. Then $\Gamma_1(Y(\beta))=Y(\beta^1)$ where $\beta^1$ is given as follows.
\begin{enumerate}
    \item If $\beta_1+\beta_j > 2(n-k)+j-1$ for all $2 \leq j \leq k$ then define 
    \[ 
        \beta^1=(\beta_2 \geq \beta_3 \geq \cdots \geq \beta_k \geq 0) \in \BKT(k,2n); 
    \] 
    \item Otherwise, find the smallest $j$ such that $\beta_1+\beta_j \leq 2(n-k)+j-1$. Define 
    \[ 
        \beta^1=(\beta_2 \geq \beta_3 \geq \cdots \geq \beta_{j-1} \geq \beta_j-1 \geq \cdots \geq \beta_{k}-1 \geq 0) \in  \BKT(k,2n) 
    \] 
    where $-1$'s are replaced by 0.
\end{enumerate}
\end{lemma}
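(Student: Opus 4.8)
The plan is to reduce the statement to the homogeneous-space curve neighborhood formula of Buch and Mihalcea, which applies verbatim since $Y \cong \IG(k,2n) = \Sp_{2n}/P_k$ is a genuine homogeneous space. Writing $w$ for the minimal length representative in the Weyl group $W_{2n}$ of $\Sp_{2n}$ corresponding to $\beta$ under the bijection of Section~\ref{ss:BKT}, the curve neighborhood of any Schubert variety is again a Schubert variety, so $\Gamma_1(Y(\beta)) = Y(w \cdot z_1 W_P)$, where $z_1$ is determined by $\Gamma_1(Y(\mathrm{id})) = Y(z_1 W_P)$. First I would compute $z_1$ using the maximal-root recursion of Proposition~\ref{prop:zd}: in type $C$ the unique maximal root of degree $1$ is $2t_1$, so exactly as in Corollary~\ref{cor:evest}(a) one gets $z_1 W_P = s_{2t_1} W_P$, whose minimal length representative is $(2<3<\cdots<k<\bar 1)$.

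The remaining task is purely combinatorial: evaluate the Hecke product $w \cdot s_{2t_1}$ as a signed permutation and translate the result back into an $(n-k)$-strict partition via the $\IG(k,2n)$ analogue of the bijection recalled in Section~\ref{ss:BKT}, namely $\beta_j = 2n+1-k - w(j) + \#\{i<j \mid w(i)+w(j) > 2n+1\}$. Since $s_{2t_1} = s_1 \cdots s_{n-1}s_n s_{n-1}\cdots s_1$, applying the Hecke product amounts to sweeping the index $1$ through the window and barring the smallest admissible value; this is precisely the $\IG(k,2n)$ counterpart of Lemma~\ref{lem:comp}, and I would establish it by the same reflection-by-reflection length bookkeeping. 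The dichotomy between cases (1) and (2) then emerges from whether each simple reflection in this reduced word strictly increases length when applied to the current image of $w$: the threshold $\beta_1 + \beta_j > 2(n-k)+j-1$ is exactly the translation, under the bijection above, of the condition that the sweep does not stall at row $j$, so that the entire first row is removed (case (1)); otherwise the sweep halts at the smallest $j$ violating the inequality, decrementing all later parts (case (2)).

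The hard part will be the faithful translation between the Weyl-group computation and the partition surgery: one must check that the $-1$'s produced by decrementing trailing parts are correctly clipped to $0$, that the output remains $(n-k)$-strict and lies in $\BKT(k,2n)$, and that the degenerate cases (for instance $\beta$ with a full first column, or $\beta_1 < 2n-k$) are absorbed uniformly into the two cases. This bookkeeping is carried out in \cite[Theorem 5.18]{ShiflerWithrow}, from which the statement follows directly once the two indexing conventions are matched; I would therefore either invoke that theorem or reproduce the length computation sketched above, whichever is more economical.
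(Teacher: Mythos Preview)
Your proposal is correct and aligns with the paper's approach: the paper does not give a proof but simply states that the lemma follows from \cite[Theorem~5.18]{ShiflerWithrow}, which is exactly the reference you arrive at after sketching the Buch--Mihalcea reduction and the partition translation. Your outline of the underlying argument (computing $z_1 W_P = s_{2t_1} W_P$ in $\IG(k,2n)$ and translating back via the $\BKT$ bijection) is precisely the content of that cited result, so you have recovered both the citation and its internal logic.
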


\begin{lemma}\label{lem:Partline}
Let $\lambda \in  \BC(k,2n+2)$. Then $\Gamma_1(\Xev(\lambda)) = \Xev(\lambda^1)$ where $\lambda^1$ is given as follows. 
\[ 
    \lambda^1=(\lambda_2-1, \cdots, \lambda_{\ell(\lambda)}-1,0,\cdots,0). 
\]
\end{lemma}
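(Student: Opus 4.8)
The plan is to exploit that $\Xev=\IG(k,2n+2)=\Sp_{2n+2}/P$ is homogeneous, reduce the statement to a purely combinatorial identity for the Hecke product, and then match that identity with the hook-removal operation $\lambda\mapsto\lambda^1$.

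First I would invoke the Buch--Mihalcea description of curve neighborhoods in the homogeneous case (recalled after Remark~\ref{rmk:curve-nbhd}, from \cite{BCMP:qkfin,buch.m:nbhds}): since $\Xev(\lambda)$ is a Schubert variety of $G/P$ with $G=\Sp_{2n+2}$, its degree-one curve neighborhood is again a single Schubert variety, $\Gamma_1(\Xev(w))=\Xev(w\cdot z_1 W_P)$, where $w\in\Wlarge^P$ is the minimal length representative attached to $\lambda$ under the bijection $\BC(k,2n+2)\cong\Wlarge^P$. By Corollary~\ref{cor:evest} we have $z_1 W_P=s_{2t_1}W_P$ (the case $k=1$, where $\Xev=\bP^{2n+1}$, is immediate, since $\Gamma_1$ of any nonempty Schubert variety is the whole space). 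Everything then reduces to the combinatorial claim that, if $w\in\Wlarge^P$ corresponds to $\lambda\in\BC(k,2n+2)$, then the coset $w\cdot s_{2t_1}W_P$ corresponds to $\lambda^1=(\lambda_2-1,\dots,\lambda_{\ell(\lambda)}-1,0,\dots,0)$.

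Second, I would prove this combinatorial claim by evaluating the Hecke product directly. Writing $w=(a_1<\dots<a_r<\bar{a}_k<\dots<\bar{a}_{r+1})$ as in the bijection $\BC(k,2n+2)\cong\Wlarge^P$, I would use the reduced word $s_{2t_1}=s_1 s_2\cdots s_{n+1}\cdots s_2 s_1$ and track the effect of right multiplication step by step, taking minimal $W_P$-coset representatives at each stage, exactly as in the bookkeeping behind Lemma~\ref{lem:comp}. The outcome is that $w\cdot s_{2t_1}$ deletes the smallest entry $w(1)$ and inserts a single barred entry, which on the Young diagram of $\lambda$ amounts to removing the top row together with the first column, i.e.\ removing the hook based at the corner box $(1,1)$; reading off the resulting $01$-word gives precisely $(\lambda_2-1,\dots,\lambda_{\ell(\lambda)}-1,0,\dots,0)$. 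Note that, because this coset lies in $\Wlarge^P$ by construction, $\lambda^1$ automatically lands back in $\BC(k,2n+2)$, so no separate verification of the $BC$-condition is required. A small sanity check is $w=\mathrm{id}$, where $\lambda$ is the full rectangle $(2n+2-k)^k$ and hook removal yields $(2n+1-k)^{k-1}$, matching $z_1=(2<3<\dots<k<\bar 1)$.

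Finally, I would observe that this is nothing but the $d=1$ specialization of \cite[Theorem~5.18]{ShiflerWithrow} applied to $\IG(k,2n+2)$, so a shorter route is simply to cite that theorem and reconcile the indexing conventions. I expect the main obstacle to be clerical rather than conceptual: the $01$-word bookkeeping has genuine edge cases---whether the first row of $\lambda$ is full (so that $\lambda_1=2n+2-k$ and right multiplication by $s_{2t_1}$ may fail to increase length) and whether the relevant slot of $w$ already carries a barred label---and one must check that the hook-removal formula holds uniformly across all of them.
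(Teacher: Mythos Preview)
Your proposal is correct; in fact it supplies considerably more detail than the paper does. The paper does not give an independent proof of Lemma~\ref{lem:Partline} at all: it simply states that the lemma follows from \cite[Theorem~5.18]{ShiflerWithrow}, which is exactly the ``shorter route'' you mention at the end. Your self-contained alternative---reducing via the Buch--Mihalcea recursion to the identity $w\cdot s_{2t_1}W_P\leftrightarrow\lambda^1$ and then verifying that Hecke multiplication by $s_{2t_1}$ implements hook removal on the $01$-word---is a genuine proof sketch that goes beyond the paper, and the edge-case concerns you flag (full first row, barred first entry) are the right ones to check.
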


\subsection{Proof of Theorem \ref{thm:ZtoYlines}.} 
\label{subs:proofofthm:ZtoYlines}

\begin{proof}
For (1) observe that $w(1)=1$ and $v(1)>1$ since $X(w) \subset \Xc$ and $X(v) \cap \Xo \neq \emptyset$. We claim that $\alpha=t_1 \pm t_{j}$ for some $k+1 \leq j \leq n+1$. Indeed, if $j\leq k$ then $ws_{t_1+t_{j}}(j)=\bar{1}$, hence $v$ would not be in $\Wodd$, and $ws_{t_1-t_{j}}W_P=wW_P$. It directly follows that $\deg C(w,v)=1$ since  
\[
    t_1-t_j=(t_1-t_2)+\cdots+{\color{blue} 1}(t_k-t_{k+1})+\cdots+(t_{j-1}-t_{j}) 
\] 
and 
\[
    t_1+t_j=(t_1-t_2)+\cdots+{\color{blue} 1}(t_k-t_{k+1})+\cdots+(t_{j-1}-t_{j})+2(t_{j}-t_{j+1})+\cdots+2t_{n+1}. 
\]

Next we prove (2). Let $w=(1<w(2)<\cdots<w(k))$. There are two cases for $v$:
\begin{enumerate}[(a)]
    \item there exists an $s$ such that $v$ is the permutation $v=(w(2)<w(3)<\cdots<w(j)<s<w(j+1)<\cdots <w(k)) \in W^P$;
    \item there exists an $s$ such that $v$ is the permutation $v=(w(2)<w(3)<\cdots <w(k)<s) \in W^P$.
\end{enumerate}
Let $w,v \in W^P$ correspond to $\alpha,\beta \in \BKT(k,2n+1)$.

For case (a) we break up the proof into two parts, depending on whether $s \geq k$ or $s<k$. If $s \geq k$, as $w(1)=1$ it immediately follows that $\alpha_{i+1}=\beta_{i}$ for $ 1 \leq i \leq j-1$. Moreover, for $j+1 \leq i \leq k$ we have $\beta_i \in \{\alpha_i,\alpha_i+1\}$, and $\beta_j \leq 2n+2-k-s+j-1$. 
From this we deduce following chain of inequalities.
\begin{align*}
    |\alpha|-|\beta| &= \left( (2n+1-k)+ \sum_{i=2}^{j} \alpha_i+\sum_{i=j+1}^{k} \alpha_i \right)-\left( \sum_{i=1}^{j-1} \beta_i+ \beta_j + \sum_{i=j+1}^{k} \beta_i\right)\\
                    &= (2n+1-k)+\left(\sum_{i=2}^{j} \alpha_i-\sum_{i=1}^{j-1} \beta_i \right)+\left(\sum_{i=j+1}^{k} \alpha_i- \sum_{i=j+1}^{k} \beta_i\right)-\beta_j\\
                    &\geq (2n+1-k)+0+(-(k-j))-((2n+2-k-s+j-1))\\
                    &\geq s-k.
\end{align*}
If $s<k$ then we have that 
\[
    \sum_{i=1}^j \beta_i = \sum_{i=1}^{j} \alpha_i-s+1
\]
and 
\[
    \sum_{i=j+1}^k \beta_i \leq \sum_{i=j+1}^{k} \alpha_i+s-1.
\]
Thus $|\beta| \leq |\alpha|$, hence $\ell(v)>\ell(w)$.

Similarly for case (b), as $w(1)=1$ we get $\alpha_{i+1}=\beta_{i}$ for $1 \leq i \leq k-1$, and $\beta_k \leq 2n+1-s$. This time the chain of inequalities is as follows.
\begin{align*}
    |\alpha|-|\beta| &= \left( (2n+1-k)+ \sum_{i=2}^{k} \alpha_i \right)-\left( \sum_{i=1}^{k-1} \beta_i+ \beta_k\right)\\
                    &= (2n+1-k)+\left(\sum_{i=2}^{k} \alpha_i-\sum_{i=1}^{k-1} \beta_i \right)-\beta_k\\
                    &\geq (2n+1-k)+0-(2n+1-s)\\
                    &\geq s-k.
\end{align*}
Since $s \geq k$ in case (b), the result follows.
\end{proof}

\subsection{Lemmas for Theorem \ref{prop:crvnbdpart}} \label{subs:proofofcrvnbdpart}

\begin{lemma} \label{lem:Ocirc1}
Let $I \in \{\BC(k,2n+1),\BKT(k,2n+1) \}$. Let $\lambda \in I$ be such that $\lambda_1<2n+1-k$, and denote by $v \in \Wo$ the Weyl group element corresponding to $\lambda$. Then $\lambda^{\Oo(1)}$ corresponds to the Weyl group element $v \cdot_k \Oo(1)$.
\end{lemma}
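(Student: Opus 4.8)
The plan is to verify the identity directly by comparing the explicit signed-permutation formula for $v \cdot_k \Oo(1)$ in Lemma~\ref{lem:comp} with the combinatorial recipe defining $\lambda^{\Oo(1)}$, handling the two indexing conventions in parallel. Since $\lambda_1 < 2n+1-k$ is equivalent to $v(1) \neq 1$, the element $v = (b_1 < \cdots < b_s < \bar b_k < \cdots < \bar b_{s+1})$ has $b_1 \geq 2$, and Lemma~\ref{lem:comp} gives
\[
    v \cdot_k \Oo(1) = (v(2) < v(3) < \cdots < \bar j^\circ < \cdots < v(k)), \qquad j^\circ = \min\bigl(\{2, \ldots, n+1\} \setminus \{b_2, \ldots, b_k\}\bigr).
\]
Thus passing from $v$ to $v \cdot_k \Oo(1)$ deletes the smallest entry $v(1)=b_1$ and inserts the barred entry $\bar j^\circ$ into its sorted position. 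The task is to show that this operation on permutations is intertwined, by the partition bijections, with hook removal for $BC$-partitions and with the recipe of Definition~\ref{defn:Ypartitions} for $BKT$-partitions.

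The cleanest conceptual route uses the horospherical description. Because $X(\lambda)$ meets the open orbit, Proposition~\ref{prop:YZ-Part} identifies $X(\lambda)$ with the Schubert variety $\Y(\Phi(\lambda))$ of $\Y \cong \IG(k,2n)$, where $\Phi$ is the identity on partitions. Under this identification the modified Hecke product $v \cdot_k \Oo(1)$ corresponds to the ordinary Hecke product $\Phi(v) \cdot s_{2t_1}$ by the reflection $s_{2t_1} \in \Wsmall$ of the highest root of $\Sp_{2n}$; this is exactly the correspondence established in the proof of Lemma~\ref{lem:Ocirc} (and more generally via Proposition~\ref{prop:bijY2W}). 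The element $\Phi(v) \cdot s_{2t_1}$ indexes the degree-$1$ curve neighborhood $\Gamma_1(\Y(\Phi(\lambda)))$ in $\IG(k,2n)$. Hence it suffices to match $\lambda^{\Oo(1)}$ with that line neighborhood, which is computed by Lemma~\ref{lem:Partline} (for $BC$) and Lemma~\ref{lem:Lambdaline} (for $BKT$). A direct comparison of definitions shows that the formula defining $\lambda^{\Oo(1)}$ is \emph{literally} the formula for the operation $\Phi(\lambda)^1$ in those lemmas; since $\Phi$ is the identity on partitions, $\lambda^{\Oo(1)} = \Phi(\lambda)^1$, and the identification gives $\Psi_I(\lambda^{\Oo(1)}) = v \cdot_k \Oo(1)$.

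As a self-contained alternative, I would run the comparison entirely combinatorially. For $BC$, starting from $D(\lambda) = D(\lambda + 1^k)$, deleting $b_1$ and inserting $\bar j^\circ$ corresponds on the $01$-word to flipping the leading $1$ to $0$ and sliding a single vertical step, which is precisely the hook removal $(\lambda_2-1, \ldots, \lambda_{\ell(\lambda)}-1, 0, \ldots, 0)$; this uses only the $01$-word/permutation dictionary of Section~\ref{ss:BC}. For $BKT$ one tracks the shifted entries through the bijection $\beta_j = 2n+3-k - w(j) + \#\{i<j \mid w(i)+w(j) > 2n+3\}$, where the sorted insertion point of $\bar j^\circ$ determines the case split of Definition~\ref{defn:Ypartitions}.

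The main obstacle is the bookkeeping in the $BKT$ case. Unlike $BC$ hook removal, which is position-independent, the position into which $\bar j^\circ$ is inserted relative to $v(2), \ldots, v(k)$ governs which of the two branches of Definition~\ref{defn:Ypartitions} applies, and the $\pm 1$ shifts built into $\Phi$ and into the passage between $\BKT(k,2n+2)$ and $\BKT(k,2n+1)$ must be reconciled against the pairing-count term $\#\{i<j \mid w(i)+w(j)>2n+3\}$. The one genuinely nontrivial point is to show that ``the smallest $j$ with $\alpha_1+\alpha_j \leq 2(n-k)+j-1$'' coincides with the sorted position of $\bar j^\circ$; I expect to isolate this as a short sub-claim and prove it by comparing the two counts directly, after which everything else reduces to routine translation.
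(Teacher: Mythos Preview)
Your ``cleanest conceptual route'' is exactly the paper's argument: reduce to the line neighborhood in $\Y\cong\IG(k,2n)$ via $\Phi$ and Proposition~\ref{prop:bijY2W}, invoke Lemmas~\ref{lem:Lambdaline}/\ref{lem:Partline}, and observe that $\Phi(\lambda)^1=\Phi(\lambda^{\Oo(1)})$ by direct comparison of definitions. The only point the paper makes explicit that you gloss over is the rewriting $\Oo(1)=s_1\cdots s_{k-1}(s_ks_{k+1}s_k)s_{k+2}\cdots s_{n+1}\cdots s_{k+2}(s_ks_{k+1}s_k)s_{k-1}\cdots s_1$ (together with the check that both presentations give the same $\cdot_k$-product), which is what allows Proposition~\ref{prop:bijY2W} to be applied term-by-term with $\psi$; your combinatorial alternative is a valid independent route but is not needed.
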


\begin{proof}
From Proposition~\ref{prop:nbhd-open} we know that $\Gamma_1(X(\lambda))=\Gamma_1(X(v))=X(v \cdot_k \Oo(1))$. By Proposition \ref{prop:YZ-Part} we also obtain that
\[
    X(v \cdot_k \Oo(1)) = \pi_\Xc(p_\Y^{-1}(Y(\Phi(v \cdot_k \Oo(1))))).
\]
Recall that $\Oo(1) := s_1 \cdots s_{n+1} \cdots s_1(=s_{2t_1})$. Our strategy is to apply Proposition~\ref{prop:bijY2W}, letting $w=s_{i_1} \dots s_{i_r}$ be a particular representation of $\Oo(1)$ as product of simple reflections. As $s_k^2=id$ and $s_ks_i=s_is_k$ for all $k+2 \leq i \leq n+1$, the following equality holds:
\begin{align*}
    \Oo(1) &= s_1\ldots s_k s_{k+1} s_k s_k s_{k+2} \ldots s_{n+1} \ldots s_{k+2}s_{k+1}s_k \ldots s_1 \\
            &= s_1\ldots s_k s_{k+1} s_k s_{k+2} \ldots s_{n+1} \ldots s_{k+2} s_ks_{k+1}s_k \ldots s_1. 
\end{align*}
Moreover, it is an easy exercise to show that the modified Hecke products of $v$ with either either representation of $\Oo(1)$ are equal, that is,
\[
    v \cdot_k s_1 \ldots s_{n+1} \ldots s_1=v \cdot_k s_1\ldots s_k s_{k+1} s_k s_{k+2} \ldots s_{n+1} \ldots s_{k+2}s_ks_{k+1}s_k \ldots s_1.
\]
We now apply Proposition~\ref{prop:bijY2W}:
\begin{align*}
    \Phi((v \cdot_k \Oo(1))W_P) &= \Phi((v \cdot_k s_1\ldots s_k s_{k+1} s_k s_{k+2} \ldots s_{n+1} \ldots s_{k+2}s_ks_{k+1}s_k \ldots s_1)W_P)\\
                    &= \Phi(v \cdot_k \psi(s_1)  \ldots \psi(s_{n}) \ldots \psi(s_1))W_P)\\
                    &= (\Phi(vW_P) \cdot \overbrace{s_1 \ldots s_n \ldots s_1}^{s_{2t_1} \text{ in  $\IG(k,2n)$}})W_Y,
\end{align*}
hence by Proposition \ref{prop:zd} and Corollary \ref{cor:evest}:
\[
    \Gamma_1(X(\lambda)) = X((v \cdot_k \Oo(1))W_P) = \pi_\Xc(p_\Y^{-1}(\Gamma_1(Y(\Phi(vW_P))))).
\]
Rewriting in terms of partitions, as $\Phi(vW_P)$ corresponds to the same partition $\lambda$ as $vW_P$ the right-hand side becomes $\pi_\Xc(p_\Y^{-1}(\Gamma_1(Y(\lambda))))$. 
Moreover by Lemma \ref{lem:Lambdaline} and Lemma \ref{lem:Partline} we obtain that $\Gamma_1(Y(\lambda))=Y(\Phi(\lambda)^1)$. It follows from the definitions of $\lambda^{\Oo(1)}$ and $\lambda^1$ that
\[
    \Phi(\lambda)^1=\Phi(\lambda^{\Oo(1)}),
\]
hence $\Gamma_1(X(\lambda))=X(\lambda^{\Oo(1)})$ as claimed.
\end{proof}

\begin{lemma} \label{lem:OZ1}
Let $I \in \{ \BC(k,2n+1),\BKT(k,2n+1) \}$. Let $\lambda \in I$ be such that $\lambda_1=2n+1-k$, and denote by $v \in \Wodd \cap W^P$ the Weyl group element corresponding to $\lambda$. Then $\lambda^{\OZ(1)}$ corresponds to the Weyl group element $v \cdot_k \OZ(1)$.
\end{lemma}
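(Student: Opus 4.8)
The plan is to run the argument of Lemma~\ref{lem:Ocirc1} one orbit lower, replacing the open orbit (and the bijection $\Phi$ together with Proposition~\ref{prop:bijY2W}) by the closed $\Sp_{2n+1}$-orbit $\Xc \cong \IG(k-1,2n)$ (and the bijection $\Phi_\Xc$ together with Proposition~\ref{prop:bijZ}). I would start from $\lambda \in I$ with $\lambda_1 = 2n+1-k$, so that $X(\lambda) \subset \Xc$ and the corresponding element $v \in \Wodd \cap W^P$ satisfies $v(1)=1$. By Proposition~\ref{prop:crvnbdweyl} the component of $\Gamma_1(X(v))$ lying in $\Xc$ is $X(v \cdot \OZ(1))$, and Proposition~\ref{prop:YZ-Part} identifies it with $i_\Xc(\Xc(\Phi_\Xc(v \cdot \OZ(1))))$. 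Hence it suffices to compute $\Phi_\Xc(v \cdot \OZ(1))$ and to match it, as a partition, with $\Phi_\Xc(\lambda^{\OZ(1)})$. (Here the Hecke product is the ordinary one, consistent with Propositions~\ref{prop:crvnbdweyl} and~\ref{prop:bijZ}.)

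First I would rewrite $\OZ(1) = s_2 \cdots s_{n+1} \cdots s_2$ as $s_{i_1+1}\cdots s_{i_r+1}$, where the down-shifted word
\[
  s_{i_1}\cdots s_{i_r} = s_1 s_2 \cdots s_{n-1} s_n s_{n-1} \cdots s_2 s_1 = s_{2t_1}
\]
is exactly the reflection $s_{2t_1}$ in the Weyl group $W_{2n}$ of $\Sp_{2n}$ acting on $\IG(k-1,2n) \cong \Xc$. This index shift is precisely the one built into Proposition~\ref{prop:bijZ}, so that proposition yields
\[
  \Phi_\Xc\big((v \cdot \OZ(1))W_P\big) = \big(\Phi_\Xc(v W_P) \cdot s_{2t_1}\big)W^\Xc.
\]
Since $2t_1$ is the degree-$1$ maximal root of $\IG(k-1,2n)$, the Buch--Mihalcea recursion (Proposition~\ref{prop:zd} and Corollary~\ref{cor:evest}) gives $\Gamma_1(\Xc(\Phi_\Xc(\lambda))) = \Xc(\Phi_\Xc(\lambda) \cdot s_{2t_1})$. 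Therefore $v \cdot \OZ(1)$ corresponds under $\Phi_\Xc$ to the degree-$1$ curve neighborhood of $\Xc(\Phi_\Xc(\lambda))$ inside $\IG(k-1,2n)$.

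The remaining task is the combinatorial comparison. Applying the degree-$1$ curve neighborhood formulas for the even symplectic Grassmannian $\IG(k-1,2n)$ --- the analogues of Lemma~\ref{lem:Partline} for $\BC$ and Lemma~\ref{lem:Lambdaline} for $\BKT$ --- I would compute $\Phi_\Xc(\lambda)^1$ explicitly and check
\[
  \Phi_\Xc(\lambda)^1 = \Phi_\Xc(\lambda^{\OZ(1)}).
\]
In the $\BC$ case this is transparent: $\Phi_\Xc(\lambda) = (\lambda_2 \geq \cdots \geq \lambda_k)$, Lemma~\ref{lem:Partline} removes its top row and one box from each remaining row to give $(\lambda_3-1, \ldots, \lambda_{\ell(\lambda)}-1, 0, \ldots)$, which is exactly $\Phi_\Xc(\lambda^{\OZ(1)})$ since $\lambda^{\OZ(1)} = (\lambda_1, \lambda_3-1, \ldots)$ and $\Phi_\Xc$ discards the (full) first part $\lambda_1 = 2n+1-k$. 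The $\BKT$ case is where I expect the main obstacle to lie: one must track the $+1$ shift in $\Phi_\Xc(\alpha) = (\alpha_2+1 \geq \cdots \geq \alpha_k+1)$ and reconcile the two-part case split of Definition~\ref{defn:Zpartitions} (together with the $-2 \mapsto -1$ replacement) with the two-part case split of Lemma~\ref{lem:Lambdaline}, verifying that after the reindexing $j \mapsto j-1$ and the shift $\alpha_i \mapsto \alpha_i+1$ the threshold $\alpha_2+\alpha_j \le 2(n-k)+j-2$ governing $\OZ(1)$ matches the threshold of Lemma~\ref{lem:Lambdaline} applied with parameters $(k-1,2n)$. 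The conceptual point making all of this work is that the full row $\lambda_1 = 2n+1-k$ is invisible to $\Phi_\Xc$ and can be freely reattached after the computation, so that the operation $\OZ(1)$ on $X(\lambda)$ reduces to a single degree-$1$ curve neighborhood step on the smaller Grassmannian $\IG(k-1,2n)$.
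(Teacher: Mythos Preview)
Your proposal is correct and follows essentially the same route as the paper's proof: apply Proposition~\ref{prop:bijZ} to convert $v \cdot \OZ(1)$ into $\Phi_\Xc(v) \cdot s_{2t_1}$ in $\IG(k-1,2n)$, identify this with the degree-$1$ curve neighborhood there via Proposition~\ref{prop:zd} and Corollary~\ref{cor:evest}, and then use Lemmas~\ref{lem:Partline} and~\ref{lem:Lambdaline} to verify $\Phi_\Xc(\lambda)^1 = \Phi_\Xc(\lambda^{\OZ(1)})$. Your write-up is in fact more explicit than the paper's on the $\BKT$ combinatorial check (the paper simply asserts that the identity is clear from the definitions), and your observation that the ordinary Hecke product is the correct one here is well taken.
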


\begin{proof}
Recall that $\OZ(1)=s_2 \dots s_{n+1} \dots s_2$. Then by Proposition \ref{prop:bijZ} we have that
\begin{align*}
    \Phi_\Xc(v \cdot O_Z(1)) &= \Phi_\Xc((v \cdot s_2 \dots s_{n+1} \dots s_2)W_P)\\
                        &= (\Phi_\Xc(vW_P) \cdot \overbrace{s_1 \dots s_n \dots s_1}^{s_{2t_1} \text{ in } \IG(k-1,2n)})W_Z.
\end{align*}
The left-hand side corresponds to the partition $\Phi_\Xc(\lambda)^1$ by Proposition \ref{prop:zd} and Corollary \ref{cor:evest}, while Lemma \ref{lem:Lambdaline} and Lemma \ref{lem:Partline} tell us that this partition indexes the curve neighborhood $\Gamma_1(Z(\lambda_2,\dots,\lambda_k))$ for $\lambda \in \BC(k,2n+1)$ (respectively, $\Gamma_1(Z(\alpha_2+1,\dots,\alpha_k+1))$ for $\alpha \in \BKT(k,2n+1)$).
By definition of $\lambda^{\OZ(1)}$ and $\lambda^1$ it is then clear that
\[
    \Phi_\Xc(\lambda)^1 = \Phi_\Xc(\lambda^{\OZ(1)}),
\]
hence $\lambda^{\OZ(1)}$ indexes the closed orbit component of $\Gamma_1(\lambda)$ as claimed.
\end{proof}

\begin{lemma} \label{lem:O11Part}
Let $\lambda \in \BC(k,2n+1)$ be such that $\lambda_1=2n+1-k$, and denote by $w \in \Wodd \cap W^P$ the Weyl group element corresponding to $\lambda$. Then $\lambda^{\OY(1)}$  corresponds to $w \cdot \OY(1) \in W$.
\end{lemma}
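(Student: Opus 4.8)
The plan is to verify the claimed identity by translating both sides into the common language of $01$-words, using the bijection $\BC(k,2n+2) \to W^P$ of Section~\ref{ss:BC} together with the explicit description of the Hecke product $w \cdot \OY(1)$ furnished by Lemma~\ref{lem:comp}. I would write $w=(1<a_2<\dots<a_r<\bar a_k<\dots<\bar a_{r+1})$, so that $w(1)=1$ reflects the hypothesis $X(\lambda) \subset \Xc$, and recall that under the bijection $D(\lambda)(i)=1$ for $i \leq n+1$ exactly when $i \in \{1,a_2,\dots,a_r\}$, while $D(\lambda)(\bar i)=1$ exactly when $i \in \{a_{r+1},\dots,a_k\}$. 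Since both $w \cdot \OY(1)$ and $\lambda^{\OY(1)}$ yield elements of $\Wodd \cap W^P$, respectively of $\BC(k,2n+1)$, it suffices to show that the associated $01$-words coincide.

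On the Hecke side, Lemma~\ref{lem:comp} gives $w \cdot \OY(1)=(w(2)<\dots<\bar j_\Y<\dots<w(k))$ with $j_\Y=\min\{2,\dots,n+1\}\setminus\{a_2,\dots,a_k\}$; that is, one deletes the entry $1=w(1)$ and inserts the barred entry $\bar j_\Y$. In terms of $01$-words this is the single swap $D(\lambda)(1)\colon 1 \mapsto 0$ and $D(\lambda)(\bar j_\Y)\colon 0 \mapsto 1$, the latter position being legitimately a $0$ since $j_\Y \notin \{a_2,\dots,a_k\}$ forces $D(\lambda)(j_\Y)=D(\lambda)(\bar j_\Y)=0$. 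I would next identify $j_\Y$ with the wingtip parameter. Using the reverse-complement relation $D(\mu^t)(i)=1-D(\mu)(\bar i)$ for $\mu=\lambda+1^k$, the condition $D(\mu)(i)=D(\mu^t)(i)$ is equivalent to $D(\mu)(i)\neq D(\mu)(\bar i)$; combined with the defining property of $\BC(k,2n+2)$ (equal values at positions $i$ and $\bar i$ can only both be $0$), this shows that $m+1$ is the least $i$ with $D(\lambda)(i)=D(\lambda)(\bar i)=0$. As $w(1)=1$ gives $D(\lambda)(1)=1$, the index $i=1$ is excluded, so $j_\Y=m+1$, equivalently $\bar j_\Y=\bar m-1$.

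It then remains to check that the combinatorial recipe for $\lambda^{\OY(1)}$ produces exactly this swap, and this is the step I expect to be the main obstacle, since it requires careful bookkeeping of the two cases and of the ``$-1$'' shifts. Reading position $\bar m=\bar j_\Y+1$ determines the case: if $D(\lambda)(\bar m)=1$ is the $i$th $1$, inserting a $1$ at position $\bar m-1$ places two consecutive vertical steps, which is precisely the duplication of row $i$ at height $\lambda_i$ appearing in case (1a); whereas if $D(\lambda)(\bar m)=0$ is the horizontal step ending row $i$ in column $j$, inserting the $1$ immediately to its left extends that row to width $j$, giving case (1b). In both cases the simultaneous deletion of the leading $1$ at position $1$ accounts for dropping $\lambda_1$ and shifting rows $2,\dots,i-1$ (resp. $2,\dots,i$) down by one box, matching the subtractions $\lambda_2-1,\dots$ in the definition. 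Comparing the resulting $01$-word with the one computed on the Hecke side completes the proof; I would keep the two running examples of Section~\ref{ss:BC} at hand as a consistency check throughout.
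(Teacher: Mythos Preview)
Your proposal is correct and follows essentially the same strategy as the paper's proof: both translate the problem into $01$-words, identify that passing from $D(\lambda)$ to the $01$-word of $w\cdot\OY(1)$ amounts to the single swap $D(\lambda)(1)\colon 1\mapsto 0$ and $D(\lambda)(\overline{m+1})\colon 0\mapsto 1$, and then verify case by case that this matches the combinatorial recipe defining $\lambda^{\OY(1)}$. Your derivation of $j_\Y=m+1$ via the transpose identity $D(\mu^t)(i)=1-D(\mu)(\bar i)$ and the $\BC$-condition is a clean step that the paper leaves implicit; conversely, the paper phrases the effect of the swap on the first $2n+2-m$ characters as a Grassmannian line neighborhood, which you replace by direct inspection of where the new vertical step lands relative to $D(\lambda)(\bar m)$---both justifications are equivalent.
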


\begin{proof}
Let $\lambda \in \BC(k,2n+1)$ be $m$-wingtip symmetric. Denote by $\lambda^1$ the $BC$-partition corresponding to $w \cdot \OY(1)$, see Lemma \ref{lem:comp}, and by $D(\lambda^1)$ the associated 01-word. We will show that $D(\lambda^1)=D(\lambda^{\OY(1)})$. 

First suppose that $D(\lambda)(\overline{m})=0$ corresponds to a horizontal step at the bottom of the $i$th row and in the $j$th column. Note that we cannot have $D(\lambda)(\overline{m+1})=1$, otherwise by definition of $\BC$-partitions we would need to have $D(\lambda)(m+1)=0$ and $\lambda$ would be $(m+1)$-wingtip symmetric and not $m$-wingtip symmetric. Therefore
\begin{align*}
    D(\lambda) &= \overbrace{{\color{blue}1} D(\lambda)(2) \cdots  D(\lambda)(\overline{m+2}){\color{blue}0}}^{ \text{first $2n+2-m$ characters} }0 D(\lambda)(\overline{m-1}) \cdots D(\lambda)(\overline{2})0;\\
    D(\lambda^1) &= \overbrace{{\color{blue}0} D(\lambda)(2) \cdots  D(\lambda)(\overline{m+2}){\color{blue}1}}^{ \text{first $2n+2-m$ characters} }0 D(\lambda)(\overline{m-1}) \cdots D(\lambda)(\overline{1}).
\end{align*}
Observe that the first $2n+2-m$ characters correspond to taking the curve neighborhood in $\mathrm{Gr}(i,2n+2-m)$. Indeed, taking the degree one curve neighborhood in $\mathrm{Gr}(i,2n+2-m)$ corresponds to deleting the top row, as well as a box in each of the subsequent $i-1$ rows. Moreover the last rows of $\lambda$, corresponding to the last $m$ characters of the $01$-word $D(\lambda)$, remain the same in $\lambda^1$. The introduction in $D(\lambda^1)$ of the character 1 in the $\overline{m+1}$ position forces $\lambda^1_i=j$.

Now suppose instead that $D(\lambda)(\overline{m})=1$ corresponds to a vertical step at the end of the $i$th row of $\lambda$. As before we must have $D(\lambda)(\overline{m+1})=0$, otherwise $\lambda$ would not be $m$-wingtip symmetric. So we have that 
\begin{align*}
    D(\lambda) &= \overbrace{{\color{blue}1} D(\lambda)(2) \cdots  D(\lambda)(\overline{m+2}){\color{blue}0}}^{ \text{first $2n+2-m$ characters} }1 D(\lambda)(\overline{m-1}) \cdots D(\lambda)(\overline{2})0;\\
    D(\lambda^1) &= \overbrace{{\color{blue}0} D(\lambda)(2) \cdots  D(\lambda)(\overline{m+2}){\color{blue}1}}^{ \text{first $2n+2-m$ characters} }1 D(\lambda)(\overline{m-1}) \cdots D(\lambda)(\overline{1}).
\end{align*}
The first $2n+2-m$ characters correspond to curve neighborhoods in $Gr(i-1,2n-m)$, deleting the top row and one box out of the subsequent $i-1$ rows. The last rows of $\lambda$, corresponding to the last $m$ characters of $D(\lambda)$, remain the same. The introduction of the character 1 in the $\overline{m+1}$ position forces $\lambda^1_i=\lambda_i$.

Therefore in either case, $D(\lambda)^1=D(\lambda^{\OY(1)})$, and the result follows.
\end{proof}

\begin{lemma} \label{lem:partBKTa} 
Let $w$ be an element of $\Wodd \cap W^P$ with $w(1)=1$ and $\alpha \in \BKT(k,2n+1)$ be the corresponding $\BKT$-partition. We consider the following cases for $w$:
\begin{enumerate} 
    \item If $w=(1<a_2<\dots<a_k)$ is such that $a_k \leq n+1$, then $\alpha_k \geq 0$;
    \item If $w=(1<a_2 < \dots <a_r < \bar a_k < \dots < \bar a_{r+1})$ is such that $\{2, 3, \cdots, a_{r+1}\} \nsubset \{ a_2, \dots,a_k\}$, then $\alpha_k \geq 0$;
    \item If $w=(1<a_2 < \dots <a_r < \bar a_k < \dots  < \bar a_{r+1}) $ is such that $\{2, 3, \dots, a_k\} \subset \{ a_2, \dots,a_k\}$, then $\alpha_{r} \geq 0$ and $\alpha_{r+1}=-1$;
    \item If $w=(1<a_2 < \dots <a_r < \bar a_k < \dots < \bar a_{t+1}<\bar a_t< \dots < \bar a_{r+1}) $ is such that there exists a largest index $t<k$ such that $\{2, 3, \dots, a_t\} \subset \{ a_2, \cdots,a_k\}$, denote by $s$ the index such that $w(s-1)=\bar a_{t+1}$, $w(s)=\bar a_t$. Then $\alpha_{s-1} \geq 0$ and $\alpha_s=-1$.
\end{enumerate}
\end{lemma}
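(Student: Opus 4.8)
The plan is to prove all four parts by one uniform computation based on the explicit bijection between $\Wlarge^P$ and $\BKT(k,2n+2)$ recalled above, combined with the identity $\alpha=\beta-1^k$. For the element $w$ corresponding to $\alpha$ this yields
\[
    \alpha_j = 2n+2-k - w(j) + \#\{ i<j \mid w(i)+w(j) > 2n+3 \}.
\]
Writing $\overline{m}=2n+3-m$, the condition $w(i)+w(j)>2n+3$ is simply $w(i)>\overline{w(j)}$, so the correction term counts the earlier entries exceeding $\overline{w(j)}$. The one structural input I would record first is that, $X(w)$ being a point of an isotropic Grassmannian, the names $a_1=1,a_2,\dots,a_k$ are pairwise distinct elements of $\{1,\dots,n+1\}$ (no value occurs together with its bar); in particular $\{a_2,\dots,a_k\}$ is a $(k-1)$-subset of $\{2,\dots,n+1\}$, and all unbarred entries precede all barred ones.

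The heart of the argument is a single identity. For an \emph{unbarred} entry $w(j)=a_j\le n+1$, every earlier entry is at most $n+1<\overline{a_j}$, so the correction vanishes and $\alpha_j=2n+2-k-a_j\ge n+1-k\ge 0$. For a \emph{barred} entry $w(j)=\overline{b}$ (so $b$ is a barred name), all unbarred entries come before position $j$, and the earlier barred entries are exactly those with name larger than $b$; counting barred and unbarred names $>b$ together gives $\#\{m\mid a_m>b\}=k-1-u$, where $u=\#\bigl(\{2,\dots,b\}\cap\{a_2,\dots,a_k\}\bigr)$. Collecting terms, I expect the clean formula
\[
    \alpha_j = b-2-u .
\]
Consequently $\alpha_j=-1$ precisely when $\{2,\dots,b\}\subseteq\{a_2,\dots,a_k\}$ (i.e. $u=b-1$), and $\alpha_j\ge 0$ otherwise.

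With this in hand each part is an application. Part (1) is the unbarred estimate at $j=k$. Part (2) applies the barred identity at $j=k$ with $b=a_{r+1}$: the hypothesis that $\{2,\dots,a_{r+1}\}$ is not contained in $\{a_2,\dots,a_k\}$ gives $u\le b-2$, hence $\alpha_k\ge 0$. In part (3), $\alpha_r\ge 0$ is the unbarred estimate, while $\alpha_{r+1}=-1$ is the barred identity at $b=a_k$ with the subset hypothesis holding. In part (4) I would first translate the index $t$ into the position $s=r+k+1-t$, so that $w(s)=\overline{a}_t$ and $w(s-1)=\overline{a}_{t+1}$; then $\alpha_s=-1$ follows from the barred identity with $b=a_t$ (the subset holds by the defining property of $t$), and $\alpha_{s-1}\ge 0$ follows with $b=a_{t+1}$, where the subset \emph{fails} by maximality of $t$ (or, when $t+1=k$, because the case~(3) hypothesis fails). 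The main obstacle I anticipate is purely the bookkeeping in this last step: correctly matching the name $a_t$ to its position $s$, and keeping the barred/unbarred split of $\{a_2,\dots,a_k\}$ consistent across the two evaluations at $s$ and $s-1$. Once the uniform identity $\alpha_j=b-2-u$ and the dictionary $t\leftrightarrow s$ are in place, every assertion drops out of the displayed formula.
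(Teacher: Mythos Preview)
Your argument is correct, and the key identity $\alpha_j = b - 2 - u$ for a barred entry $w(j)=\bar b$ (with $u=\#(\{2,\dots,b\}\cap\{a_2,\dots,a_k\})$) is exactly the right thing to isolate: the verification you sketch goes through verbatim once one notes that among the barred names $a_{r+1}<\dots<a_k$ exactly those with index $>t$ exceed $b=a_t$, so the earlier-entry count really is $(k-1)-u$. All four parts then follow as you say, including the boundary observation in part~(4) that when $t+1=k$ the failure of the case~(3) hypothesis supplies the needed non-inclusion.

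By comparison, the paper does not extract this uniform identity; it argues each of the four cases separately, in each instance expanding $\alpha_j$ via the bijection formula, introducing an auxiliary ``largest index $s$ with $a_s<b$'' and simplifying. The computations are the same in substance (your $u$ and the paper's $s$ encode the same information), but your organisation is more economical: one formula, one dichotomy ($u=b-1$ versus $u\le b-2$), done. The paper's approach has the minor advantage of being self-contained in each part, but yours makes the structure---that $\alpha_j=-1$ at a barred position precisely when $\{2,\dots,b\}\subseteq\{a_2,\dots,a_k\}$---transparent.
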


%{\color{purple} For an example of case 4 consider $w=(1<4<8<\bar{9}<\bar{5}<\bar{3}<\bar{2})$. Here $t=6$, $a_6=5$, $a_7=9$, $s=5$, $r=3$. I use the example to explain certain steps of the proof. }

\begin{proof} We will prove the four statements next.
\begin{enumerate}
    \item Since $w(j) \leq n+1$ for any $j \leq k$ in this case, we get that $\alpha_k=2n+2-k-w(k) \geq 0$ as claimed.
    \item We have
        \[
            \alpha_k=a_{r+1}-r-2+\#\{ 2 \leq i \leq r \mid a_i>a_{r+1} \}.
         \]
        Let $1 \leq s \leq r$ be the largest index such that $a_s<a_{r+1}$. Then $\alpha_k=a_{r+1}-s-2$. As $a_{r+1}$ is larger than $1,a_2,\dots,a_s$, which are all distinct elements in $\{1,2,3,\dots,n+1\}$, we deduce that $a_{r+1} \geq s+1$, with equality only possible if $a_i=i$ for $1 \leq i \leq s$.
        However, if that equality holds then $\{2,3,\dots,a_{r+1}\}=\{2,3,\dots,s+1\}= \{a_2,\dots,a_s,a_{r+1}\}$, contradicting our assumption that $\{2, 3, \cdots, a_{r+1}\} \nsubset \{ a_2, \dots,a_k\}$. Therefore $a_{r+1} \geq s+2$, hence $\alpha_k \geq 0$ as claimed.
    \item We have
        \begin{align*}
            \alpha_r &= 2n+2-k-a_r \geq 0 \\
            \alpha_{r+1} &= a_k-k-1+ \#\{2 \leq i \leq r \mid a_i>a_k\}. 
        \end{align*}
        Let $1 \leq s \leq r$ be the largest index such that $a_s<a_k$. Then $\alpha_{r+1}=a_k-k-1+(r-s)$. Furthermore we have that $s+k-r=a_k$ since $a_i \leq a_k$ for all $r+1 \leq i \leq k$. Thus, $\alpha_{r+1}=-1$.
        
        %Let $1 \leq s \leq r$ be the largest index such that $a_s<a_k$, so that $\alpha_r=a_k+r-k-s-1$. 
        %By our assumption we have $\{2, 3, \dots, a_k\} \subset \{ a_2, \dots,a_k\}$. However, since $a_{s+1},\dots,a_r>a_k$ this can be rephrased as
        %\[
            %\{2, 3, \dots, a_k\} \subset \{ a_2, \dots,a_s,a_{r+1},\dots,a_k\},
        %\]
        %which implies by comparing cardinalities that $a_k \leq s+k-r$, hence $\alpha_{r+1} \leq -1$. As parts of $\BKT$-partitions are always at least equal to $-1$, the result follows.
    \item We begin with
        \[
            \alpha_s = a_t-t-1+\#\{2 \leq i \leq r \mid a_i>a_{t}\}.
        \]
        Let $1 \leq j \leq r$ be the largest index such that $a_{j}<a_{t}$. Then 
        \[
            \alpha_{s}=a_{t}-t-1+(r-j).
        \] 
        We also have that $a_t=j+(t-r)$ since $a_i \leq a_t$ for all $r+1 \leq i \leq t$. Thus, $\alpha_{s}=-1$. Next we consider
        \[
            \alpha_{s-1} = a_{t+1}-t-2+\#\{2 \leq i \leq r \mid a_i>a_{t+1} \}.
        \]
        Let $ j'=\#\{2 \leq i \leq r \mid a_t < a_i<a_{t+1}-1 \}$. Then $a_{t+1} \geq a_t+j'+2$ and 
        \[
            \alpha_{s-1} = a_{t+1}-t-2+(r-j'-j).
        \] 
        Then
        \begin{eqnarray*}
            \alpha_{s-1} &=& a_{t+1}-t-2+(r-j'-j)\\
                         &\geq & (a_t+j'+2)-t-2+(r-j'-j)\\
                         &=&a_t-t+r-j\\
                         &=&0.
        \end{eqnarray*}
        The result follows.

 \end{enumerate}
 
\end{proof}

\begin{lemma}\label{lem:partBKTb}
Let $w$ be an element of $\Wodd \cap W^P$ with $w(1)>1$ and $\alpha \in \BKT(k,2n+1)$ be the corresponding $\BKT$-partition. We consider the following cases for $w$:
\begin{enumerate}
     \item If $w=(a_1<a_2<\dots<a_k)$ is such that $a_k \leq n+1$, then $\alpha_k \geq 1$;
    \item If $w=(a_1<a_2 < \dots <a_r < \bar a_k < \dots < \bar a_{r+1})$ is such that $\{2, 3, \cdots, a_{r+1}\} \nsubset \{a_1, a_2, \dots,a_k\}$, then $\alpha_k \geq 1$;
    \item If $w=(a_1<a_2 < \dots <a_r < \bar a_k < \dots  < \bar a_{r+1}) $ is such that $\{2, 3, \dots, a_k\} \subset \{ a_1,a_2, \dots,a_k\}$, then $\alpha_{r+1}=0$, and $\alpha_k=0$;
    
    \item If $w=(a_1 < \dots <a_r < \bar a_k < \dots < \bar a_{t+1}<\bar a_t< \cdots < \bar a_{r+2}< \bar a_{r+1})$ and $t$ is the largest index such that $\{2,3, \cdots, a_t\} \subset \{ a_1, \cdots,a_k\}$. Choose the index $s$ such that $w(s)=\bar a_t$. If $w\mapsto \alpha \in \BKT(k,2n+1)$ then $\alpha_{s} = 0$ and $\alpha_k=0$.
\end{enumerate}
\end{lemma}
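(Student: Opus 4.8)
The plan is to argue directly from the explicit bijection $w \mapsto \beta$ between $\Wlarge^P$ and $\BKT(k,2n+2)$ recalled after Equation~\eqref{E:w0}, combined with the shift $\alpha=\beta-1^k$. For $w \in \Wodd \cap W^P$ and the corresponding $\alpha \in \BKT(k,2n+1)$ this gives
\[
    \alpha_j = 2n+2-k-w(j)+\#\{i<j \mid w(i)+w(j)>2n+3\}.
\]
Each of the four claimed identities is thus a statement about a single entry $\alpha_j$, and the only nontrivial ingredient is to count the indices $i<j$ with $w(i)+w(j)>2n+3$, equivalently with $w(i)>\overline{w(j)}$. This is exactly the mechanism used in the proof of Lemma~\ref{lem:partBKTa}; the point is that the hypothesis $w(1)>1$, i.e.\ $a_1 \geq 2$ so that $1 \notin \{a_1,\dots,a_k\}$, shifts every extremal count by one. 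This is precisely what upgrades the bounds $\alpha \geq 0$ and $\alpha=-1$ of Lemma~\ref{lem:partBKTa} to $\alpha \geq 1$ and $\alpha=0$ here.

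First I would dispatch cases (1) and (2), both of which concern $\alpha_k$. In case (2), where $w(k)=\overline{a_{r+1}}$, every barred entry to the left of position $k$ automatically exceeds $\overline{w(k)}=a_{r+1}$, so the count reduces to $\#\{1 \leq i \leq r \mid a_i>a_{r+1}\}$; writing $s$ for the largest index with $a_s<a_{r+1}$ yields $\alpha_k=a_{r+1}-s-2$. Since $a_1,\dots,a_s$ are $s$ distinct elements of $\{2,\dots,n+1\}$ all below $a_{r+1}$ one gets $a_{r+1} \geq s+2$, and equality would force $\{2,\dots,a_{r+1}\} \subset \{a_1,\dots,a_k\}$, contradicting the hypothesis; hence $a_{r+1} \geq s+3$ and $\alpha_k \geq 1$. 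Case (1) is the degenerate instance with empty barred block, where $w(k)=a_k \leq n+1$ and $k<n+1$ give $\alpha_k \geq n+1-k \geq 1$ directly.

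Next I would handle cases (3) and (4), where a containment $\{2,\dots,a\} \subset \{a_1,\dots,a_k\}$ is assumed. The crucial observation is that, since $1 \notin \{a_1,\dots,a_k\}$, such a containment pins down exactly how many unbarred entries lie below a given barred value: the values strictly below the smallest barred value $a_{r+1}$ are precisely $\{2,\dots,a_{r+1}-1\}$, all unbarred, so there are $a_{r+1}-2$ of them and the count in $\alpha_k$ telescopes to give $\alpha_k=0$. Applying the same count at the threshold $a_t$, with $t$ the largest index for which $\{2,\dots,a_t\} \subset \{a_1,\dots,a_k\}$, shows that $\#\{i \leq r \mid a_i<a_t\}=a_t-1-t+r$; substituting this into $\alpha_s=a_t-t-1+\#\{i \leq r \mid a_i>a_t\}$ produces $\alpha_s=0$. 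Case (3) is recovered as the special case $t=k$, for which the relevant position is $s=r+1$ and so $\alpha_{r+1}=0$.

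The step I expect to be the main obstacle is the index bookkeeping in case (4): one must locate the position $s$ with $w(s)=\overline{a_t}$ inside the barred block (it sits $k-t$ barred entries in, so $s=r+k-t+1$) and separately account for the barred entries $\overline{a_k},\dots,\overline{a_{t+1}}$ lying to its left, each of which exceeds $\overline{a_t}$ in value and hence contributes $k-t$ to the count. Once this is set up correctly, every identity collapses to the same elementary fact---that under the containment hypothesis the unbarred values below a barred threshold fill an initial segment $\{2,3,\dots\}$---and the computation telescopes exactly as in Lemma~\ref{lem:partBKTa}.
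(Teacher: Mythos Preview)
Your proposal is correct and follows essentially the same approach as the paper: both argue case by case from the explicit formula $\alpha_j = 2n+2-k-w(j)+\#\{i<j \mid w(i)+w(j)>2n+3\}$, reducing each claim to counting unbarred entries $a_i$ above or below the relevant threshold. Your presentation is slightly more streamlined---you compute $\alpha_k=0$ in case~(4) directly via the containment $\{2,\dots,a_{r+1}\}\subset\{a_1,\dots,a_k\}$, whereas the paper first bounds $\alpha_k\geq 0$ and then concludes from $\alpha_s\geq\alpha_k$---but the substance is the same.
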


\begin{proof} We prove the four parts next.
\begin{enumerate}
    \item First observe that if $k=n+1$ then $a_1=1$. Thus we must have that $k<n+1$. It follows that $\alpha_k=2n+2-k-w(k) \geq 1$ as claimed.
    \item We have
        \[
            \alpha_k=a_{r+1}-r-2+\#\{ 1 \leq i \leq r \mid a_i>a_{r+1} \}.
        \]
        Let $1 \leq s \leq r$ be the largest index such that $a_s<a_{r+1}$. Then $\alpha_k=a_{r+1}-s-2$. As $a_{r+1}$ is larger than $a_1,a_2,\dots,a_s$, which are all distinct elements in $\{2,3,\dots,n+1\}$, we deduce that $a_{r+1} \geq s+2$, with equality only possible if $a_i=i+1$ for $1 \leq i \leq s$.
        However, if that equality holds then $\{2,3,\dots,a_{r+1}\}=\{2,3,\dots,s+2\}= \{a_2,\dots,a_s,a_{r+1}\}$, contradicting our assumption that $\{2, 3, \cdots, a_{r+1}\} \nsubset \{ a_2, \dots,a_k\}$. Therefore $a_{r+1} \geq s+3$, hence $\alpha_{k} \geq 1$ as claimed.
        
        \item We begin with
        \begin{align*} 
            \alpha_{r+1} &= a_k-k-1+ \#\{1 \leq i \leq r \mid a_i>a_k\}.
        \end{align*}
         Let $1 \leq s \leq r$ be the largest index such that $a_s<a_k$. Then $\alpha_{r+1}=a_k-k-1+(r-s)$. Furthermore we have that $s+k-r=a_k-1$ since $a_i \leq a_k$ for all $r+1 \leq i \leq k$. Thus, $\alpha_{r+1}=0$.
        
        Next we have that
        \[\alpha_k=a_{r+1}-r-2+\#\{1 \leq i \leq r: a_i>a_{r+1} \}\]
         Let $1 \leq s \leq r$ be the largest index such that $a_s<a_{r+1}$. Then $\alpha_{k}=a_{r+1}-s-2$. Furthermore, $a_i=i+1$ for $1 \leq i \leq s$. Thus $a_{r+1}=s+2$. We conclude that $\alpha_k=0$.

 \item We begin with
\begin{eqnarray*}
\alpha_{s}&=&a_{t}-t-1+\#\{1 \leq i \leq r :a_i>a_{t}\}.
\end{eqnarray*}
Let $1 \leq j \leq r$ be the largest index such that $a_{j}<a_{t}$. Then \[\alpha_{s}=a_{t}-t-1+(r-j).\] We also have that $a_t=j+(t-r)+1$ since $a_i \leq a_t$ for all $r+1 \leq i \leq t$. Thus, $\alpha_{s}=0$.
 
Next we consider
\begin{eqnarray*}
\alpha_k=a_{r+1}-r-2+\#\{1 \leq i \leq r: a_i>a_{r+1} \}.
\end{eqnarray*}
 
 Let $1 \leq s \leq r$ be the largest index such that $a_s<a_{r+1}$. Then $\alpha_k=a_{r+1}-s-2$. As $a_{r+1}$ is larger than $a_2,\dots,a_s$, which are all distinct elements in $\{2,3,\dots,n+1\}$, we deduce that $a_{r+1} \geq s+2$. Hence $\alpha_k \geq 0$. Since $\alpha_s \geq \alpha_k$ we then conclude that $\alpha_k =0$.

 %Now we will consider $\alpha_s$. Observe that the number of entries of $w$ that come before $w(s)=\bar a_{t}$, such that the entry is strictly less than $\overline{w(s)}=a_{t}$, is $a_{t}-(t-r)-1$. So we have that
 %\begin{eqnarray*}
 %\# \{i<s:w(i)>a_{t} \}&=&(s-1)-(a_{t}-(t-r)-1)\\
 %&=&s+t-r-a_t\\
 %&=&k+1-a_t.
 %\end{eqnarray*}
 %So we have that
 %\begin{eqnarray*}
 %\alpha_s&=&a_{t}+\# \{i<s-1:w(i)>a_{t} \}-k-1\\
 %&=&a_t+(k+1-a_t)-k-1\\
 %&=&0.
% \end{eqnarray*}
 \end{enumerate}
  The result follows.
\end{proof}

\begin{lemma} \label{lem:O11BKT}
Let $\alpha \in \BKT(k,2n+1)$ be such that $\alpha_1=2n+1-k$, and let $w \in \Wodd \cap W^P$ be the Weyl group element corresponding to $\alpha$. Then $\alpha^{\OY(1)}$ corresponds to $w \cdot \OY(1)$.
\end{lemma}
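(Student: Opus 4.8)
The plan is to push the statement through the explicit bijection between $\Wodd \cap W^P$ and $\BKT(k,2n+1)$ and then verify the claimed identity part by part, using the combinatorics isolated in Lemmas~\ref{lem:partBKTa} and~\ref{lem:partBKTb}. Write $w = (1 < a_2 < \cdots < a_r < \bar a_k < \cdots < \bar a_{r+1})$, where $w(1)=1$ records that $X(w) \subset \Xc$. By Lemma~\ref{lem:comp} the element $w' := w \cdot \OY(1)$ is obtained from $w$ by deleting the entry $1$ and inserting the barred entry $\bar j_\Y$, where $j_\Y = \min\{2,\dots,n+1\} \setminus \{a_2,\dots,a_k\}$; let $p$ be the position of $\bar j_\Y$ in $w'$. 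Denote by $\beta$ the $\BKT$-partition corresponding to $w'$; the goal is $\beta = \alpha^{\OY(1)}$. The first step is to show $\beta_j = \alpha_{j+1}$ for every $j < p$. Using $\alpha_j = 2n+2-k - w(j) + \#\{i < j \mid w(i)+w(j) > 2n+3\}$, the key point is that $w(1)=1$ never contributes to the correction term, since $1 + w(i) \le 2n+2 < 2n+3$ for all admissible entries; hence deleting $1$ and shifting indices leaves the correction counts unchanged for positions before the insertion, giving $\beta_j = \alpha_{j+1}$ at once.

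Next I would compute $\beta_p$ and show it vanishes. Writing $\bar j_\Y = 2n+3 - j_\Y$ gives $2n+2-k - \bar j_\Y = j_\Y - k - 1$, while the correction term $\#\{i < p \mid w'(i) > j_\Y\}$ counts exactly the $a_i$ with $2 \le i \le k$ and $a_i > j_\Y$ (every entry of absolute value $> j_\Y$ precedes $\bar j_\Y$ and exceeds $j_\Y$ in value, barred or not). By definition of $j_\Y$ the values $2,\dots,j_\Y-1$ all appear among $a_2,\dots,a_k$ and $j_\Y$ does not, so exactly $k+1-j_\Y$ of the $k-1$ entries $a_2,\dots,a_k$ exceed $j_\Y$; therefore $\beta_p = (j_\Y - k - 1) + (k+1-j_\Y) = 0$.

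The remaining and most delicate step is to identify $p$ with $\ell_{-1}(\alpha)$ and to check $\beta_j = 0$ for all $p \le j \le k$, so that $\beta$ has precisely the shape $(\alpha_2,\dots,\alpha_{\ell_{-1}(\alpha)}, 0, \dots, 0)$ required by the definition of $\alpha^{\OY(1)}$. This is where Lemma~\ref{lem:partBKTa} enters: its four cases (according to whether all of $\{2,\dots,a_t\}$ occur as absolute values of entries of $w$, and where the first gap appears) pin down the last nonnegative part of $\alpha$, and a short comparison shows that in each case the insertion position $p$ of $\bar j_\Y$ equals $\ell_{-1}(\alpha)$. For instance, in the case $\{2,\dots,a_k\} \subset \{a_2,\dots,a_k\}$ one has $\alpha_{r+1}=-1$ with $\alpha_r \ge 0$, so $\ell_{-1}(\alpha)=r$, while $j_\Y > a_k$ forces $\bar j_\Y$ just below the barred block, i.e.\ $p=r$. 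Dually, applying Lemma~\ref{lem:partBKTb} to $w'$ (which now satisfies $w'(1)>1$) confirms that the parts of $\beta$ in positions $p,\dots,k$ all vanish. Combining the three computations yields $\beta = \alpha^{\OY(1)}$.

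The main obstacle I anticipate is the bookkeeping of the correction term $\#\{i<j \mid w(i)+w(j)>2n+3\}$ as $1$ is removed and $\bar j_\Y$ inserted: one must track how each barred entry's contribution shifts and match the resulting cutoff with $\ell_{-1}(\alpha)$ across the several configurations of $w$. This content is exactly what has been isolated into Lemmas~\ref{lem:partBKTa} and~\ref{lem:partBKTb}, so the argument reduces to invoking them case by case; the degenerate case $k=n+1$ (where $X = \Xc$) is handled directly from the second part of Lemma~\ref{lem:comp}.
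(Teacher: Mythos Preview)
Your proposal is correct and follows essentially the same route as the paper: compute $w' = w \cdot \OY(1)$ via Lemma~\ref{lem:comp}, verify $\beta_j=\alpha_{j+1}$ for positions before the inserted $\bar j_\Y$ by noting $w(1)=1$ never contributes to the correction count, and then invoke Lemmas~\ref{lem:partBKTa} and~\ref{lem:partBKTb} to pin down $\ell_{-1}(\alpha)$ and force the remaining parts of $\beta$ to vanish. Your direct calculation of $\beta_p=0$ is a small addition, and the remark about $k=n+1$ is unnecessary since $\OY(1)$ is only defined for $k<n+1$, but neither affects the argument.
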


\begin{proof}
Let $w=(1<a_2 < \dots <a_r < \bar a_k < \dots < \bar a_{r+1}) \in \Wodd \cap W^P$.  Then by Lemma \ref{lem:comp}
\begin{align*}
    w \cdot \OY(1) &=(w(2)<\cdots<\bar j_\Y<\cdots<w(k)), 
\end{align*}
where 
\begin{align*}
    j_\Y &=\min \{2, \dots, n+1\} \setminus \{a_2,\dots,a_k\},
\end{align*}
Recall that $\bar j_\Y$ is possibly smaller than $w(2),w(3)$ or larger than $w(k)$. Denote by $s$ the integer such that $(w \cdot O_Y(1))(s)=\bar{j}_Y$.

By Lemma \ref{lem:partBKTa} we deduce that $\alpha_{s-1} \geq 0$ and $\alpha_s=-1$. By definition of $w \cdot \OY(1)$,
\[
    \#\{i<j+1 \mid w(i)+w(j+1)<2n+3 \} = \#\{i<j \mid (w\cdot O_Y(1))(i)+(w \cdot O_Y(1))(j)<2n+3 \} 
\] 
for $1 \leq j \leq s-1$. It follows that $\alpha^{O_Y(1)}_j=\alpha_{j+1} \geq 0$ for $1 \leq j \leq s-1$. By Lemma \ref{lem:partBKTb}, we have that $\alpha_j=0$ for $s \leq j \leq k$. The result follows.
\end{proof}

\bibliographystyle{alphabetic}
\bibliography{bibliography}

\end{document}